\numberwithin{equation}{section}
\renewcommand{\implies}{\subseteq}
\newcommand{\butnot}{\setminus}
\newcommand{\id}{\mathrm{id}}
\newcommand{\one}{\mathbf{1}}
\newcommand{\points}{\epsilon}
\newcommand{\dist}{d}
\DeclareMathOperator{\diam}{diam}
\newcommand{\del}{\partial}
\newcommand{\Kin}{\subset\subset}
\newcommand{\cl}{\overline}
\newcommand{\connected}{\textbf{CC}}
\newcommand{\tendstoc}{{\xrightarrow[c\rightarrow 0]{}}}
\newcommand{\tendstoi}{{\xrightarrow[i]{}}}
\newcommand{\tendston}{{\xrightarrow[n]{}}}
\newcommand{\tendstom}{{\xrightarrow[m]{}}}
\newcommand{\tendstoeps}{{\xrightarrow[\varepsilon]{}}}
\newcommand{\osc}{\mathrm{osc}}
\newcommand{\AL}{{\alpha,l}}
\DeclareMathOperator{\Supp}{Supp}
\renewcommand{\d}{\mathrm{d}}
\newcommand{\RP}{\mathrm{RP}}
\newcommand{\BP}{\mathrm{BP}}
\newcommand{\FP}{\mathrm{FP}}
\DeclareMathOperator{\mult}{mult}
\DeclareMathOperator{\RE}{Re}
\DeclareMathOperator{\Res}{Res}
\DeclareMathOperator{\Div}{Div}
\newcommand{\ZW}{{z,w}}
\newcommand{\IETA}{{i,\eta}}
\newcommand{\IXY}{{i,x,y}}
\newcommand{\QEDmod}{\renewcommand{\qedsymbol}{$\triangleleft$}}
\newcommand{\what}{\widehat}
\newcommand{\wtilde}{\widetilde}
\newcommand{\all}{\;\;\forall}
\newcommand{\on}{\upharpoonleft}
\newcommand{\inc}{b}
\newcommand{\A}{\mathbb{A}}
\renewcommand{\AA}{\mathscr{A}}
\newcommand{\Par}{\mathcal{A}}
\newcommand{\B}{\Delta}
\newcommand{\BB}{\mathbb{B}}
\newcommand{\BBB}{\mathscr{B}}
\newcommand{\ParB}{\mathcal{B}}
\newcommand{\complexplane}{\mathbb{C}}
\newcommand{\C}{\what{\complexplane}}
\newcommand{\CC}{\mathcal{C}}
\newcommand{\E}{\mathbb{E}}
\newcommand{\EE}{\mathscr{E}}
\newcommand{\F}{\mathscr{F}}
\newcommand{\FF}{\mathcal{F}}
\newcommand{\G}{\mathbb{G}}
\newcommand{\I}{\mathscr{I}}
\newcommand{\JJ}{\mathcal{J}}
\newcommand{\K}{\mathscr{K}}
\renewcommand{\L}{\mathscr{L}}
\newcommand{\LL}{\mathbb{L}}
\newcommand{\M}{\mathcal{M}}
\newcommand{\N}{\mathbb{N}}
\newcommand{\powerset}{\mathbb{P}}
\newcommand{\basemeasure}{\textup{\textbf{P}}}
\newcommand{\pr}{\mathscr{P}}%
\newcommand{\poly}{\mathcal{P}}%
\newcommand{\R}{\mathbb{R}}
\newcommand{\RR}{\mathscr{R}}
\renewcommand{\SS}{\mathcal{S}}
\newcommand{\T}{\mathbb{T}}
\newcommand{\V}{\mathbb{V}}
\newcommand{\X}{\mathbb{X}}
\newcommand{\Z}{\mathbb{Z}}
\newcommand{\ZZ}{\mathcal{Z}}
\newtheorem{theorem}{Theorem}[section]
\newtheorem{lemma}[theorem]{Lemma}
\newtheorem{corollary}[theorem]{Corollary}
\newtheorem{proposition}[theorem]{Proposition}
\newtheorem{claim}[theorem]{Claim}
\newtheorem{construction}[theorem]{Construction}
\theoremstyle{definition}
\newtheorem{definition}[theorem]{Definition}
\newtheorem{event}[theorem]{Event}
\theoremstyle{remark}
\newtheorem{remark}[theorem]{Remark}
\newtheorem{example}[theorem]{Example}
\newtheorem{questions}[theorem]{Questions}
\begin{document}
\author{David Simmons}
\address{University of North Texas, Department of Mathematics, 1155 Union Circle \#311430, Denton, TX 76203-5017, USA}
\email{DavidSimmons@my.unt.edu}
\title{Random Iteration of Rational Functions}
\begin{abstract}
It is a theorem of Denker and Urba\'nski \cite{DU} that if $T:\C\rightarrow\C$ is a rational map of degree at least two and if $\phi:\C\rightarrow\R$ is H\"older continuous and satisfies the ``thermodynamic expanding'' condition $P(T,\phi) > \sup(\phi)$, then there exists exactly one equilibrium state $\mu$ for $T$ and $\phi$, and furthermore $(\C,T,\mu)$ is metrically exact. We extend these results to the case of a holomorphic random dynamical system on $\C$, using the concepts of relative pressure and relative entropy of such a system, and the variational principle of Bogensch\"utz \cite{Bo}. Specifically, if $(T,\Omega,\basemeasure,\theta)$ is a holomorphic random dynamical system on $\C$ and $\phi:\Omega\rightarrow H_\alpha(\C)$ is a H\"older continuous random potential function satisfying one of several sets of technical but reasonable hypotheses, then there exists a unique equilibrium state of $(\X,\T,\phi)$ over $(\Omega,\basemeasure,\theta)$.

Also included is a general (non-thermodynamic) discussion of random dynamical systems acting on $\C$, generalizing several basic results from the deterministic case.
\end{abstract}
\maketitle

\begin{section}{Overview} \label{sectionoverview}
Let $T = (T_\omega)_{\omega\in\Omega}$ be a collection of continuous endomorphisms of a topological space $X$ parameterized by a standard Borel probability space $(\Omega,\basemeasure)$, such that the map $\omega\mapsto T_\omega$ is Borel measurable. Let $\theta:\Omega\rightarrow\Omega$ be an ergodic invertible measure-preserving transformation. We call the tuple $(T,\Omega,\basemeasure,\theta)$ a \emph{random dynamical system on $X$}. The dynamics of this system are given by the \emph{pseudo-iterates}
\[
T_\omega^n(x) := T_{\theta^{n-1}\omega}\circ\ldots\circ T_\omega(x).
\]
Random dynamical systems have been studied by several authors, including Kifer \cite{Ki} and Arnold \cite{Ar}. The crucial ergodic theory concepts of entropy and pressure were defined for random dynamical systems in \cite{AR} and \cite{Bo}, respectively. The variational principle is generalized to random dynamical systems in \cite{Bo} under very general hypotheses.

If $X$ is the Riemann sphere $\C$ and the maps $(T_\omega)_\omega$ are all rational functions, we say that $(T,\Omega,\basemeasure,\theta)$ is a \emph{holomorphic random dynamical system}.


In this paper we develop the thermodynamic formalism for holomorphic random dynamical systems. Our aim is to generalize several results from the theory of deterministic rational maps $T:\C\rightarrow\C$. For our purposes the first main result is due to Gromov \cite{Gr}, who proved that
\begin{equation}
\label{Gromov}
h_{\text{top}}(T) = \ln(\deg(T)).
\end{equation}
This paper remained unpublished for a long time; the first published proof of (\ref{Gromov}) is found in Lyubich's paper \cite{Ly}. Lyubich also proved the existence of a measure of maximal entropy for $T$, constructed as the limiting distribution of the preimages of some fixed point not in the exceptional set of $T$. The uniqueness of this measure was proven by Ma\~n\'e \cite{Ma1}, who used Ruelle's inequality to show that any measure of positive entropy has a generating partition of finite entropy. All of these results concern only the topological entropy and not the topological pressure; the first result concerning the pressure was given by Denker and Urba\'nski \cite{DU}, who proved the following theorem:

\begin{theorem}[Denker and Urba\'nski, '91] \label{theoremdenkerurbanski}
Suppose that $T$ is a rational map of degree at least two and suppose that $\phi:\C\rightarrow\R$ is H\"older continuous and satisfies
\begin{equation}
\label{DU}
P(\phi) > \sup(\phi).
\end{equation}
Then there is a unique equilibrium state for $(T,\phi)$.
\end{theorem}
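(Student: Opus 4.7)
The plan is to construct the equilibrium state via Ruelle's transfer operator $\L_\phi$, defined by $\L_\phi f(x) := \sum_{y \in T^{-1}(x)} e^{\phi(y)} f(y)$ with multiplicities counted. The hypothesis $P(\phi) > \sup(\phi)$ plays the role of a ``thermodynamic hyperbolicity'' assumption: it forces the dominant contribution in $\L_\phi^n \one$ to come from preimages that remain at bounded distance from the critical set, so that Koebe-type distortion estimates are available along most inverse branches. This substitutes for the genuine expansion one would have in the hyperbolic or Collet--Eckmann case.

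First I would produce a $(\phi - P(\phi))$-conformal probability measure $\nu$ on $\C$, that is, a Borel probability satisfying $\L_\phi^\ast \nu = e^{P(\phi)} \nu$. Following the Patterson--Sullivan construction adapted to rational maps, one forms the normalized weighted preimage measures
\[
\nu_n := \frac{1}{\L_\phi^n \one(x_0)} \sum_{y \in T^{-n}(x_0)} e^{S_n\phi(y)} \delta_y
\]
at a suitably generic base point $x_0$, and passes to a weak-$\ast$ subsequential limit. The gap $P(\phi) - \sup(\phi) > 0$ is exactly what prevents the mass of $\nu_n$ from concentrating on exceptional (super-attracting or parabolic) orbits; it likewise rules out atoms for $\nu$ and forces $\Supp(\nu)$ to lie in the Julia set.

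Next, I would construct the equilibrium state $\mu$ as an invariant measure equivalent to $\nu$. The standard route is to extract a strictly positive eigenfunction $h$ of $\L_\phi$ with eigenvalue $e^{P(\phi)}$ via an Arzelà--Ascoli limit of Cesàro averages $n^{-1}\sum_{k<n} e^{-k P(\phi)} \L_\phi^k \one$, where the needed uniform Hölder/distortion bounds on $\L_\phi^k \one$ come from the conformal measure together with the thermodynamic expansion condition. Setting $\mu := h\,\nu$ (with $\int h\,d\nu = 1$), one verifies that $\mu$ is $T$-invariant and that its Jacobian is $\mathrm{Jac}_\mu(T) = e^{P(\phi)-\phi}\,(h\circ T)/h$. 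Integrating $\log \mathrm{Jac}_\mu(T)$ and applying the Rokhlin formula then gives $h_\mu(T) + \int \phi\,d\mu = P(\phi)$, so $\mu$ is an equilibrium state.

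For uniqueness, suppose $\mu'$ is another equilibrium state. The inequality $P(\phi) > \sup(\phi) \geq \int \phi\,d\mu'$ forces $h_{\mu'}(T) > 0$, so by Mañé's theorem $\mu'$ admits a finite-entropy generating partition and hence a Rokhlin Jacobian $\mathrm{Jac}_{\mu'}(T)$ with $h_{\mu'}(T) = \int \log \mathrm{Jac}_{\mu'}(T)\,d\mu'$. Combined with the equilibrium identity this forces $\log \mathrm{Jac}_{\mu'}(T) = P(\phi) - \phi$ $\mu'$-almost everywhere, so $\mu'$ is itself $(\phi - P(\phi))$-conformal; a standard ergodicity argument for conformal measures then yields $\mu' \ll \nu$, and comparing Jacobians forces $d\mu'/d\nu$ to equal $h$ up to normalization, whence $\mu' = \mu$. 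The main technical obstacle throughout is controlling the critical orbit: one must show that $\nu$, and therefore $\mu$ and $\mu'$, give zero mass to the post-critical set, so that Koebe distortion can be applied uniformly to the inverse branches underpinning both the existence of $h$ and the rigidity of the Jacobian structure. This is precisely where the hypothesis $P(\phi) > \sup(\phi)$ does its essential work.
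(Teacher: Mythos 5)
Your proposal follows the Denker--Urba\'nski/Przytycki scaffolding, which is also the scaffolding the present paper generalizes: construct an eigenmeasure of the dual transfer operator, a strictly positive eigenfunction by distortion control along inverse branches, form the product $\mu = h\nu$, and obtain uniqueness by pairing Ma\~n\'e's positive-entropy partition with the Rokhlin/Jensen rigidity of the fiberwise Jacobian. (The paper does not re-prove the deterministic theorem directly; it cites \cite{DU} and \cite{Prz} and then re-engineers the same argument for skew products, with the analogues of your four steps appearing as Remark \ref{remarkmeasures}/Corollary \ref{corollaryeigenvalues}, the construction of $\mu_0 = g_0\nu_0$ in Section \ref{sectionconsequences}, Theorem \ref{theoremmanepartition}, and Theorem \ref{theoremequilibrium} respectively.)

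Two wrinkles are worth flagging. First, your Jacobian identity is off by the eigenfunction's cohomological factor: the finite (fiberwise) variational principle forces $\mathrm{Jac}_{\mu'}(T) = e^{P(\phi)-\phi}\,(h\circ T)/h$, not $e^{P(\phi)-\phi}$, so it is $\mu'/h$ rather than $\mu'$ that is $(\phi-P(\phi))$-conformal; the argument survives after this correction but the statement as written would be false. Second, the clause ``a standard ergodicity argument for conformal measures then yields $\mu' \ll \nu$'' is hiding the essential input: uniqueness of the $e^{P-\phi}$-conformal measure for a non-expanding rational map is not automatic --- one needs topological exactness of $T$ on $J(T)$, or equivalently the transfer-operator convergence of the form (\ref{equationmu}). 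This is precisely what the inverse branch formalism of Section \ref{sectionformalism} produces (and where \cite{DU} contained the domain-geometry error repaired by Lemma \ref{lemmaerrorfix}); so the ``Koebe-type distortion estimates'' you gesture at are not a routine application of the distortion theorem but the technical core of the entire proof. Your sketch is right in spirit, but you should be aware that both the existence of the eigenfunction $h$ and the uniqueness of $\nu$ rest on that same distortion machinery, which is the part that actually consumes the hypothesis $P(\phi)>\sup(\phi)$.
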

In a general dynamical system, (\ref{DU}) might be an unreasonable condition since the pressure is defined as a limit of limits and can rarely be calculated explicitly. However, because of (\ref{Gromov}), (\ref{DU}) follows from the easy to check condition
\[
\sup(\phi) - \inf(\phi) < \ln(\deg(T)).
\]
Theorem \ref{theoremdenkerurbanski} was proven independently by Przytycki \cite{Prz}.

Finally, we will discuss the following result due to Jonsson \cite{Jo}. Fix $d\geq 2$, and let $\RR_d$ be the set of rational functions of degree $d$, endowed with the compact-open topology.
\begin{theorem}[Jonsson, '00] \label{theoremjonsson}
Suppose that $(T,\Omega,\basemeasure,\theta)$ is a holomorphic random dynamical system, such that $\Omega$ is a compact metric space and such that the maps $\theta:\Omega\rightarrow\Omega$ and $T:\Omega\rightarrow\RR_d$ are continuous. Suppose that $h_\basemeasure(\theta) < \infty$. Then there exists a unique measure of maximal relative entropy of $(\X,\T)$ over $(\Omega,\basemeasure,\theta)$. Furthermore
\begin{equation}
\label{Jonsson}
\sup_{\sigma\in \M(\X,\T,\basemeasure)}h_\sigma(\T\on\theta) = \ln(d).
\end{equation}
\end{theorem}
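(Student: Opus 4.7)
The plan is to adapt the strategies of Gromov--Lyubich and Ma\~n\'e from the deterministic setting to the fiberwise random setting, invoking Bogensch\"utz's variational principle to transfer between topological and measure-theoretic quantities. Throughout, I use the fact that $T_\omega\in\RR_d$ for $\basemeasure$-a.e.\ $\omega$, so the pseudo-iterate $T_\omega^n$ is a rational map of degree exactly $d^n$.

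First I would establish the upper bound $h_\sigma(\T\on\theta)\leq\ln(d)$ for every $\sigma\in\M(\X,\T,\basemeasure)$. Since for $\basemeasure$-a.e.\ $\omega$ the map $T_\omega^n$ has at most $d^n$ preimages of any point, a fiberwise adaptation of Gromov's bound on $(n,\varepsilon)$-separated sets should yield a fiber topological entropy at most $\ln(d)$. The random variational principle of \cite{Bo}, valid under $h_\basemeasure(\theta)<\infty$, then gives the desired inequality on the measure-theoretic side and justifies the equality in \eqref{Jonsson} once a measure attaining $\ln(d)$ is produced.

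To produce such a measure I would follow Lyubich fiberwise. Pick a reference point $a\in\C$ lying outside a suitable ``random exceptional set'' and form, for each $\omega$, the balanced counting measures
\[
\mu_\omega^n := \frac{1}{d^n}\sum_{x\in (T_\omega^n)^{-1}(a)} \delta_x,
\]
points counted with multiplicity. I would apply Lyubich's potential-theoretic argument on each fiber to obtain weak-$*$ convergence $\mu_\omega^n\to\mu_\omega$ for $\basemeasure$-a.e.\ $\omega$, then check three things: that $\omega\mapsto\mu_\omega$ is Borel measurable, that it is equivariant in the sense $(T_\omega)_*\mu_\omega=\mu_{\theta\omega}$, and that the skew-product measure $\sigma:=\int\mu_\omega\,d\basemeasure(\omega)$ has relative entropy $\ln(d)$. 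The last point would use a generating-partition computation: any partition whose fiber cells separate the $d^n$ preimages of $a$ contributes entropy exactly $\ln(d)$ per step, and uniform distribution among the preimages ensures this bound is saturated.

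For uniqueness I would follow Ma\~n\'e: given any $\sigma\in\M(\X,\T,\basemeasure)$ with $h_\sigma(\T\on\theta)>0$, I would establish a random Ruelle inequality (bounding fiber entropy by the integral of the sum of positive fiber Lyapunov exponents) and use it to construct a finite random partition that is fiberwise generating modulo $\sigma$ with finite $\basemeasure$-averaged conditional entropy. Comparing this partition against the preimage tree of $a$ and exploiting the extremality $h_\sigma(\T\on\theta)=\ln(d)$ would force the fiber measures $\sigma_\omega$ to be uniform on the $d^n$-preimages in the limit, i.e.\ $\sigma_\omega=\mu_\omega$ for $\basemeasure$-a.e.\ $\omega$.

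The main obstacle I expect is the measurable handling of the fiberwise exceptional sets together with the verification that $\omega\mapsto\mu_\omega$ is a Borel random probability measure. Unlike the deterministic case, where the exceptional set is at most two points and is canonically determined, here the ``bad'' set may depend on $\omega$ nontrivially, and one must show the existence of a Borel set $E\subset\C$ of zero Lyubich mass such that for every $a\notin E$ and $\basemeasure$-a.e.\ $\omega$ the construction goes through and varies measurably in $\omega$. A secondary but nontrivial difficulty is formulating and proving a version of Ruelle's inequality for holomorphic random cocycles sharp enough to drive the Ma\~n\'e-style uniqueness argument under the single hypothesis $h_\basemeasure(\theta)<\infty$.
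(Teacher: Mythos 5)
Your proposal sketches a from-scratch re-proof of Jonsson's theorem (fiberwise Lyubich construction plus a random Ruelle/Ma\~n\'e uniqueness argument), whereas the paper's proof is a one-line reduction: it simply cites Jonsson's Theorem~B(ii,iii) and translates his formulation into the present relative-entropy language via the Abramov--Rohlin identity $h_\mu(\T\on\theta) = h_\mu(\T) - h_\basemeasure(\theta)$, which is why the hypothesis $h_\basemeasure(\theta)<\infty$ appears at all. So what you gain is a genuine derivation; what the paper's route buys is brevity and a clean demonstration that the new pressure/entropy conventions introduced in Section~\ref{sectionergodic} are compatible with Jonsson's. Worth noting: the ingredients you list are in fact essentially the ones Jonsson used, and the paper later reproves a strictly stronger version of this statement (Corollary~\ref{corollary2}, proved via Theorem~\ref{theorem1}) by the thermodynamic route---Perron--Frobenius operator with $\phi=0$, Theorem~\ref{theoremmanepartition} supplying the generating partition, Theorem~\ref{theoremequilibrium} for uniqueness---deliberately avoiding the potential theory you invoke, precisely because potential theory does not extend to $\phi\neq 0$. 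One small imprecision in your plan: Bogensch\"utz's variational principle (Theorem~\ref{theoremvarprinciple}) does not itself require $h_\basemeasure(\theta)<\infty$; that hypothesis is only needed for the Abramov--Rohlin reformulation, i.e.\ for comparing Jonsson's $h_\sigma(\T)$ with the relative quantity $h_\sigma(\T\on\theta)$.
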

\begin{remark}
Precise definitions of the appropriate generalizations of the notions of pressure, entropy, and equilibria to the setting of holomorphic random dynamical systems are given in Section \ref{sectionergodic}.
\end{remark}
\begin{remark}
The left hand side of (\ref{Jonsson}) is equal to
\[
h_{\text{top},\basemeasure}(\T\on\theta)
\]
by Bogensch\"utz's random variational principle (Theorem \ref{theoremvarprinciple}). Thus (\ref{Jonsson}) generalizes the deterministic equation \textup{(\ref{Gromov})}.
\end{remark}
\begin{proof}[Proof of Theorem \ref{theoremjonsson}]
This theorem is [\cite{Jo} Theorem B(ii,iii)], reformulated using the equation
\[
h_\mu(\T\on\theta) = h_\mu(\T) - h_\basemeasure(\theta),
\]
which is valid since $h_\basemeasure(\theta) < \infty$. 
\end{proof}
Note that the proof of Theorem \ref{theoremjonsson} relies heavily on the use of potential theory. It turns out that this technique is essentially useless when considering a nonzero potential function. Thus new techniques are needed to consider the case $\phi\not\equiv 0$. These techniques come from the Denker-Urbanski paper \cite{DU}; however, some care is needed to make these techniques generalize to the random setting.

The goal of this paper is to prove several generalizations of Theorem \ref{theoremdenkerurbanski}. One of these theorems (Theorem \ref{theorem1}) will also be a generalization of Theorem \ref{theoremjonsson}.

For the remainder of this section, fix a holomorphic random dynamical system $(T,\Omega,\basemeasure,\theta)$ on $\C$ and a random potential function $\phi:\Omega\rightarrow H_\alpha(\C)$ (here $\alpha > 0$ is fixed). Assume that the set
\[
\{\deg(T_\omega):\omega\in\Omega\}
\]
is bounded and does not contain $0$ or $1$. Also assume that the integrability condition
\[
\int \ln\sup_{x\in\C}((T_\omega)_*(x))\d\basemeasure(\omega) < \infty
\]
is satisfied. (Here and elsewhere $(T_\omega)_*(x)$ is the derivative of $T_\omega$ at $x$ with respect to the spherical metric.) In particular, this assumption is satisfied if $T(\Omega)$ is relatively compact.

For each $\omega\in\Omega$ and $n\in\N$, we define the \emph{Perron-Frobenius operator $L_\omega^n:\CC(\C)\rightarrow\CC(\C)$} via the equation
\begin{equation*}
L_\omega^n[f](p) := \sum_{x\in (T_\omega^n)^{-1}(p)}\exp\left(\sum_{j = 0}^{n - 1}\phi_{\theta^j(\omega)}(T_\omega^j(x))\right)f(x).
\end{equation*}
(The sum is counted with multiplicity.)

Our first result is a generalization of Theorem \ref{theoremjonsson}. The strongest hypothesis in this theorem is the fact that $\one$ is an eigenfunction of the Perron-Frobenius operator.

\begin{theorem}
\label{theorem1}
Fix $\alpha > 0$. Suppose that the integrability condition
\begin{align*}
\int\|\phi_\omega\|_\alpha\d\basemeasure(\omega) &< \infty
\end{align*}
holds, and suppose that for each $\omega\in\Omega$, there exists $\lambda_\omega > 0$ so that $L_\omega[\one] = \lambda_\omega \one$.
Then there exists a unique equilibrium state of $(\X,\T,\phi)$ over $(\Omega,\basemeasure,\theta)$. Furthermore
\[
P_{\phi,\basemeasure}(\T\on\theta) = \int \ln(\lambda_\omega)\d\basemeasure(\omega).
\]
\end{theorem}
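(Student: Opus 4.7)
The plan is to construct the equilibrium state as the skew-product measure whose fiber measures are the dual eigenmeasures of the Perron--Frobenius operator, and then to identify it as the unique variational maximizer. The hypothesis $L_\omega[\one]=\lambda_\omega\one$ is precisely the condition that normalizes $\phi$ so that no density function needs to be constructed on top of the conformal measure.

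First I would build, for $\basemeasure$-a.e.\ $\omega\in\Omega$, a probability measure $\nu_\omega$ on $\C$ satisfying the conformal (dual eigenmeasure) equation
\[
\int L_\omega[f]\,d\nu_{\theta\omega}\;=\;\lambda_\omega\int f\,d\nu_\omega\qquad\forall f\in\CC(\C).
\]
Fix any reference probability $\rho_0$ on $\C$ and set
$\nu_\omega^{(n)}:=(\lambda_{\theta^{-n}\omega}^{(n)})^{-1}(L_{\theta^{-n}\omega}^n)^{*}\rho_0$, where $\lambda_\omega^{(n)}:=\prod_{j=0}^{n-1}\lambda_{\theta^j\omega}$. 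Iterating the eigenfunction identity gives $L_\omega^n[\one]=\lambda_\omega^{(n)}\one$, so each $\nu_\omega^{(n)}$ is a probability measure. A weak$^*$ accumulation point (taken, if necessary, along Ces\`aro averages to secure the dual relation) produces $\nu_\omega$; measurable dependence on $\omega$ is obtained by the measurable-selection machinery standard in the Bogensch\"utz/Crauel framework.

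Second I would set $\mu\in\M(\X,\T,\basemeasure)$ to be the measure with disintegration $d\mu(\omega,x):=d\nu_\omega(x)\,d\basemeasure(\omega)$ and verify $\T$-invariance directly: for $f\in\CC(\C)$,
\[
\int f\circ T_\omega\,d\nu_\omega\;=\;\lambda_\omega^{-1}\!\int L_\omega[f\circ T_\omega]\,d\nu_{\theta\omega}\;=\;\lambda_\omega^{-1}\!\int f\cdot L_\omega[\one]\,d\nu_{\theta\omega}\;=\;\int f\,d\nu_{\theta\omega},
\]
so $(T_\omega)_*\nu_\omega=\nu_{\theta\omega}$ fiberwise. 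On any measurable piece on which $T_\omega$ is injective, the conformal equation forces the fiber Jacobian of $T_\omega$ with respect to $\nu_\omega$ to be $J_\omega(x)=\lambda_\omega e^{-\phi_\omega(x)}$; substituting into the random Rokhlin formula yields
\[
h_\mu(\T\on\theta)\;=\;\int \ln J_\omega\,d\mu\;=\;\int\ln\lambda_\omega\,d\basemeasure(\omega)-\int\phi\,d\mu,
\]
so $h_\mu(\T\on\theta)+\int\phi\,d\mu=\int\ln\lambda_\omega\,d\basemeasure(\omega)$.

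Third, Bogensch\"utz's random variational principle already supplies the inequality $P_{\phi,\basemeasure}(\T\on\theta)\geq\int\ln\lambda_\omega\,d\basemeasure(\omega)$; for the matching upper bound and uniqueness I would run the random analogue of the Denker--Urba\'nski upper-bound argument. For an arbitrary $\sigma\in\M(\X,\T,\basemeasure)$, decompose $\sigma_\omega$ into its absolutely continuous and singular parts against $\nu_\omega$ and apply a random Margulis--Ruelle/Jensen inequality to produce
\[
h_\sigma(\T\on\theta)+\int\phi\,d\sigma\;\leq\;\int\ln\lambda_\omega\,d\basemeasure(\omega),
\]
with equality forcing $\sigma_\omega\ll\nu_\omega$ $\basemeasure$-a.s.\ and the density $\rho_\omega:=d\sigma_\omega/d\nu_\omega$ to satisfy the fiber-invariance relation $\rho_{\theta\omega}\circ T_\omega=\rho_\omega$ $\nu_\omega$-a.e. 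Plugging this into $L_\omega[\rho_\omega]=\lambda_\omega\rho_{\theta\omega}$ (which follows from $L_\omega[\one]=\lambda_\omega\one$ and the chain rule for $L_\omega$) and using ergodicity of $\theta$ forces $\rho_\omega\equiv 1$, whence $\sigma=\mu$.

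The main obstacle will be the equality-case analysis in the final variational inequality: extracting from a fiberwise Jensen equality the rigid statement that the density $\rho_\omega$ is $\T$-invariant in the correct sense, and then propagating the resulting functional equation through the random transfer operator to force constancy. A secondary technical hurdle is the measurable, integrable construction of $\nu_\omega$ in the limit step, since the choice of a weak$^*$ accumulation point is not canonical; this is the point where the integrability hypothesis $\int\|\phi_\omega\|_\alpha\,d\basemeasure<\infty$ and the bounded-degree assumption will be used to secure H\"older control on the densities involved.
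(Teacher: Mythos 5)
Your high-level scheme---realise the equilibrium state as a conformal (dual eigenmeasure) skew product, compute its relative entropy via a log-Jacobian identity, and then close the variational inequality with an equality-case/rigidity argument---is the same shape as the paper's. But two load-bearing ingredients are missing, and without them the middle and final steps do not go through.

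First, the entropy--Jacobian identity $h_\mu(\T\on\theta)=\int\ln J_\omega\,d\mu$ (and more importantly the corresponding \emph{upper bound} for arbitrary $\sigma\in\M(\X,\T,\basemeasure)$) is not free. Relative entropy is defined as a supremum over partitions with finite relative entropy; to replace it by $H_\sigma(\points_\X\on\T^{-1}\points_\X)$ and hence by a fiberwise finite variational expression, one needs a generating partition of finite relative entropy over $\pi^{-1}\points_\Omega$, as in Proposition \ref{propositionkolmogorovsinai}. The paper's proof of Theorem \ref{theorem1} explicitly routes through Theorem \ref{theoremmanepartition}, which is the random Ma\~n\'e partition argument: it requires the random Margulis--Ruelle inequality, the quantitative lower bound on derivatives near ramification points (Lemma \ref{lemmaC7C8}), and the partition construction of Lemma \ref{lemmapartitions}. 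Your proposal treats the Rohlin formula as automatic and never addresses whether such a partition exists. In the random setting this is a genuine construction, not an invocation of folklore, and if $h_\sigma(\T\on\theta)>0$ your argument as written has no basis for the key identity.

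Second, your closing move---from $\rho_{\theta\omega}\circ T_\omega=\rho_\omega$ and $L_\omega[\rho_\omega]=\lambda_\omega\rho_{\theta\omega}$ to $\rho\equiv 1$ ``by ergodicity of $\theta$''---is too weak. Ergodicity of the base acts on $\Omega$, not along fibers, and tells you nothing about whether a $\T$-invariant function on $\X$ is fiberwise constant; an eigenfunction of the normalised transfer operator at eigenvalue one need not be $\one$ without a spectral gap or exactness statement. The paper resolves the equality case with an actual convergence theorem for the normalised operator (Theorem \ref{maintheorem}), packaged as the weak-$*$ limit statement of Remark \ref{remarkweakstar} together with the atomlessness of $\mu$ (Proposition \ref{propositionatomless}); these force the Rohlin conditionals of any equality-attaining $\sigma$ to coincide with those of $\mu$ and hence $\sigma=\mu$. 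Your Krylov--Bogoliubov construction of the $\nu_\omega$'s produces existence but not the cone contraction, and existence alone cannot carry the uniqueness argument. On the constructive side the paper also does not take an uncontrolled weak-$*$ accumulation point: it proves a genuine (measurable, $\omega$-uniform) convergence of $L_0^n[f]/L_0^n[g]$, which is what makes the definition of $\nu_\omega$ canonical and the later rigidity argument possible. In short, your entropy computation for $\mu$ and your fiber-invariance calculation are fine, but you would still have to prove analogues of Theorems \ref{theoremmanepartition} and \ref{maintheorem} before the upper bound and the equality case can be justified.
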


\begin{corollary}
\label{corollary2}
There exists a unique measure of maximal relative entropy of $(\X,\T)$ over $(\Omega,\basemeasure,\theta)$. Futhermore
\[
h_{\text{top},\basemeasure}(\T\on\theta) := P_{0,\basemeasure}(\T\on\theta) = \int \ln(\deg(T_\omega))\d\basemeasure(\omega),
\]
generalizing \textup{(\ref{Jonsson})} and \textup{(\ref{Gromov})}.
\end{corollary}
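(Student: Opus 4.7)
The plan is to deduce this corollary as an immediate application of Theorem \ref{theorem1} to the zero potential $\phi \equiv 0$. I would first verify the two hypotheses of Theorem \ref{theorem1} in this degenerate case. The integrability condition $\int \|\phi_\omega\|_\alpha \d\basemeasure(\omega) < \infty$ is trivial since all the H\"older seminorms vanish. For the eigenfunction condition, the exponential weights appearing in the definition of $L_\omega$ are all equal to $1$, so a direct computation yields
\[
L_\omega[\one](p) = \sum_{x\in T_\omega^{-1}(p)} 1 = \deg(T_\omega),
\]
where preimages are counted with multiplicity. Hence $\one$ is an eigenfunction of $L_\omega$ with eigenvalue $\lambda_\omega := \deg(T_\omega)$.

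Applying Theorem \ref{theorem1} then gives a unique equilibrium state of $(\X,\T,0)$ over $(\Omega,\basemeasure,\theta)$, together with the identity
\[
P_{0,\basemeasure}(\T\on\theta) = \int \ln(\lambda_\omega)\d\basemeasure(\omega) = \int \ln(\deg(T_\omega))\d\basemeasure(\omega).
\]
The integral on the right is finite because the set $\{\deg(T_\omega):\omega\in\Omega\}$ is assumed to be bounded and bounded away from $1$, so that $\ln(\deg(T_\omega))$ takes values in $[\ln 2,\infty)$ and is bounded above. With $\phi \equiv 0$, the definition of an equilibrium state reduces to that of a measure maximizing the relative entropy $h_\sigma(\T\on\theta)$, which is precisely a measure of maximal relative entropy; and by definition $h_{\text{top},\basemeasure}(\T\on\theta) := P_{0,\basemeasure}(\T\on\theta)$, so both assertions of the corollary follow at once.

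Because the corollary is a direct specialization of Theorem \ref{theorem1}, I do not expect any substantive obstacle. The only nontrivial input is the observation that $L_\omega[\one] = \deg(T_\omega)\,\one$, which is the random counterpart to the deterministic fact underlying Gromov's formula (\ref{Gromov}); everything else amounts to unwinding definitions and noting that the integrability conditions are automatic.
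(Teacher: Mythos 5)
Your proposal is correct and follows essentially the same route as the paper: the paper's proof of Corollary \ref{corollary2} is the single line ``If $\phi = 0$, then $L_\omega[\one] = \deg(T_\omega)\one$,'' and you have simply unpacked this observation and the trivial verification of the integrability hypothesis before invoking Theorem \ref{theorem1}.
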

\begin{proof}
If $\phi = 0$, then $L_\omega[\one] = \deg(T_\omega)\one$.
\end{proof}

\begin{remark}
Theorem \ref{theoremjonsson} is a corollary of Corollary \ref{corollary2}.
\end{remark}
\begin{proof}
The conclusion of Corollary \ref{corollary2} is the same as the conclusion of Theorem \ref{theoremjonsson}, and the hypotheses are much weaker (in particular, the hypothesis of the compactness of $\Omega$, and therefore of $T(\Omega)$, is replaced by a much milder integrability hypothesis).
\end{proof}

The next theorem concerns random holomorphic dynamical systems which come from perturbing a deterministic dynamical system.

\begin{theorem}
\label{theorem3}
Fix $\alpha > 0$ and $0 \leq \tau < 1$. For every rational function $T_0$ of degree at least two, there exists a neighborhood $\BBB$ of $T_0$ in the compact-open topology such that the following holds:
If $(T,\Omega,\basemeasure,\theta)$ is a holomorphic random dynamical system on $\C$ with $T(\Omega)\implies\BBB$, if $\phi:\Omega\rightarrow\CC(\C,\R)$ is a random potential function, and if:
\begin{align} \label{supphiomega}
\sup_{\omega\in\Omega}\|\phi_\omega\|_\alpha &< \infty\\ \label{suptauinf}
\sup(e^{\phi_\omega}) &\leq \tau \inf(L_\omega[\one]) \all \omega\in\Omega,
\end{align}
then there exists a unique equilibrium state of $(\X,\T,\phi)$ over $(\Omega,\basemeasure,\theta)$.
\end{theorem}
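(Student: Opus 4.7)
The plan is to reduce Theorem \ref{theorem3} to Theorem \ref{theorem1} by a coboundary change of potential. The goal is to produce a measurable family $(h_\omega,\lambda_\omega)_{\omega\in\Omega}$ of positive H\"older continuous functions $h_\omega\colon\C\to(0,\infty)$ and positive scalars $\lambda_\omega$ satisfying the random eigenvalue equation
\[
L_\omega[h_\omega] = \lambda_\omega\,h_{\theta\omega},\qquad \sup_{\omega\in\Omega}\|\ln h_\omega\|_\alpha<\infty.
\]
Once this is achieved, set $\tilde\phi_\omega:=\phi_\omega+\ln h_\omega-\ln h_{\theta\omega}\circ T_\omega$. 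A direct computation shows that the Perron-Frobenius operator $\tilde L_\omega$ built from $\tilde\phi_\omega$ satisfies $\tilde L_\omega[\one]=\lambda_\omega\one$, while hypothesis (\ref{supphiomega}) together with the uniform bound on $\|\ln h_\omega\|_\alpha$ gives $\sup_\omega\|\tilde\phi_\omega\|_\alpha<\infty$, which is strictly stronger than the integrability hypothesis of Theorem \ref{theorem1}. Theorem \ref{theorem1} applied to $\tilde\phi$ therefore produces a unique equilibrium state. Since $\tilde\phi-\phi$ is a random coboundary with bounded generating function $\ln h_\omega$, it integrates to zero against every $\T$-invariant measure projecting to $\basemeasure$, so pressures and equilibrium states for $\phi$ and $\tilde\phi$ agree, and Theorem \ref{theorem3} follows.

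To construct $(h_\omega,\lambda_\omega)$ uniformly in $\omega$, I first shrink $\BBB$ so that every $T\in\BBB$ has the same degree as $T_0$ and so that inverse branches of $T$ on spherical balls of some uniform radius (avoiding a fixed neighborhood of the critical values of $T_0$) admit uniform bounded-distortion estimates; both properties are automatic from continuity in the compact-open topology once $\BBB$ is small enough. Adapting the Denker-Urba\'nski strategy fiberwise, I then prove that, after appropriate normalization, the operator $L_\omega$ contracts a suitable H\"older cone in the Birkhoff--Hilbert projective metric with a contraction ratio bounded strictly below $1$ \emph{uniformly in} $\omega$. The hypothesis (\ref{suptauinf}) plays the role of the deterministic pressure gap $P(\phi)>\sup(\phi)$, but with built-in uniformity across $\omega$; combined with the uniform distortion bounds it delivers the required uniform contraction. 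Either by iteration from $\one$ and normalization, or by pairing with a random conformal measure $\nu_\omega$ obtained via a Schauder-Tychonoff argument applied to the dual cocycle $L_\omega^*$, one then extracts the eigenfunctions $h_\omega$ with uniform H\"older bounds; measurability of $\omega\mapsto(h_\omega,\lambda_\omega)$ follows from their uniqueness up to positive scaling together with the measurability of $\omega\mapsto(T_\omega,\phi_\omega)$.

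The main obstacle I anticipate is precisely this uniform projective-contraction estimate. In the deterministic Denker-Urba\'nski proof one exploits a spectral gap for a single operator $L$ on H\"older functions, whereas in the random setting one must control the cocycle $L_\omega^n$ with rates independent of $\omega$; anything weaker (say a pointwise- or almost-sure-in-$\omega$ contraction with an $\omega$-dependent rate) would fail to guarantee the uniform H\"older bound on $\ln h_\omega$ needed in the reduction above. The two uniform hypotheses in Theorem \ref{theorem3}, namely (\ref{supphiomega}) and (\ref{suptauinf}), together with the uniform geometry obtained by shrinking $\BBB$, are designed precisely to make this uniform contraction available; once it is in hand, the remaining steps (existence, uniqueness up to scale, H\"older regularity, and joint measurability) are standard, and Theorem \ref{theorem1} closes the argument.
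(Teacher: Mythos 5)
Your strategy is genuinely different from the paper's: the paper does \emph{not} reduce Theorem~\ref{theorem3} to Theorem~\ref{theorem1}. Both are instead fed into the same downstream machinery (Theorem~\ref{maintheorem}, Theorem~\ref{theoremmanepartition}, Theorem~\ref{theoremequilibrium}); they differ only in how the hypotheses of Theorem~\ref{maintheorem} are verified --- Theorem~\ref{theorem1} via Remark~\ref{remarkprobabilistic}, and Theorem~\ref{theorem3} via Remark~\ref{remarkdeterministic}, Theorem~\ref{theoremcondition}, and Corollary~\ref{corollarysufficient}, which yield the equicontinuity property for a compact backward-invariant set $X = \C\butnot B$ rather than for all of $\C$.

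There are two concrete problems with the reduction you propose. First, a circularity problem: the measurable eigenfunction family $(g_\omega,\lambda_\omega)$ satisfying $L_\omega[g_\omega]=\lambda_\omega g_{\theta\omega}$ with uniform control on $\ln g_\omega$ is not a prerequisite one can obtain cheaply and then hand to Theorem~\ref{theorem1}; in this paper it is a \emph{consequence} of the entire convergence apparatus (Corollary~\ref{corollaryeigenvalues} and formula~(\ref{eigenvalues}), which rest on Theorem~\ref{maintheorem}). Your Schauder--Tychonoff or iterate-from-$\one$ construction, together with the claimed uniform Birkhoff-cone contraction, would have to reprove essentially all of Sections~\ref{sectionformalism} and~\ref{sectionconvergence} (the inverse-branch formalism is precisely the mechanism for making ``uniform bounded distortion'' rigorous in the presence of branch points, and the counterexample targeted by Lemma~\ref{lemmaerrorfix} shows this is nontrivial even before cocycle issues are raised). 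Second, and more fatally, a domain problem: the eigenfunctions produced by the paper's machinery belong to $\CC(X)$, not to $\CC(\C)$, and there is no reason for them to be bounded, or even to extend meaningfully, on the attracting region $B$. If $\ln h_\omega$ is controlled only on $X$, then $\tilde\phi_\omega := \phi_\omega + \ln h_\omega - \ln h_{\theta\omega}\circ T_\omega$ and the identity $\tilde L_\omega[\one] = \lambda_\omega\one$ make sense only on $X$. But the hypothesis of Theorem~\ref{theorem1} is that $L_\omega[\one]$ is a constant multiple of $\one$ on all of $\C$ (equivalently, Remark~\ref{remarkprobabilistic} requires the equicontinuity property with $X=\C$), so Theorem~\ref{theorem1} does not apply to $\tilde\phi$ as stated. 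Extending $h_\omega$ arbitrarily to $B$ will not preserve the eigenvalue identity there, and the natural dynamical extensions fail because the backward orbits in $B$ converge onto $F$ and not onto the Julia set. Independently, the claim that the pressures of $\phi$ and $\tilde\phi$ agree requires integrability of the generator against every $\sigma\in\M(\X,\T,\basemeasure)$, which you derive from the same unavailable global bound on $\ln h_\omega$.
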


\begin{remark}
(\ref{suptauinf}) follows from the stronger hypothesis
\[
\sup(\phi_\omega) - \inf(\phi_\omega) \leq \deg(T_\omega) - \varepsilon,
\]
where $\varepsilon := -\ln(\tau) > 0$. In particular, this condition is satisfied when $\phi$ is close to $0$.
\end{remark}

\begin{theorem}
\label{theorem4}
Fix $\alpha > 0$, $n\in\N$, and $0 \leq \tau < 1$. For almost every set $A\implies\RR$ of cardinality $n$, in both the topological and the measure-theoretic sense, there exists a neighborhood $\BBB$ of $A$ in the compact-open topology such that the following holds:
If $(T,\Omega,\basemeasure,\theta)$ is a holomorphic random dynamical system on $\C$ with $T(\Omega)\implies\BBB$, if $\phi:\Omega\rightarrow\CC(\C,\R)$ is a random potential function, and if \textup{(\ref{supphiomega})} and \textup{(\ref{suptauinf})} are satisfied, then there exists a unique equilibrium state of $(\X,\T,\phi)$ over $(\Omega,\basemeasure,\theta)$.
\end{theorem}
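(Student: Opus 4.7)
The natural plan is to reduce Theorem \ref{theorem4} to $n$ applications of Theorem \ref{theorem3}, glued together via the generic structure of the set $A = \{T_1,\ldots,T_n\}$. Before doing so I need to characterize the ``good'' sets $A$, i.e.\ those for which the argument goes through. Certainly each $T_i$ must have degree at least two in order for Theorem \ref{theorem3} to apply; the subset of $\RR^n$ violating this condition is both meager (the degree-at-least-two maps form a dense open subset of $\RR$) and of measure zero (low-degree maps form lower-dimensional strata in the parameter space of each component $\RR_d$). Any additional generic constraints that emerge from the cone construction below, most likely some form of transversality between the Julia sets or post-critical sets of the $T_i$'s, would be verified analogously to be both comeager and of full measure in $\RR^n / S_n$.

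Given such a good $A$, I would apply Theorem \ref{theorem3} with the given $\tau$ to each $T_i$ separately to obtain a neighborhood $\BBB_i^{(0)}$ of $T_i$. Shrinking as necessary, arrange that the $\BBB_i^{(0)}$ are pairwise disjoint and set $\BBB := \bigsqcup_{i=1}^n \BBB_i^{(0)}$. For a holomorphic random dynamical system $(T,\Omega,\basemeasure,\theta)$ with $T(\Omega)\implies\BBB$, the sets $\Omega_i := T^{-1}(\BBB_i^{(0)})$ form a measurable partition of $\Omega$, and each $\omega\in\Omega_i$ satisfies that $T_\omega$ is close to $T_i$. The pseudo-iterate $T_\omega^n$ then corresponds to a random itinerary through $\{1,\ldots,n\}$ induced by $\theta$, with each individual factor controlled by the analysis of Theorem \ref{theorem3} local to the appropriate $T_i$.

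For the existence and uniqueness of the equilibrium state, I would extract from the proof of Theorem \ref{theorem3} the explicit invariant cone $K_i$ in a space of H\"older continuous functions on $\C$ that is mapped into its own interior under uniform projective contraction by every $L_\omega$ with $T_\omega\in\BBB_i^{(0)}$. The generic condition on $A$ would then be chosen precisely to guarantee that a single cone $K\implies\bigcap_i K_i$ can be constructed which is still contracted uniformly by every $L_\omega$, regardless of which $\Omega_i$ the parameter $\omega$ lies in. Given such a common cone, the integrability supplied by \textup{(\ref{supphiomega})} together with the strict inequality \textup{(\ref{suptauinf})} enables the Birkhoff--Hopf contracting-cone argument of Theorem \ref{theorem3} to run for the combined system, producing a unique random conformal eigenmeasure and hence, via the standard thermodynamic machinery of the paper, a unique equilibrium state.

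The principal obstacle is precisely the construction of this common cone across the $n$ branches. Single-branch cones are tied to specific geometric features of the underlying rational map (post-critical behavior, basins of attraction, Julia-set geometry), and for distinct maps these features generically differ. Isolating the minimal transversality hypothesis on $A$ that guarantees simultaneous contractivity of all $n$ Perron--Frobenius families on a common cone, and then verifying that this hypothesis indeed defines a set which is both of full measure and comeager in $\RR^n/S_n$, is the bulk of the technical work. Once this is in place, the thermodynamic conclusions follow from the same machinery used to prove Theorem \ref{theorem3}.
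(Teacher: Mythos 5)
Your proposal follows a genuinely different route from the paper, and unfortunately it contains a gap that you yourself identify but do not close. The paper does not deduce Theorem \ref{theorem4} from $n$ applications of Theorem \ref{theorem3} glued together: both theorems are instead specializations of the \emph{same} pair of results, Theorem \ref{theoremcondition} and Corollary \ref{corollarysufficient}, which are stated for an arbitrary finite set $\AA\implies\RR\butnot\RR_1$. The hypotheses of Theorem \ref{theoremcondition} with $F=\emptyset$ reduce to a single condition on the whole semigroup: for every $\ell\in\N$ and every word $T = T_{i_{\ell-1}}\circ\cdots\circ T_{i_0}$ in $\AA^\ell$, $T$ has no fixed ramification points. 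Remark \ref{remarksufficient} shows this defines a comeager, full-measure subset of $(\RR\butnot\RR_1)^n$ by an algebraic-geometry argument (the complement is a countable union of proper Zariski-closed subsets, exhibited via resultants, and one checks nonemptiness of the good set by an explicit construction with a transcendental parameter). Once one has this genericity, the set $X = \C\butnot B$ from Lemma \ref{lemmaB} is built directly for $\AA$, the equicontinuity property follows from Theorem \ref{theoremcondition}, and the rest of the argument (Theorem \ref{maintheorem}, Theorem \ref{theoremmanepartition}, Theorem \ref{theoremequilibrium}) runs exactly as it does for Theorem \ref{theorem3}.

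The difficulty you flag is real and fatal to the structure you propose. Applying Theorem \ref{theorem3} to each $T_i$ produces neighborhoods $\BBB_i$ such that random systems confined to a \emph{single} $\BBB_i$ are controlled, because the random compositions are perturbations of iterates of $T_i$. But a system with $T(\Omega)\implies\bigsqcup_i\BBB_i$ sees compositions $S_\ell\circ\cdots\circ S_1$ that alternate between different $\BBB_i$, and these are not perturbations of any power of any single $T_i$; nothing extracted from Theorem \ref{theorem3} bounds them. The condition actually needed on $A$ is therefore inherently a condition on the \emph{compositions} $\AA^\ell$ for all $\ell$, not a condition on each $T_i$ plus a transversality intersection. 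Your guess that the extra generic constraint involves transversality of Julia sets or post-critical sets of the individual $T_i$ does not match the actual constraint, and without identifying the correct condition there is no way to carry out your common-cone step. Finally, a minor point: the paper's mechanism is not a Birkhoff--Hopf projective cone contraction but the oscillation-decay estimate of Theorem \ref{maintheorem}, established via the inverse-branch formalism of Section \ref{sectionformalism} and Lemma \ref{lemmaspiderperturbation}; while these ideas are cousins, invoking a cone argument that does not appear in the paper adds yet another unfilled step.
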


\begin{remark}
\label{remarkproofs}~
\begin{itemize}
\item Theorem \ref{theorem1} is proved directly from Remark \ref{remarkprobabilistic}, Proposition \ref{propositionnonsingular}, Theorem \ref{maintheorem}, Theorem \ref{theoremmanepartition}, and Theorem \ref{theoremequilibrium}.
\item Theorem \ref{theorem3} is proved directly from Remark \ref{remarkdeterministic}, Theorem \ref{theoremcondition}, Corollary \ref{corollarysufficient}, Theorem \ref{maintheorem}, Theorem \ref{theoremmanepartition}, and Theorem \ref{theoremequilibrium}.
\item Theorem \ref{theorem4} is proved directly from Remark \ref{remarksufficient}, Theorem \ref{theoremcondition}, Corollary \ref{corollarysufficient}, Theorem \ref{maintheorem}, Theorem \ref{theoremmanepartition}, and Theorem \ref{theoremequilibrium}.
\end{itemize}
\end{remark}
\end{section}
\begin{section}{Introduction and fundamental definitions}\label{sectionintroduction}

For every integer $d\geq 1$, let $\RR_d$ be the set of all complex rational maps of degree $d$, endowed with the compact-open topology (equivalently, the uniform topology). Let $\RR=\coprod_{d = 1}^\infty \RR_d$ be the set of all (non-constant) complex rational maps. Note that the map $\circ:\RR\times\RR\rightarrow\RR$, $\circ(f,g) = f\circ g$ is continuous.

There are three ways to iterate rational maps:
\begin{itemize}
\item deterministically, using the same rational map every time (autonomous case)
\item deterministically, using possibly different rational maps each time (non-autonomous case)
\item randomly.
\end{itemize}

The first of these has been well studied; in this paper we are interested in the latter two. We begin by setting up some notation which describes non-autonomous deterministic dynamics:

Fix $m,n\in\Z$ with $m \leq n$. If $(T_j)_{j=m}^{n-1}$ is a finite sequence of rational maps (possibly part of a larger sequence), we denote the composition of its elements by
\[
T_m^n := T_{n-1}\circ T_{n-2}\circ\cdots\circ T_m.
\]
The contravariant map on sets we denote $T_n^m := (T_m^n)^{-1}:2^{\C}\rightarrow 2^{\C}$. The covariant map on measures we denote $T_m^n = (\sigma\mapsto\sigma\circ T_n^m):\M(\C)\rightarrow\M(\C)$. We call the map $T_m^n$ a \emph{pseudo-iterate} of the sequence $(T_j)_j$. Here the set of values for $j$ is left deliberately unspecified. A basic property is that $T_m^n\circ T_j^m = T_j^n$ whenever $j\leq m\leq n$. This is also true when $j\geq m\geq n$, but the meaning is different. If $\AA\implies\RR$ and $n\in\N$, then $\AA^n:=\{T_0^n: (T_j)_{0\leq j < n}\text{ is a sequence in }\AA\}$.

These conventions can be thought of in the following way: For each $n\in\Z$, there exists a different universe $X_n := \C$; the fact that these Riemann surfaces are all conformally equivalent is incidental. For each $m,n\in\Z$ with $m\leq n$, $T_m^n$ denotes a holomorphic map from $X_m$ to $X_n$. It does not make sense to compose two maps $T_{m_1}^{n_1}$ and $T_{m_2}^{n_2}$ unless $m_1 = n_2$, because otherwise the domain and codomain are mismatched.

Our general philosophy will be to put a subscript on every object that lives in a particular universe $X_n$. For objects which move other objects between different universes, the subscript indicates the domain and the superscript indicates the codomain. For example, in the thermodynamic formalism $L_m^n$ will indicate the Perron-Frobenius operator acting as a map from $\CC(X_m)$ to $\CC(X_n)$; see Section \ref{sectionPF}.

We move on to random dynamics. We begin by introducing a notion related to the notion of a random dynamical system, which is also often called a random dynamical system. We distinguish it by calling it a \emph{relative dynamical system}:

\begin{definition}
A \emph{(measurable) relative dynamical system} consists of
\begin{itemize}
\item A probability space $(\Omega,\basemeasure)$
\item An ergodic invertible measure-preserving transformation $\theta:\Omega\rightarrow\Omega$ [This map will usually be notated without parentheses i.e. $\theta\omega := \theta(\omega)$]
\item A measurable space $\X$
\item A measurable transformation $\T:\X\rightarrow \X$
\item A measurable map $\pi:\X\rightarrow\Omega$ such that the diagram commutes, i.e. $\pi\circ\T=\theta\circ\pi$.
\end{itemize}
\end{definition}

The interpretation is that if $\X$ is decomposed into fibers $\X_\omega := \pi^{-1}(\omega)$, then $\T$ becomes a collection of maps between them: $\T_\omega:\X_\omega\rightarrow \X_{\theta\omega}$. Thus $(\X_\omega)_\omega$ are the spaces on which the dynamical maps $(\T_\omega)_\omega$ are acting. Let us make a further interpretation in the special case $\X = \Omega\times X$, where $X$ is a topological space and $\pi = \pi_1$ is projection onto the first coordinate. For each point $(\omega,p)\in \X$, the value $\omega$ tells you what sequence of maps $(\T_{\theta^j\omega})_{j\in\N}$ will be applied to the point $p\in X$. Now, let us suppose we know $p$, but not $\omega$. The point $\omega$ can then be chosen randomly according to the measure $\basemeasure$, and this yields a probability distribution on the set of potential forward orbits of $p$ (a probability measure on $X^\N$); i.e. it tells us where the point might be sent if we decide to iterate randomly. However, it is not always possible to recover the original relative dynamical system based on these probability distributions.

In our case we take $X = \C$. We have the following definition:

\begin{definition}
\label{definitionrandomaction}
A \emph{holomorphic random dynamical system on $\C$} consists of
\begin{itemize}
\item A probability space $(\Omega,\basemeasure)$
\item An ergodic invertible measure-preserving transformation $\theta:\Omega\rightarrow\Omega$
\item A Borel measurable map $T:\Omega\rightarrow\RR$ [This map will usually be denoted in subscript i.e. $T_\omega := T(\omega)$]
\end{itemize}
The triple $(\Omega,\basemeasure,\theta)$ is called the \emph{base system} and the map $T$ is called the \emph{action on $\C$}.

If $(T,\Omega,\basemeasure,\theta)$ is a holomorphic random dynamical system on $\C$, we construct a relative dynamical system in a natural way as a skew-product: Let $\X = \Omega \times \C$, and let $\T:\X\rightarrow\X$ be defined by $\T(\omega,p) := (\theta\omega,T_\omega(p))$. The sextuple $(\Omega,\basemeasure,\theta,\X,\T,\pi_1)$ is called the \emph{relative dynamical system associated with the random dynamical system $(T,\Omega,\basemeasure,\theta)$}.
\end{definition}

Note that $T_*[\basemeasure]$ is the distribution of an individual rational function.

In several sections of this paper, we will deal with a fixed holomorphic random dynamical system $(T,\Omega,\basemeasure,\theta)$. We will almost always avoid mentioning explicitly the dependence of objects on a fixed element $\omega\in\Omega$. In particular, for all $j\in\Z$ we will use $j$ as shorthand for $\theta^j\omega$, i.e. $T_j := T_{\theta^j\omega}$, $\phi_j := \phi_{\theta^j\omega}$, and so on. Thus for each $\omega\in\Omega$ we have a doubly infinite sequence of rational functons $(T_j)_{j\in\Z}$. Conversely, if some object (such as the Julia set) depends on the sequence $(T_{\theta^j\omega})_j$, this can be indicated by simply using a subscript of $\omega$, i.e. $\JJ_\omega := \JJ_0\on_\omega$ is the Julia set of the sequence $(T_{\theta^j\omega})_j$. In fact we will use this convention even when talking about deterministic autonomous sequences i.e. $\JJ_m\on_{(T_j)_j} := \JJ_0\on_{(T_{m + j})_j}$.

The relation between the pseudo-iterates of the sequence $(T_j)_j = (T_{\theta^j\omega})_j$ and the relative dynamical system $(\Omega,\basemeasure,\theta,\X,\T,\pi_1)$ is given by the formula
\[
T_m^n(p) = \pi_2(\T^{n-m}(\theta^m\omega,p))
\]
This motivates the following notation: $T_\omega^n := T_{\T^{n-1}\omega}\circ\ldots\circ T_\omega = T_0^n\on_\omega$. Note that we could not have written $\T^n\omega$ as the superscript, as the map $n\mapsto\T^n\omega$ is not necessarily injective. Even in the case that this map is injective, the map $(\omega,n)\mapsto T_\omega^n$ is measurable whereas the map $(\omega,\T^n\omega)\mapsto T_\omega^n$ is not.

We define an \emph{event} to be a proposition whose truth value depends on $\omega$; the \emph{probability} of a measurable event, denoted $\pr(\mathrm{event})$, is the $\basemeasure$-measure of the set of all $\omega$ which satisfy the event e.g. $\pr(\omega\in A) = \basemeasure(A)$. Similarly, a \emph{random variable} is a real number which depends on $\omega$, and the \emph{expected value} of a integrable random variable, denoted $\EE[\text{random variable}]$, is the integral against $\basemeasure$ of the function which sends $\omega$ to the value which the random variable takes on corresponding to that value of $\omega$. If necessary, for non-measurable events, the phrase ``the probability of the event is at least $x$" means that the inner measure of the event is at least $x$.

A fundamental property is \emph{translation-invariance}: if $A$ is a measurable event, and $B$ is the event obtained by translating each index occuring in the statement of $A$ by some fixed amount, then $\pr(B) = \pr(A)$. A similar statement holds for the expected values of random variables.

\begin{lemma}
\label{lemmapoincare}
If $A$ is an event of positive probability, then it is almost certain that infinitely many translates of $A$ will occur in both directions.
\end{lemma}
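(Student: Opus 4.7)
The plan is to translate the statement into the standard ergodic-theoretic language and then invoke Poincaré recurrence together with ergodicity of $\theta$. By definition, the event $A$ corresponds to a measurable set $A \implies \Omega$ with $\basemeasure(A) > 0$, and the translate of $A$ by $n$ occurs at $\omega$ precisely when $\theta^n \omega \in A$. So what needs to be proven is that for $\basemeasure$-a.e.\ $\omega$, the sets $\{n > 0 : \theta^n\omega \in A\}$ and $\{n < 0 : \theta^n\omega \in A\}$ are both infinite.

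First I would handle the forward direction. Let
\[
C_+ := \bigcap_{N \geq 0}\bigcup_{n \geq N} \theta^{-n}(A) = \{\omega : \theta^n\omega \in A \text{ for infinitely many } n \geq 0\}.
\]
This set is $\theta$-invariant, since whether a sequence of visit times is infinite is unaffected by a shift of its index. By the Poincaré recurrence theorem applied to the measure-preserving map $\theta$ and the set $A$, almost every point of $A$ returns to $A$ infinitely often, i.e.\ $\basemeasure(A \butnot C_+) = 0$. Consequently $\basemeasure(C_+) \geq \basemeasure(A) > 0$, and ergodicity of $\theta$ forces $\basemeasure(C_+) = 1$.

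For the backward direction I would simply apply the same argument to $\theta^{-1}$, which is also ergodic and measure-preserving (because the $\theta$-invariant and $\theta^{-1}$-invariant $\sigma$-algebras coincide). This yields $\basemeasure(C_-) = 1$ where $C_-$ is defined analogously using negative iterates, and the conclusion follows from $\basemeasure(C_+ \cap C_-) = 1$.

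The only non-routine aspect is the translation-invariance of $C_+$ needed to trigger the ergodicity dichotomy, but this is immediate from the definition. Everything else is a direct appeal to classical results about invertible ergodic systems, so I do not anticipate any real obstacle. An alternative route of equal brevity would replace the Poincaré step by Birkhoff's ergodic theorem applied to $\one_A$, which gives the forward-visit frequency $\basemeasure(A) > 0$ along a.e.\ orbit and hence infinitely many returns in both time directions at once.
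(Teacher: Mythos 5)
Your proof is correct and follows the same route as the paper's one-line argument: Poincar\'e recurrence gives almost-sure infinite recurrence of points of $A$, and ergodicity upgrades the resulting $\theta$-invariant full-measure-in-$A$ set to a full-measure set in $\Omega$, applied in both time directions. You have merely unpacked the details that the paper leaves implicit.
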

\begin{proof}
This is a corollary of the Poincar\'e recurrence theorem, together with the fact that $\basemeasure$ is ergodic.
\end{proof}

\begin{lemma}
\label{lemmaflip}
If $(A_n)_{n\in\N}$ is a sequence of measurable events with
\[
\pr(\text{$A_n$ is satisfied by $\omega$ for all $n\in\N$ sufficiently large}) = 1
\]
then
\[
\pr(\exists n\in\N\text{ such that $A_n$ is satisfied by $\theta^{-n}\omega$}) = 1.
\]
\end{lemma}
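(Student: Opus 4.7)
The plan is to reformulate both hypothesis and conclusion as statements about measurable subsets of $\Omega$, then derive the conclusion from the assumed a.s.\ asymptotic behavior by applying Lemma~\ref{lemmapoincare} contrapositively. Concretely, write $B_n := \{\omega : A_n \text{ is satisfied by } \omega\}$ and
\[
C := \bigcup_{N\in\N}\bigcap_{n\geq N} B_n, \qquad D := \bigcup_{n\in\N} \theta^n(B_n).
\]
The hypothesis says $\basemeasure(C) = 1$, since $\omega\in C$ exactly when $A_n(\omega)$ holds for all sufficiently large $n$. The conclusion says $\basemeasure(D) = 1$, since $\omega\in\theta^n(B_n)$ exactly when $\theta^{-n}\omega\in B_n$, i.e.\ when $A_n$ is satisfied by $\theta^{-n}\omega$.

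The key observation is a pushforward: if $\omega\in C$, then there exists $N = N(\omega)$ such that $\omega\in B_n$ for every $n\geq N$, and therefore $\theta^n\omega\in\theta^n(B_n)\subseteq D$ for every $n\geq N$. Consequently, for $\basemeasure$-a.e.\ $\omega$, the forward orbit $(\theta^n\omega)_{n\in\N}$ eventually lies entirely in $D$; equivalently, the orbit visits $D^c$ only finitely often.

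Now suppose for contradiction that $\basemeasure(D) < 1$, i.e.\ $\basemeasure(D^c) > 0$. Since $D^c$ is a measurable event of positive probability, Lemma~\ref{lemmapoincare} guarantees that it is almost certain that infinitely many translates of $D^c$ occur, i.e.\ that $\theta^n\omega\in D^c$ for infinitely many $n\in\N$, for $\basemeasure$-a.e.\ $\omega$. This directly contradicts the conclusion of the previous paragraph. Hence $\basemeasure(D) = 1$, as required.

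The argument is short and the only real conceptual move is the dualization that swaps translating sets ($\omega\in B_n$) for translating points ($\theta^n\omega\in\theta^n(B_n)$), after which Poincaré recurrence closes the gap. The mildly delicate point is to verify that $D$ is in fact measurable so that Lemma~\ref{lemmapoincare} applies; this is immediate because $D$ is a countable union of images of measurable sets under the invertible bi-measurable transformation $\theta$, and hence measurable.
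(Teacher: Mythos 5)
Your argument is correct, but it takes a genuinely different route from the paper's proof. The paper argues directly: by continuity of measures, the hypothesis produces for each $\varepsilon>0$ a single index $N$ with $\pr(A_N)\geq 1-\varepsilon$; translation invariance then gives $\pr(A_N\text{ at }\theta^{-N}\omega)\geq 1-\varepsilon$, and this event already implies the conclusion, so taking $\varepsilon\to 0$ finishes. Your argument instead dualizes, passing from $\omega$ lying in eventually all $B_n$ to the forward orbit of $\omega$ lying eventually in $D=\bigcup_n\theta^n(B_n)$, and then appeals to Poincar\'e recurrence (Lemma~\ref{lemmapoincare}) to rule out $\basemeasure(D^c)>0$. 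Both are valid (and your measurability check for $D$ is right, since $\theta$ is invertible and bi-measurable), but the paper's version is lighter-weight: it only uses that $\theta$ is measure-preserving, whereas your appeal to Lemma~\ref{lemmapoincare} routes through the Poincar\'e recurrence theorem and ergodicity. The trade-off is that your formulation, viewing the conclusion as almost-sure eventual absorption of the forward orbit into $D$, is arguably the more transparent explanation of \emph{why} the lemma is true; the paper's proof is shorter but somewhat more opaque. One small stylistic note: the contradiction in your argument can be avoided — since a.e.\ forward orbit eventually lies in $D$, a.e.\ $\omega$ lies in $\bigcup_n\theta^{-n}D$, and then $\theta$-invariance of $\basemeasure$ plus $D\subseteq\bigcup_n\theta^{-n}D$ gives $\basemeasure(D)=1$ directly, without invoking recurrence at all; that would recover the paper's elementary footprint.
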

\begin{proof}
Fix $\varepsilon > 0$. Continuity of measures gives $N\in\N$ such that
\[
\pr(\text{$A_N$ is satisfied by $\omega$})
\geq \pr(\text{$A_n$ is satisfied by $\omega$ for all $n\geq N$})
\geq 1 - \varepsilon.
\]
Translation invariance of probabilities gives
\[
\pr(\exists n\in\N\text{ such that $A_n$ is satisfied by $\theta^{-n}\omega$})
\geq \pr(\text{$A_N$ is satisfied by $\theta^{-N}\omega$})
\geq 1 - \varepsilon.
\]
Taking the supremum over all $\varepsilon > 0$ yields the lemma.
\end{proof}

\begin{remark}
The assumption here that the $A_n$ are measurable is crucial. (Indeed, this assumption is crucial whenever a continuity of measures argument is invoked.) In general we will not verify this assumption directly, but refer to the Appendix (Section \ref{sectionappendix}).
\end{remark}

We include a section on notational conventions (Section \ref{sectionnotation}) in case there is any confusion. Some conventions we list here:

$0\in\N$.

Unless explicitly stated, variables are allowed to take on the value $\infty$. However, they are nonnegative unless otherwise stated.

Much of the time, we work with multisets rather than sets. If $A$ and $B$ are multisets, $f$ is a function, and $C$ is a standard set, then the expressions $A\cup B$, $A\cap C$, $\#(A)$, $f(A)$, $\sum_{x\in A}f(x)$, and $\one_A$ should all be interpreted in the multiset-theoretical sense.\footnote{For those familiar with algebraic geometry: A multiset can be thought of as an effective divisor on $\C$. The operations of union, cardinality, forward image, inverse image, and intersection with a standard set correspond to the divisor concepts of sum, degree, push-forward, pullback, and restriction to a subdomain of $\C$, respectively. The operation of summation over a multiset has no direct analogue in the theory of divisors. The characteristic function of a multiset is merely the divisor associated with it, interpreted as a map from $\C$ to $\N$.} If there is a star i.e. $\#^*(A)$ or $\sum_{x\in A}^* f(x)$, then the expression should be interpreted in the regular set-theoretic sense. If $T$ is a holomorphic map of Riemann surfaces, and if $p$ is in the codomain of $T$, then by $T^{-1}(p)$, $\RP_T$, $\BP_T$, and $\FP_T$ we mean the multisets consisting of all preimages of $p$, ramification points, branch points, and fixed points, respectively, counting multiplicity, so that $\BP_T = T(\RP_T)$, and
\begin{align*}
\#(T^{-1}(p) &= \deg(T)\\
\#(\BP_T) &= \#(\RP_T) = 2\deg(T) - 2\\
\#(\FP_T) &= \deg(T) + 1
\end{align*}
assuming $\deg(T)\geq 2$. For example, $\#^*(T^{-1}(p))$ is the absolute number of preimages of $p$, not counting multiplicity.

In some cases, we allow maps between multisets that send two copies of the same point to different places; see Lemmas \ref{lemmacomplex} and \ref{lemmamorecomplex}. 

The word ``multiplicity'' is used in three senses in this paper. The multiplicity of a multiset is the maximum of its characteristic function; its multiplicity at a point is its characteristic function evaluated at that point. The multiplicity of a point $p$ relative to a rational map $T$ will be denoted $\mult_T(x)$ or just $\mult(x)$. If $T$ is a rational map and $U\implies\C$ is open, the multiplicity $\mult(V)$ of a connected component $V\in\connected(T^{-1}(U))$ is the degree of the map $T\on V:V\rightarrow U$ as a proper map between Riemann surfaces. Alternatively, $\mult(V)=\#(T^{-1}(x)\cap V)$ for all $x\in U$.

\end{section}

\begin{section}{Definitions}\label{sectionbasic}
We begin this section by studying non-autonomous sequences of rational maps, and end by studying holomorphic random dynamical systems.

Suppose that $(T_j)_{j\in\N}$ is a sequence of rational maps. We define the Fatou and Julia sets of the sequence $(T_j)_{j\in\N}$ as follows: A point $x\in\C$ is \emph{Fatou} [with respect to $(T_j)_j$] if it has a neighborhood $U$ such that the sequence $(T_0^n\on U)_{n\in\N}$ is a normal family. The set of Fatou points is called the \emph{Fatou set} and is denoted by $\FF_0$, and the set of non-Fatou points is called the \emph{Julia set} and is denoted by $\JJ_0$. Since normality is a local property, it follows that the sequence $(T_0^n\on \FF_0)_{n\in\N}$ is a normal family. This definition is a clear analogue of the definition of the Fatou set in the deterministic case.

If $T$ is a rational function, denote the set of its totally ramified points [points with ramification degree $\deg(T) - 1$] by $\SS_T$. The \emph{exceptional set} of a sequence $(T_j)_{j\in\N}$ is the set
\[
\SS_0 := \{x\in\C: x\in\bigcap_{n\in\N}\SS_{T_0^n}\text{ for all }n\in\N\}
\]
i.e. $\SS_0$ is the set of points whose iterates are all totally ramified. The exceptional set of a constant sequence $(T)_j$ is equal to the exceptional set of $T$ defined in the standard way. Note that $T_n\on\SS_n$ is always injective.

\begin{remark}
\label{remarkinvariant}
For any $m,n\in\Z$, $m<n$, we have $T_n^m(\FF_n)=\FF_m$ and $T_n^m(\JJ_n)=\JJ_m$. In other words, the Fatou and Julia sequences $(\FF_n)_n$ and $(\JJ_n)_n$ are fully $(T_n)_n$-invariant. However, the exceptional set $(\SS_n)_n$ is not in general fully invariant; it is only forward invariant. See also Remark \ref{remarkinvariantexceptional} below.
\end{remark}

\begin{definition}
\label{definitionlinear}
A sequence $(T_j)_{j\in\N}$ is \emph{linear} if $\deg(T_j) = 1$ for all $j\in\N$, and \emph{quasilinear} if $\deg(T_j) = 1$ for all but finitely many $j\in\N$.
\end{definition}

The assumption of nonlinearity or of non-quasilinearity has some immediate applications:

\begin{remark}
\label{remarkatmosttwo}
If $(T_j)_{j\in\N}$ is nonlinear then $\#(\SS_0)\leq 2$. If $(T_j)_{j\in\N}$ is linear then $\SS_0 = \C$.
\end{remark}
\begin{proof}
As in the deterministic case, this follows from the Riemann-Hurwitz formula for the number of ramification points of a rational map.
\end{proof}

\begin{remark}
\label{remarknonempty}
If $(T_n)_n$ is a sequence of rational functions such that $\deg(T_n)\tendston \infty$, then $(T_n)_n$ is not a normal family. In particular, if $(T_j)_j$ is not quasilinear, then $\JJ_0 \neq \emptyset$.
\end{remark}
\begin{proof}
The first assertion follows from the fact that the map $T\mapsto\deg(T)$ is continuous from the compact-open topology, and never takes infinity as a value. The second assertion follows from the first plus the fact that the degree is multiplicative.
\end{proof}

\begin{lemma}
\label{lemmafatoucompact}
Suppose that $(T_j)_{j\in\N}$ is not quasilinear. For every $\kappa > 0$ such that $B_s(\SS_0,\kappa) \Kin \FF_0$, and for every $\kappa_2 > 0$, there exists $N\in\N$ such that for all $n\geq N$,
\[
T_0^n(B_s(\SS_0,\kappa))\implies B_s(\SS_n,\kappa_2).
\]
\end{lemma}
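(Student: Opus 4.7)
My plan is to argue by contradiction, exploiting normality of $(T_0^n)$ on the Fatou set together with the total ramification hypothesis at points of $\SS_0$. Suppose the conclusion fails: then there exist $\kappa_2 > 0$, a subsequence $n_k\to\infty$, and points $x_k\in B_s(\SS_0,\kappa)$ with $\dist_s(T_0^{n_k}(x_k), \SS_{n_k}) \geq \kappa_2$. Since $\#(\SS_0)\leq 2$ by Remark \ref{remarkatmosttwo}, I would pass to a subsequence so that all $x_k$ lie in $B_s(p,\kappa)$ for a single point $p\in\SS_0$. Because $\cl{B_s(p,\kappa)}$ is connected and contained in $\cl{B_s(\SS_0,\kappa)} \Kin \FF_0$, it lies in a single Fatou component $F$ of $\FF_0$.

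Next, using compactness of $\cl{B_s(p,\kappa)}$ and normality of $(T_0^n)$ on $\FF_0$, I would extract a further subsequence so that $x_k\to x_\infty\in\cl{B_s(p,\kappa)}\subset F$ and $T_0^{n_k}$ converges uniformly on compact subsets of $\FF_0$ (in the spherical metric) to some map $g:\FF_0\to\C$, meromorphic on each component. Continuity then yields $T_0^{n_k}(x_k)\to g(x_\infty)$ and $T_0^{n_k}(p)\to g(p)$. Since $T_0^{n_k}(p)\in\SS_{n_k}$, the assumed lower bound forces $\dist_s(g(x_\infty), g(p))\geq\kappa_2>0$, so $g$ is non-constant on $F$.

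The heart of the argument, and the main obstacle, is to contradict this via the total ramification at $p$. Each $T_0^{n_k}$ is totally ramified at $p$ with multiplicity $d_k := \deg(T_0^{n_k})$, and non-quasilinearity of $(T_j)$ forces $d_k\to\infty$. After passing to one more subsequence so that $T_0^{n_k}(p)\to g(p)$ in $\C$, I would choose fixed local charts $\phi$ around $p$ and $\psi$ around $g(p)$; then for large $k$ the composition $f_k := \psi\circ T_0^{n_k}\circ \phi^{-1}$ is defined on a fixed neighborhood of $0$ and has the form
\[
f_k(z) = \psi(T_0^{n_k}(p)) + c_k z^{d_k} + O(z^{d_k+1}),
\]
so every derivative of $f_k$ at $0$ of order $1\leq m<d_k$ vanishes. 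By Weierstrass's theorem, uniform convergence of the $f_k$ yields uniform convergence of all derivatives, so every derivative of $\psi\circ g\circ\phi^{-1}$ at $0$ vanishes; hence $g$ is locally constant at $p$, and by the identity theorem $g\equiv g(p)$ on the connected component $F$. This contradicts $\dist_s(g(x_\infty),g(p))\geq\kappa_2>0$ and completes the proof.
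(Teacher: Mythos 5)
Your proof is correct and is essentially the same argument as the paper's. Both hinge on normality of $(T_0^n)$ on the Fatou set together with the observation that $\mult_{T_0^n}(p) = \deg(T_0^n)\to\infty$ for $p\in\SS_0$ forces any locally uniform subsequential limit to be constant near $p$; the paper cites upper semicontinuity of multiplicity in the compact-open topology for this last step, while you re-derive that fact directly via the local form of a totally ramified map together with Weierstrass's theorem on convergence of derivatives, and then run the argument contrapositively rather than concluding $\diam(T_0^n(B_s(p,\kappa)))\to 0$ directly.
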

\begin{proof}
Fix $\kappa > 0$ such that $B_s(\SS_0,\kappa) \Kin \FF_0$, and fix $\kappa_2 > 0$. For each $x\in\SS_0$,
\begin{equation}
\label{sequenceconvergent}
(T_0^n\on \cl{B_s}(x,\kappa))_{n\in\N}
\end{equation}
is a normal family. Now
\begin{equation*}
\mult_{T_0^n}(x) 
= \deg(T_0^n)
\tendston \infty.
\end{equation*}
Thus no subsequence of (\ref{sequenceconvergent}) can converge to a non-constant map, because multiplicity at a point is upper-semicontinuous in the compact-open topology. It follows that $\diam_s(T_0^n(B_s(x,\kappa)))\tendston 0$. Thus for sufficiently large $n$,
\begin{equation*}
T_0^n(B_s(x,\kappa)) \implies B_s(T_0^n(x),\kappa_2).
\end{equation*}
But $T_0^n(x)\in\SS_n$. Since $\#(\SS_0) < \infty$ by Remark \ref{remarkatmosttwo}, we have that for sufficiently large $n$,
\begin{equation*}
T_0^n(B_s(\SS_0,\kappa)) \implies B_s(\SS_n,\kappa_2).
\end{equation*}
\end{proof}

Of course, this lemma is moot if $\SS_0 \implies \JJ_0$. In the deterministic case, this is fine, since $\SS_0$ is always a subset of the Fatou set. However, in the random case, this is not true; see Proposition \ref{examplesingularnew}.

\begin{definition}
\label{definitionsingular}
A sequence of rational functions $(T_j)_j$ is \emph{singular} if $\SS_0\cap\JJ_0\neq\emptyset$.
\end{definition}

This concludes our study of non-autonomous non-thermodynamic dynamics. We move on to random dynamics:

\begin{remark}
\label{remarkmeasurable}
Suppose that $(T,\Omega,\basemeasure,\theta)$ is a holomorphic random dynamical system on $\C$. The maps $\omega\mapsto\JJ_\omega$ and $\omega\mapsto\SS_\omega$ are Effros measurable, i.e. Borel measurable when the codomain $\K(\C)$ is given the Vietoris topology. (For more information see the Appendix (Section \ref{sectionappendix}).)
\end{remark}
\begin{proof}
Fix $\delta > 0$. We have
\begin{align*}
&\{\omega\in\Omega: \cl{B_s}(x,\delta) \cap\JJ_\omega \neq\emptyset\}\\
&= \left\{\omega\in\Omega: \forall \delta_2 > \delta,H < \infty\text{ rational }\exists n\in\N\text{ such that}\sup_{\cl{B_s}(x,\delta_2)}(T_0^n)_* \geq H\right\}
\end{align*}
which is measurable by Theorem \ref{theorempropositions}. By a standard criterion for Effros measurability, the map $\omega\mapsto\JJ_\omega$ is Effros measurable.

Now
\begin{align*}
\SS_\omega
&= \left\{x: \forall n\in\N\;\;\exists p\in\C\text{ such that }T_n^0(p)\implies\{x\}\right\}
\end{align*}
which depends measurably on $\omega$ by Theorem \ref{theorempropositions}.
\end{proof}

\begin{corollary}
\label{corollarymeasurable}
The maps $\omega\mapsto\#(\SS_\omega)$ and $\omega\mapsto\#(\SS_\omega\cap\JJ_\omega)$ are measurable. Thus the sets $\{\omega\in\Omega:(T_j)_j\text{ is quasilinear}\}$ and $\{\omega\in\Omega:(T_j)_j\text{ is singular}\}$ are measurable.
\end{corollary}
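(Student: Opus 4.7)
The plan is to reduce everything to the Effros measurability of $\omega\mapsto\SS_\omega$ and $\omega\mapsto\JJ_\omega$ established in Remark \ref{remarkmeasurable}, together with standard facts about the Vietoris topology on $\K(\C)$. First I would verify that the cardinality function $\# : \K(\C)\to \N\cup\{\infty\}$ is Borel: the level set $\{K:\#K\leq n\}$ can be written as ``$K$ is covered by $n$ balls of radius $1/m$'' for each $m$, which is a countable intersection of opens using a dense sequence in $\C$. Composing with the Effros measurable maps from Remark \ref{remarkmeasurable} yields the measurability of $\omega\mapsto\#(\SS_\omega)$ and of $\omega\mapsto\#(\JJ_\omega)$.

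For $\omega\mapsto \#(\SS_\omega\cap\JJ_\omega)$, I would split the domain using Remark \ref{remarkatmosttwo}. Let $L:=\{\omega:\#(\SS_\omega)=\infty\}$; this is the ``linear'' set and is measurable by the previous step. On $L$ we have $\SS_\omega=\C$, so $\#(\SS_\omega\cap\JJ_\omega)=\#(\JJ_\omega)$, which is measurable. On $\Omega\butnot L$, Remark \ref{remarkatmosttwo} says $\SS_\omega$ has at most two elements, so by the Kuratowski--Ryll-Nardzewski selection theorem applied to the Effros measurable map $\omega\mapsto \SS_\omega$ (with a dummy value $\star$ to fill in missing points) I can extract measurable sections $\sigma_1,\sigma_2:\Omega\butnot L\to \C\cup\{\star\}$ enumerating $\SS_\omega$. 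For Effros measurable $\omega\mapsto \JJ_\omega$ the graph $\{(\omega,x):x\in\JJ_\omega\}$ is jointly measurable in $\Omega\times\C$, so each indicator $\one_{\sigma_i(\omega)\in\JJ_\omega}$ is measurable; adding them (with the obvious correction when $\sigma_1=\sigma_2$) gives $\#(\SS_\omega\cap\JJ_\omega)$ on $\Omega\butnot L$.

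Finally, I would note that $\RR_1$ is clopen in $\RR=\coprod_d \RR_d$, so $\{\omega:\deg(T_\omega)=1\}$ is measurable and hence so is each $\{\omega:\deg(T_{\theta^j\omega})=1\}$ since $\theta$ is measurable and $T$ is Borel. Quasilinearity then reads
\[
\{\omega:(T_j)_j \text{ is quasilinear}\}=\bigcup_{N\in\N}\bigcap_{j\geq N}\{\omega:\deg(T_{\theta^j\omega})=1\},
\]
which is measurable. Singularity is just the condition $\#(\SS_\omega\cap\JJ_\omega)\geq 1$, hence measurable by the previous paragraph. The one genuinely delicate point is the intersection step, since Effros measurability is not in general preserved by intersection; here it works only because the bound $\#(\SS_\omega)\leq 2$ on the nonlinear piece lets me replace the closed-set intersection by checking membership of two measurable sections in $\JJ_\omega$.
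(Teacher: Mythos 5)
Your proof is correct, but it takes a genuinely different route from the paper's. The paper disposes of the corollary in one line by invoking Corollary \ref{corollaryconventions}, the ``measurable conventions'' meta-theorem from the appendix (Section \ref{sectionappendix}): the quantities $\#(\SS_\omega)$, $\#(\SS_\omega\cap\JJ_\omega)$, and the two defining conditions are all built from operations on the list (\ref{mapsstart})--(\ref{mapsend}) (notably (\ref{intersectionF}) and (\ref{card}), both starred but with no free variables) out of the Effros-measurable constants $\SS_\omega$ and $\JJ_\omega$ supplied by Remark \ref{remarkmeasurable}, so the machinery of Theorems \ref{theoremexpressions} and \ref{theorempropositions} grants measurability without further work. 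You instead argue by hand: the Borel cardinality map on $\K(\C)$ (your $G_\delta$ description of the level sets is right), and then the careful split into the linear fiber ($\SS_\omega=\C$) and the nonlinear fiber where $\#\SS_\omega\leq 2$ lets you replace the set-theoretic intersection — which is not Vietoris-continuous — by two measurable sections tested against the graph of $\omega\mapsto\JJ_\omega$. Both are correct; the paper's approach scales to all the measurability claims it needs later, while yours is self-contained and exhibits exactly where the intersection step is delicate and what saves it (the finiteness bound from Remark \ref{remarkatmosttwo}). You could, incidentally, avoid the Castaing/KRN selection for the \emph{singularity} set alone by noting $\SS_\omega\cap\JJ_\omega\neq\emptyset$ iff $\dist(\SS_\omega,\JJ_\omega)=0$ and that this distance is a continuous function on $\K(\C)\times\K(\C)$; the selection trick is only forced on you by the stronger claim about the cardinality-valued map. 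Your direct proof of the quasilinearity set from the continuity of $\deg:\RR\rightarrow\N$ is also cleaner than trying to route it through $\#(\SS_\omega)$, which as you implicitly noticed does not distinguish quasilinear from merely nonlinear.
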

\begin{proof}
This follows directly from Corollary \ref{corollaryconventions}.
\end{proof}

\begin{remark}
\label{remarkinvariantexceptional}
If $(T,\Omega,\basemeasure,\theta)$ is a holomorphic random dynamical system on $\C$, then $\#(\SS_\omega)$ is independent of $\omega$ for $\basemeasure$-almost every $\omega\in\Omega$. In particular, $\pr((\SS_n)_n\text{ is fully invariant}) = 1$.
\end{remark}
\begin{proof}
For all $\omega\in\Omega$, $T_\omega(\SS_\omega)\implies \SS_{\theta\omega}$, so $\#(\SS_\omega)\leq\#(\SS_{\theta\omega})$. Since $\basemeasure$ is ergodic, it follows that there exists a constant $m = 0,1,2,\infty$ [see Remark \ref{remarkatmosttwo}] such that $\#(\SS_\omega) = m$ almost surely.

Fix $\omega\in\Omega$ such that $\#(\SS_n) = m$ for all $n\in\Z$. If $m = \infty$, this means that $T_j$ is degree one for all $j\in\Z$; thus $\SS_n = \C$ for all $n$, and $(\SS_n)_n$ is fully invariant. If $m < \infty$, then for each $j\in\Z$, we have 
\[
T_j(\SS_j) \implies \SS_{j + 1}
\]
Since $T_j\on\SS_j$ is injective, it follows that both sides have cardinality $m$. Thus we have equality. Since each point of $\SS_j$ is totally ramified, we have
\[
\SS_j = (T_j)^{-1}(T_j(\SS_j)) = (T_j)^{-1}(\SS_{j + 1}),
\]
i.e. $(\SS_n)_n$ is fully invariant.
\end{proof}

Definitions \ref{definitionlinear} and \ref{definitionsingular} generalize straightforwardly to the random setting:

\begin{definition}
A holomorphic random dynamical system $(T,\Omega,\basemeasure,\theta)$ is \emph{linear} if $\deg(T_0) = 1$ almost surely, \emph{antilinear} if $\deg(T_0)\geq 2$ almost surely, and \emph{singular} if $(T_j)_j$ is almost certainly singular.
\end{definition}

\begin{remark}
\label{remarkpoincare}
If $(T,\Omega,\basemeasure,\theta)$ is nonlinear, then $(T_j)_j$ is almost certainly not quasilinear. If $(T,\Omega,\basemeasure,\theta)$ is nonsingular, then $(T_j)_j$ is almost certainly nonsingular.
\end{remark}
\begin{proof}
Lemma \ref{lemmapoincare} and Remark \ref{remarkmeasurable}, together with the observation that for all $m\in\N$, if $(T_{m + j})_j$ is nonsingular then $(T_j)_j$ is nonsingular.
\end{proof}

Thus by Remark \ref{remarknonempty}, if $(T,\Omega,\basemeasure,\theta)$ is a nonlinear holomorphic action on $\C$, then $\JJ_\omega \neq \emptyset$ for almost all $\omega\in\Omega$.

To show that nonsingularity is a nontrivial requirement, we give an example where it fails. In this example, the dynamics are not destroyed completely, so that suggests that there may be some interest in investigating singular actions. However we also show that under reasonable hypotheses nonsingularity holds.

\begin{proposition}
\label{examplesingularnew}
There exists $(T,\Omega,\basemeasure,\theta)$ a holomorphic random dynamical system on $\C$ such that
\[
\pr(\SS_0 = \{\infty\}\implies \JJ_0\text{ and }\mathrm{HD}(\JJ_0)\geq 1) = 1.
\]
\end{proposition}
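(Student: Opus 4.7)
The plan is to exhibit a concrete holomorphic random dynamical system built from random quadratic polynomials $T_\omega(z) = a_\omega z^2 + b_\omega$, with parameters drawn from a carefully chosen product distribution. Take $\Omega = (\complexplane^*\times\complexplane)^\Z$ with shift $\theta$ and $\basemeasure = \nu^\Z$, where the first marginal of $\nu$ makes $\log|a|$ symmetric and very heavy-tailed (e.g. $\nu(\{|\log|a|| \geq x\}) \gtrsim 1/\log x$ for large $x$, so that $\sum_j \nu(\{|\log|a_j|| > 2^{j+1} c\}) = \infty$ for every $c > 0$) and the second marginal is absolutely continuous on $\complexplane$ relative to Lebesgue measure. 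Define $T_\omega(z) = a_{\omega_0} z^2 + b_{\omega_0}$.

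First I would verify $\SS_0 = \{\infty\}$ almost surely. Each $T_\omega$ is a quadratic polynomial, so $\infty \in \SS_\omega$ deterministically by Remark \ref{remarkinvariantexceptional}. The only other totally ramified point of $T_\omega$ is its finite critical point $-b_\omega/(2a_\omega)$; its first image under $T_\omega$ almost surely fails to lie in the (at most two-element) totally ramified set of $T_{\theta\omega}$ by the absolute continuity of $b$'s distribution. So by Remark \ref{remarkinvariantexceptional} applied fiber-wise, $\SS_0 = \{\infty\}$ almost surely.

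Next I would verify $\infty \in \JJ_0$ almost surely. Passing to the coordinate $w = 1/z$ around infinity, each $T_j$ becomes $f_j(w) = w^2/a_j + O(w^3)$, and the pseudo-iterate has the expansion $f_0^n(w) = w^{2^n}/A_n + O(w^{2^n+1})$ with $A_n = a_0^{2^{n-1}} a_1^{2^{n-2}} \cdots a_{n-1}$. Writing $S_n := \log|A_n|/2^n = \sum_{j<n} 2^{-1-j}\log|a_j|$, the heavy-tail hypothesis makes the independent events $\{|2^{-1-j}\log|a_j|| > c\}$ have probabilities summing to infinity in $j$, so by the second Borel–Cantelli lemma $|X_j| = |2^{-1-j}\log|a_j|| > c$ infinitely often. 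Symmetry of $\log|a|$ then forces $\liminf_n S_n = -\infty$ and $\limsup_n S_n = +\infty$ almost surely. For such $\omega$, the critical radius $e^{S_n}$ visits arbitrarily small values along the $\liminf$-subsequence, so the spherical derivative $\sup_{|w|\leq r}(f_0^n)_*(w) \gtrsim 2^n e^{-S_n}$ blows up on every disk $\{|w|\leq r\}$. By Marty's criterion, the family $(f_0^n)_n$ is non-normal at $w = 0$, i.e.\ $\infty \in \JJ_0$.

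Finally, to obtain $\mathrm{HD}(\JJ_0) \geq 1$ almost surely, I restrict the support of $\nu$ to real polynomials by taking $a, b \in \R$; the heavy-tail on $\log|a|$ and the continuity of $b$ are maintained. Then each $T_\omega$ preserves $\R \cup \{\infty\}$ and the "tame" blocks between the sparse wild indices (by construction the wild events $|\log|a_j|| > 2^{j+1}c$ remain sparse while still summing to infinity) provide arbitrarily long stretches where the real dynamics is uniformly hyperbolic on a Cantor repeller. Pulling this repelling Cantor set back through the random iteration via the backward-invariance $\JJ_0 = (T_0^n)^{-1}(\JJ_n)$ (Remark \ref{remarkinvariant}) yields an embedded locally bi-Lipschitz Cantor subset of $\JJ_0 \cap \R$ whose local Hausdorff dimension is $\geq 1$ by a Bowen-type covering estimate. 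The main obstacle is verifying that the heavy-tail mechanism producing $\infty \in \JJ_0$ does not destroy the real repelling structure; this is handled by choosing the tail decay of $\nu$ just fast enough that the wild indices are sparse (giving asymptotic density zero while still $\sum = \infty$), so that typical tame blocks are arbitrarily long and the real Cantor repeller persists almost surely.
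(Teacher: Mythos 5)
Your construction differs from the paper's at every stage, and while the first two pieces are plausible, the final step breaks down irreparably.

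The fatal gap is in the Hausdorff dimension argument. You propose to exhibit a ``repelling Cantor set'' on the real line and transport it via backward invariance into $\JJ_0\cap\R$, claiming the result has local Hausdorff dimension $\geq 1$. But for any uniformly hyperbolic conformal repeller $K\subsetneq\R$ with expansion bounded away from $1$, Bowen's pressure formula $P(-s\log|T'|)=0$ gives $\dim_H(K)=s^*$, and since $P(0)=h_{\text{top}}>0$ while $P(-\log|T'|)<0$ for such a proper repeller, one always has $0<s^*<1$. A locally bi-Lipschitz image of such a set therefore also has dimension strictly below $1$. Passing to nested limits over ``tame blocks'' cannot fix this: the dimension of an inverse-limit Cantor set is controlled by the block-wise pressures and stays bounded away from $1$ unless the expansion degenerates, which would undermine the uniform hyperbolicity you need. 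In short, a Cantor repeller in $\R$ is exactly the wrong geometric object to prove a dimension-one lower bound. The paper sidesteps this entirely by a topological-separation argument: all the maps $Q_c(z)=3z^2-2z^3+cz^2(z-1)^2$ fix $0$, $1$, and $\infty$, with $0$ and $1$ superattracting and of multiplicity only $2$ (so not totally ramified), hence $0,1\in\FF_0$ and $T_0^n$ converges locally uniformly to the respective constants $0$ and $1$. Thus $\FF_0$ has at least two components, $\JJ_0$ separates $\C$, and $\mathrm{HD}(\JJ_0)\geq 1$ follows from topological dimension. Your family $a_\omega z^2+b_\omega$ has no common finite superattracting fixed point across $\omega$, so this mechanism is unavailable; generically $\JJ_0\cap\complexplane$ will be a planar Cantor set with no separating structure.

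A secondary gap: your argument that $\infty\in\JJ_0$ rests on the local Taylor expansion $f_0^n(w)\approx w^{2^n}/A_n$ near $w=0$, but you do not control the radius on which this is valid. In your coordinates $f_j(w)=w^2/(a_j+b_jw^2)$ has a pole at $w=\pm\sqrt{-a_j/b_j}$, and precisely in the regime $|a_j|$ small that drives $\liminf_n S_n=-\infty$ this pole approaches $0$, shrinking the neighborhood of validity at roughly the same rate as your ``critical radius'' $e^{S_n}$. Without quantifying both rates you cannot conclude Marty non-normality at $w=0$. You would also need more than Borel--Cantelli for $|2^{-1-j}\log|a_j||>c$ infinitely often; to get $\liminf_n S_n=-\infty$ (not merely $\limsup|X_j|=\infty$) you must invoke a.s.\ divergence of the independent symmetric series via the three-series theorem plus the oscillation dichotomy for sums of independent symmetric summands. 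Both issues are likely fixable, unlike the dimension estimate. The paper avoids them by building its escape mechanism into the family itself: $Q_0$ has degree $3$ while $Q_c$ ($c\neq 0$) has degree $4$, so $(Q_c)_{0<c\leq 1}$ fails to be normal at $\infty$ for structural reasons, and Lemma~\ref{lemmasudden} then makes the failure propagate along the pseudo-iterates.
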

\begin{proposition}
\label{propositionnonsingular}
Suppose that $(T,\Omega,\basemeasure,\theta)$ is antilinear. If
\[
\EE[\ln\sup(T_*)] < \infty,
\]
then $(T,\Omega,\basemeasure,\theta)$ is nonsingular.
\end{proposition}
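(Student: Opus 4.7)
The goal is to show $\pr(\SS_0 \cap \JJ_0 \neq \emptyset) = 0$. By Remark \ref{remarkinvariantexceptional} together with antilinearity, $\#(\SS_\omega)$ is almost surely equal to some constant $m \in \{0,1,2\}$, and $(\SS_n)_{n \in \Z}$ is fully invariant. If $m = 0$ the conclusion is immediate, so fix a full-measure set on which $m \in \{1,2\}$ and $(\SS_n)_n$ is fully invariant, measurably pick $x_0 \in \SS_0$, and set $x_n := T_0^n(x_0) \in \SS_n$. Full invariance together with total ramification forces $T_n$ to be totally ramified at $x_n$ with index $d_n = \deg(T_n) \geq 2$. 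I will show $x_0 \in \FF_0$, contradicting $x_0 \in \JJ_0$ and thereby establishing nonsingularity.

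Place stereographic charts centered at $x_n$ (sending $x_n \mapsto 0$), and let $f_n$ represent $T_n$ in these charts, so that $f_n(z) = c_n z^{d_n}(1 + O(z))$ near $0$. Writing $M_n := \sup(T_n)_*$, the spherical Lipschitz estimate $d_s(T_n(y), x_{n+1}) \leq M_n \, d_s(y, x_n)$ forces any $T_n$-preimage of the antipode of $x_{n+1}$ to lie at spherical distance at least $\pi/M_n$ from $x_n$. Hence $f_n$ is holomorphic with bounded modulus on a chart disk of radius $r_n \asymp 1/M_n$, and Cauchy's estimate yields
\[
|c_n| \leq C/r_n^{d_n} \leq C' M_n^{d_n}.
\]

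Now set $F_j := f_{j-1} \circ \cdots \circ f_0$ (with $F_0 = \id$) and define $A_j(z) := -\ln(M_j |F_j(z)|)$. So long as $A_j(z) > 0$, the point $F_j(z)$ lies in the domain of validity of the Taylor expansion of $f_j$, and combining that expansion with the bound on $|c_j|$ gives $|F_{j+1}(z)| \leq 2C'(M_j |F_j(z)|)^{d_j}$, which translates to
\[
A_{j+1}(z) \geq d_j A_j(z) - \ln M_{j+1} - K
\]
for an absolute constant $K$. Using $d_j \geq 2$, dividing by $2^{j+1}$ and telescoping produces
\[
\frac{A_n(z)}{2^n} \geq A_0(z) - \sum_{j=0}^{\infty} \frac{\ln M_{j+1} + K}{2^{j+1}}.
\]
The right-hand series has finite expectation $\EE[\ln M_0] + K$ by hypothesis, hence converges almost surely to some finite random $S$. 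Choosing $|z|$ small enough that $A_0(z) = -\ln(M_0 |z|) > S + 1$ (a neighborhood of positive radius $\sim e^{-S-1}/M_0$), the inequality $A_n(z) \geq 2^n$ is inductively maintained for every $n$, so $|F_n(z)| \to 0$ uniformly on this chart-neighborhood. Equivalently, $T_0^n \to x_n$ uniformly on a spherical neighborhood of $x_0$, so $(T_0^n)_n$ is equicontinuous there, hence normal, yielding $x_0 \in \FF_0$ as required.

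The main technical obstacle is the Cauchy bound on $|c_n|$: one must verify that $f_n$ is holomorphic on a disk whose radius does not shrink faster than $1/M_n$, and the spherical Lipschitz property is exactly what prevents preimages of the antipode from crowding $x_n$. The seemingly crude factor $M_n^{d_n}$ it produces is absorbed by the $2^j$-type denominators appearing in the telescoping (owing to antilinearity), while the integrability hypothesis $\EE[\ln M_0] < \infty$ is used precisely to ensure almost sure convergence of the resulting series.
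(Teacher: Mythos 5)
Your argument is correct and reaches the conclusion by a route genuinely different from the paper's. The paper invokes the spherically-adapted second-order Taylor estimate (Lemma \ref{lemmaC10}) at each step: since $T_0^n(x)\in\RP_{T_n}$, the first-order term $(T_n)_*$ vanishes there, giving a clean quadratic contraction $\dist_s(T_0^{n+1}(x),T_0^{n+1}(y))\leq \frac{H_n}{2}\dist_s(T_0^n(x),T_0^n(y))^2$; the accumulated constants $\prod_j H_j$ are then controlled by the Birkhoff ergodic theorem. You instead exploit the full $d_n$-th order vanishing at the totally ramified point, building it from scratch with local stereographic charts and a Cauchy estimate on a disk of radius $\asymp 1/M_n$, and you control the logarithmic error series by Tonelli rather than Birkhoff. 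Each choice buys something: the paper's approach reuses Lemma \ref{lemmaC10} (already needed elsewhere) and only requires the multiplicity to be $\geq 2$, so it does not distinguish totally ramified points from ordinary critical points; your approach is self-contained at the complex-analysis level and gets the stronger doubly-exponential decay $|F_n|\lesssim e^{-2^n}$, which is more than needed but costs nothing.

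Two small imprecisions worth fixing. First, ``measurably pick $x_0\in\SS_0$'' is a distraction: since $\SS_0$ can contain two points and you need $\SS_0\subseteq\FF_0$, fix $\omega$ in the full-measure set first (that is where the integrability and antilinearity hypotheses are spent) and then run the contraction argument for an \emph{arbitrary} $x_0\in\SS_0$; there is no selection needed because the conclusion is pointwise in $x_0$ once $\omega$ is fixed. Second, the Cauchy bound really gives $|c_n|\leq (C_1 M_n)^{d_n}$ with the $C_1^{d_n}$ factor growing with $d_n$ (the proposition does not assume bounded degree); this is harmless because it amounts to replacing $A_j$ by $A_j+\ln C_1$ in the recursion, but the claim $|c_n|\leq C'M_n^{d_n}$ with $C'$ absolute is, as stated, not quite right. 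Finally, the phrase ``$T_0^n\to x_n$ uniformly'' is a moving-target statement; what the estimate actually delivers is $\diam_s(T_0^n(U))\to 0$, which is the correct way to conclude equicontinuity and hence normality on $U$.
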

In particular, if $T_*[\basemeasure]$ is a point measure or is supported on a compact set, then $(T,\Omega,\basemeasure,\theta)$ is nonsingular. (However, the proofs of these special cases could be simplified.)

To prove Proposition \ref{examplesingularnew}, we will use the following lemma:
\begin{lemma}
\label{lemmasudden}
For each $n\in\N$, suppose that $f_n:\N^n\rightarrow\N$. Then there exists a probability measure $\sigma$ on $\N$ such that
\begin{equation}
\label{sudden}
\sigma^\N[(k_n)_{n\in\N}:\exists\text{ infinitely many }n\in\N\text{ such that }k_n \geq f_n(k_0,\ldots,k_{n - 1})] = 1.
\end{equation}
\end{lemma}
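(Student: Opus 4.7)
The plan is to concentrate $\sigma$ on a very sparse sequence $\{b_n\}_{n\in\N}$ chosen to dominate every possible value of $f_n$ on histories in $\{b_0,\ldots,b_{n-1}\}^n$, and then to invoke L\'evy's conditional Borel--Cantelli lemma together with Paley--Zygmund and Hewitt--Savage. Define $b_0 := 0$ and inductively
\[
b_n \;:=\; 1 + \max\Bigl(\{b_{n-1}\} \cup \{f_n(k_0,\ldots,k_{n-1}) : k_j \in \{b_0,\ldots,b_{n-1}\}\text{ for all }j<n\}\Bigr),
\]
which is a well-defined natural number because the inner max ranges over a finite set. Put $\sigma(\{b_n\}) := C/n^2$ for $n \geq 1$ with the remaining mass placed on $b_0$, so that the tail $q_n := \sigma(\{b_m : m \geq n\})$ is comparable to $1/n$.

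Sampling $(k_n)_{n\in\N}$ from $\sigma^\N$, write $k_n = b_{J_n}$ with the $J_n$ i.i.d., and set $M_n := \max(J_0,\ldots,J_{n-1})$ and $F_n := \{M_n < n\}$. On $F_n$ the history lies in $\{b_0,\ldots,b_{n-1}\}^n$, so $f_n(k_0,\ldots,k_{n-1}) \leq b_n$ by construction, giving
\[
\{J_n \geq n\}\cap F_n \;\implies\; A_n := \{k_n \geq f_n(k_0,\ldots,k_{n-1})\}.
\]
Since $J_n$ is independent of $(J_0,\ldots,J_{n-1})$, the conditional probability satisfies
\[
\sigma^\N(A_n \mid J_0,\ldots,J_{n-1}) \;\geq\; q_n \cdot \one_{F_n}.
\]
By L\'evy's conditional Borel--Cantelli lemma, it is enough to show that $Y_\infty := \sum_n q_n \one_{F_n}$ is almost surely infinite.

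To this end, observe that $\sigma^\N(F_n) = (1-q_n)^n \to e^{-C'} > 0$, so the partial sums $Y_N$ satisfy $\EE[Y_N] \asymp \log N$. For $n < m$, the condition $F_n$ forces $J_0,\ldots,J_{n-1} < n < m$, so $F_n \cap F_m$ factors as an intersection over the disjoint blocks $\{J_0,\ldots,J_{n-1}\}$ and $\{J_n,\ldots,J_{m-1}\}$; the factors are independent, giving $\sigma^\N(F_n \cap F_m) \leq \sigma^\N(F_n)$ and hence $\EE[Y_N^2] = O(\log^2 N)$. The ratio $\EE[Y_N]^2/\EE[Y_N^2]$ is therefore bounded below uniformly in $N$, so the Paley--Zygmund inequality yields $\sigma^\N(Y_N \geq \tfrac12 \EE[Y_N]) \geq c > 0$ for all large $N$, which forces $\sigma^\N(Y_\infty = \infty) \geq c$.

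Finally, the event $\{Y_\infty = \infty\}$ is invariant under any permutation of finitely many $J_i$'s: if $J_0,\ldots,J_K$ are permuted then for $n > K$ the set $\{J_0,\ldots,J_{n-1}\}$, and hence $M_n$, is unchanged, so only the finitely many indicators $\one_{F_n}$ with $n \leq K$ can be altered. By the Hewitt--Savage zero-one law, $\sigma^\N(Y_\infty = \infty) \in \{0,1\}$, so it equals $1$. The main obstacle is precisely this last chain: because the $F_n$ are genuinely dependent, the classical second Borel--Cantelli lemma is not applicable, and one must combine a second-moment bound, Paley--Zygmund, and Hewitt--Savage to convert the positive expectation of $Y_\infty$ into an almost-sure divergence.
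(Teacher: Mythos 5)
Your proof is correct, and it takes a genuinely different route from the paper's, though the two share the same underlying idea. Both concentrate $\sigma$ on a sparse, strictly increasing sequence built recursively to dominate every possible value of $f_n$ on histories confined to the earlier terms, and both detect the desired event through ``strict records'' of the index sequence: your favorable event $F_n \cap \{J_n \geq n\}$ is precisely the event that $J_n$ is a strict record with value $\geq n$, while the paper's is that $\ell_n$ is a strict record with $\ell_n \geq \log_3 n$. Where you diverge is in the choice of tail and, more importantly, in how the almost-sure infinitude of good times is established. The paper samples from a geometric distribution and then asserts, as an ``elementary exercise,'' that infinitely many records satisfy $n \leq 3^{\ell_n}$ almost surely; it does not spell out how to handle the dependence among record events. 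You instead use a polynomially-tailed $\sigma$ so that $q_n \asymp 1/n$, reduce the problem via L\'evy's conditional Borel--Cantelli to showing $\sum_n q_n \one_{F_n}=\infty$ a.s., and then handle the dependence explicitly: a second-moment bound with Paley--Zygmund gives positive probability of divergence, and Hewitt--Savage upgrades this to probability one via the observation that permuting finitely many $J_i$ alters only finitely many of the $\one_{F_n}$. Your version is longer but self-contained, and it makes visible exactly why the classical second Borel--Cantelli lemma cannot be invoked directly, which the paper elides. The only cosmetic point worth tightening is the $n=0$ normalization (the empty-max convention for $M_0$ and the choice of mass at $b_0$), which you handle implicitly but correctly.
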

The idea is that according to $\sigma^\N$, the odds are that every once in a while, something will happen which is much more significant than anything which has happened before; informally we could write
\[
\sigma^\N[(k_n)_{n\in\N}:\exists\text{ infinitely many }n\in\N\text{ such that }k_n >> k_0,\ldots,k_{n - 1}] = 1.
\]
$(f_n)_n$ specifies exactly what we mean by ``$>>$''.
\begin{proof}[Proof of Lemma \ref{lemmasudden}:]
Let
\[
\mu = \sum_{\ell\in\N}2^{-(\ell + 1)}\delta_\ell
\]
be the geometric distribution. An elementary exercise in probability shows that $\mu^\N$ satisfies
\[
\mu^\N[(\ell_n)_{n\in\N}:\exists\text{ infinitely many }n\in\N\text{ such that }\ell_n > \ell_j\all j < n\text{ and such that }n\leq 3^{\ell_n}] = 1.
\]
Define the sequence $(k_\ell)_{\ell\in\N}$ by induction:
\begin{equation}
\label{kell}
k_\ell := \max_{n\leq 3^\ell}\max_{\substack{(\ell_j)_{j = 0}^{n - 1} \\ \ell_j < \ell\all j < n}}f_n(k_{\ell_0},\ldots,k_{\ell_{n - 1}})
\end{equation}
and let $\sigma = (\ell\mapsto k_\ell)_*[\mu]$. Fix $(\ell_n)_{n\in\N}$ such that there exist infinitely many $n\in\N$ such that $\ell_n > \ell_j$ for all $j < n$ and such that $n\leq 3^{\ell_n}$. For each such $n$, by (\ref{kell}) we have
\[
k_{\ell_n}\geq f_n(k_{\ell_0},\ldots,k_{\ell_{n - 1}}).
\]
Thus
\[
\mu^\N[(\ell_n)_{n\in\N}:\exists\text{ infinitely many }n\in\N\text{ such that }k_{\ell_n} \geq f_n(k_{\ell_0},\ldots,k_{\ell_{n - 1}})] = 1.
\]
which clearly implies (\ref{sudden}).
\end{proof}
\begin{proof}[Proof of Proposition \ref{examplesingularnew}:]

Define $Q:\complexplane\rightarrow\RR$ by
\begin{equation}
\label{Qdef}
Q_c(z) := 3z^2 - 2z^3 + c z^2 (z - 1)^2.
\end{equation}
Note that for each $c\in\complexplane$, the points $0$, $1$, and $\infty$ are all fixed ramification points, but only $\infty$ is totally ramified.

Now, $\deg(Q_c) = 4$ for $c\neq 0$, but $\deg(Q_0) = 3$. Clearly $Q_c\tendstoc Q_0$ locally uniformly on $\complexplane$, but we cannot have $Q_c\tendstoc Q_0$ uniformly on $\C$, because $\deg:\RR\rightarrow\N$ is continuous. Thus $(Q_c)_{0<c\leq 1}$ is not a normal family in any neighborhood of $\infty$. So for all $k\in\N$ there exists $0 < c_k \leq 1$ such that
\[
Q_{c_k}(B_s(\infty,2^{-k}))\nsubseteq \C\butnot B_e(0,1).
\]
For each $n\in\N$ and for each $n$-tuple $(k_j)_{j = 0}^{n - 1}$, let $f = f_n(k_j)_j\geq n$ be large enough so that
\[
Q_{c_{k_{n - 1}}}\circ\ldots\circ Q_{c_{k_0}}(B_s(\infty,2^{-k}))\supseteq B_s(\infty,2^{-f}).
\]
Let $\sigma\in\M(\N)$ be the probability measure given by Lemma \ref{lemmasudden}. Let $\Omega = \N^\Z$, let $\basemeasure = \sigma^\Z$, let $\theta$ be the shift map, and let $T:\Omega\rightarrow\RR$ be given by
\[
T(k_n)_n := Q_{c_{k_0}}.
\]
Fix $\omega = (k_n)_n\in\Omega$ and assume that there exist infinitely many $n\in\N$ such that $k_n \geq f_n(k_0,\ldots,k_{n - 1})$; by Lemma \ref{lemmasudden}, this assumption is almost certainly valid. For each $n\in\N$ such that $k_n > f_n(k_0,\ldots,k_{n - 1})$, we have
\begin{align*}
T_0^{n + 1}(B_s(\infty,2^{-k_n}))
&= Q_{c_{k_n}}\circ\ldots\circ Q_{c_{k_0}}(B_s(\infty,2^{-k_n}))\\
&\supseteq Q_{c_{k_n}}(B_s(\infty,2^{-f_n(k_j)_{j = 0}^{n - 1}})\\
&\supseteq Q_{c_{k_n}}(B_s(\infty,2^{-k_n})\\
&\nsubseteq \C\butnot B_e(0,1).
\end{align*}
But since $k_n \geq n$, we have
\[
T_0^{n + 1}(B_s(\infty,2^{-n}))\nsubseteq \C\butnot B_e(0,1).
\]
Since this is true for all $n\in\N$, it is clear that $\infty\in\JJ_0$. Based on (\ref{Qdef}), it is clear that $\SS_0 = \{\infty\}$. Thus, all that remains is to show that $\mathrm{HD}(\JJ_0)\geq 1$.

First, we show that $0,1\in\FF_0$. To see this, note that the family $(Q_c)_{0 < c \leq 1}$ is normal on $\complexplane$, and $0$ and $1$ are superattracting fixed points of this family. The result then follows from an elementary calculation.

Since $\mult_{T_0^n}(0),\mult_{T_0^n}(1) \geq 2^n\tendston\infty$, we have that $T_0^n\tendston 0$ locally uniformly on the connected component of $\FF_0$ containing $0$, and $T_0^n\tendston 1$ locally uniformly on the connected component of $\FF_0$ containing $1$. Thus $\FF_0$ has two distinct connected components, so $\mathrm{HD}(\JJ_0)\geq 1$.
\end{proof}
To prove Proposition \ref{propositionnonsingular}, we will use the following lemma:
\begin{lemma}
\label{lemmaC10}
Suppose that $T\in\RR$, and that $x,y\in\C$. Let
\[
\wtilde{H} = \frac{32}{\pi^2} \sup(T_*)^2.
\]
Then
\begin{align} \label{C9}
\|(T_*)_*(x)\|_s^e \leq \wtilde{H}\\ \label{C10}
|T_*(x) - T_*(y)| &\leq \wtilde{H}\dist_s(x,y)\\ \label{C11}
\dist_s(T(x),T(y)) &\leq \dist_s(x,y)[T_*(x) + \frac{\wtilde{H}}{2}\dist_s(x,y)]
\end{align}
\end{lemma}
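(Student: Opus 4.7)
The plan is to prove \textup{(\ref{C9})} first, since \textup{(\ref{C10})} and \textup{(\ref{C11})} both follow from it by integrating along a spherical geodesic. Estimate \textup{(\ref{C9})} on the mixed spherical-to-Euclidean derivative of the scalar function $T_*:\C\to\R$ is the technical heart.

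For \textup{(\ref{C9})}, I first reduce to a normalized case. The group $\mathrm{PSU}(2)$ of rotations of $\C$ acts by spherical isometries, and the spherical derivative is invariant under pre- and post-composition with such rotations; hence $\sup(T_*)$ is unchanged and $\|(T_*)_*(x)\|_s^e$ transforms equivariantly. Choosing rotations sending $x$ to $0$ and $T(x)$ to $0$, I may assume $x = 0$ and $T(0) = 0$ in standard coordinates on $\complexplane$. Since the spherical density $\rho(z) = 1/(1+|z|^2)$ equals $1$ at the origin, the spherical gradient there coincides with the Euclidean gradient. Expanding $T(z) = a_1 z + a_2 z^2 + \cdots$ and differentiating
\[
T_*(z) = |T'(z)|\,\frac{1+|z|^2}{1+|T(z)|^2}
\]
at $z = 0$ by the Wirtinger calculus, the factor $(1+|z|^2)/(1+|T(z)|^2)$ contributes nothing to first order (both $z$ and $T(z)$ vanish at $0$), and one obtains $\|(T_*)_*(0)\|_s^e = |T''(0)|$. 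It thus suffices to show $|T''(0)| \leq (32/\pi^2)\sup(T_*)^2$. This is a Cauchy integral estimate: setting $M := \sup(T_*)$, the identity $|T'(z)| = T_*(z)(1+|T(z)|^2)/(1+|z|^2) \leq M(1+|T(z)|^2)$ together with the bootstrap $|T(z)| \leq \int_0^{|z|}|T'(te^{i\arg z})|\,dt$ yields a uniform bound on $|T'|$ on a disk $|z| \leq r$; Cauchy's estimate then gives $|T''(0)| \leq (2/r^2)\sup_{|z|=r}|T(z)|$, and optimizing over $r$ produces the constant $32/\pi^2$.

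Given \textup{(\ref{C9})}, \textup{(\ref{C10})} follows by integrating $\|(T_*)_*\|_s^e$ along the spherical geodesic from $x$ to $y$, of length $\dist_s(x,y)$. For \textup{(\ref{C11})}, parameterize a spherical geodesic $\gamma:[0,L]\to\C$ from $x$ to $y$ by arc length, with $L = \dist_s(x,y)$. The curve $T\circ\gamma$ connects $T(x)$ and $T(y)$ with spherical length at most $\int_0^L T_*(\gamma(s))\,ds$. Applying \textup{(\ref{C10})} along $\gamma$ gives $T_*(\gamma(s)) \leq T_*(x) + \wtilde H s$, whence
\[
\dist_s(T(x),T(y)) \leq \int_0^L \bigl(T_*(x) + \wtilde H s\bigr)\,ds = L\,T_*(x) + \tfrac{\wtilde H}{2}L^2 = \dist_s(x,y)\Bigl[T_*(x) + \tfrac{\wtilde H}{2}\dist_s(x,y)\Bigr],
\]
which is exactly \textup{(\ref{C11})}.

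The main obstacle is pinning down the precise constant $32/\pi^2$ in \textup{(\ref{C9})}; the scaling shape $\wtilde H \asymp M^2$ is essentially forced by dimensional considerations (any bound on the spherical gradient of $T_*$ in terms of $M$ alone must scale like $M^2$), but obtaining the sharp numerical constant requires a careful optimization of the Cauchy radius $r$ against the bootstrap growth of $|T|$ from $T(0)=0$.
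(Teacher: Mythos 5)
Your proof is correct and follows essentially the same route as the paper: normalize to $x = T(x) = 0$, observe that the spherical gradient of $T_*$ at $0$ coincides with $|T''(0)|$ because the conversion factor $(1+|z|^2)/(1+|T(z)|^2)$ has vanishing first-order variation at the origin, bound $|T''(0)|$ by a Cauchy estimate, then pass from \textup{(\ref{C9})} to \textup{(\ref{C10})} to \textup{(\ref{C11})} by integrating along a unit-speed spherical geodesic. The one step where you deviate is the disk bound on $|T|$ that feeds the Cauchy estimate: the paper applies the spherical mean-value inequality directly (from $T_* \leq H$ it follows immediately that $T(\cl{B_s}(0,\pi/(4H))) \subseteq \cl{B_s}(0,\pi/4) = \cl{B_e}(0,1)$), whereas you rederive the equivalent fact by a radial Gronwall/ODE bootstrap using $|T'(z)| \leq M(1+|T(z)|^2)$, giving $|T(z)| \leq \tan(M|z|)$. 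The two routes give the same conclusion at radius $r = \pi/(4M)$ and the same Cauchy estimate $|T''(0)| \leq 2/r^2 = 32M^2/\pi^2$; the paper's route is a touch shorter since it stays in the spherical metric throughout. One small caution: you say \emph{optimizing} over $r$ produces $32/\pi^2$, but the true minimizer of $2\tan(Mr)/r^2$ gives a slightly smaller constant ($\approx 3.1\,M^2$ versus $32/\pi^2 \approx 3.24\,M^2$). The constant $32/\pi^2$ comes precisely from the non-optimal but convenient choice $r = \pi/(4M)$, which is what the paper uses and what you should state explicitly if you want to land on that exact value.
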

\begin{proof}
Integration along the geodesic connecting $x$ and $y$ yields the sequence of implications
\[
(\ref{C9}) \Rightarrow (\ref{C10}) \Rightarrow (\ref{C11}).
\]
Thus we are reduced to proving (\ref{C9}). To this end, fix $x\in\C$. Without loss of generality, we may assume that $x = T(x) = 0$. Then
\begin{align*}
T_*(z) &= |T'(z)|\frac{1 + |z|^2}{1 + |T(z)|^2}\\
(T_*)_*(0) &= |T''(0)|,
\end{align*}
since the conversion factor is equal to one up to second order. Now, the mean value inequality gives
\begin{align*}
T\left(\cl{B_s}\left(0,\frac{\pi}{4H}\right)\right) &\implies \cl{B_s}\left(0,\frac{\pi}{4}\right)\\
T\left(\cl{B_e}\left(0,\tan\left(\frac{\pi}{4H}\right)\right)\right) &\implies \cl{B_e}\left(0,\tan\left(\frac{\pi}{4}\right)\right) = \cl{B_e}(0,1)\\
|T| &\leq 1 \hspace{.5 in} \left[\text{on }\cl{B_e}\left(0,\tan\left(\frac{\pi}{4H}\right)\right)\right]
\end{align*}
We can now give an elementary bound from the Cauchy integral formula:
\begin{align*}
T(z) &= \frac{1}{2\pi\imath}\int_{w\in S_e(0,\frac{\pi}{4H})}\frac{T(w)\d w}{w - z}\\
T''(z) &= \frac{2!}{2\pi\imath}\int_{w\in S_e(0,\frac{\pi}{4H})}\frac{T(w)\d w}{(w - z)^3}\\
|T''(0)| &\leq \frac{2}{2\pi}\int_{w\in S_e(0,\frac{\pi}{4H})}\frac{|\d w|}{|w|^3}\\
&= \frac{2}{(\pi/(4H))^2} = \frac{32}{\pi^2}H^2.
\end{align*}
\end{proof}
\begin{proof}[Proof of Proposition \ref{propositionnonsingular}]
For each $\omega\in\Omega$, define
\[
H_\omega := \frac{32}{\pi^2}\sup(T_*)^2.
\]
Clearly, $\EE[\ln(H)] < \infty$.

Fix $\omega\in\Omega$ and assume that $(T_j)_j$ is non-quasilinear, and that there exists $C < \infty$ such that
\begin{equation}
\label{ergodic}
\sum_{j = 0}^{n - 1}\ln(H_j) \leq Cn
\end{equation}
for all $n\in\N$. By the Birkhoff ergodic theorem, this assumption is almost certainly valid.

Let
\[
\delta = e^{-C}.
\]
\begin{claim}
For all $x\in\SS_0$, for all $y\in B_s(x,\delta)$, and for all $n\in\N$
\[
\dist_s(T_0^n(x),T_0^n(y)) \leq \delta\prod_{j = 0}^{n - 1}\frac{H_j}{2e^C} \leq 2^{-n}.
\]
\end{claim}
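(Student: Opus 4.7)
The approach is induction on $n$, feeding the critical-point condition at $\SS_0$ into inequality (\ref{C11}) of Lemma \ref{lemmaC10}. Write $x_j := T_0^j(x)$ and $y_j := T_0^j(y)$. Since $(T,\Omega,\basemeasure,\theta)$ is antilinear, by intersecting the full-measure event $\{\deg(T_0)\geq 2\}$ over all iterates of $\theta$ I may further assume (for $\basemeasure$-almost every $\omega$) that $\deg(T_j)\geq 2$ for every $j\in\N$. Combined with $x\in\SS_0$, which forces $x_j\in\SS_j$ for all $j$, this guarantees that each $x_j$ is a critical point of $T_j$, so that $(T_j)_*(x_j)=0$.

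Plugging $(T_j)_*(x_j)=0$ into (\ref{C11}) at each stage produces a quadratic recursion:
\[
\dist_s(x_{j+1},y_{j+1}) \leq \dist_s(x_j,y_j)\bigl[(T_j)_*(x_j) + \tfrac{H_j}{2}\dist_s(x_j,y_j)\bigr] = \tfrac{H_j}{2}\dist_s(x_j,y_j)^2.
\]
The key idea is to convert this quadratic rate into a clean linear one by absorbing one copy of $\dist_s(x_j,y_j)$ into the buffer $\delta=e^{-C}$. In the inductive step, I would first observe that the inductive hypothesis combined with the ergodic bound (\ref{ergodic}) telescopes:
\[
\prod_{j=0}^{n-1}\frac{H_j}{2e^C} = \frac{1}{2^n e^{Cn}}\prod_{j=0}^{n-1}H_j \leq 2^{-n},
\]
so that $\dist_s(x_n,y_n) \leq \delta\cdot 2^{-n} \leq \delta$. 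Feeding this buffer estimate back into the quadratic recursion gives
\[
\dist_s(x_{n+1},y_{n+1}) \leq \tfrac{H_n}{2}\dist_s(x_n,y_n)^2 \leq \tfrac{H_n}{2}\delta\,\dist_s(x_n,y_n) = \frac{H_n}{2e^C}\dist_s(x_n,y_n) \leq \delta\prod_{j=0}^{n}\frac{H_j}{2e^C},
\]
closing the induction. The base case $n=0$ is the hypothesis $y\in B_s(x,\delta)$, and the second inequality $\delta\prod\leq 2^{-n}$ in the statement of the claim is just the telescoping computation above together with $\delta\leq 1$.

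The main obstacle is conceptual rather than computational: inequality (\ref{C11}) delivers only a quadratic contraction at a critical point, which cannot be iterated directly to yield a geometric product bound of the desired form. The ergodic hypothesis (\ref{ergodic}) is precisely what allows one to maintain the invariant $\dist_s(x_n,y_n)\leq\delta$ across all stages, thereby trading the quadratic local estimate for a linear one whose rate $H_n/(2e^C)$ decays geometrically on average.
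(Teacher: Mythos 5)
Your proof is correct and follows essentially the same induction as the paper: feed $(T_n)_*(T_0^n(x))=0$ into (\ref{C11}), apply the inductive hypothesis, and absorb one copy of the distance into $\delta = e^{-C}$ using the ergodic bound (\ref{ergodic}). You also correctly invoke antilinearity (rather than the paper's looser ``non-quasilinear'') to get $\deg(T_j)\geq 2$ for \emph{every} $j$, which is what is actually needed so that each $T_0^j(x)$ is a genuine ramification point of $T_j$.
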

\begin{proof}
First note that (\ref{ergodic}) implies
\begin{equation}
\label{useagain}
\delta\prod_{j = 0}^{n - 1}\frac{H_j}{2e^C}
\leq 2^{-n}\delta
\leq \min(2^{-n},e^{-C}),
\end{equation}
proving the right hand inequality.

The proof of the left hand inequality is by induction on $n$:

Base case $n = 0$: By hypothesis.

Inductive step: Assume the claim is true for $n$. Since $x\in\SS_0$ and since $(T_j)_j$ is non-quasilinear, we have $T_0^n(x)\in\RP_{T_n}$ i.e. $(T_n)_*\circ T_0^n(x) = 0$. By Lemma \ref{lemmaC10},
\begin{align*}
\dist_s(T_0^{n + 1}(x),T_0^{n + 1}(y)) &\leq \dist_s(T_0^n(x),T_0^n(y))\left((T_n)_*\circ T_0^n(x) + \frac{H_n}{2}\dist_s(T_0^n(x),T_0^n(y))\right)\\
&\leq \left(\delta\prod_{j = 0}^{n - 1}\frac{H_j}{2e^C}\right)^2\frac{H_n}{2}\\
&\leq \delta\prod_{j = 0}^{n - 1}\frac{H_j}{2e^C}\frac{H_n}{2e^C}
= \delta\prod_{j = 0}^n\frac{H_j}{2e^C};
\end{align*}
the last inequality following from (\ref{useagain}).
\QEDmod\end{proof}
It follows that $\diam(T_0^n(B_s(x,\delta)))$ tends to zero as $n$ approaches infinity. Thus $B_s(x,\delta)\implies \FF_0$; in particular $x\in \FF_0$. Since this is true for all $x\in\SS_0$, we have $\SS_0\implies \FF_0$.
\end{proof}
In deterministic dynamics, if $U$ is an open set such that some subsequence of $(T^n\on U)_n$ is a normal family, then $U$ is Fatou i.e. the entire sequence is normal. We give a counterexample to a similar claim in random dynamics, although it is in a sense cheating since the example is conjugate to a deterministic action.
\begin{example}
\label{exampleclusterdivergent}
There exists a nonlinear nonsingular holomorphic random dynamical system $(T,\Omega,\basemeasure,\theta)$ on $\C$ such that the following event is almost certain to occur:
\begin{quote}
\begin{event}
\label{eventclusterdivergent}
There exists an open set $U$ intersecting the Julia set and an increasing sequence $(n_i)_i$ in $\N$ such that
\[
\diam(T_0^{n_i}(U))\tendstoi 0.
\]
In particular, $(T_0^{n_i}\on U)_i$ is a normal family.
\end{event}
\end{quote}
\end{example}
\begin{proof}
Fix $S\in\RR\butnot\RR_1$ with no totally ramified points and with a geometrically attracting fixed point $p$, i.e. $0 < S_*(p) < 1$. Let $B$ be a neighborhood of $p$ which is relatively compact in the attracting basin of $p$, so that $\diam(S^n(B))\tendston 0$.

For each $j\in\N$, fix $\phi_j\in\RR_1$ such that $(\phi_j)_*(p) = j$. For each $n\in\N$, let
\[
f_n(k_0,\ldots,k_{n - 1}) := f(n) := n\lceil S_*(p)^{-n}\rceil.
\]
Let $\sigma\in\M(\N)$ be the measure guaranteed by Lemma \ref{lemmasudden}. Let
\begin{align*}
\Omega &= \N^\Z\\
\basemeasure &= \sigma^\Z\\
\theta(k_j)_j &= (k_{j + 1})_j\\
T(k_j)_j &= \phi_{k_1}\circ S\circ \phi_{k_0}^{-1}
\end{align*}
Clearly, $(T,\Omega,\basemeasure,\theta)$ is nonlinear and nonsingular; furthermore we have
\[
T_0^n(k_j)_j = \phi_{k_n}\circ S^n\circ \phi_{k_0}^{-1}.
\]
Fix $\omega = (k_j)_j\in\Omega$ and assume that there exist infinitely many $n\in\N$ with $k_n\geq f(n)$ and that $k_n\underset{n}{\nrightarrow}\infty$; by Lemma \ref{lemmasudden}, this assumption is almost certainly valid.
We have
\begin{align*}
(T_0^n)_*(\phi_{k_0}(p)) &= \frac{(\phi_{k_n})_*(p)(S_*(p))^n}{(\phi_{k_0})_*(p)}\\
&= \frac{k_n (S_*(p))^n}{k_0}
\end{align*}
Thus for infinitely many $n\in\N$, we have
\[
(T_0^n)_*(\phi_{k_0}(p)) \geq \frac{f(n) (S_*(p))^n}{k_0} \geq \frac{n}{k_0}\tendston \infty,
\]
so $\phi_{k_0}(p)\in\JJ_0$.

Let $U = \phi_{k_0}(B)$, so that $U\cap \JJ_0\neq\emptyset$. Since $k_n\underset{n}{\nrightarrow}\infty$, there exist $k\in\N$ and an increasing sequence $(n_i)_i$ such that $k_{n_i} = k$ for all $i\in\N$. Now
\begin{align*}
T_0^{n_i}(U) &= \phi_{k_{n_i}}\circ S^{n_i}(B) = \phi_k \circ S^{n_i}(B)\\
\diam(T_0^{n_i}(U)) &\leq \sup(\phi_k)_* \diam(S^{n_i}(B))\tendstoi 0.
\end{align*}
\end{proof}

\end{section}

\begin{section}{Topological exactness}\label{sectionexact}
In this section, we fix a nonlinear holomorphic random dynamical system $(T,\Omega,\basemeasure,\theta)$.

We begin by considering the following event:

\begin{quote}
\begin{event}
\label{eventexactJJ}
Suppose $U\implies\C$ is open with $U\cap \JJ_0\neq\emptyset$. Then there exists $n\in\N$ such that $T_0^n(U)\supseteq \JJ_n$. (In other words, $(T_j)_j$ is topologically exact on the Julia set.)
\end{event}
\end{quote}

We would like to prove that Event \ref{eventexactJJ} is almost certain to occur under reasonable assumptions. The most obvious assumptions are nonlinearity and nonsingularity, but these are insufficient to ensure Event \ref{eventexactJJ}. Indeed, consider Example \ref{exampleclusterdivergent}, and fix $\omega\in\Omega$ satisfying Event \ref{eventclusterdivergent}. If $T_0^n(U)\supseteq \JJ_n$ for some $n$, then we would have that $(T_n^{n_i})_i$ was normal on both $\FF_n$ and $\JJ_n$, and thus on all of $\C$, contradicting Remark \ref{remarknonempty}. Thus $T_0^n(U)\nsupseteq \JJ_n$ for all $n\in\N$ i.e. the sequence $(T_j)_j$ is not topologically exact on $(\JJ_j)_j$.

One option would be to introduce stronger hypotheses and prove that Event \ref{eventexactJJ} holds under these hypotheses. We will take a different approach. Note that Event \ref{eventexactJJ} implies a distinction between open sets $U$ on which the sequence $(T_0^n\on U)_n$ is normal and those on which some subsequence is normal. We define the \emph{uniform Julia set} to be the set of all points $x\in\JJ_0$ such that for every increasing sequence of integers $(n_i)_i$, the sequence $(T_0^{n_i}\on U)_i$ is not a normal family. The uniform Julia set is denoted $\JJ_0'$. Like the Julia set and the exceptional set, the uniform Julia set enjoys total invariance $T_n^m(\JJ_n') = \JJ_m'$. A priori, it is not clear that $\JJ_0'\neq\emptyset$; this will follow from our hypotheses of nonlinearity and nonsingularity. The main results of this section are that $(T_j)_j$ is topologically exact on $(\JJ_j')_j$ almost surely (Proposition \ref{propositionexactUJ}), and that $\JJ_0'$ is almost certainly uncountable and perfect (Theorem \ref{theoremperfect}).

As an intermediate step, we prove a property weaker than exactness for the actual Julia set. Essentially we replace in the definition of exactness ``For all $U$'' by ``There exist arbitrarily small $U$''.

The idea of the proof is that if a rational map $T$ is ``mixing'', if an open set $U$ is ``large'', and if a compact set $K$ is ``far away from the exceptional set'', then $T(U)\supseteq K$. The details are to specify what these concepts mean, and to prove that they happen some of the time.

The ``mixing'' of a rational map $T$ can be measured by the complexity of a preimage of an arbitrary point far from the exceptional set. The exceptional set must be ignored since the preimage of any exceptional point is a singleton, which is trivial. The exceptional set does not mix; instead, we measure the degree of mixing outside the exceptional set. We measure the complexity of the set $T^{-1}(p)$ first in terms of cardinality, and secondly in terms of the concept of ``$m$-diameter'' defined below.

\begin{lemma}
\label{lemmasix}
Suppose $(T_j)_{j\in\N}$ is a non-quasilinear sequence of rational functions. For all $m\in\N$, there exists $L\in\N$ such that for all $\ell\geq L$ and for all $p\in\C\butnot\SS_\ell$ we have $\#^*(T_\ell^0(p))\geq m$.
\end{lemma}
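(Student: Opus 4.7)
The plan is to proceed by induction on $m$. The base case $m=1$ is immediate with $L_1 = 0$, since every non-constant rational map is surjective, so $\#^*(T_\ell^0(p)) \geq 1$ for all $\ell$ and all $p$.

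For the inductive step, assume the lemma holds with constant $L_m$; I will produce $L_{m+1}$. The key idea is to decompose $T_0^\ell = T_{L_m}^\ell \circ T_0^{L_m}$. Writing $Q^*$ for the set of distinct points in the preimage multiset $T_\ell^{L_m}(p) \subseteq X_{L_m}$, and using that $T_0^{L_m}$-preimages of distinct points are disjoint, one has
\[
\#^*(T_\ell^0(p)) \;=\; \sum_{q \in Q^*} \#^*(T_{L_m}^0(q)).
\]
The inductive hypothesis yields $\#^*(T_{L_m}^0(q)) \geq m$ for each $q \in Q^* \butnot \SS_{L_m}$, while the trivial bound $\geq 1$ applies when $q \in Q^* \cap \SS_{L_m}$. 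Setting $a := |Q^* \butnot \SS_{L_m}|$ and $b := |Q^* \cap \SS_{L_m}|$, this gives $\#^*(T_\ell^0(p)) \geq ma + b$; by Remark \ref{remarkatmosttwo} applied to the shifted sequence $(T_{L_m+j})_j$, which is again non-quasilinear, $b \leq |\SS_{L_m}| \leq 2$.

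A short case analysis on $(a,b)$ shows $ma + b \geq m+1$ in all configurations except two. The first ``bad'' configuration is $|Q^*| = 2$ with $(a,b) = (0,2)$: here $Q^* = \SS_{L_m}$, so $p \in T_{L_m}^\ell(\SS_{L_m}) \subseteq \SS_\ell$ by forward invariance of the exceptional set (since $T_j(\SS_j) \subseteq \SS_{j+1}$ iteratively), contradicting $p \notin \SS_\ell$. The second and main obstacle is the case $|Q^*| = 1$: the unique preimage $q$ then satisfies $\mult_{T_{L_m}^\ell}(q) = \deg(T_{L_m}^\ell)$, i.e., $q \in \SS_{T_{L_m}^\ell}$, and the bound $ma+b$ is too weak to conclude on its own.

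To dispose of the $|Q^*|=1$ case, I use that the sets $\SS_{T_{L_m}^{L_m+k}}$ form a decreasing family of subsets of $X_{L_m}$ whose intersection over $k$ is, by definition, $\SS_{L_m}$. Since $(T_{L_m+j})_j$ is non-quasilinear, $T_{L_m}^{L_m+k}$ has degree $\geq 2$ for all sufficiently large $k$, at which point Remark \ref{remarkatmosttwo} gives $|\SS_{T_{L_m}^{L_m+k}}| \leq 2$; a decreasing chain of subsets of a two-element set then stabilizes after at most finitely many further steps. Taking $L_{m+1}$ to be $L_m$ plus this stabilization length forces $\SS_{T_{L_m}^\ell} = \SS_{L_m}$ whenever $\ell \geq L_{m+1}$, so that $q \in \SS_{L_m}$ and hence $p = T_{L_m}^\ell(q) \in \SS_\ell$, again a contradiction. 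Thus neither bad configuration can occur when $p \notin \SS_\ell$, and $\#^*(T_\ell^0(p)) \geq m+1$ in every remaining case, completing the induction.
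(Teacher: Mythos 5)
Your proof is correct, but takes a genuinely different route from the paper's.  The paper works directly with the sets $K_\ell := \{p\in\C:\#^*(T_\ell^0(p))<m\}$: it notes that $(K_\ell)_\ell$ is backward invariant, so $\ell\mapsto\#(K_\ell)$ is nonincreasing, and eventually finite (once $\deg(T_0^\ell)\geq m$, one has $K_\ell\subseteq\BP_{T_0^\ell}$); once the cardinality stabilizes, backward invariance of $K_\ell$ is upgraded to full invariance, which forces every point of $K_\ell$ to be totally ramified at every step, i.e.\ $K_\ell\subseteq\SS_\ell$.  That is a single global stabilization argument with no induction on $m$.  You instead carry out an induction on $m$, splitting the preimage count at the previously obtained $L_m$ via $\#^*(T_\ell^0(p))=\sum_{q\in Q^*}\#^*(T_{L_m}^0(q))$, and you handle the residual bad configurations ($|Q^*|=1$, or $Q^*\subseteq\SS_{L_m}$) by a second, smaller stabilization argument applied to the decreasing chain $\SS_{T_{L_m}^{L_m+k}}\searrow\SS_{L_m}$, together with the forward invariance $T_j(\SS_j)\subseteq\SS_{j+1}$.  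Both proofs ultimately rest on the same hard kernel --- a decreasing chain of sets of cardinality $\leq 2$ stabilizes --- but the paper applies it to the sets of deficient points $K_\ell$ whereas you apply it to the exceptional sets of the finite compositions.  Your version pays with the induction and the $(a,b)$ case analysis and gains a more explicit, layer-by-layer construction of the threshold $L$; the paper's version is shorter and avoids treating each $m$ separately.  One small point: for $m=1$ the configuration $(a,b)=(0,2)$ already gives $ma+b=2=m+1$, so it isn't actually ``bad'' there, but since your argument shows $a=0$ can never occur at all when $p\notin\SS_\ell$ this costs nothing.
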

\begin{proof}
The sequence of sets
\[
(K_\ell)_{\ell\in\N} := \left(\{p\in\C:\#^*(T_\ell^0(p)) < m\}\right)_{\ell\in\N}
\]
is backward invariant. Thus the function $\ell\mapsto\#(K_\ell)$ is nonincreasing; since $(T_j)_j$ is non-quasilinear this function is eventually finite, and therefore eventually constant, say for $\ell\geq L$. But then for all $\ell\geq L$, we have
\begin{align*}
T_{\ell + 1}^\ell(K_{\ell + 1}) &\implies K_\ell\\
\#^*(T_{\ell + 1}^\ell(K_{\ell + 1})) &\geq \#^*(K_{\ell + 1}) = \#^*(K_\ell)
\end{align*}
Thus $T_{\ell + 1}^\ell(K_{\ell + 1}) = K_\ell$, and $(K_\ell)_{\ell\geq L}$ is fully invariant. Thus every point in $K_\ell$ is totally ramified. Thus we have $K_\ell\implies \SS_\ell$, proving the lemma.
\end{proof}

We next want to rephrase Lemma \ref{lemmasix} in a quantitative way. To do this we will need a generalization of diameter:

For each (multi)set $K\implies\C$ and for each $m\in\N$ define the $m$-diameter of $K$, denoted $\diam_m(K)$, by
\begin{equation}
\label{mdiameter}
\diam_m(K)=\sup\left\{\min_{i\neq j}\dist_s(x_i,x_j):(x_i)_{i = 1}^m\in K^m\right\}.
\end{equation}
We will only use the $m$-diameter of the spherical metric. The case $m = 2$ gives the ordinary diameter. Note that if $K$ is compact, then the supremum in (\ref{mdiameter}) is actually achieved. Also note that

\begin{itemize}
\item[1)] $\diam_m(K)>0$ if and only if $\#^*(K)\geq m$
\item[2)] $m_1\leq m_2$ implies $\diam_{m_2}(K)\leq\diam_{m_1}(K)$
\item[3)] $K_1\implies K_2$ implies $\diam_m(K_1)\leq\diam_m(K_2)$
\item[4)] $\diam_m:2^{\C}\rightarrow\R$ is Lipschitz continuous with a corresponding constant of $2$, if $2^{\C}$ has the Hausdorff metric and $\R$ has the standard metric
\end{itemize}

\begin{corollary}
\label{corollarysixquantitative}
Let $(T,\Omega,\basemeasure,\theta)$ be a nonlinear holomorphic random dynamical system on $\C$. For all $m\in\N$ and for all $\varepsilon,\kappa > 0$ there exist $\ell\in\N$ and $\delta > 0$ so that the following event is true with probability at least $1 - \varepsilon$:
\begin{quote}
\begin{event}
\label{eventsixquantitative}
For all $p\in\C\butnot B_s(\SS_\ell,\kappa)$, we have $\diam_m(T_\ell^0(p))\geq \delta$.
\end{event}
\end{quote}
\end{corollary}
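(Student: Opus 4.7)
The plan is to upgrade the qualitative pointwise conclusion of Lemma~\ref{lemmasix} to a uniform quantitative bound over compact sets of $p$, and then to pick deterministic parameters $\ell$ and $\delta$ by two successive applications of continuity of measure.

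By Remark~\ref{remarkpoincare} the sequence $(T_j)_j$ is almost surely non-quasilinear, so Lemma~\ref{lemmasix} yields a random variable $L(\omega)\in\N\cup\{\infty\}$, namely the least witness of that lemma for the given $m$, which is finite almost surely. Fix $\omega$ with $L(\omega)<\infty$ and any $\ell\geq L(\omega)$. Lemma~\ref{lemmasix} gives $\#^*(T_\ell^0(p))\geq m$ for every $p\in\C\butnot\SS_\ell$, and hence by property~(1) of the $m$-diameter the function $p\mapsto\diam_m(T_\ell^0(p))$ is strictly positive on $\C\butnot\SS_\ell$. Since the support of $T_\ell^0(p)=(T_0^\ell)^{-1}(p)$ depends continuously on $p$ in the Hausdorff metric (the zeros of a polynomial vary continuously in its coefficients) and $\diam_m$ is insensitive to multiplicity and Lipschitz continuous in the Hausdorff metric by property~(4), this function is continuous. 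On the compact subset $\C\butnot B_s(\SS_\ell,\kappa)\implies\C\butnot\SS_\ell$ it therefore attains a positive minimum
\[
\delta_\ell(\omega):=\min_{p\in\C\butnot B_s(\SS_\ell,\kappa)}\diam_m(T_\ell^0(p))>0.
\]

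To obtain deterministic parameters, I observe that the events $\{L\leq\ell_0\}$ are measurable (via the Effros measurability of $\omega\mapsto\SS_\ell(\omega)$ from Remark~\ref{remarkmeasurable} together with the selection tools of Section~\ref{sectionappendix}), increasing in $\ell_0$, and together exhaust a probability-one event. Continuity of measure yields $\ell_0\in\N$ with $\pr(L\leq\ell_0)\geq 1-\varepsilon/2$; set $\ell:=\ell_0$. On $\{L\leq\ell_0\}$ we have $\delta_\ell(\omega)>0$ by the previous step, so the events $\{\delta_\ell\geq\delta\}$ increase as $\delta\downarrow 0$ to an event of probability at least $1-\varepsilon/2$. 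A second application of continuity of measure produces $\delta>0$ with $\pr(\delta_\ell\geq\delta)\geq 1-\varepsilon$, which is exactly Event~\ref{eventsixquantitative}.

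The main obstacle is not conceptual but the careful verification that $L$ and $\delta_\ell$ are genuinely measurable random variables, so that continuity of measure can be legitimately invoked; this is routine given the Effros-measurability framework of the Appendix applied to the parameter-dependence of rational-map preimages and of the continuous functional $\diam_m$.
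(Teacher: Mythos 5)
Your proof is correct and follows essentially the same route as the paper's: Lemma~\ref{lemmasix} plus Remark~\ref{remarkpoincare} to get an almost-surely finite $L$, then compactness of $\C\butnot B_s(\SS_\ell,\kappa)$ and continuity of $p\mapsto\diam_m(T_\ell^0(p))$ to get a positive minimum, then continuity of measure to extract deterministic $\ell$ and $\delta$. The only difference is cosmetic — the paper compresses the two continuity-of-measure steps into the one-line reduction ``it is enough to show'' the corresponding a.s.\ event, which you have sensibly unpacked.
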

\begin{proof}
It is enough to show: 
\[
\pr(\exists L\in\N\text{ such that }\forall \ell\geq L, \exists\delta > 0\text{ such that Event \ref{eventsixquantitative} is satisfied}) = 1.
\]
Fix $\omega\in\Omega$ and assume that $(T_j)_j$ is not quasilinear; by Remark \ref{remarkpoincare}, this assumption is almost certainly valid. By Lemma \ref{lemmasix}, there exists $L\in\N$ such that for all $\ell\geq L$ and for all $p\in\C\butnot\SS_\ell$, we have $\#^*(T_\ell^0(p))\geq m$. Fix $\ell\geq L$. We have $\diam_m(T_\ell^0(p)) > 0$ for all $p\in\C\butnot\SS_\ell$. Since $\C\butnot B_s(\SS_\ell,\kappa)$ is compact and since $p\mapsto \diam_m(T_\ell^0(p))$ is continuous, there exists $\delta > 0$ such that $\diam_m(T_\ell^0(p)) \geq \delta$ for all $p\in\C\butnot B_s(\SS_\ell,\kappa)$. Thus we are done.
\end{proof}

We now are ready to give a precise meaning to the ``largeness'' quality of an open set $U$ discussed in the paragraph preceding Lemma \ref{lemmasix}. For each $\delta>0$, we define the set $\G_\delta\implies 2^{\C}$ by
\[
\G_\delta := \{U\implies\C\text{ open}: \diam_3(\C\butnot U) \geq \delta\}
\]
Open sets $U\in \G_\delta$ we consider ``small''. The set $\G_\delta$ has two important properties, which can be stated deterministically:

\begin{lemma}
\label{lemmaspillover}
Fix $\delta > 0$ and $K\implies\C$, and suppose that $T$ is a rational map. If $\diam_3(T^{-1}(p))\geq\delta$ for all $p\in K$, then for all $U\notin\G_\delta$ we have $T(U)\supseteq K$.
\end{lemma}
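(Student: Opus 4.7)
The plan is to prove the contrapositive via a direct monotonicity argument on $\diam_3$. Suppose some point $p\in K$ fails to lie in $T(U)$; I want to derive a contradiction with the assumption $U\notin\G_\delta$, i.e., with $\diam_3(\C\butnot U)<\delta$.

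First I would observe that $p\notin T(U)$ forces every preimage of $p$ to lie outside $U$, i.e., $T^{-1}(p)\implies \C\butnot U$ (as multisets this is fine, because the underlying set of $T^{-1}(p)$ is disjoint from $U$). Then I would apply property (3) of the $m$-diameter listed immediately before the lemma (monotonicity under inclusion): this yields
\[
\diam_3(T^{-1}(p)) \leq \diam_3(\C\butnot U).
\]
By hypothesis the left side is at least $\delta$, while by assumption $U\notin\G_\delta$ the right side is strictly less than $\delta$, a contradiction. Hence every $p\in K$ lies in $T(U)$, i.e., $K\implies T(U)$.

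The only subtle point worth checking is that the $m$-diameter behaves correctly when passing from the multiset $T^{-1}(p)$ to the underlying set: the definition (\ref{mdiameter}) uses $\#^*$-type witnesses $(x_i)_{i=1}^m$ consisting of distinct points, so $\diam_m$ of a multiset coincides with $\diam_m$ of its support, and monotonicity under inclusion of supports is immediate. No real obstacle arises; the lemma is essentially a tautology once one unwinds the definitions of $\G_\delta$ and $\diam_3$.
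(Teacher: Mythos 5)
Your proof is correct and follows essentially the same route as the paper: argue by contradiction (equivalently, the contrapositive), note that $p\notin T(U)$ forces $T^{-1}(p)\implies\C\butnot U$, and invoke monotonicity of $\diam_3$ to contradict $U\notin\G_\delta$. The extra remark about multisets versus their supports is a harmless clarification and does not change the argument.
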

\begin{lemma}
\label{lemmajuliainfinite}
Fix $\delta > 0$, and suppose that $(T_j)_{j\in\N}$ is a sequence of rational maps. If $T_n(U)\in \G_\delta$ for all $n\in\N$, then $(T_n\on U)_{n\in\N}$ is a normal family.
\end{lemma}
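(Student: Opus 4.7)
The plan is to deduce the conclusion from the classical Montel theorem (a family of meromorphic functions on $U$ omitting three common values on $\C$ is a normal family), via a normalization argument that handles the fact that the three omitted values depend on $n$.

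First I would unpack the hypothesis: since $T_n(U)\in\G_\delta$, the complement $\C\butnot T_n(U)$ contains three points $p_1^{(n)}, p_2^{(n)}, p_3^{(n)}$ with pairwise spherical distances at least $\delta$ (here I'm using the stated form of the lemma, noting that the argument is identical if $T_n$ is replaced by the pseudo-iterate $T_0^n$, since what matters is only that each map in the family omits three well-separated values). To prove normality it suffices to show that every subsequence of $(T_n\on U)_n$ admits a further subsequence converging locally uniformly on $U$ in the spherical metric. Fix a subsequence. By compactness of $\C^3$ and a diagonal argument, I would pass to a further subsequence $(n_k)_k$ such that $p_i^{(n_k)}\tendstok p_i$ for $i = 1,2,3$; the lower bound $\delta > 0$ on pairwise distances is preserved in the limit, so $p_1,p_2,p_3$ are distinct points of $\C$.

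Next I would choose, for each $k$, a M\"obius transformation $\phi_k:\C\rightarrow\C$ sending the triple $(p_1^{(n_k)}, p_2^{(n_k)}, p_3^{(n_k)})$ to the fixed triple $(p_1,p_2,p_3)$. Since the parameters determining $\phi_k$ depend continuously on the input triple (which stays bounded away from configurations with two coincident points, by the separation $\delta$), and since the input triples converge to $(p_1,p_2,p_3)$, we get $\phi_k\tendstok\id_\C$ uniformly on $\C$, and likewise $\phi_k^{-1}\tendstok\id_\C$ uniformly. Now the composed family $(\phi_k\circ T_{n_k}\on U)_k$ consists of meromorphic maps on $U$ each omitting the \emph{same} three values $p_1,p_2,p_3$. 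By the classical Montel theorem, this composed family is normal on $U$, so a further subsequence converges locally uniformly on $U$. Composing back with $\phi_k^{-1}$, which converges uniformly to the identity, the corresponding subsequence of $(T_{n_k}\on U)_k$ also converges locally uniformly on $U$. This gives normality.

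The only genuine content is the reduction from varying omitted values to fixed omitted values; the hard part of the argument is really just organizing the M\"obius normalization so that the uniform convergence $\phi_k\to\id$ lets us transfer normality back and forth. Everything else is a direct invocation of the classical three-point Montel theorem and the compactness of the sphere. In particular no dynamical or thermodynamic input is needed at this stage.
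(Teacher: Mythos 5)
Your proof is correct and follows essentially the same strategy as the paper: use the $\delta$-separation to extract three omitted values, apply a M\"obius normalization so the omitted values become fixed, invoke the classical three-point Montel theorem, then undo the normalization. The only packaging difference is that the paper normalizes every map at once to the fixed triple $(0,1,\infty)$ and controls the normalizers by noting they form a compact, hence normal, family (so that ``composition of normal families is normal'' finishes), whereas you pass to a subsequence first so that the normalizers converge uniformly to the identity; both work, and the underlying idea is the same.
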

\begin{proof}[Proof of Lemma \ref{lemmaspillover}:]
By contradiction, suppose that there exists $p\in K\butnot T(U)$. Then $T^{-1}(p)\implies \C\butnot U$. Thus 
\begin{equation*}
\diam_3(\C\butnot U) \geq \diam_3(T^{-1}(p)) \geq \delta.
\end{equation*}
i.e. $U\in\G_\delta$, contradicting our hypothesis.
\end{proof}
\begin{proof}[Proof of Lemma \ref{lemmajuliainfinite}:]
For each $n\in\N$, we have $\diam_3(\C\butnot T_n(U))\geq\delta$. Since $\C\butnot T_n(U)$ is compact, there exist points $(p_i^{(n)})_{i=0}^2$ in $\C\butnot T_n(U)$ which are $\delta$-separated.

Let $a_0=0$, $a_1=1$, $a_2=\infty$, and let $\pi:\{(x_i)_{i=0}^2\in\C^3:x_i\text{ are distinct}\}\rightarrow\RR_1$ be the map which sends each triple $(x_i)_{i=0}^2$ to the unique M\"obius transformation $\phi$ such that $\phi(a_i)=x_i$ for each $i=0,1,2$. Now let $K=\{(x_i)_{i=0}^2\in\C^3:x_i\text{ are $\delta$-separated}\}$; $K$ is compact. Since $\pi$ is an algebraic morphism, it follows that $\pi$ is continuous. Thus $\K:=\pi(K)$ is compact.

For each $n\in\N$ let $\phi_n=\pi(p_i^{(n)})_{i=0}^2$. Then $\phi_n\in\K$ for all $n\in\N$; thus $(\phi_n)_{n\in\N}$ is a normal family. Now consider the family $(S_n)_{n\in\N}:=(\phi_n^{-1}\circ T_n)_{n\in\N}$. Note that $S_n(U)\implies\complexplane\butnot\{0,1\}$. By Montel's theorem, $(S_n\on U)_{n\in\N}$ is a normal family. Since the composition of two normal families is normal, it follows that $(T_n\on U)_{n\in\N}$ is a normal family.
\end{proof}

We are now ready to prove

\begin{theorem}
\label{theoremexact}
Suppose that $(T,\Omega,\basemeasure,\theta)$ is nonlinear and nonsingular. Then the following event is almost certain to occur:
\begin{quote}
\begin{event}
\label{eventexact}
Fix $\kappa,\delta > 0$. Then there exist $N\in\N$ and $x\in \JJ_0$ such that $T_0^n(B_s(x,\delta))\supseteq\C\butnot B_s(\SS_n,\kappa)$ and $T_0^n(B_s(x,\delta))\supseteq \JJ_n$ for all $n\geq N$.
\end{event}
\end{quote}
\end{theorem}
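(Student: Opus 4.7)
By Remark \ref{remarkpoincare} and the proof of Proposition \ref{propositionnonsingular}, $(T_j)_j$ is almost surely non-quasilinear and nonsingular; restrict attention to this full-measure event. Then $\SS_0$ is finite of cardinality at most $2$ (Remark \ref{remarkatmosttwo}), $\SS_0 \implies \FF_0$, and $\JJ_0\neq\emptyset$ (Remark \ref{remarknonempty}). Since the conclusion of Event \ref{eventexact} is monotone in $\kappa$ (smaller $\kappa$ is stronger), we may shrink $\kappa$ freely throughout. The plan is to combine the preimage-spread estimate of Corollary \ref{corollarysixquantitative}, the spillover principle of Lemma \ref{lemmaspillover}, and the normality characterization of Lemma \ref{lemmajuliainfinite} to obtain an initial ``spillover,'' and then to use Lemma \ref{lemmafatoucompact} to propagate it to all large times.

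Fix $\kappa' \in (0,\kappa)$ small enough that $B_s(\SS_0,\kappa') \Kin \FF_0$, and apply Corollary \ref{corollarysixquantitative} with $m = 3$, a small $\varepsilon > 0$, and parameter $\kappa'$ to obtain $\ell_0 \in \N$ and $\delta_1 > 0$ such that the event
\[
\mathcal{E} := \{\omega : \forall p \in \C \butnot B_s(\SS_{\ell_0},\kappa'),\;\diam_3((T_0^{\ell_0})^{-1}(p)) \geq \delta_1\}
\]
has probability at least $1-\varepsilon$. Since the preimages of a totally ramified point of degree $d \geq 2$ spread at rate $\kappa'^{1/d} \gg \kappa'$ as $\kappa'\to 0$, we may shrink $\kappa'$ further to arrange $2\kappa' < \delta_1$. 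Now pick $x \in \JJ_0$ and set $V_n := T_0^n(B_s(x,\delta))$. Because $x\in\JJ_0$, every tail of $(T_0^n\on_{B_s(x,\delta)})_n$ is non-normal, so the contrapositive of Lemma \ref{lemmajuliainfinite} shows that $A := \{n : V_n \notin \G_{\delta_1}\}$ is unbounded. By translation invariance and Lemma \ref{lemmapoincare}, the shifted event $\theta^{-m}\mathcal{E}$ holds for infinitely many $m$ almost surely. A zero-one-law argument (the event ``$\exists n_0$ with $V_{n_0}\notin\G_{\delta_1}$ and $\theta^{n_0}\omega\in\mathcal{E}$'' is essentially $\theta$-invariant, and has positive probability by an inclusion-exclusion bound together with the fact that $\pr(V_n \notin \G_{\delta_1})$ is eventually bounded below) then produces $n_0$ satisfying both conditions. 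Lemma \ref{lemmaspillover} applied to $T = T_{n_0}^{n_0+\ell_0}$, $U = V_{n_0}$, $K = \C \butnot B_s(\SS_{n_0+\ell_0},\kappa')$ now yields the initial spillover
\[
V_{n_0+\ell_0} \supseteq \C \butnot B_s(\SS_{n_0+\ell_0},\kappa').
\]

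To propagate the spillover to all large $n$, set $n_* := n_0 + \ell_0$ and apply Lemma \ref{lemmafatoucompact} to the shifted sequence $(T_{n_*+j})_j$ with input neighborhood $B_s(\SS_{n_*},\kappa') \Kin \FF_{n_*}$ and target radius $\kappa$: there exists $M \in \N$ with $T_{n_*}^{n_*+m}(B_s(\SS_{n_*},\kappa')) \implies B_s(\SS_{n_*+m},\kappa)$ for all $m \geq M$. Since $\C \butnot V_{n_*} \implies B_s(\SS_{n_*},\kappa')$ and $\C \butnot V_n \implies T_{n_*}^n(\C \butnot V_{n_*})$ (an elementary consequence of surjectivity of $T_{n_*}^n$), we obtain $\C \butnot V_n \implies B_s(\SS_n,\kappa)$ for all $n \geq N := n_* + M$, which is the first inclusion. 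For the $\JJ_n$ containment, re-run the Lemma \ref{lemmaspillover} argument with $K = \JJ_n$ and $T = T_{n_0}^n$: preimages of Julia points lie in $\JJ_{n_0}$ by full invariance (Remark \ref{remarkinvariant}), which is disjoint from $\SS_{n_0}$ by nonsingularity, so a compactness argument analogous to that in the proof of Corollary \ref{corollarysixquantitative} controls the $3$-diameter of preimages on $\JJ_n$, whence some preimage of each $p \in \JJ_n$ lies in $V_{n_0}$ and hence $p \in V_n$. The main obstacle is the coordination step in the second paragraph: coupling the deterministic unboundedness of $A$ (coming from non-normality at $x$) with the probabilistic recurrence of $\mathcal{E}$ (coming from Corollary \ref{corollarysixquantitative}) requires the ergodic structure of $(\Omega,\basemeasure,\theta)$ in an essential way, since $A$ need not have positive density and could in principle be disjoint from any given positive-density set.
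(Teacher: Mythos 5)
Your proof runs into a genuine gap at the step you yourself flag at the end, and that gap is not an optional annoyance to be smoothed over later---it is exactly where the missing idea lives. You fix $x\in\JJ_0$, observe that the set $A := \{n : V_n\notin\G_{\delta_1}\}$ is unbounded (correct, via the contrapositive of Lemma \ref{lemmajuliainfinite}), and then attempt to coordinate that $\omega$- and $x$-dependent set with the recurrence times of $\mathcal{E}$. The ``zero-one-law argument'' offered for this step does not work: the event ``$\exists n_0$ with $V_{n_0}\notin\G_{\delta_1}$ and $\theta^{n_0}\omega\in\mathcal{E}$'' is not $\theta$-invariant, because $V_n$ is anchored to time $0$ and to a point $x$ that depends measurably on $\omega$, and there is no justification offered that $\pr(V_n\notin\G_{\delta_1})$ is ``eventually bounded below'' (unboundedness of $A$ for each $\omega$ says $\pr(\bigcup_{n\geq N}\{V_n\notin\G_{\delta_1}\})=1$ for all $N$, which bounds no single term). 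Worse, the underlying approach of fixing one $x\in\JJ_0$ cannot be saved by a sharper ergodic argument: $x\in\JJ_0$ only guarantees that each \emph{tail} of $(T_0^n\on B_s(x,\delta))_n$ is non-normal, not that no subsequence is normal. So even knowing the precise recurrence times $(n_k)$ for $\mathcal{E}$, there is no a priori obstruction to $(T_0^{n_k}\on B_s(x,\delta))_k$ being a normal family---that is the entire reason the paper introduces the uniform Julia set $\JJ_0'$ in the first place, and its nonemptiness (Lemma \ref{lemmauniformnonempty}) is a \emph{consequence} of the theorem you are proving, not a tool you may use here.

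The paper's proof sidesteps the coordination problem by refusing to fix $x$. It first conditions on the recurrence times $(n_k)$ being given (via Lemma \ref{lemmapoincare}, together with $\dist_s(\JJ_{n_k},\SS_{n_k}) > \kappa$), and then argues by contradiction: if for \emph{every} $x\in\JJ_0$ and \emph{every} $k$ the spillover fails, then Lemma \ref{lemmaspillover} forces $T_0^{n_k}(B_s(x,\delta))\in\G_{\delta_2}$ for all $k$, so by Lemma \ref{lemmajuliainfinite} the subsequence $(T_0^{n_k}\on B_s(x,\delta))_k$ is normal for \emph{every} Julia point $x$; combined with automatic normality of this subsequence on the Fatou set, the local nature of normality forces $(T_0^{n_k})_k$ to be normal on all of $\C$, contradicting Remark \ref{remarknonempty} (non-quasilinearity implies $\deg(T_0^{n_k})\to\infty$). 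This is the idea you are missing: quantify over all $x$ so that failure yields global normality of a subsequence, which is self-contradictory, rather than trying to intersect a fixed but sparse non-normality set with a recurrence set.

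A smaller issue: for the $\JJ_n$ containment in the propagation step, there is no need to re-run Lemma \ref{lemmaspillover} with $K=\JJ_n$ or invoke a ``compactness argument analogous to Corollary \ref{corollarysixquantitative}.'' Once $\C\butnot V_{n_*} \implies B_s(\SS_{n_*},\kappa)\Kin\FF_{n_*}$, full invariance of the Fatou set (Remark \ref{remarkinvariant}) immediately gives $\C\butnot V_n \implies T_{n_*}^n(\C\butnot V_{n_*}) \implies T_{n_*}^n(\FF_{n_*}) = \FF_n$, i.e.\ $\JJ_n \implies V_n$ for all $n\geq n_*$. Lemma \ref{lemmafatoucompact} is then only needed for the first inclusion.
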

\begin{proof}
Fix $\kappa > 0$ small enough so that $\pr(\dist_s(\JJ_0,\SS_0) > \kappa)\geq 2/3$. By Corollary \ref{corollarysixquantitative}, there exist $\delta_2 > 0$ and $\ell\in\N$ such that the probability of Event \ref{eventsixquantitative} occurs with $m = 3$ and $\delta = \delta_2$ is at least $2/3$. Fix $\omega\in\Omega$ and assume that $(T_j)_j$ non-quasilinear and that there exists an increasing sequence $(n_k)_{k\in\N}$ such that for all $k\in\N$,
\begin{itemize}
\item Event \ref{eventsixquantitative} occurs for $\theta^{n_k}\omega$ and $\delta = \delta_2$
\item $\dist_s(\JJ_{n_k},\SS_{n_k}) > \kappa$
\end{itemize}
By Lemma \ref{lemmapoincare}, this assumption is almost certainly valid.

By contradiction, suppose that $T_0^{n_k + \ell}(B_s(x,\delta))\nsupseteq \C\butnot B_s(\SS_{n_k + \ell},\kappa)$ for all $k\in\N$ and for all $x\in \JJ_0$. Fix $x\in\C$.
\begin{itemize}
\item If $x\in \JJ_0$, then by Lemma \ref{lemmaspillover}, $T_0^{n_k}(B_s(x,\delta))\in \G_{\delta_2}$ for all $k\in\N$. By Lemma \ref{lemmajuliainfinite}, $(T_0^{n_k}\on B_s(x,\delta))_k$ is a normal family.
\item If $x\in \FF_0$, then there exists a neighborhood $U_x$ of $x$ such that the sequence $(T_0^n\on U_x)_n$ is a normal family; in particular the subsequence $(T_0^{n_k}\on U_x)_k$ is a normal family.
\end{itemize}
Since normality is a local property, it follows that $(T_0^{n_k})_{k\in\N}$ is a normal family. But this is impossible by Proposition \ref{remarknonempty}.

Thus there exists $k\in\N$ and $x\in \JJ_0$ so that $T_0^{n_k + \ell}(B_s(x,\delta))\supseteq \C\butnot B_s(\SS_{n_k + \ell},\kappa)$. Now
\begin{equation*}
\C\butnot T_0^{n_k + \ell}(B_s(x,\delta))
\implies B_s(\SS_{n_k + \ell},\kappa)
\Kin \FF_0.
\end{equation*}
Thus by Lemma \ref{lemmafatoucompact}, we have that Event \ref{eventexact} occurs almost certainly for fixed $\delta$, $\kappa$ sufficiently small. Since $\delta$ and $\kappa$ can be quantified countably, we are done.
\end{proof}

As a corollary, we obtain that the uniform Julia set is nonempty:

\begin{lemma}
\label{lemmauniformnonempty}
Event \ref{eventexact} implies that $\JJ_0'\neq\emptyset$, assuming $(T_j)_j$ is not quasilinear.
\end{lemma}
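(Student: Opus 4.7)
Since $(T_j)_j$ is non-quasilinear, Remark \ref{remarknonempty} gives $\JJ_0 \neq \emptyset$. First I would apply Event \ref{eventexact} for each $k \in \N$ with parameters $\kappa = \delta = 1/k$ to produce points $x_k \in \JJ_0$ and integers $N_k \in \N$ satisfying $T_0^n(B_s(x_k, 1/k)) \supseteq \C \butnot B_s(\SS_n, 1/k)$ for all $n \geq N_k$. Since $\JJ_0$ is a closed subset of the compact sphere $\C$, I extract a convergent subsequence $x_k \to x_\infty \in \JJ_0$ (re-indexing silently). The claim is that $x_\infty \in \JJ_0'$.

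Suppose for contradiction that $x_\infty \notin \JJ_0'$. Then by definition there exist a neighborhood $V$ of $x_\infty$ and an increasing sequence $(n_i)_i$ such that $(T_0^{n_i} \on V)_i$ is a normal family; pass to a sub-subsequence (still indexed by $i$) so that $T_0^{n_i} \to T_\infty$ locally uniformly on $V$ for some meromorphic $T_\infty : V \to \C$ (possibly $T_\infty \equiv \infty$). Using continuity of $T_\infty$ at $x_\infty$, I choose $r > 0$ small enough that $\cl{B_s}(x_\infty, r) \Kin V$ and the compact set $K' := T_\infty(\cl{B_s}(x_\infty, r))$ is contained in a small spherical ball around $T_\infty(x_\infty)$; in particular $K' \subsetneq \C$.

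Since $\C \butnot K'$ is non-empty and open, it contains three $\eta$-separated points $p_1, p_2, p_3$ lying at spherical distance at least $\eta'$ from $K'$, for some constants $\eta, \eta' > 0$. By uniform convergence, for all sufficiently large $i$ we have $T_0^{n_i}(\cl{B_s}(x_\infty, r)) \implies B_s(K', \eta'/2)$, so this image is disjoint from $B_s(p_j, \eta'/2)$ for each $j \in \{1,2,3\}$. Now I pick $k$ so large that $1/k < \min(\eta/2, \eta'/2)$ and $B_s(x_k, 1/k) \implies \cl{B_s}(x_\infty, r)$ (possible since $x_k \to x_\infty$). Then for $i$ with $n_i \geq N_k$, Event \ref{eventexact} applied at $n = n_i$ gives $T_0^{n_i}(B_s(x_k, 1/k)) \supseteq \C \butnot B_s(\SS_{n_i}, 1/k)$. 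Since $\#(\SS_{n_i}) \leq 2$ by Remark \ref{remarkatmosttwo}, the set $B_s(\SS_{n_i}, 1/k)$ consists of at most two balls of diameter $2/k < \eta$, each of which contains at most one of the $\eta$-separated points $p_j$. Hence some $p_j$ lies in $T_0^{n_i}(B_s(x_k, 1/k)) \implies T_0^{n_i}(\cl{B_s}(x_\infty, r))$, contradicting the disjointness from $B_s(p_j, \eta'/2)$.

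The main obstacle is arranging $K' \subsetneq \C$: local uniform convergence of $T_0^{n_i}$ on $V$ does not directly bound the size of its image on a fixed compact subball, so one must use the freedom to shrink $r$ \emph{after} selecting the limit $T_\infty$, exploiting continuity at $x_\infty$. The remainder is a pigeonhole argument powered by the uniform bound $\#(\SS_n) \leq 2$, which guarantees that any three suitably separated witness points cannot all be hidden inside the thin neighborhood of the random exceptional set.
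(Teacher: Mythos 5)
Your proof is correct, and it takes a genuinely different route from the paper's. The paper invokes the second conclusion of Event~\ref{eventexact} ($T_0^n(B_s(x,\delta))\supseteq\JJ_n$ with $\kappa=1$ fixed); you invoke only the first ($T_0^n(B_s(x,\delta))\supseteq\C\butnot B_s(\SS_n,\kappa)$ with $\kappa\to 0$). From there the strategies diverge. The paper takes a cluster point $x$ of the $x_k$, and from putative normality of $(T_0^{n_i}\on U)_i$ deduces normality of $(T_n^{n_i}\on T_0^n(U))_i$; since $T_0^n(U)\supseteq\JJ_n$ this makes $(T_n^{n_i})_i$ normal on $\FF_n\cup\JJ_n=\C$, contradicting Remark~\ref{remarknonempty}. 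You instead extract a locally uniform sublimit $T_\infty$ on a neighborhood of $x_\infty$, shrink $r$ so that $T_\infty(\cl{B_s}(x_\infty,r))$ lies in a small ball, and run a three-point pigeonhole argument against the near-surjectivity of $T_0^{n_i}$ on the shrinking balls $B_s(x_k,1/k)$, with $\#(\SS_n)\leq 2$ (Remark~\ref{remarkatmosttwo}) preventing the exceptional-set neighborhoods from swallowing all three separated witness points. Your route sidesteps the paper's unstated normality-transfer step --- that normality of $(T_n^{n_i}\circ T_0^n)_i$ on $U$ forces normality of $(T_n^{n_i})_i$ on the open image $T_0^n(U)$ --- which is true but needs a small argument via local inverse branches and removability at critical values; in exchange you use the quantitative bound $\#(\SS_n)\leq 2$ and a small-image estimate for the sublimit, neither of which the paper's version touches. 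Both are sound; yours is more hands-on and self-contained on this point, while the paper's is shorter but tacitly leans on the normality-transfer fact.
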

\begin{proof}
Let $\kappa = 1 > 0$. For each $k\in\N$, there exist $N_k\in\N$ and $x_k\in\JJ_0$ satisfying $T_0^n(B_s(x_k,2^{-k}))\supseteq \JJ_n$ for all $n\geq N_k$. Let $x\in\JJ_0$ be a cluster point of the sequence $(x_k)_k$; we claim that $x\in\JJ_0'$. If $U$ is any neighborhood of $x$, then there exists $k\in\N$ so that $B_s(x_k,2^{-k})\implies U$, and thus $T_0^n(U)\supseteq \JJ_n$, where $n = N_k$. By contradiction, suppose that there exists an increasing sequence $(n_i)_i$ such that $(T_0^{n_i}\on U)_i$ is a normal family. Then $(T_n^{n_i}\on T_0^n(U))_i$ is also a normal family. But then $(T_n^{n_i})_i$ is normal both on $\FF_n$ and $\JJ_n$, contradicting Remark \ref{remarknonempty}. Thus there exists no such sequence $(n_i)_i$, and $x\in\JJ_0'$.
\end{proof}

We now go on to prove our main results about the uniform Julia set:

\begin{proposition}
\label{propositionexactUJ}
Suppose that $(T,\Omega,\basemeasure,\theta)$ is nonlinear and nonsingular. Then the following event is almost certain to occur:
\begin{quote}
\begin{event}
\label{eventexactUJ}
Suppose $U\implies\C$ is open with $U\cap \JJ_0'\neq\emptyset$. Then there exists $n\in\N$ such that $T_0^n(U)\supseteq \JJ_n\supseteq\JJ_n'$.
\end{event}
\end{quote}
\end{proposition}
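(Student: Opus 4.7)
The plan is to mimic the argument of Theorem \ref{theoremexact}, replacing the particular ball $B_s(x,\delta)$ constructed there by the arbitrary open set $U$. The crucial new input is that any point $x \in U \cap \JJ_0'$ supplies, by the very definition of the uniform Julia set, the non-normality along any prescribed subsequence that Theorem \ref{theoremexact} had to obtain by contradiction; this lets us run the argument \emph{forward} rather than by contradiction.

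First I would restrict to a full-measure event on which the following hold for $\omega$: (i) $(T_j)_j$ is non-quasilinear and nonsingular; (ii) for every rational $\kappa > 0$, with $\ell = \ell(\kappa) \in \N$ and $\delta_2 = \delta_2(\kappa) > 0$ the parameters furnished by Corollary \ref{corollarysixquantitative} for $m = 3$, there exists an infinite sequence $(n_k)_{k \in \N}$ of positive integers along which both Event \ref{eventsixquantitative} at $\theta^{n_k}\omega$ and the inequality $\dist_s(\JJ_{n_k},\SS_{n_k}) > \kappa$ hold. The latter is assembled by countable quantification over rational $\kappa$ together with Lemma \ref{lemmapoincare}, exactly as in the proof of Theorem \ref{theoremexact}.

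Now fix such an $\omega$ and an open $U$ with $U \cap \JJ_0' \neq \emptyset$. Pick $x \in U \cap \JJ_0'$, choose $\kappa > 0$ small enough that $B_s(\SS_0,\kappa) \Kin \FF_0$, and let $\ell,\delta_2$ and $(n_k)_k$ be as in (ii). By definition of $\JJ_0'$ no subsequence of $(T_0^n \on U)_n$ is normal, so in particular $(T_0^{n_k} \on U)_k$ is not a normal family. The contrapositive of Lemma \ref{lemmajuliainfinite} then furnishes some $k$ with $T_0^{n_k}(U) \notin \G_{\delta_2}$. Event \ref{eventsixquantitative} at $\theta^{n_k}\omega$ verifies the hypothesis of Lemma \ref{lemmaspillover} applied to $T = T_{n_k}^{n_k+\ell}$ and $K = \C \butnot B_s(\SS_{n_k+\ell},\kappa)$, and applying that lemma directly yields
\[
T_0^{n_k+\ell}(U) = T_{n_k}^{n_k+\ell}\bigl(T_0^{n_k}(U)\bigr) \supseteq \C \butnot B_s(\SS_{n_k+\ell},\kappa).
\]

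Finally, I would apply Lemma \ref{lemmafatoucompact} at time $n_k + \ell$ with target radius $\kappa_2 < \kappa$: it produces $N \in \N$ such that $T_{n_k+\ell}^m(B_s(\SS_{n_k+\ell},\kappa)) \implies B_s(\SS_m,\kappa_2)$ for all $m \geq n_k + \ell + N$. Taking $m$ to be a sufficiently late term of the good sequence in (ii) beyond this threshold (so that $\dist_s(\JJ_m,\SS_m) > \kappa > \kappa_2$), surjectivity of $T_{n_k+\ell}^m$ combined with the preceding display yields
\[
T_0^m(U) \supseteq \C \butnot B_s(\SS_m,\kappa_2) \supseteq \JJ_m \supseteq \JJ_m',
\]
the last inclusion being immediate from the definition of the uniform Julia set. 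The main bookkeeping obstacle is coordinating $\kappa,\delta_2,\kappa_2$ inside a single almost-sure event that works uniformly in $U$; this is handled by the countable quantification in (ii), exactly as in Theorem \ref{theoremexact}.
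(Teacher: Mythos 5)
Your overall strategy is the same as the paper's: set up an almost-sure event (via Corollary~\ref{corollarysixquantitative} and Lemma~\ref{lemmapoincare}) providing an increasing sequence of good times, use the definition of $\JJ_0'$ to get non-normality along that sequence, invoke the contrapositive of Lemma~\ref{lemmajuliainfinite} to get a non-small iterate, and then apply Lemma~\ref{lemmaspillover}. Up through the display $T_0^{n_k+\ell}(U)\supseteq\C\butnot B_s(\SS_{n_k+\ell},\kappa)$, the argument is fine. However, there is a concrete gap in your final paragraph.

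You then invoke Lemma~\ref{lemmafatoucompact} ``at time $n_k+\ell$,'' but that lemma requires $B_s(\SS_{n_k+\ell},\kappa)\Kin\FF_{n_k+\ell}$, which amounts to the condition $\dist_s(\JJ_{n_k+\ell},\SS_{n_k+\ell})>\kappa$. Your event~(ii) only guarantees $\dist_s(\JJ_{n_k},\SS_{n_k})>\kappa$ at the times $n_k$, not at $n_k+\ell$; choosing $\kappa$ so that $B_s(\SS_0,\kappa)\Kin\FF_0$ gives the condition at time $0$, and full invariance gives $\SS_j\implies\FF_j$ for all $j$, but neither gives a lower bound on $\dist_s(\JJ_{n_k+\ell},\SS_{n_k+\ell})$ for the same $\kappa$ (the Julia set can drift arbitrarily close to the exceptional set at other times). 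So the application of Lemma~\ref{lemmafatoucompact} is unjustified as written. This is exactly the index mismatch that the paper's proof avoids by phrasing the second condition in the almost-sure event at time $n_i+\ell$: its assumption is $\JJ_{n_i+\ell}\implies\C\butnot B_s(\SS_{n_i+\ell},\kappa)$, after which the conclusion $T_0^{n_i+\ell}(U)\supseteq\C\butnot B_s(\SS_{n_i+\ell},\kappa)\supseteq\JJ_{n_i+\ell}$ is immediate and no further pushing forward is needed. The fix is therefore simple: replace the clause $\dist_s(\JJ_{n_k},\SS_{n_k})>\kappa$ in your (ii) with $\dist_s(\JJ_{n_k+\ell},\SS_{n_k+\ell})>\kappa$ (this is just as easy to arrange, since $\ell$ depends only on $\kappa$ and translation invariance handles the shift via Lemma~\ref{lemmapoincare}); then drop the final Lemma~\ref{lemmafatoucompact} step entirely and conclude directly as the paper does.
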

\begin{proof}
Fix $\omega\in\Omega$ and assume that there exist an increasing sequence $(n_i)_i$, an integer $\ell\in\N$, and $\kappa,\delta > 0$ such that for all $i\in\N$,
\begin{itemize}
\item $\diam_3(T_{n_i + \ell}^{n_i}(p))\geq \delta$ for all $p\in\C\butnot B_s(\SS_{n_i + \ell},\kappa)$
\item $\JJ_{n_i + \ell}\implies \C\butnot B_s(\SS_{n_i + \ell},\kappa)$
\end{itemize}
By Corollary \ref{corollarysixquantitative} and Lemma \ref{lemmapoincare}, this assumption is almost certainly valid. Suppose that $U\implies\C$ is open with $U\cap \JJ_0'\neq\emptyset$. Then $(T_0^{n_i}\on U)_i$ is not a normal family. By Lemma \ref{lemmajuliainfinite}, there exists $i\in\N$ with $T_0^{n_i}(U)\notin\G_\delta$. By Lemma \ref{lemmaspillover}, $T_0^n(U) = T_{n_i}^n(T_0^{n_i}(U)) \supseteq \C\butnot B_s(\SS_n,\kappa)\supseteq\JJ_n$, where $n = n_i + \ell$. Thus we are done.
\end{proof}

\begin{theorem}
\label{theoremperfect}
Suppose that $(T,\Omega,\basemeasure,\theta)$ is nonlinear and nonsingular. Then the following event is almost certain to occur:
\begin{quote}
\begin{event}
\label{eventperfect}
The set $\JJ_0'$ is uncountable and perfect.
\end{event}
\end{quote}
\end{theorem}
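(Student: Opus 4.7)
The plan is to fix $\omega$ in a single measure-one event on which (i) Event \ref{eventexactUJ} holds, (ii) $\JJ_0' \neq \emptyset$, and (iii) $\SS_n \cap \JJ_n = \emptyset$ for every $n \in \Z$. All three are guaranteed almost surely by Proposition \ref{propositionexactUJ}, Lemma \ref{lemmauniformnonempty}, and the nonsingularity hypothesis combined with the shift-invariance of $\basemeasure$-a.s.\ events. The set $\JJ_0'$ is automatically closed in $\C$ (its complement is, by definition, a union of open sets on which some subsequence of $(T_0^n)_n$ is normal), so once no-isolated-points is proved, $\JJ_0'$ will be a nonempty perfect subset of the Polish space $\C$ and thus automatically uncountable. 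So the real work is to rule out isolated points.

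First I would argue by contradiction: suppose some $x \in \JJ_0'$ has an open neighborhood $U$ with $U \cap \JJ_0' = \{x\}$. Applying Event \ref{eventexactUJ} to $U$ produces $n_0 \in \N$ with $T_0^{n_0}(U) \supseteq \JJ_{n_0} \supseteq \JJ_{n_0}'$. For each $z \in \JJ_{n_0}'$ there exists $u \in U$ with $T_0^{n_0}(u) = z$; by the full invariance $(T_0^{n_0})^{-1}(\JJ_{n_0}') = \JJ_0'$, such a $u$ lies in $\JJ_0' \cap U = \{x\}$, so $z = T_0^{n_0}(x)$. Hence $\JJ_{n_0}' = \{y_{n_0}\}$ is a singleton, where $y_{n_0} := T_0^{n_0}(x)$, and the forward invariance $T_{n_0}^n(\JJ_{n_0}') = \JJ_n'$ upgrades this to $\JJ_n' = \{y_n\}$ for every $n \geq n_0$, with $y_n := T_0^n(x)$.

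Next I would translate this singleton property into total ramification. For every $m \geq n_0$, full invariance gives
\[
\{y_{n_0}\} = \JJ_{n_0}' = (T_{n_0}^m)^{-1}(\JJ_m') = (T_{n_0}^m)^{-1}(\{y_m\})
\]
set-theoretically. Since the multiset $(T_{n_0}^m)^{-1}(y_m)$ has cardinality $\deg(T_{n_0}^m)$, this forces $\mult_{T_{n_0}^m}(y_{n_0}) = \deg(T_{n_0}^m)$, i.e.\ $y_{n_0} \in \SS_{T_{n_0}^m}$. Intersecting over $m \geq n_0$ yields $y_{n_0} \in \SS_{n_0}$. But $y_{n_0} \in \JJ_{n_0}' \subseteq \JJ_{n_0}$, contradicting the nonsingularity statement $\SS_{n_0} \cap \JJ_{n_0} = \emptyset$ that was included in our full-measure event.

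I expect the step \emph{``singleton uniform Julia set forces total ramification along the entire forward orbit''} to be the main obstacle, since it is where one must carefully distinguish set-theoretic from multiplicity-counted preimages and combine the full invariance of $\JJ'$ with the intersection definition $\SS_{n_0} = \bigcap_{m \geq n_0} \SS_{T_{n_0}^m}$. Everything else — nonemptiness, closedness, and the passage from ``perfect'' to ``uncountable'' — is routine once the no-isolated-points property is in hand.
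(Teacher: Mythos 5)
Your argument is correct. The first half --- using Event \ref{eventexactUJ} plus full invariance of the uniform Julia set to show that an isolated point of $\JJ_0'$ forces $\JJ_{n_0}'$ to collapse to a singleton --- is exactly the paper's computation. Where you diverge is the endgame: the paper observes upfront, via Lemmas \ref{lemmauniformnonempty} and \ref{lemmasix} (with nonsingularity in the background), that $\#(\JJ_n')\geq 2$ almost surely, so the singleton conclusion is an immediate contradiction; you instead unwind the full-invariance relation $(T_{n_0}^m)^{-1}(\JJ_m')=\JJ_{n_0}'$ into the multiplicity statement that the collapsed point is totally ramified under each $T_{n_0}^m$, hence lies in $\SS_{n_0}$, contradicting nonsingularity directly. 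Both routes trade on the same phenomenon --- nonsingularity keeps $\JJ'$ off the exceptional set, where preimage structure degenerates --- but you derive the contradiction from the definition of $\SS$ itself, while the paper prepackages it as a cardinality fact via the preimage-counting Lemma \ref{lemmasix}. Yours avoids invoking Lemma \ref{lemmasix} at the cost of the careful set-theoretic-to-multiplicity translation; the paper's is slightly tidier once its preliminary observation is recorded.
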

\begin{proof}
First, note that by Lemmas \ref{lemmauniformnonempty} and \ref{lemmasix}, we have that $\#(\JJ_0')\geq 2$ almost surely. Fix $\omega\in\Omega$ and assume that Event \ref{eventexactUJ} occurs, and that $\#(\JJ_n')\geq 2$ for all $n\in\N$; this assumption is almost certainly valid. Since $\JJ_0'$ is nonempty, to show that $\JJ_0'$ is uncountable it suffices to show that $\JJ_0'$ is perfect. To this end, by contradiction suppose that $x\in\JJ_0'$ is an isolated point of $\JJ_0'$, i.e. there exists $\delta > 0$ so that $\JJ_0' \cap B_s(x,\delta) = \{x\}$. By Event \ref{eventexactUJ}, there exists $n\in\N$ with $T_0^n(B_s(x,\delta))\supseteq\JJ_n'$. But then 
\[
\JJ_n' = T_0^n(B_s(x,\delta)) \cap \JJ_n' = T_0^n(B_s(x,\delta) \cap \JJ_0') = \{T_0^n(x)\},
\]
contradicting that $\#(\JJ_n')\geq 2$. Thus $x$ is not isolated, and $\JJ_0'$ is perfect.
\end{proof}
\end{section}

\begin{section}{Perron-Frobenius Operator: Definition, Notation, and Fundamental Lemma}\label{sectionPF}

Suppose that $T$ is a rational map and that $\phi\in\CC(\C)$ is a potential function. We define the \emph{Perron-Frobenius operator $L:\CC(\C)\rightarrow\CC(\C)$ associated with $(T,\phi)$} via the equation
\begin{equation}
\label{Ldef}
L[f](p) := \sum_{x\in T^{-1}(p)}\exp(\phi(x))f(x).
\end{equation}
(Here, finally, we are using the conventions about multiplicity established in Section \ref{sectionintroduction}.)

If $(T_j)_j$ is a sequence of rational maps and if $(\phi_j)_j$ is a sequence of potential functions, then $L_j$ denotes the Perron-Frobenius operator associated with $(T_j,\phi_j)$. We denote the pseudo-iterates by $L_m^n := L_{n-1} \circ\cdots\circ L_m$, and the Birkhoff sums by $\phi_m^n := \sum_{j=m}^{n-1}\phi_j\circ T_m^j$ [$m < n$ in both cases]. It is an easy exercise to show that $L_m^n$ is the Perron-Frobenius operator associated with $(T_m^n,\phi_m^n)$, i.e.
\begin{equation}
\label{Ldefiterated}
L_m^n[f](p) = \sum_{x\in T_n^m(p)}\exp(\phi_m^n(x))f(x).
\end{equation}
We denote the dual operator by exchanging indices, so that
\[
L_n^m[\nu] := \int \left[\sum_{x\in T_n^m(p)}\exp(\phi_m^n(x))\delta_x\right]\d\nu(p).
\]
If $(T,\Omega,\basemeasure,\theta)$ is a holomorphic random dynamical system on $\C$, we define a \emph{random potential function on $(T,\Omega,\basemeasure,\theta)$} to be a measurable map $\phi:\Omega\rightarrow\CC(\C)$. As in Section \ref{sectionintroduction}, we shorten $\phi_j := \phi_j(\omega) := \phi(\theta^j\omega)$ for $\omega\in\Omega$ fixed, and $\phi := \phi_0$.

The following lemma will be used repeatedly. In words, it says that if $f$ is a function and $g>0$ is a test function, then the convex hull of the range of $L[f]/L[g]$ is a subinterval of the convex hull of the range of $f/g$.
\begin{lemma}
\label{lemmanonincreasing}
Suppose that $T$ is a rational map, and suppose that $\phi$ is a potential function. For any $f,g\in\CC(\C)$ with $g>0$ and for any $K \implies \C$, we have
\begin{align}\label{suplessthan}
\sup_K \frac{L[f]}{L[g]} &\leq \sup_{T^{-1}(K)}\frac{f}{g} \\ \label{inflessthan}
\inf_K \frac{L[f]}{L[g]} &\geq \inf_{T^{-1}(K)}\frac{f}{g} \\ \label{osclessthan}
\left\|\frac{L[f]}{L[g]}\right\|_{\osc,K} &\leq \left\|\frac{f}{g}\right\|_{\osc,T^{-1}(K)}
\end{align}
\end{lemma}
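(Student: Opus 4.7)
The plan is to exploit the observation that for each fixed $p\in K$, the ratio $L[f](p)/L[g](p)$ is a convex combination of the values $(f/g)(x)$ as $x$ ranges over the multiset $T^{-1}(p)$. First I would rewrite
\[
L[f](p) \;=\; \sum_{x\in T^{-1}(p)} \bigl(\exp(\phi(x))\,g(x)\bigr)\cdot\frac{f(x)}{g(x)},
\]
and observe that the coefficients $w_x := \exp(\phi(x))\,g(x)$ are strictly positive (this is where the hypothesis $g>0$ enters), with $L[g](p) = \sum_x w_x > 0$. Dividing therefore expresses $L[f](p)/L[g](p)$ as the weighted mean
\[
\frac{L[f](p)}{L[g](p)} \;=\; \frac{\sum_{x\in T^{-1}(p)} w_x\,(f/g)(x)}{\sum_{x\in T^{-1}(p)} w_x},
\]
with the sums interpreted in the multiset sense of Section \ref{sectionintroduction}, so that preimages are correctly counted with ramification multiplicity.

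Next I would invoke the trivial fact that any convex combination of real numbers lies between their infimum and supremum. Since $T^{-1}(p)\implies T^{-1}(K)$ for $p\in K$, this yields
\[
\inf_{T^{-1}(K)} \frac{f}{g} \;\leq\; \frac{L[f](p)}{L[g](p)} \;\leq\; \sup_{T^{-1}(K)} \frac{f}{g}
\]
for every $p\in K$. Taking the supremum over $p\in K$ on the left side of the middle expression gives (\ref{suplessthan}); taking the infimum gives (\ref{inflessthan}). Finally, since by definition $\|h\|_{\osc,K} = \sup_K h - \inf_K h$, subtracting the two resulting inequalities immediately gives (\ref{osclessthan}).

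There is no real obstacle here: the entire content is the elementary fact that a convex average cannot exceed the range of the quantities being averaged. The only point requiring a modicum of care is bookkeeping with the multiset convention, but the convex-combination identity above handles this automatically, since both $L[f]$ and $L[g]$ sum over $T^{-1}(p)$ with the same multiplicities and weights.
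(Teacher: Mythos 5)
Your proof is correct and follows essentially the same route as the paper: both arguments bound $f(x)$ by $g(x)\sup_{T^{-1}(K)}(f/g)$ (equivalently, express $L[f]/L[g]$ as a weighted average of $f/g$), sum over the fiber, divide by $L[g]$, and subtract the sup and inf inequalities to obtain the oscillation bound. The convex-combination framing is a pleasant packaging of the same elementary computation.
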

\begin{proof}
Fix $p\in K$; for all $x\in T^{-1}(p)$, $f(x)\leq g(x) \sup_{T^{-1}(K)}(f/g)$. Summing over all $x\in T^{-1}(p)$, dividing by $L[g](p)$, and taking the supremum over all $p\in K$ yields (\ref{suplessthan}). A similar argument yields (\ref{inflessthan}). Subtracting (\ref{inflessthan}) from (\ref{suplessthan}) yields (\ref{osclessthan}).
\end{proof}

\end{section}
\begin{section}{Statement of Key Theorems}\label{sectionmaintheorems}

\begin{definition}
\label{definitionboundeddistortion}
Fix a holomorphic random dynamical system $(T,\Omega,\basemeasure,\theta)$ on $\C$, and a random potential function $\phi:\Omega\rightarrow\CC(\C)$. We say that $X\implies\C$ has the \emph{bounded distortion property} if
\begin{itemize}
\item[A)] $X$ is closed, connected, contains at least three points, and its complement $B := \C\butnot X$ satisfies
\[
T(B)\Kin B
\]
almost surely.
\item[B)] There exists $M<\infty$ so that for all $j\in\N$,
\begin{equation}
\label{Misaboundzero}
\|\ln(L_0^j[\one])\|_{\osc,X}\leq M
\end{equation}
almost surely. Equivalently, for all $n,j\in\Z$ with $j\leq n$,
\begin{equation}
\label{Misabound}
\|\ln(L_j^n[\one])\|_{\osc,X}\leq M
\end{equation}
almost surely.
\end{itemize}

$X$ has the \emph{equicontinuity property} if $X$ satisfies (A) and if
\begin{itemize}
\item[C)] There exists $\gamma$ a modulus of continuity such that for all $n\in\N$
\begin{equation}
\label{gammaisaboundzero}
\rho_{\ln(L_0^n[\one])}^{(X)}\leq \gamma
\end{equation}
almost surely. Equivalently, for all $n,j\in\Z$ with $j\leq n$,
\begin{equation}
\label{gammaisabound}
\rho_{\ln(L_j^n[\one])}^{(X)}\leq \gamma
\end{equation}
almost surely.
\end{itemize}
\end{definition}

Clearly, the equicontinuity property implies the bounded distortion property.

\begin{remark}
\label{remarkprobabilistic}
If $\one$ is a pseudo-eigenvalue of the Perron-Frobenius operator, i.e. $\pr(L[\one]\text{ is constant}) = 1$, then $\C$ has the equicontinuity property. For example, this is true if $\phi = 0$. Another sufficient condition is given below in Theorem \ref{theoremcondition}.
\end{remark}

\begin{theorem}
\label{maintheorem}
Fix $\alpha,\beta > 0$. Suppose that $(T,\Omega,\basemeasure,\theta)$ is a nonsingular holomorphic random dynamical system on $\C$ with a potential function $\phi:\Omega\rightarrow\CC(\C)$, suppose that $X\implies\C$ has the bounded distortion property, and suppose that
\begin{align} \label{Disaboundintegral}
\EE[\deg(T)^\beta] &< \infty\\ \label{C1isaboundintegral}
\EE[\|\phi\|_\AL^\beta] &< \infty\\ \label{tauisaboundintegral}
\EE[\sup(\phi)] &< \EE[\ln\inf(L[\one])].
\end{align}
Then the following event is almost certain to occur:
\begin{quote}
\begin{event}
\label{mainevent}
Suppose that $\gamma_1$ and $\gamma_2$ are moduli of continuity. For all $\varepsilon,\kappa > 0$ there exists $N\in\N$ such that for all $n\geq N$ and for all $f,g\in\CC(\C)$ with $g>0$, $\rho_{f/g}^{(X)}\leq\gamma_1$, and $\rho_{\ln(g)}^{(X)}\leq \gamma_2$,
\begin{equation}
\label{mainequation}
\left\|\frac{L_0^n[f]}{L_0^n[g]}\right\|_{\osc,X\butnot B_s(\SS_n,\kappa)\cup \JJ_n} \leq \varepsilon.
\end{equation}
\end{event}
\end{quote}
\end{theorem}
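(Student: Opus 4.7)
The plan is to follow the Denker-Urba\'nski spectral gap strategy, adapted to the random setting by combining Lemma \ref{lemmanonincreasing} with the topological exactness results of Section \ref{sectionexact} and the thermodynamic condition \textup{(\ref{tauisaboundintegral})}. Set $K_n := X\butnot B_s(\SS_n,\kappa)\cup \JJ_n$, and split $L_0^n = L_m^n \circ L_0^m$ at an intermediate time $m = m(n)$. Since $L_m^n$ is the Perron-Frobenius operator for $(T_m^n, \phi_m^n)$ by \textup{(\ref{Ldefiterated})}, Lemma \ref{lemmanonincreasing} yields
\[
\left\|\frac{L_0^n[f]}{L_0^n[g]}\right\|_{\osc, K_n} \;\leq\; \left\|\frac{L_0^m[f]}{L_0^m[g]}\right\|_{\osc, T_n^m(K_n)}.
\]
It therefore suffices to produce, almost surely, choices $m$ and $n-m$ both tending to infinity under which $h_m := L_0^m[f]/L_0^m[g]$ has oscillation at most $\varepsilon$ on the preimage $T_n^m(K_n)$.

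The first step is to construct an almost-sure modulus of continuity $\gamma_3$ for $h_m$ on $X$ that is independent of $m$. Writing $h_m(p)$ as the weighted average $\sum_y e^{\phi_0^m(y)} f(y) / \sum_y e^{\phi_0^m(y)} g(y)$ over $y \in T_m^0(p)$ and comparing with a nearby $p' \in X$, bounded distortion \textup{(\ref{Misabound})} controls the perturbation of the weights for paired preimage branches (such branches behave well on $X$ because $T(B)\Kin B$ forces $T_0^m(X^c) \subseteq X^c$), the H\"older input $\gamma_1,\gamma_2$ controls the spread of the integrands, and the per-step H\"older norms $\|\phi_j\|_\AL$ accumulate controllably. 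A Borel-Cantelli argument using the integrability hypotheses \textup{(\ref{Disaboundintegral})} and \textup{(\ref{C1isaboundintegral})} upgrades these in-probability bounds into an almost-sure, $m$-uniform modulus $\gamma_3$.

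The second step is to show that, almost surely, $T_n^m(K_n)$ is eventually $\delta$-dense in $X \cup \JJ_m$ for any prescribed $\delta$. Proposition \ref{propositionexactUJ} asserts that every open $U$ meeting $\JJ_m'$ is eventually mapped by $T_m^n$ onto $\JJ_n \implies K_n$, whence $U \cap T_n^m(K_n) \neq \emptyset$; since $\JJ_m'$ is uncountable and perfect by Theorem \ref{theoremperfect}, a finite covering of $X \cup \JJ_m$ by balls centered at points of (or near) $\JJ_m'$ propagates the single-ball statement to $\delta$-density. The thermodynamic condition \textup{(\ref{tauisaboundintegral})}, via the Birkhoff ergodic theorem applied to $\sup\phi - \ln\inf L[\one]$, ensures that the fraction of the total $L_m^n$-weight carried by preimages approaching $\SS_n$ decays exponentially in $n-m$, so these preimages cannot spoil the oscillation estimate on $K_n$.

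The principal obstacle is turning the qualitative nature of Proposition \ref{propositionexactUJ} (the exactness time depends on $U$) into a density estimate that is uniform in $n$. I expect to overcome this via a two-time-scale argument: fix a finite $\delta$-cover of $X \cup \JJ_m$, apply Lemma \ref{lemmaflip} to convert the forward almost-sure exactness at each cover element into a statement that holds at some backward time with full probability, and then choose $m$ inside the resulting full-measure event so that, by translation invariance, the quantitative version of Event \ref{eventexact} has already manifested. The exponential decay of bad weights near $\SS_n$ afforded by \textup{(\ref{tauisaboundintegral})} provides the slack needed to absorb the residual terms; the nonsingularity hypothesis of Section \ref{sectionexact} is essential here, since without it the bad weights might not decay at all.
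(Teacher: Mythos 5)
Your proposed architecture has a genuine gap: there is no contraction mechanism. The chain you set up is, schematically,
\begin{equation*}
\left\|\frac{L_0^n[f]}{L_0^n[g]}\right\|_{\osc,K_n}
\;\leq\;
\left\|h_m\right\|_{\osc,\,T_n^m(K_n)},
\qquad h_m := \frac{L_0^m[f]}{L_0^m[g]},
\end{equation*}
via Lemma \ref{lemmanonincreasing}, and then you argue that $h_m$ is equicontinuous with an $m$-uniform modulus $\gamma_3$ on $X$ while $T_n^m(K_n)$ becomes $\delta$-dense in $X\cup\JJ_m$. But these three facts only bound the right-hand side by $\|h_m\|_{\osc,X\cup\JJ_m}$, which by another application of Lemma \ref{lemmanonincreasing} is $\leq\gamma_1(\pi/2)$ --- a constant, not $\varepsilon$. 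Indeed the density of $T_n^m(K_n)$ works \emph{against} you: it makes the oscillation over $T_n^m(K_n)$ as \emph{large} as possible, since density saturates Lemma \ref{lemmanonincreasing}. Nothing in the sketch forces the oscillation to strictly decrease; monotonicity of oscillation under $L$ plus equicontinuity of the iterates is the ingredients of a normal-family argument, but does not by itself produce convergence to a constant.

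The missing idea, which is the heart of the Denker-Urba\'nski method and carries the entire proof in the paper, is strict contraction of oscillation by a definite factor per block of iterates. Concretely: one must exhibit, for each pair $p,q\in K_n$, a \emph{pairing} between a large-mass subset of $T_n^m(p)$ and $T_n^m(q)$ (not between a single preimage set and a dense set) coming from the same inverse branches, with (i) paired preimages exponentially close by a Koebe-type distortion bound for uniformly locally injective lifts (Lemmas \ref{lemmakoebe}--\ref{lemmakoebetwo}), and (ii) the weight excluded from the pairing being a summable fraction bounded away from $1$, which is where \textup{(\ref{tauisaboundintegral})} enters via the $\tau^{n-j}$ decay. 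This is precisely what the inverse branch formalism of Section \ref{sectionformalism} (operators $A_j^n$, $B_j^n$, Proposition \ref{propositionmrecursion}, Corollary \ref{corollaryZAB}) is built to do, culminating in Proposition \ref{propositionhyperbolic}, which gives the $(1-\varepsilon_m)$ contraction factor; Lemma \ref{lemmamultiexact} supplies the geometric covering you alluded to, and Proposition \ref{propositiontinysteps} iterates. Without some version of this contraction step, the equicontinuity-plus-density argument you propose stalls at a bounded, but non-vanishing, oscillation. (A secondary concern: your claim of an $m$-uniform modulus $\gamma_3$ for $h_m$ via Borel--Cantelli is itself not obviously attainable without the same branch-contraction estimates; Lemma \ref{lemmatechnical} only gives propagation of a modulus through one step with a worse output modulus, and controlling the accumulation requires exactly the Koebe-type contraction of branches.)
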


\begin{remark}
The ``thermodynamic expanding'' condition (\ref{tauisaboundintegral}) appears to be different from the condition $P > \sup(\phi)$ used in \cite{Prz} and \cite{DU}, however they are not so different. In \cite{Prz} it is stated that the main advantage of the condition $P > \sup(\phi)$ is that it is checkable; in fact due to the fact that $h_{\text{top}}(T) = \ln(\deg(T))$ it is sufficient to check $\sup(\phi) - \inf(\phi) < \ln(\deg(T))$. But this condition also implies (\ref{tauisaboundintegral}) (or its deterministic counterpart). Furthermore, in the random setting the equation $h_{\text{top}}(T) = \ln(\deg(T))$ has not been proven; in fact one of the results of this paper is a generalization of this equation (Corollary \ref{corollaryhequalslndeg}). It would be silly to assume what we are trying to prove.
\end{remark}

\begin{remark}
\label{remarkfullstrength}
For most purposes, it suffices to consider (\ref{mainequation}) with $X\butnot B_s(\SS_n,\kappa)\cup \JJ_n$ replaced by just $\JJ_n$. However, we will use the full strength in proving Theorem \ref{theoremequilibrium} (uniqueness of equilibrium states). Similarly, although the statement becomes simpler if we move the $f$ and $g$ quantifiers outside of the $\varepsilon$ and $\kappa$ quantifiers, and from there delete $\gamma_1$ and $\gamma_2$ from the statement entirely, and from there implicitize $\varepsilon$, $N$, and $n$ by replacing (\ref{mainequation}) with an equation about limits, the full strength is needed to prove Corollary \ref{corollaryeigenvalues}.
\end{remark}

\begin{remark}
\label{remarkthermodynamic}
(\ref{tauisaboundintegral}) implies that $(T,\Omega,\basemeasure,\theta)$ is nonlinear. Thus in the proof of Theorem \ref{maintheorem} we may use Theorem \ref{theoremexact} and Corollary \ref{corollarysixquantitative}.
\end{remark}

The following lemma explains the use of the bounds (\ref{Disaboundintegral}) - (\ref{tauisaboundintegral}). It will be used in Corollary \ref{corollaryZAB} and Lemma \ref{lemmahyperbolic}.

\begin{lemma}
\label{lemmamotivation}
Suppose that $(T,\Omega,\basemeasure,\theta)$ satisfies the hypotheses of Theorem \ref{maintheorem}. Fix $\varepsilon > 0$. Then there exist $D,C_1,C_2 < \infty$ and $\tau < 1$ so that for each $n\in\Z$, the probability that for all $j \leq n - 1$
\begin{align}\label{Disaboundmod}
\deg(T_j) &\leq D(n - j)^{1/\beta}\\ \label{C1isaboundmod}
\|\phi_j\|_\AL &\leq C_1(n - j)^{1/\beta}\\ \label{tauisaboundmod}
e^{\sup(\phi_j^n)} &\leq e^{C_2}\tau^{n-j}\inf(L_j^n[\one])
\end{align}
is at least $1 - \varepsilon$.
\end{lemma}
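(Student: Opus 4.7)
The plan is to establish each of the three bounds separately with probability at least $1 - \varepsilon/3$ and then combine them by a union bound. By translation invariance of probability, it suffices to prove the assertion for a single fixed $n$, say $n = 0$.

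For (\ref{Disaboundmod}), set $X_j := \deg(T_j)^\beta$; by (\ref{Disaboundintegral}) these identically distributed random variables are integrable, so Chebyshev gives $\pr(\deg(T_{-k}) > Dk^{1/\beta}) = \pr(X_0 > D^\beta k)$, and the layer-cake identity yields
\[
\sum_{k = 1}^{\infty}\pr(X_0 > D^\beta k) \leq \frac{\EE[X_0]}{D^\beta},
\]
which can be made less than $\varepsilon/3$ by choosing $D$ sufficiently large. A union bound over $k \geq 1$ then gives (\ref{Disaboundmod}) with probability at least $1 - \varepsilon/3$. The same argument applied to $Y_j := \|\phi_j\|_\AL^\beta$, using (\ref{C1isaboundintegral}), produces $C_1$ for which (\ref{C1isaboundmod}) holds with probability at least $1 - \varepsilon/3$.

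For (\ref{tauisaboundmod}), take logarithms. Since $\phi_j^0 = \sum_{i = j}^{-1}\phi_i\circ T_j^i$, we have $\sup(\phi_j^0) \leq \sum_{i = j}^{-1}\sup(\phi_i)$; and since $L_i[c\one] = cL_i[\one] \geq c\inf(L_i[\one])\one$ pointwise for each constant $c \geq 0$, iterating gives $\inf(L_j^0[\one]) \geq \prod_{i = j}^{-1}\inf(L_i[\one])$. Hence the bound reduces to $\sup_{k \geq 1}S_k \leq C_2$, where
\[
S_k := \sum_{i = -k}^{-1}W_i, \qquad W_i := \sup(\phi_i) - \ln\inf(L_i[\one]) - \ln\tau.
\]
By (\ref{tauisaboundintegral}), $\mu := \EE[\sup(\phi) - \ln\inf(L[\one])] < 0$; choose $\tau \in (e^\mu, 1)$ so that $\EE[W_0] = \mu - \ln\tau < 0$.

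Since $W_0$ is integrable (the first-moment finiteness of $\sup(\phi)$ and $\ln\inf(L[\one])$ being implicit in (\ref{tauisaboundintegral})), Birkhoff's ergodic theorem gives $S_k/k \to \EE[W_0] < 0$ almost surely. Consequently $S := \sup_{k \geq 1}S_k$ is a measurable, almost-surely finite random variable, and continuity of measure supplies $C_2$ with $\pr(S > C_2) < \varepsilon/3$. The crux of the argument is this final step, where the ``thermodynamic expanding'' hypothesis is used precisely to ensure that a strictly subunital $\tau$ can be chosen and that the resulting supremum of Birkhoff sums is almost surely finite; the first two bounds are routine applications of Chebyshev plus the summability of $\pr(X_0 > ck)$ for integrable $X_0$.
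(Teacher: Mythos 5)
Your proof is correct. For (\ref{tauisaboundmod}) you follow essentially the paper's route: reduce the claim to a statement about Birkhoff sums via the factorizations $\sup(\phi_j^n)\leq\sum_{i=j}^{n-1}\sup(\phi_i)$ and $\inf(L_j^n[\one])\geq\prod_{i=j}^{n-1}\inf(L_i[\one])$, apply the ergodic theorem (over $\theta^{-1}$) to $\sup(\phi_\omega)-\ln\inf(L_\omega[\one])$, whose expectation is negative by (\ref{tauisaboundintegral}), so that after choosing $\tau$ with $\EE[\sup(\phi)-\ln\inf(L[\one])]<\ln\tau<0$ the backward sums $S_k$ tend a.s.\ to $-\infty$ and hence have an a.s.-finite supremum, and then extract $C_2$ by continuity of measure. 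The only genuine divergence is in (\ref{Disaboundmod}) and (\ref{C1isaboundmod}): the paper applies Birkhoff once more to the nonnegative summands $\deg(T_j)^\beta$ and $\|\phi_j\|_\AL^\beta$, obtains linear-plus-constant growth of the backward partial sums, deduces the termwise bounds from nonnegativity, and then uses a single continuity-of-measure step to select one constant $C$ serving all three estimates simultaneously (setting $D=(D_0+C)^{1/\beta}$, $C_1=(C_0+C)^{1/\beta}$, $C_2=C$). You instead use the tail-sum estimate $\sum_{k\geq 1}\pr(\deg(T)^\beta>D^\beta k)\leq\EE[\deg(T)^\beta]/D^\beta$ together with translation invariance and a union bound. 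Your route is somewhat more elementary for these two bounds --- it requires only stationarity and the finiteness of the first moments (\ref{Disaboundintegral}), (\ref{C1isaboundintegral}) rather than ergodicity, and gives an explicit dependence of $D$ and $C_1$ on $\varepsilon$ --- whereas the paper's route is more uniform, treating all three conditions with one application of Birkhoff. Both are valid and reach the same conclusion.
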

\begin{proof}
Choose
\begin{align*}
D_0 &> \EE[\deg(T)^\beta]\\
C_0 &> \EE[\|\phi\|_\AL^\beta]\\
\ln(\tau) &> \EE[\sup(\phi) - \ln\inf(L[\one])].
\end{align*}
By the Birkhoff ergodic theorem, for each $\omega\in\Omega$ there exists $C < \infty$ so that for al $m\leq n$,
\begin{align} \label{BETstart}
\sum_{j = m}^n\deg(T_j)^\beta &\leq (n - m)D_0 + C\\
\sum_{j = m}^n\|\phi_j\|_\AL^\beta &\leq (n - m)C_0 + C\\ \label{BETend}
\sum_{j = m}^n\ln(\sup(\phi_j) - \inf(L_j[\one])) &\leq (n - m)\ln(\tau) + C.
\end{align}
Thus by continuity of measures, there exists $C$ such that (\ref{BETstart}) - (\ref{BETend}) are satisfied with probability at least $1 - \varepsilon$. Let
\begin{align*}
D &:= (D_0 + C)^{1/\beta}\\
C_1 &:= (C_0 + C)^{1/\beta}\\
C_2 &:= C,
\end{align*}
so that (\ref{BETstart}) - (\ref{BETend}) imply (\ref{Disaboundmod}) - (\ref{tauisaboundmod}). 

\end{proof}

The following theorem gives a sufficient condition for the existence of a set $X$ with the equicontinuity property:

\begin{theorem}
\label{theoremcondition}
Suppose that $\AA\implies\RR\butnot\RR_1$ is a finite set, and suppose that $F\implies\C$ is finite with $\AA(F)\implies F$. Suppose further that for all $\ell\in\N$, for all $T\in\AA^\ell$, and for all $p\in\FP_T$,
\begin{itemize}
\item[i)] $p\in\RP_T$ implies $p\in F$
\item[ii)] $p\in F$ implies $T_*(p)<1$, i.e. $p$ is an attracting fixed point of $T$.
\end{itemize}
Fix $\tau < 1$. Then there exist $B$ and $\BBB\Kin\RR$ neighborhoods of $F$ and $\AA$ respectively so that
\begin{itemize}
\item[A)] $X := \C\butnot B$ is closed, connected, contains at least three points, and
\[
\BBB(B)\Kin B
\]
\item[B)] Fix $C_1 < \infty$ and $\alpha > 0$. Then there exist $M < \infty$ and $\gamma$ a modulus of continuity such that if $(T_j)_{j\in\N}$ is a sequence of rational maps in $\BBB$ and $(\phi_j)_{j\in\N}$ is a sequence of potential functions and if for all $j\in\N$,
\begin{align} \label{C1isabound}
\|\phi_j\|_\AL &\leq C_1\\ \label{tauisabound}
e^{\sup(\phi_j)} &\leq \tau\inf(L_j[\one]),
\end{align}
then for all $n\in\N$ \textup{(\ref{Misaboundzero})} and \textup{(\ref{gammaisaboundzero})} hold.
\end{itemize}
\end{theorem}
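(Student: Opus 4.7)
The first task is to construct $B$ and $\BBB$. By the invariance $\AA(F)\implies F$, the finiteness of $F$, and condition (ii), each point of $F$ is a periodic attracting point for each $T\in\AA$. Using finiteness of $\AA$, I take $B$ to be a disjoint union of small round disks centered at the points of $F$ — with radii chosen so that $X=\C\butnot B$ is closed, connected, and contains at least three points — sized small enough that $T(\cl B)\implies B^\circ$ strictly for each $T\in\AA$. Compactness of $\cl B$ combined with the compact-open continuity of evaluation then yields an open neighborhood $\BBB\Kin\RR$ of $\AA$ for which $\BBB(\cl B)\implies B^\circ$, so in particular $\BBB(B)\Kin B$. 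This gives conclusion (A).

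For the distortion bound \eqref{Misaboundzero}, I combine Lemma \ref{lemmanonincreasing} with a cone-contraction argument driven by the thermodynamic condition \eqref{tauisabound}. The key observation is that $\tau<1$ forces no single preimage $x\in T_j^{-1}(p)$ to contribute more than a $\tau$-fraction of $L_j[\one](p)$; consequently, the map $f\mapsto L_j[f]/L_j[\one]$ acts on $X$ as a weighted average with weights uniformly bounded by $\tau$, producing a genuine contraction of oscillation at each step. Iterating and using $T^{-1}(X)\implies X$ (from $\BBB(B)\Kin B$) together with the H\"older bound \eqref{C1isabound}, one obtains a uniform bound $M$ on $\|\ln(L_0^n[\one])\|_{\osc,X}$ independent of $n$ and of the admissible sequence.

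The modulus-of-continuity bound \eqref{gammaisaboundzero} is more delicate and requires a Koebe-distortion analysis of the inverse branches of $T_0^n$ on a neighborhood of $X$. Condition (i) forces periodic ramification points of every composition $T\in\AA^\ell$ into $F\implies B$; combined with (ii) and the finiteness of $\AA$, a postcritical argument (using that the basins of the attracting cycles in $F$ absorb all critical orbits in a uniformly bounded number of steps) shows that the critical values of $T\in\BBB$ and their forward iterates remain trapped in $B$. Hence for any $p$ at uniform distance from $\del B$, every inverse branch of $T_0^n$ centered at $p$ extends univalently to a disk of definite radius, so Koebe gives a uniform distortion constant. Weighting by $e^{\phi_0^n}$ and exploiting the exponential decay of individual preimage weights guaranteed by \eqref{tauisabound}, one sums contributions and extracts a single modulus of continuity $\gamma$ depending only on $\alpha$, $C_1$, $\tau$, and the data $(B,\BBB)$.

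The main obstacle is the uniformity of the cone-contraction estimate: although \eqref{tauisabound} is tailor-made for contraction at each individual step, the contraction rate must be uniform over all admissible non-autonomous sequences in $\BBB$. This uniformity is what the conditions (i)--(ii) plus the passage from $\AA$ to a sufficiently small neighborhood $\BBB$ provide — all geometric constants (inverse-branch radii, Koebe constants, the minimal separation of preimages guaranteed by avoiding $F$) can be controlled in terms of $\AA$, $F$, $\tau$, $C_1$, and $\alpha$ alone, giving the required $M$ and $\gamma$.
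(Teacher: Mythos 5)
Your construction of $B$ and $\BBB$ for conclusion (A) is essentially correct and matches the paper's Lemma \ref{lemmaB} (the paper additionally introduces a weighting function $c:F\to(0,\infty)$ because the attracting rates at different points of $F$ need not match, but that is a minor detail). The rest of the proposal, however, has two genuine gaps.

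For (\ref{Misaboundzero}): it is true that each individual summand in $L_j[\one](p) = \sum_{x\in T_j^{-1}(p)} e^{\phi_j(x)}$ is at most $\tau L_j[\one](p)$, but this alone gives no contraction of oscillation. For a weighted-average operator, contraction of $\|\cdot\|_{\osc}$ amounts to a total-variation overlap between the kernels at different base points $p,q\in X$; a per-term bound $\leq\tau$ does not prevent the preimage measures of $p$ and $q$ from being essentially disjoint, in which case the TV distance is $1$ and no contraction occurs. To produce the needed overlap you must couple preimages of nearby points through inverse branches and then chain across $X$ with quantitative control, which is exactly what the paper's inverse branch formalism (Definition \ref{definitionZAB}, Proposition \ref{propositionhyperbolic}) does. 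The actual proof is a jump induction in blocks of length $\ell$, with $\ell$ chosen via Lemmas \ref{lemmaR} and \ref{lemmaL} so that preimages of points of $X$ are $\delta_3$-separated and $\tau^\ell C_3 < 1$, together with Corollary \ref{corollaryZAB} to control the bad branches.

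For (\ref{gammaisaboundzero}): the assertion that hypotheses (i)--(ii) force the critical values of maps in $\BBB$ "and their forward iterates to remain trapped in $B$" is false. Conditions (i)--(ii) constrain only \emph{periodic} ramification points; they say nothing about non-periodic postcritical orbits. For instance, $T(z)=z^2+c$ with $c$ a Misiurewicz parameter satisfies the hypotheses with $\AA=\{T\}$ and $F=\{\infty\}$, yet the forward orbit of the finite critical point accumulates on a repelling cycle inside the Julia set, far from any neighborhood $B$ of $\{\infty\}$; indeed the theorem is most interesting precisely in such non-hyperbolic situations. Consequently the inverse branches of $T_0^n$ at points of $X$ need not extend univalently to disks of uniform radius, and the direct Koebe argument you propose fails. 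The paper instead splits $L_0^n$ via the inverse branch formalism into "good" branches (those with small spherical image, on which Lemma \ref{lemmakoebetwo} gives uniform distortion) and "bad" branches (those meeting branch values or with large image), and proves the bad branches contribute a geometrically negligible amount via Corollary \ref{corollaryZAB} and Claim \ref{claimsigma}. This in turn requires tracking the multiplicity of overlapping sheets of $T^{-n}(U)$ rather than genuine simply connected domains --- the content of Lemma \ref{lemmaerrorfix}, which corrects an error in Denker--Urba\'nski --- a subtlety your proposal does not address at all.
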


\begin{remark}
\label{remarkdeterministic}
If $T\in\RR\butnot\RR_1$, then the hypotheses of Theorem \ref{theoremcondition} are satisfied with $\AA = \{T\}$ and $F$ the set of forward images of periodic ramification points of $T$.
\end{remark}

\begin{corollary}
\label{corollarysufficient}
Suppose that $\AA$, $F$, and $\tau$ are as in Theorem \ref{theoremcondition}, and let $B$ and $\BBB$ be given by Theorem \ref{theoremcondition}. If $(T,\Omega,\basemeasure,\theta)$ is any holomorphic random dynamical system on $\C$ such that $T_*[\basemeasure](\BBB) = 1$, and if $\phi:\Omega\times\C\rightarrow\R$ is a random potential function on $\Omega$ satisfying \textup{(\ref{C1isabound})} and \textup{(\ref{tauisabound})} almost surely for some fixed $C_1$ and $\alpha$, then $X := \C\butnot B$ has the equicontinuity property. Thus $(\alpha,\beta:=1,\Omega,\basemeasure,\theta,T,\phi,X)$ satisfies the hypotheses of Theorem \ref{maintheorem}.
\end{corollary}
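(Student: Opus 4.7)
The plan is to read off the equicontinuity property of $X$ directly from Theorem \ref{theoremcondition}, and then verify the integrability hypotheses \textup{(\ref{Disaboundintegral})--(\ref{tauisaboundintegral})} of Theorem \ref{maintheorem} using the fact that $T$ takes values in the relatively compact set $\BBB$ almost surely and that $\phi$ satisfies the uniform bounds \textup{(\ref{C1isabound})} and \textup{(\ref{tauisabound})} almost surely. Nonsingularity will require separately invoking Proposition \ref{propositionnonsingular}.

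First I would check the equicontinuity property. Condition (A) follows immediately from Theorem \ref{theoremcondition}(A): $X = \C\butnot B$ is closed, connected, has at least three points, and since $T\in\BBB$ almost surely (because $T_*[\basemeasure](\BBB)=1$), we have $T(B)\Kin B$ almost surely. For (C), since $T_j\in\BBB$ almost surely for every $j\in\Z$ (by translation invariance applied countably often) and $(\|\phi_j\|_\AL, \sup e^{\phi_j}/\inf L_j[\one])$ satisfy \textup{(\ref{C1isabound})} and \textup{(\ref{tauisabound})} almost surely, Theorem \ref{theoremcondition}(B) supplies the modulus $\gamma$ and constant $M$ so that \textup{(\ref{gammaisaboundzero})} holds almost surely. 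This in turn implies the bounded distortion property with the same $M$.

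Next I would verify the remaining hypotheses of Theorem \ref{maintheorem} with $\beta := 1$. Since $\BBB\Kin\RR$, the continuous functions $T\mapsto\deg(T)$ and $T\mapsto\sup(T_*)$ are bounded on $\BBB$, so $\EE[\deg(T)] < \infty$, giving \textup{(\ref{Disaboundintegral})}, and $\EE[\ln\sup(T_*)] < \infty$. Moreover $\deg(T_\omega)\geq 2$ almost surely (the original set $\AA$ avoids $\RR_1$, and after shrinking $\BBB$ this persists because $\deg$ is continuous and $\BBB$ is relatively compact), so $(T,\Omega,\basemeasure,\theta)$ is antilinear and Proposition \ref{propositionnonsingular} yields nonsingularity. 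Condition \textup{(\ref{C1isaboundintegral})} is immediate from $\|\phi_\omega\|_\AL\leq C_1$ almost surely. Finally, taking logarithms of \textup{(\ref{tauisabound})} gives
\[
\sup(\phi_\omega) - \ln\inf(L_\omega[\one]) \leq \ln(\tau) < 0
\]
almost surely, and since both $\sup(\phi_\omega)$ and $\ln\inf(L_\omega[\one])$ are bounded (the first by $C_1$, the second via the bounded distortion property and the bound on $\deg$), we may integrate and obtain $\EE[\sup(\phi)] \leq \ln(\tau) + \EE[\ln\inf(L[\one])] < \EE[\ln\inf(L[\one])]$, which is \textup{(\ref{tauisaboundintegral})}.

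There is no real obstacle here; the content is entirely a bookkeeping assembly of Theorem \ref{theoremcondition}, Proposition \ref{propositionnonsingular}, and elementary compactness arguments. The one small point requiring care is that Theorem \ref{theoremcondition}(B) is stated for a fixed sequence $(T_j)_j$ in $\BBB$ satisfying the pointwise bounds, so to transfer its conclusion to the random setting one applies it along almost every realization $\omega\in\Omega$ of the pseudo-iterate sequence $(T_{\theta^j\omega})_j$, using that the ``almost surely" null set for membership in $\BBB$ and for the bounds on $\phi_\omega$ is preserved under the countable family of shifts $\theta^j$.
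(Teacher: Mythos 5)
Your proof takes the same approach as the paper's (extremely terse) proof, which simply observes that the only hypothesis of Theorem \ref{maintheorem} not immediately available from Theorem \ref{theoremcondition} is nonsingularity, and deduces that from Proposition \ref{propositionnonsingular} using the relative compactness of $\BBB$. You have spelled out the remaining bookkeeping that the paper dismisses as obvious, and all of it is essentially right.

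One small technical inaccuracy: in your verification of \textup{(\ref{tauisaboundintegral})} you assert that $\sup(\phi_\omega)$ is ``bounded by $C_1$'' and that $\ln\inf(L_\omega[\one])$ is bounded ``via the bounded distortion property and the bound on $\deg$.'' Neither of these is quite true: $\|\phi_\omega\|_\AL\leq C_1$ controls only the modulus of continuity (hence the oscillation) of $\phi_\omega$, not its value, so $\sup(\phi_\omega)$ may range over all of $\R$; and the bounded distortion property bounds the oscillation $\|\ln L_0^j[\one]\|_{\osc,X}$, not the magnitude of $\ln\inf(L_\omega[\one])$. What is actually bounded is the \emph{difference}: from \textup{(\ref{tauisabound})} one has $\sup(\phi_\omega)-\ln\inf(L_\omega[\one])\leq\ln\tau$, and since $\inf(L_\omega[\one])\leq\deg(T_\omega)e^{\sup(\phi_\omega)}$ one also has $\sup(\phi_\omega)-\ln\inf(L_\omega[\one])\geq-\ln\deg(T_\omega)\geq-\ln D$ with $D:=\sup_\BBB\deg<\infty$. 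So $\sup(\phi_\omega)-\ln\inf(L_\omega[\one])$ lies in the fixed interval $[-\ln D,\ln\tau]$, hence is integrable, and taking expectations gives $\EE[\sup(\phi)-\ln\inf(L[\one])]\leq\ln\tau<0$. This is precisely the form in which \textup{(\ref{tauisaboundintegral})} is used (see Lemma \ref{lemmamotivation}), so the conclusion stands; only the intermediate claim that the summands are individually bounded should be deleted. Incidentally, the shrinking of $\BBB$ to ensure $\deg\geq 2$ is unnecessary: \textup{(\ref{tauisabound})} already forces $\deg(T_\omega)\geq 1/\tau>1$, hence $\geq 2$, so antilinearity is automatic.
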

\begin{proof}
The only claim which requires proof is the fact that $(T,\Omega,\basemeasure,\theta)$ is nonsingular. However, this follows by Proposition \ref{propositionnonsingular} as $T_*[\basemeasure]$ is supported on $\BBB$ which is relatively compact in $\RR$.
\end{proof}

Next, we show that the assumptions of Theorem \ref{theoremcondition} are reasonable, despite the fact that it is difficult to determine whether they are satisfied for any specific set of rational maps:
\begin{remark}
\label{remarksufficient}
For every $m\in\N$, the set of all sequences $(T_i)_{i=1}^m$ in $\RR\butnot\RR_1$ such that hypotheses (i) and (ii) of Theorem \ref{theoremcondition} are satisfied with $\AA = \{T_i:i=1,\ldots,m\}$ and $F = \emptyset$ is generic (i.e. comeager) in $(\RR\butnot\RR_1)^m$.
\end{remark}
\begin{proof}
It suffices to show that for all $n\in\N$ and for every finite sequence $(i_j)_{j=0}^{n-1}$ with $i_j=1,\ldots,m$ for all $j=0,\ldots,n-1$, the set
\[
\AA_{\vec{i}} := \{(T_i)_{i=1}^m: T = T_{i_{n-1}}\circ\cdots\circ T_{i_0}\text{ has no fixed ramification points}\}
\]
is open dense. In fact, we show that for every finite sequence $(d_i)_{i=1}^m$, $d_i\geq 2$, the set
\[
\AA_{\vec{i},\vec{d}} := \AA_{\vec{i}}\cap \prod_{i=1}^m\RR_{d_i}
\]
is a nonempty Zariski open subset of the analytic variety $\prod_{i=1}^m\RR_{d_i}$. It is a well-known fact that every nonempty Zariski open subset of an irreducible analytic variety is open and dense in the usual topology. (This follows from the multidimensional identity principle.)

To see that $\AA_{\vec{i},\vec{d}}$ is Zariski open, note that for each $d\geq 2$,
\begin{align*}
&\{T\in\RR_d: T\text{ has no fixed ramification points}\}\\
&= \{f/g:f,g\in \poly_d,\Res_{d,d}(f,g)\neq 0,\Res_{2d - 2,d + 1}(f'g - fg',f - \id \cdot g) \neq 0\}
\end{align*}
is Zariski open. ($\poly_d$ is the set of all polynomials of degree at most $d$. $\Res_{k,\ell}$ is the resultant whose domain is $\poly_k\times\poly_\ell$.) Since the map
\[
(T_i)_{i=1}^m\mapsto T_{i_{n-1}}\circ\ldots\circ T_{i_0}
\]
is an algebraic morphism, it is continuous in the Zariski topology. Thus $\AA_{\vec{i},\vec{d}}$ is Zariski open.

We wish to show that $\AA_{\vec{i},\vec{d}}$ is nonempty. Fix $a\in\complexplane$ transcendental. For each $d\geq 2$ we define
\[
S_d(z) := \frac{a z^d + 1}{a z^d - 1}
\]
We claim that $(S_{d_i})_{i=1}^m\in\AA_{\vec{i},\vec{d}}$. For ease of notation we write $T_j := S_{d_{i_j}}$, $j = 0,\ldots,n-1$, so that it suffices to show that $T := T_0^n$ has no fixed ramification points.

By contradiction, suppose that $p\in\C$ is a fixed ramification point of $T$. Then there exists $j=0,\ldots,n-1$ such that $T_0^j(p)$ is a ramification point of $T_j$. By cycling the indices, we may without loss of generality assume that $j = 0$. In this case $p$ is a ramification point of $T_0$ i.e. $p$ is zero or infinity.

We claim by induction that for $j=1,\ldots,n$, $T_0^j(p)$ can be expressed as $\pm 1$ times the quotient of two monic polynomials in $a$ whose degrees are equal. In particular, $p = T_0^n(p) \neq 0,\infty$, a contradiction.

Base case $j=1$: $T_0(0) = -1/1$, $T(\infty) = 1/1$.

Inductive step: If
\[
T_0^j(p) = \frac{\pm a^k + \ldots}{a^k + \ldots}
\]
then
\begin{align*}
T_0^{j + 1}(p) &= \frac{a(\pm a^k + \ldots)^{d_j} + (a^k + \ldots)^{d_j}}{a(\pm a^k + \ldots)^{d_j} - (a^k + \ldots)^{d_j}}\\
&= \frac{(\pm)^{d_j}a^{k d_j + 1} + \ldots}{(\pm)^{d_j}a^{k d_j + 1} + \ldots}.
\end{align*}
\end{proof}
\begin{remark}
It does not seem obvious how to prove Remark \ref{remarksufficient} without using the machinery of algebraic geometry. One would like to be able to wiggle the maps a bit (say by post-composing with a M\"obius transformation close to the identity), to ``shake off'' any given fixed ramification point. This works for a single map, or more generally if the sequence $(i_j)_{j = 0}^{n - 1}$ has an element which occurs exactly once. However, it is hard to account for the ``double effect'' of perturbation (in particular, to make sure it does not cancel itself out) in the case where each element occurs at least twice.
\end{remark}
The same is true if polynomials are considered instead of rational functions:
\begin{remark}
\label{remarksufficientpolynomial}
For every $m\in\N$, the set of all sequences $(T_i)_{i=1}^m$ in $\poly\butnot\poly_1$ such that hypotheses (i) and (ii) of Theorem \ref{theoremcondition} are satisfied with $\AA = \{T_i:i=1,\ldots,m\}$ and $F = \{\infty\}$ is generic in $(\poly\butnot\poly_1)^m$.
\end{remark}
\begin{proof}
Almost the exact same as the proof of Remark \ref{remarksufficient}, except that we define
\[
S_d(z) := a z^d + 1.
\]
Details are left to the reader.
\end{proof}
\end{section}

\begin{section}{Consequences of Theorem \ref{maintheorem}}\label{sectionconsequences}
In this section, we fix a nonsingular holomorphic random dynamical system $(T,\Omega,\basemeasure,\theta)$ on $\C$ with a potential function $\phi:\Omega\rightarrow\CC(\C)$ and a set $X\implies\C$ satisfying the hypotheses of Theorem \ref{maintheorem}. We assume that Theorem \ref{maintheorem} has already been proven. (The proof of Theorem \ref{maintheorem} is given in Sections \ref{sectionpreliminaries} - \ref{sectionconvergence}.)

All measures are assumed to be nonnegative.

\begin{remark}
\label{remarkmeasures}
Events \ref{mainevent} and \ref{eventexactUJ} imply the following event:
\begin{quote}
\begin{event}
\label{eventmeasures}
For every $g\in\CC(\C)$ with $g>0$, there exists a unique measure $\nu_g$ whose support is $\JJ_0'$ such that for every $f\in\CC(\C)$ and for every $\kappa > 0$,
\begin{equation}
\label{equationmu}
\frac{L_0^n[f]}{L_0^n[g]}\tendston \mathbf{1}\int f\d \nu_g
\end{equation}
uniformly on $(X\butnot B_s(\SS_n,\kappa)\cup\JJ_n)_n$. Furthermore, if $(\sigma_n)_n$ is any sequence of probability measures such that $\sigma_n$ is supported on $X\butnot B_s(\SS_n,\kappa)\cup\JJ_n$ for all $n\in\N$, then the convergence
\begin{equation}
\label{weakstar}
\nu_g = \lim_{n\rightarrow\infty}L_n^0\left[\frac{\sigma_n}{L_0^n[g]}\right]
\end{equation}
holds in the weak-* topology.
\end{event}
\end{quote}
\end{remark}
\begin{proof}
Fix $\omega\in\Omega$ such that Event \ref{mainevent} is satisfied.

Fix $g\in\CC(\C)$ with $g > 0$. Suppose that $f\in\CC(\C)$. Let $\gamma_1 = \rho_{f/g}$ and let $\gamma_2 = \rho_{\ln(g)}$. Since Event \ref{mainevent} holds, for all $\varepsilon,\kappa > 0$ there exists $N\in\N$ such that for all $n\geq N$, (\ref{mainequation}) holds. But this exactly means that
\begin{equation}
\label{mainequationrephrased}
\left\|\frac{L_0^n[f]}{L_0^n[g]}\right\|_{\osc,X\butnot B_s(\SS_n,\kappa)\cup \JJ_n} \tendston 0.
\end{equation}
for all $\kappa > 0$.

Now (\ref{suplessthan}), (\ref{inflessthan}), and (\ref{mainequationrephrased}) together with the backward invariance of the Julia set imply that
\[
\left(\left[\inf_{\JJ_n}\frac{L_0^n[f]}{L_0^n[g]},\sup_{\JJ_n}\frac{L_0^n[f]}{L_0^n[g]}\right]\right)_{n\in\N}
\]
is a decreasing sequence of intervals whose diameters tend to zero. Let $\nu_g[f]$ be their unique point of intersection. It is easily verified that $\nu_g$ is a positive linear functional on $\CC(\C)$, so by the Riesz representation theorem we may identify it with a measure. Clearly, (\ref{mainequationrephrased}) implies (\ref{equationmu}).

If $f\in\CC(\C)$ satisfies $f\on \JJ_0' = 0$, then the left hand side of (\ref{equationmu}) is identically zero on $\JJ_n'$. Since the convergence of (\ref{equationmu}) is uniform on the sequence $(\JJ_n')_n$, we have $\int f\d\nu_g = 0$. Since this is true for all $f$ such that $f\on \JJ_0' = 0$, we have $\nu_g(\C\butnot \JJ_0') = 0$ i.e. $\Supp(\nu_g)\implies \JJ_0'$. For the other direction, note that Event \ref{eventexactUJ} implies that for every $U\implies\C$ open with $U\cap\JJ_0'\neq\emptyset$, for sufficiently large $n\in\N$, $L_0^n[\one_U]$ is strictly positive on $\JJ_n'$; since $L_0^n[\one_U]$ is lower semicontinuous, it follows that there exists $\varepsilon > 0$ such that $L_0^n[\one_U] \geq \varepsilon L_0^n[g]$ on $\JJ_0'$. Then $\nu_g(U) \geq \varepsilon \int g\d\nu_g = \varepsilon > 0$.

Finally, since the convergence (\ref{equationmu}) holds uniformly on $(X\butnot B_s(\SS_n,\kappa)\cup\JJ_n)_n$, the same convergence holds when integrated against a sequence of measures $(\sigma_n)_n$ as in the hypothesis. Algebra, followed by the implicitization of the $f$ variable, yields (\ref{weakstar}).
\end{proof}

For the remainder of this section, we assume that $X$ has the equicontinuity property.

\begin{corollary}
\label{corollaryeigenvalues}
The following event is almost certain to occur:
\begin{quote}
\begin{event}
\label{eventeigenvalues}
Fix $p_0\in X\butnot \SS_0$. Then the sequence
\begin{equation}
\label{equationG}
\left[\ln\left(\frac{L_{-n}^0[\one]}{L_{-n}^0[\one](p_0)}\right)\right]_{n\in\N}
\end{equation}
is uniformly Cauchy on $X$.
\end{event}
\end{quote}
\end{corollary}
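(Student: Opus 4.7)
My plan is to combine Arzelà--Ascoli compactness with a uniqueness argument for subsequential limits. Write $f_n := L_{-n}^0[\one]$ and $g_n(x) := \ln\bigl(f_n(x)/f_n(p_0)\bigr)$. The equicontinuity property supplies a single modulus $\gamma$ with $\rho_{\ln f_n}^{(X)} \leq \gamma$ uniformly in $n$ (almost surely), and the bounded distortion property gives $\|\ln f_n\|_{\osc, X} \leq M$; combined with $p_0 \in X$ this yields $\|g_n\|_{\infty, X} \leq M$. Thus $(g_n)_{n \in \N}$ is a uniformly bounded, equicontinuous family on the compact continuum $X$, so by Arzelà--Ascoli it is relatively compact in $C(X)$ under the sup norm. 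Uniform Cauchy-ness reduces to the uniqueness of subsequential limits.

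To prove such uniqueness, suppose $g_{n_k} \to g^{(1)}$ and $g_{n_k'} \to g^{(2)}$ uniformly on $X$ along subsequences with $n_k < n_k'$. From $f_{n_k'} = L_{-n_k}^0\bigl[L_{-n_k'}^{-n_k}[\one]\bigr]$, set $\wtilde f := L_{-n_k'}^{-n_k}[\one]/L_{-n_k'}^{-n_k}[\one](p_0)$; equicontinuity plus bounded distortion place $\wtilde f \in [e^{-M}, e^M]$ with a modulus of continuity $\gamma_1(t) := e^M(e^{\gamma(t)} - 1)$ on $X$, independent of $k$. Applying Event \ref{mainevent} at the time-shifted base $\theta^{-n_k}\omega$ (it holds there almost surely by translation invariance of probabilities, simultaneously for all $k$) with test pair $(\wtilde f,\one)$ and $\gamma_2 = 0$ yields, for each $\varepsilon, \kappa > 0$ once $n_k$ is sufficiently large,
\[
\left\|\frac{L_{-n_k}^0[\wtilde f]}{L_{-n_k}^0[\one]}\right\|_{\osc,\, X'} \leq \varepsilon, \qquad X' := (X \setminus B_s(\SS_0, \kappa)) \cup \JJ_0.
\]
Since $T_0^{-n_k}(X) \subseteq X$ (backward invariance of $X$ from $T(\C \setminus X) \Kin \C \setminus X$), Lemma \ref{lemmanonincreasing} traps this ratio inside $[e^{-M}, e^M]$, so passage to logs costs only a factor $e^M$, and the multiplicative normalization $L_{-n_k'}^{-n_k}[\one](p_0)$ drops out of the oscillation:
\[
\bigl\|\ln(f_{n_k'}/f_{n_k})\bigr\|_{\osc,\, X'} \leq \varepsilon\, e^M.
\]

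Choose $\kappa$ small enough that $p_0 \in X'$ (possible since $p_0 \notin \SS_0$ and $\SS_0$ is finite). Then the estimate rephrases as $|g_{n_k'}(x) - g_{n_k}(x)| \leq \varepsilon e^M$ on $X'$, and passing $k \to \infty$ yields $\|g^{(2)} - g^{(1)}\|_{\infty, X'} \leq \varepsilon e^M$; letting $\varepsilon \to 0$ forces $g^{(1)} = g^{(2)}$ on $X'$ for every $\kappa$. Because $\JJ_0 \subseteq X$ (Montel applied to $B = \C \setminus X$ using $T(B) \Kin B$ shows $B \subseteq \FF_0$) and $\SS_0 \cap \JJ_0 = \emptyset$ (nonsingularity), $\bigcup_{\kappa > 0} X' = X \setminus \SS_0$, which is dense in $X$ since the Hausdorff connected set $X$ with $|X| \geq 3$ has no isolated points. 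By continuity, $g^{(1)} \equiv g^{(2)}$ on all of $X$, so the precompact family $(g_n)$ has a single cluster point, hence converges uniformly, which is the claimed Cauchy condition.

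The main obstacle I anticipate is keeping constants uniform in the two indices $n_k < n_k'$ when invoking Theorem \ref{maintheorem}. Uniformity of the modulus of the test function $\wtilde f$ rests on the single modulus $\gamma$ supplied by the equicontinuity property; uniformity of the threshold $N$ in Event \ref{mainevent} across the shifted base points $\theta^{-n_k}\omega$ is more delicate and will likely require refining to a subsubsequence (via Poincaré recurrence applied to the event $\{N \leq N^*\}$ for $N^*$ large) along which these thresholds are bounded, the limits $g^{(1)}, g^{(2)}$ being unchanged. A secondary but essential ingredient is the a priori bound $[e^{-M}, e^M]$ linking oscillation of ratios to oscillation of their logarithms, without which the large multiplicative normalizations would spoil the estimate.
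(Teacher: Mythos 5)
Your core mechanism --- applying Theorem~\ref{maintheorem} at the shifted base $\theta^{-n_k}\omega$ to the test pair $f = L_{-n_k'}^{-n_k}[\one]/L_{-n_k'}^{-n_k}[\one](p_0)$, $g = \one$, with the modulus and pinching of $f$ controlled uniformly by $\gamma$ and $M$, then converting the oscillation bound on $L_{-n_k'}^0[\one]/L_{-n_k}^0[\one]$ into a bound on its logarithm via the a priori trap $[e^{-M},e^M]$ --- is exactly the heart of the paper's proof. But the surrounding Arzel\`a--Ascoli / uniqueness-of-cluster-points frame is unnecessary: once a single $n_k$ is found at which the shifted event holds with its threshold $\leq n_k$, the same estimate applies simultaneously to \emph{every} $m > n_k$ (the test function $L_{-m}^{-n_k}[\one]$ is free to vary), giving $\|g_{m_1} - g_{m_2}\|_{\infty}$ small for all $m_1, m_2 > n_k$. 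That is already the Cauchy criterion, so no precompactness of $(g_n)$ needs to be consumed.

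The genuine gap is precisely where you flag your uncertainty. Having fixed subsequences $(n_k),(n_k')$ to produce cluster points $g^{(1)},g^{(2)}$, you cannot afterwards refine $(n_k)$ via Poincar\'e recurrence on $\{N\le N^*\}$: nothing guarantees the recurrent times meet your already-chosen $(n_k)$ in an infinite set. The fix is to choose the recurrent times \emph{first} and only then extract a convergent subsubsequence --- but at that point you may as well run the direct Cauchy estimate, which is what the paper does. It packages the threshold issue as Lemma~\ref{lemmaflip}, which converts ``$A_n$ holds for all large $n$ a.s.'' into ``$\exists n$ with $A_n$ at $\theta^{-n}\omega$ a.s.,'' thereby producing a base time $n_k$ whose threshold is \emph{automatically} $\le n_k$. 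You would also need to verify measurability of the shifted fixed-$n$ event (quantified over an uncountable class of test pairs), which the paper handles by noting the relevant normalized test-pair family is compact --- a separate application of Arzel\`a--Ascoli, in the function space rather than to $(g_n)$ --- and then invoking Corollary~\ref{corollaryconventions}.
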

\begin{proof}
Let $\gamma_1$ be the modulus of continuity corresponding to the fact that $X$ has the equicontinuity property, and let $\gamma_2 = 0$. For each $k\in\N$, let $\varepsilon_k = 2^{-k} > 0$, and let $0 < \kappa_k < \diam(X)/4$ be small enough so that $\gamma_1(4\kappa_k)\leq 2^{-k}$. For each $n\in\N$, consider the following event:
\begin{quote}
\begin{event}
\label{eventinternal}
For all $f,g\in\CC(\C)$ with $g > 0$, $\rho_{f/g}^{(X)} \leq \gamma_1$, and $\rho_{\ln(g)}^{(X)}\leq \gamma_2$, we have that (\ref{mainequation}) holds. (Note that (\ref{mainequation}) depends on $\varepsilon_k$ and $\kappa_k$, and thus indirectly on $k$.)
\end{event}
\end{quote}
Now Theorem \ref{maintheorem} implies that
\[
\pr(\text{Event \ref{eventinternal} is satisfied for all $n$ sufficiently large}) = 1.
\]
Thus by Lemma \ref{lemmaflip},
\[
\pr(\exists n_k\in\N\text{ such that Event \ref{eventinternal} is satisfied for $\theta^{-n_k}\omega$}) = 1.
\]
We should say a word about the measurability of Event \ref{eventinternal}, since this is necessary to apply Lemma \ref{lemmaflip}. Let
\[
\K = \{(f,g)\in\CC(X)\times\CC(X):g > 0,\rho_{f/g} \leq \gamma_1,\rho_{\ln(g)} \leq \gamma_2,f(p_0) = 0,g(p_0) = 1\}
\]
By the Arzel\`a-Ascoli theorem, $\K$ is a compact metric space under the supremum norm. It will readily be verified that Event \ref{eventinternal} is equivalent to:
\begin{quote}
\begin{event}
\label{eventinternalstandard}
(\ref{mainequation}) holds for all $(f,g)\in\K$.
\end{event}
\end{quote}
which is measurable by Corollary \ref{corollaryconventions}.

Fix $\omega\in\Omega$ satisfying the following events:
\begin{enumerate}[A)]
\item $\rho_{\ln(L_{-n}^{-j}[\one])}^{(X)} \leq \gamma_1$ for all $j,n\in\N$, $j\leq n$
\item For all $k\in\N$, there exists $n_k\in\N$ such that  Event \ref{eventinternal} is satisfied for $\theta^{-n_k}\omega$
\end{enumerate}
By (\ref{gammaisabound}) and by the above calculation, such sequences form an almost certain event. Thus, if we show that $\omega$ satisfies Event \ref{eventeigenvalues}, then we are done.

Fix $k\in\N$, and fix $j\geq n_k$. We write $n := n_k$.

Let $f = L_{-j}^{-n}[\one]$, and let $g = \one\sup_X(f)$. Now
\begin{equation*}
\rho_{f/g}^{(X)} \leq 
\rho_{\ln(f/g)}^{(X)}\sup_X(f/g)
= \rho_{\ln(f)}^{(X)}
\leq \gamma_1.
\end{equation*}
Clearly $g > 0$, and $\rho_{\ln(g)}^{(X)} = 0 = \gamma_2$. Thus Event \ref{eventinternal} applies. We write $K := X\butnot B_s(\SS_0,\kappa_k)\cup\JJ_0$,  so that (\ref{mainequation}) becomes
\[
\left\|\frac{L_{-n}^0[f]}{L_{-n}^0[g]}\right\|_{\osc,K} \leq 2^{-k}.
\]
Now
\begin{align*}
\|\ln(L_{-j}^0[\one]) - \ln(L_{-n}^0[\one])\|_{\osc,K}
&= \|\ln(L_{-n}^0[f]) - \ln(L_{-n}^0[g])\|_{\osc,K}\\
&= \left\|\ln\left(\frac{L_{-n}^0[f]}{L_{-n}^0[g]}\right)\right\|_{\osc,K}\\
&\leq \sup_K\left(\frac{L_{-n}^0[g]}{L_{-n}^0[f]}\right) \left\|\frac{L_{-n}^0[f]}{L_{-n}^0[g]}\right\|_{\osc,K}\\
&\leq 2^{-k}\sup(g/f) \leq e^M 2^{-k}
\end{align*}
We claim that $K$ is close to $X$ in the Hausdorff metric:
\begin{claim}
\[
X\implies \cl{B_s}(K,4\kappa_k).
\]
\end{claim}
\begin{proof}
Fix $p\in X$. Since $X$ is connected and has diameter at least $4\kappa_k$, the sets $S_s(p,2\kappa_k)$ and $S_s(p,4\kappa_k)$ must intersect $X$, say at $q$ and $r$. Since $\diam_3\{p,q,r\}=2\kappa_k$ and since $\#(\SS_0)\leq 2$, the pigeonhole principle implies that either $p$, $q$, or $r$ is not in $B_s(\SS_0,\kappa_k)$. But then this point is in $K$, and $p\in\cl{B_s}(K,4\kappa_k)$.
\QEDmod\end{proof}
As a result of this claim, we have the bound
\begin{align*}
\|\ln(L_{-j}^0[\one]) - \ln(L_{-n}^0[\one])\|_{\osc,X}
&\leq \|\ln(L_{-j}^0[\one]) - \ln(L_{-n}^0[\one])\|_{\osc,K} + 4\gamma_1(4\kappa_k)\\
&\leq (4 + e^M) 2^{-k}.
\end{align*}
Since this is true for all $j\geq n_k$, we see that $\|\ln(L_{-j_1}^0[\one]) - \ln(L_{-j_2}^0[\one])\|_{\osc,X}$ tends to zero as $j_1$ and $j_2$ approach infinity jointly. Thus if $p_0\in X$, then
\[
\left\|\ln\left(\frac{L_{-j_1}^0[\one]}{L_{-j_1}^0[\one](p_0)}\right) - \ln\left(\frac{L_{-j_2}^0[\one]}{L_{-j_2}^0[\one](p_0)}\right)\right\|_{\infty,X} \xrightarrow[j_1,j_2]{} 0.
\]
(The function whose $\infty,X$ norm is being taken vanishes at $p_0\in X$.) Thus we are done.
\end{proof}

Without loss of generality suppose that for all $\omega\in\Omega$, the following events are satisfied:
\begin{enumerate}[A)]
\item Events \ref{eventmeasures} and \ref{eventeigenvalues}
\item $T(B)\Kin B$
\item $(T_n)_{n\in\N}$ is non-quasilinear and nonsingular
\end{enumerate}
Note that (B) implies that $\JJ_\omega\implies X$ for all $\omega\in\Omega$, since $\#(X)\geq 3$.

The limit of the sequence (\ref{equationG}) depends on $p_0\in X\butnot\SS_0$. Since $\JJ$ is strongly measurable and always nonempty, by the selection theorem [\cite{Mo} Theorem 2.13, p.32] we may choose a (measurable) random point $p_0\in\JJ_0\implies X\butnot\SS_0$.

Fix $\omega\in\Omega$. The backwards invariance of $X$ implies that (\ref{Ldef}) defines a family of maps $L_m^n:\CC(X)\rightarrow\CC(X)$. We do not distinguish notationally from this family and from the original family $L_m^n:\CC(\C)\rightarrow\CC(\C)$ defined in Section \ref{sectionPF}. We make the following definitions, whose validity is justified by Events \ref{eventmeasures} and \ref{eventeigenvalues}:
\begin{align} \label{gdef}
g_0 &:= \lim_{n\rightarrow\infty}\frac{L_{-n}^0[\one]}{L_{-n}^0[\one](p_0)}
\in\CC(X)\hspace{.5 in}g_0 > 0
\\ \label{lambdadef}
\lambda_0^n &:= L_0^n[g_0](p_n)
> 0
\\
\nu_0 &:= \lim_{n\rightarrow\infty}L_n^0\left[\frac{\delta_{p_n}}{L_0^n[g_0]}\right]
\in\M(\JJ_0')
\end{align}
Recall that in Section \ref{sectionintroduction} we made the convention that $O_n = O_{\theta^n\omega} = O_0\on_{\theta^n\omega}$ for any random object $O_0$.
We continue:
\begin{align} \label{nudef}
\mu_0 &:= g_0 \nu_0
\in\M(\JJ_0')
\\
\psi_0^n &:= \phi_0^n + \ln(g_0) - \ln(g_n\circ T_0^n) - \ln(\lambda_0^n)
\in\CC(T_n^0(X))
\\ \label{Lpsidef}
\L_0^n[f](p) &:= \frac{L_0^n[f g_0](p)}{\lambda_0^n g_n(p)} = \sum_{x\in T_n^0(p)}e^{\psi_0^n(x)}f(x)
:\CC(X)\rightarrow\CC(X)
\\ \label{Lpsistardef}
\L_n^0[\sigma] &:= g_0 L_n^0\left[\frac{\sigma}{\lambda_0^n g_n}\right] = \int \sum_{x\in T_n^0(p)}e^{\psi_0^n(x)}\delta_x\d\sigma(p)
:\M(X)\rightarrow\M(X)
\end{align}
We make the following observations, whose proofs are algebraic in nature and are left to the reader:
\begin{align} \label{eigenvalues}
L_0^n[g_0] &= \lambda_0^n g_n\\ \label{measures}
L_n^0[\nu_n] &= \lambda_0^n\nu_0\\ \label{normalized}
\nu_0[g_0] &= 1\\ \label{equationmunew}
\frac{L_0^n[f]}{\lambda_0^n g_n} &\tendston \mathbf{1}\int f\d \nu_0 \hspace{.5 in}[\text{on }X\butnot B_s(\SS_n,\kappa)\cup\JJ_n,f\in\CC(X)]\\ \label{invariant}
\L_0[\one] &= \one\\ \label{measuresnu}
\L_n^0[\mu_n] &= \mu_0\\ \label{equationnu}
\L_0^n[f] &\tendston \mathbf{1}\int f\d \mu_0 \hspace{.5 in}[\text{on }X\butnot B_s(\SS_n,\kappa)\cup\JJ_n,f\in\CC(X)]\\ \label{probabilitymeasure}
\mu_0[\one] &= 1\\ \label{rewriteend}
(T_0^n)_*[\mu_0] &= \mu_n
\end{align}
The last two formulas imply that $(\mu_n)_n$ is a $(T_n)_n$-invariant sequence of probability measures.

\begin{remark}
\label{remarkweakstar}
Fix $\kappa > 0$. If $(\sigma_n)_n$ is any sequence of probability measures such that
\begin{equation*}
\sigma_n(X\butnot B_s(\SS_n,\kappa)\cup\JJ_n)\tendston 1,
\end{equation*}
then the convergence
\begin{equation}
\label{weakstarnew}
\mu_0 = \lim_{n\rightarrow\infty}\L_n^0[\sigma_n]
\end{equation}
holds in the weak-* topology.
\end{remark}
\begin{proof}
This follows directly from (\ref{weakstar}), plust the fact that $\L$ is a probability-preserving operator.
\end{proof}

\begin{remark}
The measurability of (\ref{gdef}) - (\ref{Lpsistardef}) follows directly from Theorem \ref{theoremexpressions}. Of crucial importance here is the measurability of the random point $p_0$.
\end{remark}

\begin{remark}
The expressions
\begin{equation*}
\ln\sup_X(g_0), \ln\inf_X(g_0), \ln(\lambda_0),\ln(\nu_0[\one])
\end{equation*}
are bounded deterministically (independent of $\omega$.) Thus the expression $\sup_X(\psi_0)$ has finite expectation.
\end{remark}
\begin{proof}
This follows directly from (\ref{gammaisabound}) and (\ref{tauisaboundintegral}), together with the fact that $\ln(g_0)$ vanishes at $p_0$.
\end{proof}

\begin{claim}
\label{claimMexact}
With the above assumptions and constructions, the following event is satisfied:
\begin{quote}
\begin{event}
\label{eventMexact}
The sequence $(\mu_n)_n$ is metrically exact i.e. for all $A\implies \C$ Borel measurable we have either $\mu_0(A) = 0$ or $\mu_n(T_0^n(A)) \tendston 1$.
\end{event}
\end{quote}
\end{claim}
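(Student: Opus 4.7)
The plan is to prove the claim by showing that $h_n := \L_0^n[\one_A] \in \CC(X)$ converges to the constant $\mu_0(A)$ in $L^1(\mu_n)$, and then deducing $\mu_n(T_0^n(A)) \to 1$ from the identity $\{h_n > 0\} \cap X = T_0^n(A) \cap X$. Since $\mu_0$ is supported on $\JJ_0' \implies X$, I may assume $A \implies X$, and the backward invariance $T_n^0(p) \implies X$ for $p \in X$ (a consequence of $T(B) \Kin B$) makes $h_n$ well-defined on $X$. The strict positivity of $e^{\psi_0^n}$ yields the identity $\{h_n > 0\} \cap X = T_0^n(A) \cap X$, and because $\mu_n$ is supported on $X$, this converts the target $\mu_n(T_0^n(A))$ into $\mu_n(\{h_n > 0\})$; this also sidesteps the fact that $T_0^n(A)$ need not be Borel.

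First I would establish $L^1$-contraction: using $|\L_0^n[g]| \leq \L_0^n[|g|]$ and (\ref{measuresnu}),
\[
\|\L_0^n[g]\|_{L^1(\mu_n)} \;\leq\; \int \L_0^n[|g|] \, d\mu_n \;=\; \int |g| \, d\L_n^0[\mu_n] \;=\; \|g\|_{L^1(\mu_0)}.
\]
Specialized to $g = \one_A$, this also gives $\int h_n \, d\mu_n = \mu_0(A)$. Fix $\varepsilon > 0$; by density of $\CC(X)$ in $L^1(\mu_0)$ pick $f \in \CC(X)$ with $0 \leq f \leq 1$ and $\|f - \one_A\|_{L^1(\mu_0)} < \varepsilon$. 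Then $\|h_n - \L_0^n[f]\|_{L^1(\mu_n)} < \varepsilon$ by the contraction estimate applied to $g = f - \one_A$. By (\ref{equationnu}), $\L_0^n[f] \to \mu_0(f)$ uniformly on $(X \butnot B_s(\SS_n,\kappa)) \cup \JJ_n$ for every $\kappa > 0$; since $\mu_n(\JJ_n') = 1$ and $\JJ_n' \implies \JJ_n$, this uniform convergence holds $\mu_n$-almost everywhere, hence in $L^1(\mu_n)$. Combining with $|\mu_0(f) - \mu_0(A)| < \varepsilon$ by the triangle inequality yields $\limsup_n \|h_n - \mu_0(A)\|_{L^1(\mu_n)} \leq 2\varepsilon$, and as $\varepsilon$ was arbitrary, $h_n \to \mu_0(A)$ in $L^1(\mu_n)$.

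Finally, Markov's inequality applied at the threshold $\mu_0(A)/2$ gives $\mu_n(\{|h_n - \mu_0(A)| > \mu_0(A)/2\}) \to 0$, hence $\mu_n(\{h_n > \mu_0(A)/2\}) \to 1$, and a fortiori $\mu_n(T_0^n(A)) = \mu_n(\{h_n > 0\}) \to 1$. The chief subtlety is to verify that the uniform-convergence region appearing in (\ref{equationnu}) always covers the support of $\mu_n$; this is automatic because $\mu_n$ is concentrated on $\JJ_n'$, which lies in $(X \butnot B_s(\SS_n,\kappa)) \cup \JJ_n$ for every $\kappa > 0$ and every $n$. Beyond that, the argument is a soft combination of the $L^1$-contraction of the normalized transfer operator, a standard $\CC(X)$-approximation, and the uniform convergence already supplied by Events \ref{mainevent} and \ref{eventmeasures}.
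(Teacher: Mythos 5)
Your proof is correct, and it takes a genuinely different route from the paper. The paper introduces $\wtilde{A} := \bigcup_n T_n^0 T_0^n A$, observes that it suffices to prove the zero--one law $\mu_0(\wtilde{A})\in\{0,1\}$, and derives this in one line from Remark \ref{remarkweakstar} applied to the normalized restrictions $\one_{T_0^n\wtilde{A}}\mu_n/\mu_0(\wtilde{A})$ together with the $\L^*$-invariance (\ref{measuresnu}): the limit identity forces $\mu_0 = \one_{\wtilde{A}}\mu_0/\mu_0(\wtilde{A})$, hence $\mu_0(\wtilde{A}) = 1$. You instead work with $h_n := \L_0^n[\one_A]$ directly, extract $L^1(\mu_n)$-contraction and the identity $\int h_n\, d\mu_n = \mu_0(A)$ from the adjoint relation and (\ref{measuresnu}), approximate $\one_A$ in $L^1(\mu_0)$ by a continuous $f$, feed $f$ into the uniform convergence (\ref{equationnu}) valid on $\JJ_n \supseteq \JJ_n' = \Supp(\mu_n)$, and close with Markov's inequality at threshold $\mu_0(A)/2$ using $\{h_n>0\}\cap X = T_0^n(A)\cap X$. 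Both arguments ultimately rest on the same engine (convergence of the normalized transfer operator to $\mu_0$), but the paper's version is a compact weak-$*$ calculation that bypasses any approximation step, whereas yours is a more classical Doob/martingale-flavored $L^1$ argument that makes the contraction property explicit and is arguably more robust and transparent; it also neatly sidesteps measurability of $T_0^n(A)$ by substituting the superlevel set of $h_n$. Two cosmetic points: $h_n$ is a bounded Borel function but not in $\CC(X)$ (dispensable for the argument), and when reducing to $A\implies X$ you should note that $\mu_n(T_0^n(A\cap X))\leq\mu_n(T_0^n(A))$, so the replacement only strengthens the conclusion.
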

\begin{proof}
Fix $A\implies \C$, and let
\[
\wtilde{A} := \bigcup_{n\in\N}T_n^0 T_0^n A
\]
so that
\begin{equation*}
\mu_0(A)
\leq \mu_0(\wtilde{A})
= \lim_{n\rightarrow\infty}\mu_0(T_n^0 T_0^n A)
= \lim_{n\rightarrow\infty}\mu_n(T_0^n(A))
\end{equation*}
Thus it suffices to show that $\mu_0(\wtilde{A})$ is either zero or one.

Suppose that $\mu_0(\wtilde{A}) > 0$. Then
\[
\left(\frac{\one_{T_0^n \wtilde{A}}\mu_n}{\mu_0(\wtilde{A})}\right)_{n\in\N}
\]
is a sequence of probability measures supported on $(\JJ_n)_n$. Thus by Remark \ref{remarkweakstar},
\begin{align*}
\mu_0 &= \lim_{n\rightarrow\infty}\L_n^0\left[\frac{\one_{T_0^n \wtilde{A}}\mu_n}{\mu_0(\wtilde{A})}\right]\\
&= \frac{1}{\mu_0(\wtilde{A})}\lim_{n\rightarrow\infty}\left(\one_{T_0^n\wtilde{A}}\circ T_0^n\right)\left(\L_n^0[\mu_n]\right)\\
&= \frac{1}{\mu_0(\wtilde{A})}\lim_{n\rightarrow\infty}\one_{\wtilde{A}}\mu_0
= \frac{\one_{\wtilde{A}}\mu_0}{\mu_0(\wtilde{A})}.
\end{align*}
Evaluating at $\wtilde{A}$, we see that $\mu_0(\wtilde{A}) = 1$.
\end{proof}

\begin{proposition}
\label{propositionatomless}
\[
\pr(\mu_0\text{ is atomless}) = 1.
\]
\end{proposition}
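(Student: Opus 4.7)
The plan is to argue by contradiction using the metric exactness established in Claim~\ref{claimMexact} together with the ergodicity of $\theta$ and the fact that $\Supp(\mu_0) = \JJ_0'$ is perfect.

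For each $\omega\in\Omega$ let $m(\omega) := \sup_{p\in\C}\mu_0(\{p\})$ denote the maximum atomic mass of $\mu_0$. Since every finite Borel measure on $\C$ has only countably many atoms, $m$ is a well-defined measurable function of $\omega$. Because $(T_0)_*[\mu_0] = \mu_1$, we have $\mu_1(\{T_0(p)\}) \geq \mu_0(\{p\})$ for every $p\in\C$, and therefore $m(\theta\omega)\geq m(\omega)$. The set $\{\omega : m(\omega)\geq a\}$ is then forward-$\theta$-invariant for each $a>0$, so by ergodicity $m(\omega)$ is almost surely equal to a constant $m\in[0,1]$.

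Suppose for contradiction that $m>0$. Then almost surely there exists a point $p_0\in\C$ with $\mu_0(\{p_0\}) = m$. Applying Claim~\ref{claimMexact} (Event~\ref{eventMexact}) to the singleton $A := \{p_0\}$, and observing that $T_0^n(A) = \{T_0^n(p_0)\}$ is again a single point, we conclude that
\[
\mu_n(\{T_0^n(p_0)\}) \tendston 1.
\]
Hence $m(\theta^n\omega) \tendston 1$, and since $m(\theta^n\omega) = m$ almost surely for every $n$, this forces $m = 1$. In other words, $\mu_0 = \delta_{q_0}$ for some (random) point $q_0\in\C$ almost surely.

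To reach a contradiction, recall that $\mu_0 = g_0\nu_0$ with $g_0>0$ continuous on $X\supseteq\JJ_0\supseteq\JJ_0'$; by Remark~\ref{remarkmeasures} we have $\Supp(\nu_0) = \JJ_0'$, so $\Supp(\mu_0) = \JJ_0'$ as well. But by Theorem~\ref{theoremperfect}, $\JJ_0'$ is almost surely uncountable (and in particular has cardinality at least two), which is incompatible with $\mu_0$ being a Dirac mass. Therefore $m = 0$ almost surely, proving that $\mu_0$ is atomless almost surely. The only subtlety is the measurability bookkeeping (selecting the atom $p_0$ as a measurable function of $\omega$, applying ergodicity to $m$), but given the tools already developed in Remark~\ref{remarkmeasures} and Section~\ref{sectionappendix} this is routine; the substantive content is entirely in the short chain exactness $\Rightarrow$ $m\in\{0,1\}$ $\Rightarrow$ perfectness contradiction.
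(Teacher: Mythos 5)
Your argument is correct and follows the paper's proof almost exactly: both define the maximal atom mass, use forward invariance under $T$ plus ergodicity to get an almost-sure constant $C$, then apply Claim~\ref{claimMexact} to the singleton to force $C\in\{0,1\}$ and hence $\mu_0=\delta_q$ if $C>0$. The only divergence is in ruling out the Dirac case: the paper observes that $\Supp(\mu_0)\subseteq\C\setminus\SS_0$ and invokes Lemma~\ref{lemmasix} (choosing $\ell$ so each point has at least two preimages) together with the support-invariance $\{q\}=\Supp(\mu_0)=T_\ell^0(\Supp(\mu_\ell))=T_\ell^0T_0^\ell(q)$ from (\ref{measuresnu}) to get a contradiction, whereas you instead cite $\Supp(\mu_0)=\JJ_0'$ (Remark~\ref{remarkmeasures}) and the uncountability/perfectness of $\JJ_0'$ from Theorem~\ref{theoremperfect}. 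Your ending is slightly more economical since it leans on an already-proved global fact, while the paper's is more self-contained at this point in the development and requires only the weaker fact $\#(\JJ_0')\geq 2$ (implicit in Lemma~\ref{lemmasix}) rather than the full perfectness theorem; either is valid given the standing assumptions (nonlinearity from Remark~\ref{remarkthermodynamic} and nonsingularity from the Theorem~\ref{maintheorem} hypotheses).
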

\begin{proof}
Let
\[
a_0 = \max_{p\in\C}\mu_0(p),
\]
so that we are trying to show $\pr(a_0 = 0) = 1$. Fix $\omega\in\Omega$. For all $p\in\C$ we have
\[
\mu_0(p)\leq \mu_1(T_0(p))\leq a_1;
\]
taking the supremum over $p\in\C$ yields $a_0\leq a_1$. Since $(\Omega,\basemeasure,\theta)$ is ergodic, there exists $C\in [0,1]$ such that $\pr(a_0 = C) = 1$. Thus, we are done if $C = 0$.

Fix $\omega\in\Omega$ so that $a_n = C$ for all $n\in\N$.

Fix $p\in\C$ with $\mu_0(p) = C$. By contradiction suppose $C > 0$. By Claim \ref{claimMexact}, we have
\[
C = a_n\geq \mu_n(T_0^n(p)) \tendston 1.
\]
Thus $C = 1$, and $\mu_0 = \delta_p$. It follows that
\[
p\in\Supp(\mu_0)\implies J_0\implies\C\butnot\SS_0.
\]
Let $m = 2$, and let $\ell\in\N$ be given by Lemma \ref{lemmasix}. Then
\[
\#(T_\ell^0 T_0^\ell(p))\geq 2;
\]
in particular, this set is not equal to $\{p\}$. But (\ref{measuresnu}) gives that
\[
\{p\} = \Supp(\mu_0) = T_\ell^0(\Supp(\mu_\ell)) = T_\ell^0 T_0^\ell(p),
\]
a contradiction. Thus we are done.
\end{proof}

\begin{proposition}
\label{propositionequivalent}
The random objects
$\lambda_0 > 0$, $g_0 \in \CC(X)$, and $\nu_0\in\M(X)$ are well-defined up to equivalence, where $(\lambda,g,\nu)\sim(\wtilde{\lambda},\wtilde{g},\wtilde{\nu})$ if and only if $(\lambda,g,\nu)$ and $(\wtilde{\lambda},\wtilde{g},\wtilde{\nu})$ are related almost surely by the change of variables
\begin{align} \label{lambdabetatwoequals}
\wtilde{\lambda_0} &= \frac{k_0}{k_1} \lambda_0 \\ \label{gbetatwoequals}
\wtilde{g_0} &= k_0 g_0\\ \label{mubetatwoequals}
\wtilde{\nu_0} &= \nu_0/k_0
\end{align}
where $k_0 > 0$ is (measurable) random. $\mu_0$, $\psi_0$, and $\L_0$ are well-defined up to a set of measure zero. In particular \textup{(\ref{lambdabetatwoequals})} says that $\ln(\lambda)$ is well-defined up to cohomological equivalence.

More precisely, any triple $(\wtilde{\lambda},\wtilde{g},\wtilde{\nu})$ satisfies \textup{(\ref{eigenvalues}) - (\ref{normalized})} if and only if there exists $k_0 > 0$ random satisfying \textup{(\ref{lambdabetatwoequals})} - \textup{(\ref{mubetatwoequals})}. In this case, if $\wtilde{\mu}$, $\wtilde{\psi}$, and $\wtilde{\L}$ are defined by \textup{(\ref{nudef}) - (\ref{Lpsidef})}, then $\wtilde{\mu} = \mu$ almost everywhere, and so on.
\end{proposition}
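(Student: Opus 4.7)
My plan is that the ``if'' direction is direct algebra, so the content is in the ``only if'' direction. Given a triple $(\tilde\lambda,\tilde g,\tilde\nu)$ satisfying \textup{(\ref{eigenvalues})--(\ref{normalized})}, I would identify the scalar $k_0$ explicitly as $\nu_0[\tilde g_0]$ and prove the three scaling relations in order—eigenvalue \textup{(\ref{lambdabetatwoequals})}, eigenfunction \textup{(\ref{gbetatwoequals})}, eigenmeasure \textup{(\ref{mubetatwoequals})}—with the remaining conclusions following mechanically.

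The eigenvalue relation would come from pairing $\nu_n$ against $L_0^n[\tilde g_0] = \tilde\lambda_0^n \tilde g_n$ and using the dual equation $L_n^0[\nu_n] = \lambda_0^n\nu_0$ to deduce $\tilde\lambda_0^n k_n = \lambda_0^n k_0$, where $k_n := \nu_n[\tilde g_n]$. Specializing to $n=1$ yields $\tilde\lambda_0 = (k_0/k_1)\lambda_0$.

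The eigenfunction relation \textup{(\ref{gbetatwoequals})} is the main step. I would apply \textup{(\ref{equationmunew})} to the continuous function $f := \tilde g_0 - k_0 g_0 \in \CC(X)$, for which $\nu_0[f] = 0$. A short calculation reduces the conclusion to
\[
k_0(h_n - 1) \tendston 0 \quad \text{uniformly on } X\butnot B_s(\SS_n,\kappa)\cup\JJ_n,
\]
where $h_n := \tilde g_n/(k_n g_n) = h_0(\theta^n\omega)$ is the time-shift of a fixed random continuous function. The key maneuver is a flipping argument via Lemma \ref{lemmaflip}: the measurable event ``$|h_n-1|<\varepsilon$ on $X\butnot B_s(\SS_n,\kappa)\cup\JJ_n$'' holds for all large $n$ almost surely, and its evaluation at $\theta^{-n}\omega$ collapses to the $n$-independent statement ``$|h_0-1|<\varepsilon$ on $X\butnot B_s(\SS_0,\kappa)\cup\JJ_0$''. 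Intersecting over countable $\varepsilon,\kappa\to 0$, then using continuity of $h_0$ combined with the density of $X\butnot\SS_0$ in $X$ (since $\#(\SS_0)\leq 2$ by Remark \ref{remarkatmosttwo} while $X$ is connected with at least three points), forces $h_0 \equiv 1$, i.e., $\tilde g_0 = k_0 g_0$ on all of $X$.

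The eigenmeasure relation would follow from the weak-$*$ formula \textup{(\ref{weakstar})} applied with $g=\tilde g_0$ and $\sigma_n = \delta_{p_n}$: substituting the now-known identity $L_0^n[\tilde g_0](p_n) = k_0\lambda_0^n$ yields $\nu_{\tilde g_0} = \nu_0/k_0$, and identifying $\tilde\nu_0$ with $\nu_{\tilde g_0}$ through their shared normalization $[\tilde g_0]=1$ and support on $\JJ_0'$ (forced by \textup{(\ref{measures})} together with full invariance of the uniform Julia set) completes \textup{(\ref{mubetatwoequals})}. I expect the main obstacle to be the flipping step: verifying measurability of the shift-dependent event so that Lemma \ref{lemmaflip} can be invoked, and then extending the equality $h_0 \equiv 1$ from $X\butnot\SS_0\cup\JJ_0$ to the whole of $X$ through the combination of continuity and finiteness of the exceptional set.
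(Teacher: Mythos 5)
The first two scaling relations are handled essentially as in the paper: you pick the same $k_0 = \nu_0[\wtilde{g_0}]$, derive \textup{(\ref{lambdabetatwoequals})} by pairing against $\nu_n$, and derive \textup{(\ref{gbetatwoequals})} through a flipping argument (the paper applies \textup{(\ref{equationmu})} directly to $f = \wtilde{g_0}$, $g = g_0$ rather than linearizing via \textup{(\ref{equationmunew})} with $f = \wtilde{g_0} - k_0 g_0$, but these are equivalent) followed by the same density-plus-continuity extension to all of $X$. Your account of the measurability worry is also in keeping with the paper's use of Lemma \ref{lemmaflip}.

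The eigenmeasure step, however, has a genuine gap. You compute $\nu_{\wtilde{g_0}} = \nu_0/k_0$ correctly, but then try to identify $\wtilde{\nu_0}$ with $\nu_{\wtilde{g_0}}$ ``through their shared normalization $[\wtilde{g_0}]=1$ and support on $\JJ_0'$.'' This does not follow: two measures with identical support and a single shared integral need not coincide, and the uniqueness clause of Event \ref{eventmeasures} requires the candidate measure to satisfy the defining limit \textup{(\ref{equationmu})}, which you never verify for $\wtilde{\nu_0}$. Moreover, the claim that \textup{(\ref{measures})} forces $\Supp(\wtilde{\nu_0})\implies\JJ_0'$ is not correct as stated --- \textup{(\ref{measures})} only yields backward invariance of the support, and $X$ itself is backward invariant. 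The paper avoids all of this: having established $\wtilde{g_n} = k_n g_n$ and $\wtilde{\lambda}_0^n = (k_0/k_n)\lambda_0^n$, it observes that $\sigma_n := \wtilde{g_n}\wtilde{\nu_n}$ is a probability measure (this is exactly what \textup{(\ref{normalized})}-with-tildes says), and feeds the sequence $(\sigma_n)_n$ directly into \textup{(\ref{weakstar})} with $g = g_0$; the tilded \textup{(\ref{measures})} then lets you collapse $L_n^0[\cdot]$ and produce $\nu_0 = k_0\wtilde{\nu_0}$ in one chain of equalities, with no detour through $\nu_{\wtilde{g_0}}$ and no appeal to a uniqueness statement. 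You should replace your identification step with this direct application of \textup{(\ref{weakstar})}.
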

\begin{proof}
For the forward direction, define the random variable
\[
k_0 := \int \wtilde{g_0}\d\nu_0.
\]
Clearly $k_0 > 0$; we wish to show (\ref{lambdabetatwoequals}) - (\ref{mubetatwoequals}). Fix $\omega\in\Omega$. Integrating (\ref{measures}) [no tildes] against $\wtilde{g_0}$, using (\ref{eigenvalues}) [with tildes] to simplify, setting $n = 1$, and dividing by $k_1$ yields (\ref{lambdabetatwoequals}).

Fix $\varepsilon,\kappa > 0$. Consider the random set $K_0 := X\butnot B_s(\SS_0,\kappa)$. For each $\omega\in\Omega$, (\ref{equationmu}) implies that the equation
\begin{equation}
\label{flipped}
\left\|\frac{L_0^n[\wtilde{g_0}]}{L_0^n[g_0]} - k_0\right\|_{\infty,K_n} \leq \varepsilon k_0
\end{equation}
holds for sufficiently large $n$.
Fix $\omega\in\Omega$ and assume that there exists $n\in\N$ such that \ref{flipped} holds for $\theta^{-n}\omega$. By Lemma \ref{lemmaflip}, 
this assumption is almost certainly valid. Rearranging the reindexed (\ref{flipped}) yields
\begin{align*}
\left\|\frac{k_{-n} \wtilde{g_0}}{k_0 g_0} - k_{-n}\right\|_{\infty,K_0} &\leq \varepsilon k_{-n}\\
\left\|\frac{\wtilde{g_0}}{k_0 g_0} - 1\right\|_{\infty,K_0} &\leq \varepsilon.
\end{align*}
Since $\varepsilon,\kappa > 0$ were arbitrary, the above equation almost certainly holds for all $\varepsilon,\kappa > 0$. But this implies (\ref{gbetatwoequals}).

Now, (\ref{normalized}) [with tildes] implies that $(\wtilde{g_n}\wtilde{\nu_n})_n$ is a sequence of probability measures supported on the Julia set. (\ref{weakstar}), (\ref{gbetatwoequals}), (\ref{eigenvalues}) [no tildes], (\ref{lambdabetatwoequals}), and (\ref{measures}) [with tildes] yield
\[
\nu_0 = \lim_{n\rightarrow\infty}L_n^0\left[\frac{\wtilde{g_n}\wtilde{\nu_n}}{L_0^n[g_0]}\right]
= \lim_{n\rightarrow\infty}L_n^0\left[\frac{k_n g_n\wtilde{\nu_n}}{\lambda_0^n g_n}\right]
= k_0\lim_{n\rightarrow\infty}L_n^0[\wtilde{\nu_n}]
= k_0 \wtilde{\nu_n}.
\]
The backwards direction and the claims made about $\wtilde{\mu}$, $\wtilde{\psi}$, and $\wtilde{\L}$ are purely algebraic and are left to the reader.
\end{proof}

We now step back and take a more global view by considering the relative dynamical system $(\Omega,\basemeasure,\theta,\X,\T,\pi_1)$ associated to $(T,\Omega,\basemeasure,\theta)$ (see Definition \ref{definitionrandomaction}). Let $\CC_f(\X)$ be the set of all measurable fiberwise continuous functions on $\X$. We define global objects
\begin{align*}
\lambda&:\Omega\rightarrow\R
&\lambda(\omega) := \lambda_0\on_\omega\\
\phi&:\X\rightarrow\R
&\phi(\omega,p) := \phi_0(p)\on_\omega\\
\mu&\in\M(\JJ)
&\mu := \int \delta_\omega\times\mu_0\on_\omega\d\basemeasure(\omega)\\
\psi&:\T^{-1}(\Omega\times X)\rightarrow\R
&\psi(\omega,p) := \psi_0(p)\on_\omega\\
\L&:\CC_f(\Omega\times X)\rightarrow\CC_f(\Omega\times X)
&\L[f](\omega,p) := \L_{-1}\on_\omega[f(\theta^{-1}\omega,\cdot)](p)\\
\L^*&:\M(\Omega\times X)\rightarrow\M(\Omega\times X)
&\L^*[\sigma] := \int \sum_{x\in T_{\theta^{-1}\omega}^{-1}(p)}e^{\psi(\theta^{-1}\omega,x)}\delta_{(\theta^{-1}\omega,x)}\d\sigma(\omega,p)
\end{align*}
and so (\ref{invariant}) - (\ref{rewriteend}) yield global formulas
\begin{align*}
\L[\one] &= \one\\
\L^*[\mu] &= \mu\\
\mu[\one] &= 1\\
\T_*[\mu] &= \mu
\end{align*}
In particular, $\mu$ is a $\T$-invariant probability measure on $\X$ with $\pi_*[\mu] = \basemeasure$.

\end{section}
\begin{section}{Ergodic theory of holomorphic random dynamical systems} \label{sectionergodic}
For this section, see \cite{Bo}, although the notation differs considerably.

Suppose that $(\Omega,\basemeasure,\theta,\X,\T,\pi)$ is a relative dynamical system. Let $\M(\X,\T,\basemeasure)$ be the set of all $\T$-invariant probability measures $\sigma$ on $\X$ such that $\pi_*[\sigma] = \basemeasure$, and let $\M_e(\X,\T,\basemeasure)$ be the set of all ergodic elements of $\M(\X,\T,\basemeasure)$. We have the following definition:

\begin{definition} If $\sigma\in\M(\X,\T,\basemeasure)$, the \emph{relative entropy} of $\T$ over $\theta$ with respect to $\sigma$ is defined by the equations
\begin{align*}
h_\sigma(\T\on\theta) &:= \sup_\Par h_\sigma(\T\on\theta;\Par)\\
h_\sigma(\T\on\theta;\Par) &:= \lim_{n\rightarrow\infty}\frac{1}{n}H_\sigma\left(\bigvee_{j = 0}^{n - 1}\T^{-j}\Par\on\pi^{-1}\epsilon_\Omega\right)
\end{align*}
(The supremum is taken over all partitions $\Par$ of $\X$ such that $H_\sigma(\Par\on\pi^{-1}\epsilon_\Omega) < \infty$. $\epsilon_\Omega$ and $\epsilon_\X$ are the partitions into points of $\Omega$ and $\X$, respectively.) For proof of the existence of the limit see [\cite{Bo} Theorem 2.2, p.102].
\end{definition}

\begin{proposition}
\label{propositionkolmogorovsinai}
Fix $\sigma\in\M(\X,\T,\basemeasure)$. Suppose that there exists a partition $\Par$ of $\X$ such that
\begin{enumerate}[A)]
\item $\Par$ has finite $\sigma$-entropy over $\Omega$ i.e.
\begin{equation}
\label{finiteentropy}
H_\sigma(\Par\on\pi^{-1}\epsilon_\Omega) < \infty.
\end{equation}
\item $\Par$ $\sigma$-almost generates $\X$ over $\Omega$ i.e.
\begin{equation}
\label{partitionsequal}
\bigvee_{j\in\N} \T^{-j}\Par \vee \pi^{-1}\epsilon_\Omega \equiv_\sigma \epsilon_\X,
\end{equation}
where ``$\Par_1\equiv_\sigma\Par_2$'' means that there exists a set $A\implies\X$ with $\sigma(\X\butnot A) = 0$ such that $\Par_1\on A = \Par_2\on A$.
\end{enumerate}
Then the following equations hold:
\begin{equation}
\label{kolmogorovsinai}
h_\sigma(\T\on\theta)
= h_\sigma(\T\on\theta;\Par)
= H_\sigma(\epsilon_\X\on\T^{-1}\epsilon_\X)
\end{equation}
Furthermore, if $(\sigma_{\omega,p})_{(\omega,p)\in\X}$ is the Rohlin decomposition of $\sigma$ relative to $\T^{-1}(\points_\X)$ i.e.
\begin{align*}
\sigma &= \int \sigma_{\omega,p}\d\sigma(\omega,p)\\
\T_*[\sigma_{\omega,p}] &= \delta_{\omega,p},
\end{align*}
then
\begin{equation}
\label{hequalsH}
h_\sigma(\T\on\theta) = \int H_{\sigma_{\omega,p}}(\points_\X)\d\sigma(\omega,p).
\end{equation}
\end{proposition}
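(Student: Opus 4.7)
The plan is to prove this in three steps, each of which adapts a classical Kolmogorov--Sinai argument to the relative setting. For brevity I will write $\eta := \pi^{-1}\epsilon_\Omega$; since $\pi\circ\T = \theta\circ\pi$ and $\theta$ is invertible, we have $\T^{-1}\eta = \eta$, which is what allows the classical proofs to go through unchanged after conditioning on $\eta$.

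Step 1 (Relative Kolmogorov--Sinai). I would first show $h_\sigma(\T\on\theta) = h_\sigma(\T\on\theta;\Par)$ by verifying that $h_\sigma(\T\on\theta;\Par') \le h_\sigma(\T\on\theta;\Par)$ for every other partition $\Par'$ with $H_\sigma(\Par'\on\eta) < \infty$. The standard subadditivity inequality, applied fiberwise relative to $\eta$, yields
\[
h_\sigma(\T\on\theta;\Par') \le h_\sigma(\T\on\theta;\Par) + H_\sigma\Bigl(\Par' \Bigm| \bigvee_{j=0}^{N-1}\T^{-j}\Par \vee \eta\Bigr)
\]
for every $N\in\N$, and by hypothesis (B) combined with the martingale convergence of conditional entropy, the last term tends to $0$ as $N\to\infty$.

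Step 2 (Reduction to $H_\sigma(\epsilon_\X\on\T^{-1}\epsilon_\X)$). Using the chain rule together with Ces\`aro averaging, I would pass from the definition of $h_\sigma(\T\on\theta;\Par)$ to the limit formula
\[
h_\sigma(\T\on\theta;\Par) = H_\sigma\Bigl(\Par \Bigm| \bigvee_{j=1}^{\infty}\T^{-j}\Par \vee \eta\Bigr);
\]
the derivation is identical to the classical one because $\T^{-1}\eta = \eta$. Now
\[
\bigvee_{j=1}^{\infty}\T^{-j}\Par \vee \eta = \T^{-1}\Bigl(\bigvee_{j=0}^{\infty}\T^{-j}\Par \vee \eta\Bigr) \equiv_\sigma \T^{-1}\epsilon_\X
\]
by (B). Since moreover $\Par \vee \T^{-1}\epsilon_\X \equiv_\sigma \epsilon_\X$, the identity $H_\sigma(\Par\on\mathcal{B}) = H_\sigma(\Par\vee\mathcal{B}\on\mathcal{B})$ gives $h_\sigma(\T\on\theta;\Par) = H_\sigma(\epsilon_\X\on\T^{-1}\epsilon_\X)$, completing (\ref{kolmogorovsinai}).

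Step 3 (Fiberwise formula). For (\ref{hequalsH}), I would invoke the standard disintegration formula: if $\mathcal{B}$ is a countably generated sub-$\sigma$-algebra and $(\sigma_z)_z$ is the Rohlin disintegration of $\sigma$ over $\mathcal{B}$, then for any partition or $\sigma$-algebra $\Par$ one has $H_\sigma(\Par\on\mathcal{B}) = \int H_{\sigma_z}(\Par)\,\d\sigma(z)$. Applied to $\Par = \epsilon_\X$ and $\mathcal{B} = \T^{-1}\epsilon_\X$, the defining property $\T_*[\sigma_{\omega,p}] = \delta_{(\omega,p)}$ forces $\sigma_{\omega,p}$ to be concentrated on the fiber $\T^{-1}(\omega,p)$, which is finite because every $T_\omega$ is a rational map of finite degree. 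Thus $H_{\sigma_{\omega,p}}(\epsilon_\X)$ is simply the Shannon entropy of the discrete probability measure $\sigma_{\omega,p}$, and integrating yields (\ref{hequalsH}).

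The main obstacle is the technicality in Step 3 of interpreting $H_\sigma(\epsilon_\X\on\T^{-1}\epsilon_\X)$, since $\epsilon_\X$ is an uncountable partition and conditional entropy is classically defined for countable partitions. This is resolved by the fiberwise finiteness of $\T$: the entropies $H_{\sigma_{\omega,p}}(\epsilon_\X)$ are automatically bounded by $\log\deg(T_\omega)$, and one can realize $H_\sigma(\epsilon_\X\on\T^{-1}\epsilon_\X)$ as the monotone limit of $H_\sigma(\Par_n\on\T^{-1}\epsilon_\X)$ along any exhausting sequence of finite refinements $\Par_n\nearrow\epsilon_\X$ (mod $\sigma$), interchanging the limit with the integral by monotone convergence. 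Once this measurability point is settled, all remaining steps are formal manipulations of conditional entropy.
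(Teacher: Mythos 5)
Your argument is correct and follows the same classical Kolmogorov--Sinai route that the paper cites ([PU, Thm.~1.9.7] and [Ki, Lemma~1.5]), with the correct observation that the invertibility of $\theta$ together with $\pi\circ\T=\theta\circ\pi$ gives $\T^{-1}\pi^{-1}\epsilon_\Omega = \pi^{-1}\epsilon_\Omega$, so that the deterministic machinery relativizes verbatim. Your Step~3 treatment of $H_\sigma(\epsilon_\X\on\T^{-1}\epsilon_\X)$ via the finite fibers of $\T$ and the Rohlin disintegration is likewise what the paper invokes through [PU, Def.~1.8.3].
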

\begin{proof}
(\ref{kolmogorovsinai}) is a straightforward but tedious generalization of the deterministic case [\cite{PU} Theorem 1.9.7 p.60]. A proof of the first equality is furthermore given in [\cite{Ki} Lemma 1.5 p.45]. The equality
\[
H_\sigma(\epsilon_\X\on\T^{-1}\epsilon_\X) = \int H_{\sigma_{\omega,p}}(\points_\X)\d\sigma(\omega,p)
\]
just follows from the definition of conditional entropy; c.f. [\cite{PU} Definition 1.8.3 p.54].
\end{proof}
\begin{remark}
Note that the right hand side of (\ref{kolmogorovsinai}) does not depend on $\theta$. The key here is the hypothesis that there exists a partition which is generating and of finite entropy \emph{relative to $\pi^{-1}\epsilon_\Omega$.} This condition is in a sense an indicator that $\Omega$ is the ``correct quotient space to look at''. For example, if $h_\basemeasure(\theta) > 0$, then the conditions ``there exists a partition generating and of finite entropy relative to $\pi^{-1}\epsilon_\Omega$'' and ``there exists a partition generating and of finite entropy in the absolute sense'' are incompatible.
\end{remark}

Now, we assume that $(\Omega,\basemeasure,\theta,\X,\T,\pi)$ is the relative dynamical system associated to a random dynamical system $(T,\Omega,\basemeasure,\theta)$ on a compact metric space $X$.

\begin{definition}
\label{definitionpressure}
Suppose that $\phi:\Omega\rightarrow\CC(X)$ is a random potential function on $(T,\Omega,\basemeasure,\theta)$. Assume further that $\phi$ satisfies
\begin{equation}
\label{L1}
\EE[\|\phi\|_\infty] < \infty.
\end{equation}
The \emph{relativistic pressure} of $\T$ over $\theta$ with respect to $\phi$ is defined by the equation
\begin{equation*}
P_{\phi,\basemeasure}(\T\on\theta) := \lim_{\varepsilon\rightarrow 0}\limsup_{n\rightarrow\infty}\frac{1}{n}\int\ln\sup_{E\implies X}\left(\sum_{x\in E}e^{\phi_0^n(x)}\right)\d\basemeasure(\omega),
\end{equation*}
where the supremum is taken over all $(\omega,n,\varepsilon)$-separated subsets $E$ of $X$, i.e. all sets $E\implies X$ such that
\[
x,y\in E, \dist(T_0^j(x),T_0^j(y))\leq \varepsilon\all j = 0,\ldots,n - 1 \Rightarrow x = y.
\]
(c.f. [\cite{Bo} Definition 5.4, p.109])
If $\phi = 0$, then $P_{\phi,\basemeasure}(\T\on\theta)$ is called the \emph{relative topological entropy} of $\T$ over $\theta$, and is denoted $h_{\text{top},\basemeasure}(\T\on\theta)$.
\end{definition}

We have the following variational principle:
\begin{theorem}
\label{theoremvarprinciple}
Suppose that $(T,\Omega,\basemeasure,\theta)$ is a random dynamical system on a compact metric space $X$, and suppose that $\phi:\Omega\rightarrow\CC(X)$ is a random potential function satisfying (\ref{L1}). Then
\begin{equation}
\label{varprinciple}
\sup_{\sigma\in \M_e(\X,\T,\basemeasure)}\left(h_\sigma(\T\on\theta) + \int \phi\d\sigma\right) =
\sup_{\sigma\in \M(\X,\T,\basemeasure)}\left(h_\sigma(\T\on\theta) + \int \phi\d\sigma\right) = P_{\phi,\basemeasure}(\T\on\theta).
\end{equation}
\end{theorem}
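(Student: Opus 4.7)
The plan is to derive this from Bogensch\"utz's random variational principle \cite{Bo}, translating our notation into his. The hypotheses---compactness of $X$, random continuous potential $\phi$, and the integrability condition (\ref{L1})---are exactly those required in \cite{Bo}, so once the dictionary between notational conventions is set up, the result is an immediate consequence. For the reader's orientation I sketch below the structure of the argument as it would unfold in our setting.

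The first equality in (\ref{varprinciple}) follows from a fiberwise ergodic decomposition: any $\sigma \in \M(\X,\T,\basemeasure)$ disintegrates into $\T$-ergodic components, each of which still projects to $\basemeasure$ (since $\basemeasure$ is itself $\theta$-ergodic) and hence lies in $\M_e(\X,\T,\basemeasure)$; because both $\sigma \mapsto h_\sigma(\T\on\theta)$ and $\sigma \mapsto \int \phi \, \d\sigma$ are affine functionals, the two suprema coincide. For the second equality, the easy inequality $\sup \leq P_{\phi,\basemeasure}(\T\on\theta)$ is proved as follows: given $\sigma \in \M(\X,\T,\basemeasure)$ and a finite partition $\Par$ of $X$ of diameter at most $\varepsilon/2$, the classical inequality $\sum p_i (\ln a_i - \ln p_i) \leq \ln \sum a_i$, applied fiberwise and integrated against $\basemeasure$, bounds $h_\sigma(\T\on\theta;\pi^{-1}\Par) + \int \phi \, \d\sigma$ by $\frac{1}{n}\int \ln \sum_A e^{\sup_A \phi_0^n} \, \d\basemeasure(\omega)$, where the sum is over atoms $A$ of $\bigvee_{j=0}^{n-1}\T^{-j}\pi^{-1}\Par$; selecting one point from each atom produces an $(\omega,n,\varepsilon)$-separated set and yields the pressure bound after taking $\varepsilon\rightarrow 0$.

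For the reverse inequality $P_{\phi,\basemeasure}(\T\on\theta) \leq \sup$, one builds approximate equilibrium measures by placing weights proportional to $e^{\phi_0^n(x)}$ on maximal $(\omega,n,\varepsilon)$-separated fibers $E_\omega \implies X$, normalizing to obtain probability measures on each fiber, integrating against $\basemeasure$ to produce a global measure on $\X$, time-averaging via $\frac{1}{n}\sum_{j=0}^{n-1}(\T^j)_*$, and extracting a weak-$*$ subsequential limit $\sigma \in \M(\X,\T,\basemeasure)$. The main obstacle---and the reason the random proof is genuinely more delicate than the deterministic one---is measurability: the selection $\omega \mapsto E_\omega$ must be made measurably, which requires a random selection theorem along the lines of those discussed in the Appendix, and the Rohlin disintegration used in the entropy estimate must be handled with care because the ambient $\sigma$-algebra $\pi^{-1}\epsilon_\Omega$ is typically nontrivial. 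A secondary point is that weak-$*$ limits must preserve the relative entropy lower bound, which follows from upper semicontinuity of $\sigma \mapsto h_\sigma(\T\on\theta;\Par)$ when $\Par$ has fiberwise $\sigma$-null boundary; this can be arranged by perturbing the partition slightly, using a standard argument to choose boundaries avoiding a prescribed null set on each fiber.
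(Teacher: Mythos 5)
You are right that the engine here is Bogensch\"utz's random variational principle, and your fiberwise-ergodic-decomposition argument for the first equality is sound: each ergodic component of $\sigma$ still pushes to $\basemeasure$ because $\basemeasure$ is ergodic, and the two functionals are affine. But the core claim---that ``the hypotheses are exactly those required in \cite{Bo}, so once the dictionary between notational conventions is set up, the result is an immediate consequence''---is where the gap lies, and it is precisely the content the paper's proof actually supplies.

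There are two substantive discrepancies between Bogensch\"utz's setup and this paper's, and neither is purely notational. First, Bogensch\"utz defines a relative (random) dynamical system to be \emph{invertible}, whereas the systems considered here are not assumed invertible fiberwise; one must verify that invertibility is not actually used anywhere in his proof before applying his theorem. Second---and more seriously---Bogensch\"utz defines relative dynamical entropy as a supremum over finite partitions of the form $\pi_2^{-1}(\Par_2)$ for $\Par_2$ a finite partition of $X$, whereas the definition adopted in this paper takes the supremum over \emph{all} partitions $\Par$ of $\X$ with $H_\sigma(\Par\on\pi^{-1}\epsilon_\Omega)<\infty$. These are \emph{a priori} different quantities, and the right-hand side of the variational formula would not be the one this paper needs unless one proves they agree. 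The paper does so: given any finite partition $\Par$ of $\X=\Omega\times X$ and any $\varepsilon>0$, it approximates $\Par$ by a product partition $\Par_1\times\Par_2$ with $H_\sigma(\Par\on\Par_1\times\Par_2)\leq\varepsilon$, deduces $H_\sigma(\Par\on\pi_2^{-1}\Par_2\vee\pi_1^{-1}\epsilon_\Omega)\leq\varepsilon$, and hence $h_\sigma(\T\on\theta;\Par)\leq h_\sigma(\T\on\theta;\pi_2^{-1}\Par_2)+\varepsilon$; the reduction from countable to finite partitions is cited as standard. Your proposal, by asserting immediate applicability, silently assumes this equality of entropy notions, which is not given.

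Your subsequent from-scratch sketch (weights $\propto e^{\phi_0^n}$ on maximal separated fibers, time averaging, weak-$*$ limits, upper semicontinuity via null-boundary partitions) is a correct outline of how one would prove the relative variational principle directly, and it is the right fallback if the citation approach fails. But as written it is a plan, not a proof: the measurable selection of $\omega\mapsto E_\omega$ and the fiberwise upper semicontinuity of relative entropy are acknowledged obstacles, not discharged ones; and the route duplicates the content of \cite{Bo} rather than leveraging it. Either commit to a full direct proof filling those gaps, or, if you cite Bogensch\"utz, identify and resolve the invertibility and partition-class discrepancies.
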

\begin{proof}~
The first equation is a straightforward but tedious generalization from the deterministic case [\cite{PU} Corollary 2.4.3, p.93].

The proof of the second equation is essentially found in [\cite{Bo} Theorem 6.1 p.110]; however, his setup is slightly different from ours. We include a brief justification that the theorem still holds in our new setting:

First of all, although Bogensch\"utz specifically states his theorem for invertible relative dynamical systems (he defines an RDS as an invertible RDS), he does not use the hypothesis of invertibility anywhere in the proof of the variational principle. There are a couple of points in the proof where Bogensch\"utz's notation suggests that he is using the invertibility, however closer examination reveals that this is not the case.

Secondly, Bogensch\"utz defines relative dynamical entropy as a supremum over only finite partitions, and in fact only over partitions coming from a partition of $X$, i.e. partitions of the form $\pi_2^{-1}(\Par)$ for some finite partition $\Par$ of $X$. In fact, this supremum is the same as our definition of the relative dynamical entropy as the supremum over all partitions of $\X$ which have finite entropy relative to $\pi_1^{-1}\epsilon_\Omega$. The proof of the reduction to finite partitions of $\X$ is the same as in the deterministic case; we do not repeat it here. To prove that it suffices to consider only partitions coming from partitions of $X$, consider any finite partition $\Par$ of $\X = \Omega\times X$. It is a standard exercise to show that for every $\varepsilon > 0$, there exist two partitions $\Par_1$ of $\Omega$ and $\Par_2$ of $X$ so that $H_\sigma(\Par\on\Par_1\times\Par_2) \leq \varepsilon$. From this, it clearly follows that $H_\sigma(\Par\on\pi_2^{-1}\Par_2\vee\pi_1^{-1}\epsilon_\Omega) \leq \varepsilon$. Thus
\[
h_\sigma(\T\on\theta;\Par) \leq h_\sigma(\T\on\theta;\pi_2^{-1}\Par_2) + \varepsilon.
\]
The remainder of the proof should be clear.
\end{proof}

\begin{definition}
If $(\Omega,\basemeasure,\theta,T,X,\phi)$ are as in Theorem \ref{theoremvarprinciple}, then an \emph{equilibrium state} of $(\X,\T,\phi)$ over $(\Omega,\basemeasure,\theta)$ is an element $\sigma\in \M(\X,\T,\basemeasure)$ on which the supremum in (\ref{varprinciple}) is achieved. If $\phi = 0$, an equilibrium state is called a \emph{measure of maximal relative entropy} of $(\X,\T)$ over $(\Omega,\basemeasure,\theta)$.
\end{definition}

As in the deterministic case, if there exists an equilibrium state then there exists an ergodic equilibrium state. Thus any unique equilibrium state is automatically ergodic.

We return to our setting of rational maps, and state our main results concerning ergodic theory of holomorphic random dynamical systems:

\begin{theorem}
\label{theoremmanepartition}
Fix a holomorphic random dynamical system $(T,\Omega,\basemeasure,\theta)$ on $\C$. Assume that
\begin{enumerate}[A)]
\item There exists $D < \infty$ such that $\pr(\deg(T)\leq D) = 1$
\item
\begin{equation}
\label{lnH}
\EE[\ln\sup(T_*)] < \infty.
\end{equation}
\end{enumerate}
Then for every $\sigma\in\M_e(\X,\T,\basemeasure)$ with $h_\sigma(\T\on\theta) > 0$, there exists a (measurable) partition $\Par$ of $\X$ such that the hypotheses of Proposition \ref{propositionkolmogorovsinai} are satisfied.
\end{theorem}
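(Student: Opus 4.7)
The plan is to adapt Ma\~n\'e's construction of a generating partition of finite entropy to the random setting. The two key ingredients are a random Ruelle--Margulis inequality, which forces the fiber Lyapunov exponent to be strictly positive, and a random partition whose pieces at scale $k(\omega)$ have diameter $\leq 2^{-k(\omega)}$, with $k$ chosen so that $\EE[k]<\infty$.

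\emph{Step 1 (positive Lyapunov exponent).} I would first invoke (or reprove along the lines of Bahnm\"uller--Bogensch\"utz) the following random Ruelle inequality: for every $\sigma\in\M_e(\X,\T,\basemeasure)$,
\[
h_\sigma(\T\on\theta)\leq 2\int \ln^+\|T_\omega'(p)\|_s\,\d\sigma(\omega,p).
\]
The integrand is $\basemeasure$-integrable by \textup{(\ref{lnH})} and $\pi_*[\sigma]=\basemeasure$, so the right hand side is finite. Since $h_\sigma(\T\on\theta)>0$ by hypothesis, the fiber Lyapunov exponent $\chi_\sigma:=\lim_n n^{-1}\ln\|(T_0^n)'(p)\|_s$, which exists for $\sigma$-a.e.\ $(\omega,p)$ by the Birkhoff theorem, is strictly positive.

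\emph{Step 2 (partition construction).} Fix a countable family $(\Par^{(k)})_{k\in\N}$ of finite partitions of $\C$ with pieces of spherical diameter at most $2^{-k}$ and cardinality $\#(\Par^{(k)})\leq C\cdot 4^k$. Define a measurable function $k:\Omega\to\N$ and set $\Par_\omega:=\Par^{(k(\omega))}$; this yields a measurable partition $\Par$ of $\X=\Omega\times\C$. Disintegrating $\sigma$ over $\pi=\pi_1$,
\[
H_\sigma(\Par\on\pi^{-1}\points_\Omega) = \int H_{\sigma_\omega}(\Par_\omega)\,\d\basemeasure(\omega) \leq \ln C + \ln(4)\,\EE[k],
\]
so (\ref{finiteentropy}) holds provided $\EE[k]<\infty$. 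For (\ref{partitionsequal}), the atom of $\bigvee_{j<n}\T^{-j}\Par$ through $(\omega,p)$ restricts in the fiber to $\{q\in\C:T_0^j(q)\in\Par^{(k(\theta^j\omega))}(T_0^j(p))\ \forall j<n\}$. Combining the positive Lyapunov exponent with Koebe-type distortion for inverse branches of $T_0^n$, this set has diameter $\lesssim 2^{-k(\theta^n\omega)}\,e^{-n\chi_\sigma/2}$. Since $k(\theta^n\omega)$ grows at most subexponentially in $n$ by Birkhoff whenever $\EE[k]<\infty$, this diameter tends to zero, giving (\ref{partitionsequal}).

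\emph{Main obstacle.} The principal difficulty is handling the critical set $\RP_{T_\omega}$, where $T_\omega'$ vanishes and the Koebe bound collapses. I would address this with random Pesin theory: $\sigma$-almost every orbit avoids the countable set of critical orbits in each fiber, and enters an exhaustion of Pesin blocks with positive asymptotic frequency. On each Pesin block the inverse branches of $T_0^n$ along the orbit have uniformly bounded distortion, with combinatorial complexity controlled by the bounded-degree assumption (A). Finally I would refine the choice of $k(\omega)$ via a Borel--Cantelli argument so that the forward orbit $(T_0^j(p))_j$ maintains a controlled distance from $\partial\Par^{(k(\theta^j\omega))}$ and from critical pre-images; here \textup{(\ref{lnH})} is used to estimate how often the orbit comes close to critical values, ensuring that the condition $\EE[k]<\infty$ remains compatible with these distance requirements.
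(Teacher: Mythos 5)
Your Step 1 agrees with the paper, which also begins by invoking the random Margulis--Ruelle inequality from \cite{BB} to deduce $\int\ln T_*(x)\,\d\sigma(\omega,x)>0$. After that, however, you diverge in a way that creates a genuine gap.

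The core difficulty you correctly identify -- the derivative of $T_\omega$ vanishes on $\RP_{T_\omega}$, so any naive ``points in the same atom expand'' argument fails there -- is not resolvable with a partition whose fiberwise scale $k(\omega)$ depends only on $\omega$. A uniform-scale partition $\Par_\omega=\Par^{(k(\omega))}$ necessarily places atoms of diameter $\sim 2^{-k(\omega)}$ \emph{containing} the ramification points, and on such an atom the map contracts rather than expands, regardless of how you tune $k$. The product over the orbit then fails to separate points near critical preimages, so (\ref{partitionsequal}) does not follow. Your proposed repair (random Pesin theory plus a Borel--Cantelli adjustment of $k(\omega)$ to keep the orbit away from $\partial\Par$ and critical preimages) would have to make $k(\omega)$ depend on the spatial location of the orbit and not just $\omega$, which contradicts the form of the partition you wrote down; moreover it is not clear how to arrange this while keeping $\EE[k]<\infty$, since the amount of refinement needed near a critical point is controlled by the orbit of the point itself.

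The paper instead follows Ma\~n\'e's deterministic argument structurally: the fiber partition is \emph{variable-scale}, namely a family of annular shells $B_{k-1}\butnot B_k$ around $\RP_{T_\omega}$ (with radii $\delta_k$ decaying rapidly), each refined to scale $2^{-k}$ (Lemma \ref{lemmapartitions}, equations (\ref{pardefone})--(\ref{pardeftwo})). The essential new quantitative input replacing Ma\~n\'e's compactness argument is Lemma \ref{lemmaC7C8}, an explicit lower bound $T_*(x)\geq\delta^{2D}/(C_8H^{4D})$ for $\delta=\dist_s(x,\RP_T)$ uniform over all rational maps of degree $\leq D$ and $\sup(T_*)\leq H$. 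Combined with Lemma \ref{lemmaUVW} (a quantitative injectivity/expansion estimate for inverse branches), this yields Claim \ref{claimTxTy}: points in the same atom expand by a factor $e^{-\varepsilon}T_*(x)$, after which the generating property follows by iteration and Birkhoff. Finite conditional entropy is then shown by decomposing $H_\sigma(\Par\on\pi^{-1}\points_\Omega)$ over the radial partition $\ParB$ and controlling $\int k\,\d\sigma$ using the Ruelle inequality together with hypothesis (\ref{lnH}). Your proposal is missing this explicit derivative lower bound and the variable-scale construction that exploits it; without them the critical-set obstacle is not actually overcome.
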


\begin{theorem}
\label{theoremequilibrium}
Fix a nonsingular holomorphic random dynamical system $(T,\Omega,\basemeasure,\theta)$ on $\C$ with a potential function $\phi:\Omega\rightarrow\CC(\C)$ and a set $X\implies\C$ satisfying the hypotheses of Theorem \ref{maintheorem}. Assume further that (\ref{L1}) holds, that $X$ has the equicontinuity property, and that the conclusion of Theorem \ref{theoremmanepartition} is satisfied. Let $\mu\in\M(\X,\T,\basemeasure)$ be as defined in Section \ref{sectionconsequences}. Then $\mu$ is the unique equilibrium state of $(\X,\T,\phi)$ over $(\Omega,\basemeasure,\theta)$. Furthermore, the relativistic pressure of $\phi$ is given by
\begin{equation}
\label{pressureequals}
P_{\phi,\basemeasure}(\T\on\theta) = \int \ln(\lambda)\d\basemeasure.
\end{equation}
\end{theorem}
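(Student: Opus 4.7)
The approach follows the Denker--Urba\'nski template for deterministic rational maps, adapted to the relative setting. There are three components: a pressure upper bound valid for every invariant measure, equality realized by $\mu$, and uniqueness via analyzing when the bound is saturated. I use freely the objects constructed in Section \ref{sectionconsequences}, in particular that $\L[\one]=\one$, $\L^*[\mu]=\mu$, and the cocycle identity $\psi_0 = \phi_0 + \ln(g_0) - \ln(g_1\circ T_0) - \ln(\lambda_0)$.

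First I would prove $P_{\phi,\basemeasure}(\T\on\theta) \leq \int \ln(\lambda)\d\basemeasure$ for every ergodic $\sigma \in \M_e(\X,\T,\basemeasure)$. By hypothesis (the conclusion of Theorem \ref{theoremmanepartition}) there exists a partition satisfying the hypotheses of Proposition \ref{propositionkolmogorovsinai} whenever $h_\sigma(\T\on\theta) > 0$; the zero-entropy case is handled directly from (\ref{tauisaboundintegral}) and (\ref{L1}). The Rohlin formula then gives $h_\sigma(\T\on\theta) = \int H_{\sigma_{\omega,p}}(\points_\X)\d\sigma$, where the conditional measure $\sigma_{\omega,p}$ lives on the finite set $\T^{-1}(\omega,p)$. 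Since $\L[\one] = \one$, the weights $(e^{\psi(\theta^{-1}\omega,x)})_{x\in T_{\theta^{-1}\omega}^{-1}(p)}$ form a probability vector, and the finite-set Gibbs inequality yields
\[
H_{\sigma_{\omega,p}}(\points_\X) + \int\psi\d\sigma_{\omega,p} \leq \ln\sum_x e^{\psi(\theta^{-1}\omega,x)} = 0,
\]
with equality iff $\sigma_{\omega,p} = \mu_{\omega,p}$. Integrating against $\sigma$ and telescoping the $\ln(g)$ contribution of $\psi$ via $\T$-invariance (legitimate because $\ln(g_0)$ is bounded on $X$ by the equicontinuity property) produces $h_\sigma(\T\on\theta) + \int\phi\d\sigma \leq \int\ln(\lambda)\d\basemeasure$. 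The variational principle (Theorem \ref{theoremvarprinciple}) delivers the claimed upper bound on pressure.

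Second, equality for $\mu$. The identity $\L^*[\mu]=\mu$ together with $\L[\one]=\one$ says precisely that $\mu_{\omega,p}$ assigns mass $e^{\psi(\theta^{-1}\omega,x)}$ to $(\theta^{-1}\omega,x)$, so the Jensen inequality above is an equality for $\sigma=\mu$. This gives $h_\mu(\T\on\theta) + \int \phi\d\mu = \int \ln(\lambda)\d\basemeasure$, which combined with the upper bound proves (\ref{pressureequals}) and certifies $\mu$ as an equilibrium state.

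Third, uniqueness. Let $\sigma$ be any equilibrium state. By affineness of $\sigma\mapsto h_\sigma(\T\on\theta)+\int\phi\d\sigma$ and the ergodic decomposition we may assume $\sigma$ is ergodic. Saturation of Jensen forces $\sigma_{\omega,p}=\mu_{\omega,p}$ for $\sigma$-a.e. $(\omega,p)$; testing against Borel functions and using the defining formula for $\L^*$ rewrites this as the operator fixed-point equation $\L^*[\sigma] = \sigma$. Disintegrating $\sigma = \int \delta_\omega\times\sigma_\omega\d\basemeasure(\omega)$ and iterating the fixed-point equation along the fibers gives $\sigma_0 = \L_n^0[\sigma_n]$ for every $n\in\N$. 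Applying the weak-$*$ convergence (\ref{weakstarnew}) of Remark \ref{remarkweakstar} then identifies $\sigma_0 = \mu_0$ almost surely, hence $\sigma = \mu$. The main obstacle is verifying the support hypothesis of Remark \ref{remarkweakstar}, namely $\sigma_n(X\butnot B_s(\SS_n,\kappa)\cup\JJ_n)\tendston 1$. I would establish it by combining three facts: $\sigma$ has positive relative entropy (forced by (\ref{tauisaboundintegral}) applied to the pressure equality), a random analogue of the Ma\~n\'e argument forbidding positive-entropy invariant measures from concentrating on the Fatou set (where iterates form normal families), and the nonsingularity $\SS_\omega\cap\JJ_\omega=\emptyset$ a.s.\ to push mass off the exceptional set. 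The fact that the full strength of Theorem \ref{maintheorem} allows convergence on $X\butnot B_s(\SS,\kappa)$ (as emphasized in Remark \ref{remarkfullstrength}), not just on $\JJ$, is what makes the support verification tractable: any remaining mass outside the Julia set but away from $\SS$ contributes correctly to the limit.
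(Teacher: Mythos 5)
Your skeleton---Rohlin decomposition, fiberwise finite variational principle, saturation forcing $\L^*[\sigma]=\sigma$, then passing through Remark \ref{remarkweakstar}---tracks the paper's proof closely. But two steps need repair.

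The serious gap is branch points. The weights $e^{\psi(\theta^{-1}\omega,x)}$ are indexed by the \emph{multiset} $T_{\theta^{-1}\omega}^{-1}(p)$, so $\L[\one]=\one$ says they sum to one \emph{with multiplicity}. The disintegration $\sigma_{\omega,p}$ is a measure on the \emph{set} $\T^{-1}(\omega,p)$, and the finite variational principle bounds the fiber quantity by $\ln\sum^*$, the unrepeated sum; your inequality still holds because $\ln\sum^*\leq\ln\sum=0$, but \emph{equality} requires both Gibbs saturation and $p\notin\BP_{T_{\theta^{-1}\omega}}$ so that the two sums agree. This is not a formality: to show $\mu$ saturates the bound, the paper must invoke Proposition \ref{propositionatomless} (atomlessness of $\mu$) to rule out mass on branch points---at a ramified preimage the correct Rohlin mass is $n_x e^\psi$, not $e^\psi$, and Jensen is then strict. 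Conversely, saturation in an equilibrium $\sigma$ delivers $\sigma_{\omega,p}=\L^*[\delta_{(\omega,p)}]$ only at non-branch points. Your proposal does not register this pair of conditions; in particular the claim that $\L^*[\mu]=\mu$ and $\L[\one]=\one$ alone ``say precisely'' that $\mu_{\omega,p}$ is the Gibbs weight is incorrect without the atomlessness input.

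The second issue is the Ma\~n\'e-type argument you posit for the support hypothesis: it is unnecessary and in tension with your own closing observation that mass outside $\JJ$ away from $\SS$ is tolerable. The paper instead notes that $\Omega\times B$ and $\SS$ are forward invariant, hence by ergodicity have $\sigma$-measure $0$ or $1$. If $\sigma(\Omega\times B)=1$, a Schwarz--Pick contraction argument makes the fiber measures atomic, so $h_\sigma(\T\on\theta)=0$; if $\sigma(\SS)=1$, then $\#\SS_\omega\leq 2$ forces $h_\sigma=0$; both zero-entropy cases are dispatched by (\ref{tauisaboundintegral}). When both have measure zero, $\sigma$ is supported on $(\Omega\times X)\butnot\SS$, and a Poincar\'e-recurrence argument via Lemma \ref{lemmapoincare} produces a subsequence $(n_k)$ along which $\sigma_{n_k}(B_s(\SS_{n_k},\kappa))\to 0$, which is all Remark \ref{remarkweakstar} requires. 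Finally, a minor omission: the pressure bound for $\mu$ is derived under the standing assumption of positive relative entropy, which is not known a priori for $\mu$; the paper closes this by noting (\ref{hequalsH}) holds trivially when $h_\sigma(\T\on\theta)=0$.
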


\begin{corollary}
\label{corollaryhequalslndeg}
Fix an antilinear holomorphic random dynamical system $(T,\Omega,\basemeasure,\theta)$ on $\C$ satisfying the hypotheses of Theorem \ref{theoremmanepartition}. Then the random version of the equation $h_{\text{top}}(T) = \ln(\deg(T))$ holds; more specifically, we have
\[
h_{\text{top},\basemeasure}(\T\on\theta) := P_{0,\basemeasure}(\T\on\theta) = \EE[\ln(\deg(T))].
\]
\end{corollary}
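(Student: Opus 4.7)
The plan is to apply Theorem \ref{theoremequilibrium} with $\phi \equiv 0$ and $X = \C$, then identify the random eigenvalue $\lambda$ appearing in the pressure formula \textup{(\ref{pressureequals})} with $\deg(T)$. Since the zero potential trivializes most of the hypotheses, the proof is essentially a bookkeeping verification that Theorem \ref{theoremequilibrium} applies, followed by a one-line computation of $\lambda_0$.

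First I would verify the hypotheses. Nonsingularity follows directly from Proposition \ref{propositionnonsingular} given antilinearity and $\EE[\ln\sup(T_*)] < \infty$. With $\phi \equiv 0$ we have $L_\omega[\one] = \deg(T_\omega)\one$, so $\ln(L_0^j[\one])$ is a constant function on $\C$; taking $X = \C$ (so $B = \emptyset$), all clauses of Definition \ref{definitionboundeddistortion} hold trivially with $M = 0$ and $\gamma \equiv 0$, as observed in Remark \ref{remarkprobabilistic}, and hence $X = \C$ has the equicontinuity property. The integrability conditions of Theorem \ref{maintheorem} are immediate: \textup{(\ref{Disaboundintegral})} holds for any $\beta > 0$ by boundedness of $\deg(T)$; \textup{(\ref{C1isaboundintegral})} is trivial since $\phi \equiv 0$; and \textup{(\ref{tauisaboundintegral})} reduces to $0 < \EE[\ln\deg(T)]$, which holds because antilinearity forces $\deg(T) \geq 2$ almost surely. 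The condition \textup{(\ref{L1})} is likewise trivial, and the conclusion of Theorem \ref{theoremmanepartition} is assumed as a hypothesis of the corollary.

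All hypotheses of Theorem \ref{theoremequilibrium} being in force, it yields
\[
P_{0,\basemeasure}(\T\on\theta) = \int \ln(\lambda)\d\basemeasure.
\]
To finish I would identify $\lambda_0$ explicitly in the case $\phi \equiv 0$. Since $L_{-n}^0[\one]$ is a constant function on $\C$, the normalized ratio in the definition \textup{(\ref{gdef})} of $g_0$ is identically equal to $\one$, so $g_0 \equiv \one$ almost surely. Then by \textup{(\ref{lambdadef})},
\[
\lambda_0 \;=\; L_0[g_0](p_1) \;=\; L_0[\one](p_1) \;=\; \deg(T_0),
\]
and therefore $\int \ln(\lambda)\d\basemeasure = \EE[\ln(\deg(T))]$, as desired.

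There is no substantive obstacle: this corollary is purely a specialization of Theorem \ref{theoremequilibrium} to the zero potential, and the ``hard part'' is only the verification that the trivial choices $X = \C$, $M = 0$, $\gamma \equiv 0$, $g_0 \equiv \one$ really do satisfy the various clauses of Definitions \ref{definitionboundeddistortion} and the integrability hypotheses of Theorem \ref{maintheorem}.
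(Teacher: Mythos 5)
Your proof is correct and takes essentially the same route as the paper: verify nonsingularity via Proposition \ref{propositionnonsingular}, take $X = \C$ (which has the equicontinuity property since $L_\omega[\one] = \deg(T_\omega)\one$ is constant), note that the integrability hypotheses of Theorem \ref{maintheorem} and condition \textup{(\ref{L1})} trivialize under $\phi \equiv 0$, apply Theorem \ref{theoremequilibrium}, and compute $\lambda_0 = \deg(T_0)$. One small slip in wording: the corollary's hypotheses are the \emph{hypotheses} of Theorem \ref{theoremmanepartition}, not its conclusion — you then invoke Theorem \ref{theoremmanepartition} itself to obtain the conclusion needed by Theorem \ref{theoremequilibrium}, which is also implicit in the paper's proof.
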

\begin{proof}
Let $\phi = 0$. Clearly, (\ref{Disaboundintegral}) - (\ref{tauisaboundintegral}) are satisfied, and $X = \C$ has the equicontinuity property. Proposition \ref{propositionnonsingular} shows that $(T,\Omega,\basemeasure,\theta)$ is nonsingular (here we are using that the hypotheses of Theorem \ref{theoremmanepartition} are satisfied, not just the conclusion). Thus the hypotheses of Theorem \ref{maintheorem} are satisfied. Since (\ref{L1}) is also clearly satisfied, we have verified the hypotheses of Theorem \ref{theoremequilibrium}.

A simple calculation shows that if $\phi = 0$, then $\lambda_0 = \deg(T_0)$. Now (\ref{pressureequals}) gives the result.
\end{proof}
\end{section}
\begin{section}{Preliminaries from Complex Analysis}\label{sectionpreliminaries}
We begin with the following lemma, which describes the behavior of injective maps from the unit disk to the Riemann sphere. Note that the Koebe distortion theorem does not apply to such maps, as shown by Example \ref{examplekoebe} below.
\begin{lemma}
\label{lemmakoebe}
Suppose that $U$ is a simply connected hyperbolic open set, and suppose that $\zeta:U\rightarrow\C$ is holomorphic and injective. Then $\zeta$ is Lipschitz continuous with a corresponding constant of $\sqrt{\frac{\lambda_s(\zeta(U))}{1 - \lambda_s(\zeta(U))}}$, if the continuity is measured with respect to the hyperbolic metric $h_U$ on $U$, and with respect to the spherical metric $s$ on $\C$ (note normalizations in Section \ref{sectionnotation}).
\end{lemma}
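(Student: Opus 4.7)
The plan is to exploit the fact that $\zeta$, being a holomorphic injection from a simply connected hyperbolic open set, is a conformal isomorphism onto its image $V:=\zeta(U)$. In particular $V$ is itself simply connected (and hyperbolic in all cases where the claim is nonvacuous, namely when $\lambda_s(V)<1$), so $V$ carries its own hyperbolic metric $h_V$, and $\zeta:(U,h_U)\to(V,h_V)$ is a genuine isometry. The Lipschitz constant of $\zeta:(U,h_U)\to(\C,s)$ therefore equals $\sup_{w\in V}s(w)/h_V(w)$, and the lemma reduces to the pointwise inequality
\[
\frac{s(w)}{h_V(w)}\;\leq\;\sqrt{\frac{\lambda_s(V)}{1-\lambda_s(V)}} \qquad (w\in V).
\]

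For a fixed $w\in V$ I plan to precompose with a rotation $\rho$ of the Riemann sphere that sends $w$ to $0$; since $\rho$ is an isometry for the spherical metric and for the spherical measure, and is a biholomorphism of $\C$, it preserves every quantity appearing in the above inequality while replacing $V$ by $\rho(V)$ and $w$ by $0$. Choosing a Riemann map $f:\mathbb{D}\to\rho(V)$ with $f(0)=0$, a direct conformal-factor computation gives $s(0)/h_{\rho(V)}(0)=|f'(0)|$ up to the normalization constant fixed in Section~\ref{sectionnotation}. The lemma is then reduced to the univalent-function estimate
\[
|f'(0)|^2\;\leq\;\frac{\lambda_s(f(\mathbb{D}))}{1-\lambda_s(f(\mathbb{D}))},
\]
required for every univalent $f:\mathbb{D}\to\C$ with $f(0)=0$.

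To close the argument I would invoke the Koebe $1/4$-theorem, obtaining $f(\mathbb{D})\supseteq B_e(0,|f'(0)|/4)$, and then integrate the spherical area density over this origin-centered Euclidean disk; since a Euclidean disk $B_e(0,r)$ has spherical area of the explicit form $r^2/(c+r^2)$ in the normalization of Section~\ref{sectionnotation}, this yields a lower bound for $\lambda_s(f(\mathbb{D}))$ of precisely the form needed. The extremal case $f(z)=cz$, for which $f(\mathbb{D})$ is itself a Euclidean (and spherical) disk about~$0$, both confirms the correct qualitative form and pins down the sharp constant $1$; equivalently, one may use that $\log s_f$ (with $s_f$ the spherical derivative of $f$) is superharmonic on $\mathbb{D}$ for univalent $f$, which gives the sharp version of the area-derivative inequality without any loss in the Koebe $1/4$ step.

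The main obstacle is entirely bookkeeping: the hyperbolic metric, the spherical metric, and the spherical area $\lambda_s$ each carry a normalization constant fixed in Section~\ref{sectionnotation}, and the exact numerical prefactor $1$ in the quantity $\sqrt{\lambda_s(V)/(1-\lambda_s(V))}$ only appears after those constants are tracked consistently through the conformal-factor computation and through the spherical area integral of $B_e(0,|f'(0)|/4)$. The substantive content—the isometry reduction between $U$ and $V$, the spherical-rotation reduction to $w=0$, and the Koebe/area-derivative inequality—is standard once framed in this way.
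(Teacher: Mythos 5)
Your isometric reduction is sound and agrees with the paper's first move: reduce, via a Riemann map, to a pointwise bound $\|\zeta_*(0)\|_h^s \leq \sqrt{\lambda_s(\zeta(\B))/(1-\lambda_s(\zeta(\B)))}$ for a univalent $\zeta:\B\to\C$. Where you and the paper diverge is in the choice of normalization at the target. The paper sends $\zeta(0)=\infty$, so that $\zeta$ has a simple pole at $0$, and the derivative norm is $\|\zeta_*(0)\|_h^s = 1/|c_{-1}|$ for the Laurent coefficient $c_{-1}$. The classical area computation then gives $\lambda_e(\C\butnot\zeta(\B)) = -\pi\sum_{k\geq -1}k|c_k|^2 \leq \pi|c_{-1}|^2$ directly, and the remaining ingredient is an elementary isoperimetric fact: among sets of fixed Euclidean area, a Euclidean disk centered at $0$ has maximal spherical area. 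You instead rotate $w\mapsto 0$ and work with $f(0)=0$; this is the same statement under $z\mapsto 1/z$ (a spherical isometry), but in this picture the Euclidean area theorem no longer plugs in cleanly, and you must supply a substitute.

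The substitutes you name do not close the argument. The Koebe $1/4$-theorem gives $f(\B)\supseteq B_e(0,|f'(0)|/4)$, hence $\lambda_s(f(\B)) \geq \frac{|f'(0)|^2/16}{1+|f'(0)|^2/16}$; solving gives $|f'(0)|^2 \leq 16\,\lambda_s(f(\B))/(1-\lambda_s(f(\B)))$, which is off by a factor of $4$ in the Lipschitz constant. This is not a bookkeeping matter that ``will come out'' once normalizations are tracked: the bound the paper proves is sharp (the linear maps $z\mapsto cz$ saturate it, as stated in Example \ref{examplekoebe}), so the extra factor of $4$ is a genuine loss. Your fallback via superharmonicity of $\log s_f$ (true, since $\Delta\log s_f = -4s_f^2 \leq 0$) gives $\log s_f(0) \geq$ its circle/disk averages, i.e.\ a \emph{lower} bound on $s_f(0)$, whereas you need an \emph{upper} bound on $s_f(0)^2 = |f'(0)|^2$ in terms of $\lambda_s(f(\B)) = \frac{1}{\pi}\int_\B s_f^2\,dA_e$. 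The sketch does not explain how to reverse the inequality, and I do not believe superharmonicity alone does so.

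The missing ingredient, if you want to follow your route, is the step the paper calls the optimization: a Euclidean disk about the origin maximizes spherical area among sets of a given Euclidean area, because the spherical area density $\frac{1}{\pi(1+|z|^2)^2}$ is a strictly decreasing radial function, so it is best to place the mass near the origin. That, combined with the Euclidean area theorem in the $\zeta(0)=\infty$ normalization, gives the sharp constant with no slack. I would recommend abandoning the Koebe $1/4$ shortcut and the superharmonicity fallback and instead switching to the $\zeta(0)=\infty$ normalization, exactly so that the Laurent-coefficient form of the area theorem applies.
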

\begin{proof}
By the Riemann mapping theorem, we may without loss of generality suppose that $U = \B$.

It is enough to show that for each $x\in\B$,
\begin{equation}
\label{koebebounds}
\|\zeta_*(x)\|_h^s\leq\sqrt{\frac{\lambda_s(\zeta(\B))}{1 - \lambda_s(\zeta(\B))}}.
\end{equation}
(Recall that according to our conventions, $\|\cdot\|_h^s$ indicates the operator norm from the hyperbolic metric to the spherical metric. $\zeta_*(x)$ indicates the induced map on tangent spaces.)

Without loss of generality we suppose that $x=0$, and that $\zeta(x)=\infty$. Let $\zeta(z)=\sum_{k=-1}^\infty c_k z^k$ be the Laurent series for $\zeta$ in the annulus $0<|z|<1$. (The injectivity of $\zeta$ implies that the pole is simple and unique.) The area theorem [\cite{CG} Theorem 1.1 p.1] gives that
\begin{align*}
\lambda_e(\C\butnot \zeta(\B))
&= -\pi\sum_{k=-1}^\infty k|c_k|^2\\
&\leq \pi|c_{-1}|^2\\
&=\lambda_e(B_e(0,|c_{-1}|))
\end{align*}
Notice that the set $B_e(0,|c_{-1}|)$ is in fact the solution to the optimization problem of maximizing a set's spherical area while holding its Euclidean area fixed. (The formula for spherical area in terms of a Euclidean integral implies that it is optimal for the mass to be as close to the origin as possible.) Since $B_e(0,|c_{-1}|)$ has maximal spherical area among sets with fixed Euclidean area, it also has maximal spherical area among sets whose Euclidean area is less than or equal to its own. Thus we have
\begin{align*}
\lambda_s(\C\butnot \zeta(\B))
&\leq \lambda_s(B_e(0,|c_{-1}|))\\
&= \int_{r=0}^{|c_{-1}|}\int_{\theta=0}^{2\pi}\frac{r\d r\d \theta}{\pi(1 + r^2)^2}\\
&= 1 - \frac{1}{1 + |c_{-1}|^2}
\end{align*}
or
\[
\lambda_s(\zeta(\B)) \geq \frac{1}{1 + |c_{-1}|^2}
\]
Solving for $|c_{-1}|$ yields
\[
|c_{-1}| \geq \sqrt{\frac{1 - \lambda_s(\zeta(\B))}{\lambda_s(\zeta(\B))}}
\]
Finally, we note that $\|\zeta_*(0)\|_h^s=\frac{1}{|c_{-1}|}$, so taking the reciprocal of both sides yields (\ref{koebebounds}).
\QEDmod\end{proof}

\begin{example}
\label{examplekoebe}
The family of maps $(z\mapsto cz)_{c\in\R^+}$ shows that the bound (\ref{koebebounds}) is sharp. Furthermore, as $c$ tends to infinity, this family become a counterexample to any distortion claim similar to that of the Koebe theorem.
\end{example}
\begin{proof}
In fact, direct calculation shows that if $\zeta(z)=cz$, then $\|\zeta_*(0)\|_h^s = c$ and $\lambda_s(\zeta(\B)) = \frac{1}{1 + c^2}$, yielding that (\ref{koebebounds}) is sharp.

As $c$ goes to infinity, the derivative at zero goes to infinity, but since the map is injective, the change of variables formula implies that the derivative remains uniformly square-integrable, and thus cannot tend uniformly to infinity on any set of positive measure. Thus there exist sequences $(\zeta_n)_n$ and $(x_n)_n$ such that $x_n\tendston 0$ and $\displaystyle{\frac{\|(\zeta_n)_*(0)\|_h^s}{\|(\zeta_n)_*(x_n)\|_h^s}}\tendston\infty$; in other words the distortion is unbounded.
\QEDmod\end{proof}

\begin{questions}
Is Lemma \ref{lemmakoebe} true if $\zeta$ is a map from a hyperbolic Riemann surface which is not simply connected? Alternatively, does the lemma hold if we drop the requirement of injectivity, but in the conclusion $\lambda_s(\zeta(\B))$ is counted with multiplicity? See also Lemma \ref{lemmakoebetwo} below.
\end{questions}

The next lemma extends Lemma \ref{lemmakoebe} to the case where $\zeta$ is only locally injective. We first need a definition:

\begin{definition}
\label{definitionlocallyinjective}
Fix $\sigma>0$. A map $\zeta:U\rightarrow\C$ is \emph{$\sigma$-locally injective} if for all $x,y\in U$ with $\dist_U(x,y)\leq\sigma$ and $\zeta(x) = \zeta(y)$, we have $x = y$. $\zeta$ is \emph{uniformly locally injective} if there exists $\sigma>0$ such that $\zeta$ is $\sigma$-locally injective.
\end{definition}

\begin{lemma}
\label{lemmakoebetwo}
Fix $\sigma > 0$. Suppose that $U$ is a simply connected hyperbolic open set, and suppose that $\zeta:U\rightarrow\C$ is holomorphic and $\sigma$-locally injective. Then $\zeta$ is Lipschitz continuous with a corresponding constant of $\coth(\sigma/2)\sqrt{\frac{\lambda_s(\zeta(U))}{1 - \lambda_s(\zeta(U))}}$, if the continuity is measured with respect to the hyperbolic metric $h_U$ on $U$, and with respect to the spherical metric $s$ on $\C$.
\end{lemma}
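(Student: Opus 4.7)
The plan is to reduce the $\sigma$-locally injective case to the injective case of Lemma \ref{lemmakoebe} by restricting $\zeta$ to a hyperbolic ball of radius $\sigma/2$ around an arbitrary point. Any two points of such a ball lie within hyperbolic distance $\sigma$, so $\sigma$-local injectivity forces the restriction to be genuinely injective. Lipschitz continuity will then follow from the resulting pointwise bound on $\|\zeta_*\|_h^s$ by integration along hyperbolic geodesics, exactly as in the proof of Lemma \ref{lemmakoebe}.

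By the Riemann mapping theorem, and by post-composing $\zeta$ with a hyperbolic isometry of $U$ (which preserves both $\sigma$-local injectivity and the hyperbolic-to-spherical operator norm), it will suffice to establish the bound at the single point $x = 0$ in $U = \B$. Set $B := B_{h_U}(0, \sigma/2)$; under the normalization of the hyperbolic metric $|dz|/(1-|z|^2)$ implicit in the proof of Lemma \ref{lemmakoebe}, the ball $B$ is the Euclidean disk $B_e(0, r)$ with $r = \tanh(\sigma/2)$. Since $B$ is simply connected and $\zeta|_B$ is injective, applying Lemma \ref{lemmakoebe} to $\zeta|_B : B \to \C$, with $B$ equipped with its intrinsic hyperbolic metric $h_B$, yields
\[
\|\zeta_*(0)\|_{h_B}^s \leq \sqrt{\frac{\lambda_s(\zeta(B))}{1 - \lambda_s(\zeta(B))}} \leq \sqrt{\frac{\lambda_s(\zeta(U))}{1 - \lambda_s(\zeta(U))}},
\]
where the second inequality uses $\zeta(B) \subseteq \zeta(U)$ and monotonicity of $t \mapsto t/(1-t)$ on $[0,1)$.

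The final step is to convert $\|\zeta_*(0)\|_{h_B}^s$ into $\|\zeta_*(0)\|_h^s$. The conformal uniformization $\phi : \B \to B$, $\phi(w) = rw$, has $\phi_*(0) = r$; pulling back the hyperbolic density shows that at $0$ every tangent vector $v$ satisfies $\|v\|_{h_B} = (1/r)\|v\|_h$. Consequently
\[
\|\zeta_*(0)\|_h^s \leq \frac{1}{r}\|\zeta_*(0)\|_{h_B}^s = \coth(\sigma/2)\sqrt{\frac{\lambda_s(\zeta(U))}{1 - \lambda_s(\zeta(U))}},
\]
which is the desired bound. I expect the only delicate point to be keeping the hyperbolic-metric normalization consistent with that used in Lemma \ref{lemmakoebe}; this can be verified from the identity $\|\zeta_*(0)\|_h^s = 1/|c_{-1}|$ appearing in its proof, which singles out the density $1/(1-|z|^2)$ and hence the distance formula $d_h(0,r) = \tanh^{-1}(r)$ used above.
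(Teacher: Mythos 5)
Your proof is correct and follows the same route as the paper's: restrict $\zeta$ to the hyperbolic $\sigma/2$-ball $B = B_e(0,\tanh(\sigma/2))$, where $\sigma$-local injectivity becomes genuine injectivity, apply Lemma \ref{lemmakoebe} with the intrinsic metric $h_B$, and rescale at $0$ by the factor $\coth(\sigma/2) = 1/\tanh(\sigma/2)$ coming from $h(0)=\tanh(\sigma/2)\,h_B(0)$. One tiny wording slip: you want to \emph{pre}-compose $\zeta$ with a hyperbolic isometry of $\B$ to move the base point to $0$ (post-composition is not available, since $\zeta$ maps into $\C$, not $U$).
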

Note that Lemma \ref{lemmakoebe} is a special case, achieved by letting $\sigma = \infty$.
\begin{proof}
As in the proof of Lemma \ref{lemmakoebe}, we may without loss of generality suppose that $U = \B$. Thus we need to show that for all $x\in\B$,
\begin{equation}
\label{koebeboundstwo}
\|\zeta_*(x)\|_h^s\leq\coth(\sigma/2)\sqrt{\frac{\lambda_s(\zeta(\B))}{1 - \lambda_s(\zeta(\B))}},
\end{equation}
Again, without loss of generality we suppose that $x=0$.

The hyperbolic diameter of $B := B_e(0,\tanh(\sigma/2))$ is $\sigma$. Thus $\zeta \on B$ is injective, since for all $x,y\in B\implies\B$ such that $\zeta(x) = \zeta(y)$, we have $\dist_h(x,y)\leq\sigma$ and thus $x=y$ since $\zeta$ is $\sigma$-locally injective. Thus Lemma \ref{lemmakoebe} applies, and
\begin{equation*}
\|\zeta_*(0)\|_{h_B}^s \leq \sqrt{\frac{\lambda_s(\zeta(B))}{1 - \lambda_s(\zeta(B))}} \leq \sqrt{\frac{\lambda_s(\zeta(\B))}{1 - \lambda_s(\zeta(\B))}}.
\end{equation*}
(Recall that $h_B$ denotes the hyperbolic metric of $B$.) A simple calculation shows that $h(0) = \tanh(\sigma/2)h_B(0)$. The result then follows by composition.
\end{proof}

We shall furthermore need the following facts, which we will state without proof:
\begin{lemma}
\label{lemmacomplex}
Fix $H < \infty$ and $\delta_2 > 0$. Then there exists $\delta_1 > 0$ so that if $T$ is a rational map with $\sup(T_*) \leq H$ (and thus $\deg(T) = \int T_*^2\d\lambda_s \leq H^2$), then $T$ has the following property: For all $p,q\in\C$ with $\dist_s(p,q)\leq \delta_1$, there exists a bijection $\Phi:T^{-1}(p)\rightarrow T^{-1}(q)$ (recall that these are multisets according to our conventions) such that
\begin{equation}
\label{closetoid}
\dist_s(x,\Phi(x)) \leq \delta_2
\end{equation}
for all $x\in T^{-1}(p)$.
\end{lemma}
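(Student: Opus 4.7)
The plan is a contradiction-and-compactness argument. Suppose the conclusion fails for some fixed $H$ and $\delta_2 > 0$: then there exist rational maps $T_n$ with $\sup((T_n)_*) \leq H$ and points $p_n, q_n \in \C$ with $\dist_s(p_n, q_n) \to 0$ such that no bijection $\Phi : T_n^{-1}(p_n) \to T_n^{-1}(q_n)$ of multisets satisfies $\dist_s(x, \Phi(x)) \leq \delta_2$ for all $x \in T_n^{-1}(p_n)$. Since $\deg(T_n) \leq H^2$ for all $n$, I would first pass to a subsequence on which $\deg(T_n) = d$ is constant. Because each $T_n$ is $H$-Lipschitz from $(\C, \dist_s)$ to itself, the family $(T_n)_n$ is equicontinuous on the compact sphere, so Arzel\`a--Ascoli yields a further subsequence with $T_n \to T$ uniformly on $\C$ and with $p_n, q_n \to p$ for some common $p \in \C$.

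Next I would identify $T$: as a uniform limit of rational maps into $\C$ it is holomorphic, hence itself rational. Cauchy's estimates in the spherical metric give $(T_n)_* \to T_*$ uniformly, and dominated convergence yields $\int T_*^2 \d\lambda_s = \lim_n \int (T_n)_*^2 \d\lambda_s$, i.e.\ $\deg(T) = d$. Writing $T^{-1}(p) = \{x_1, \ldots, x_k\}$ with multiplicities $m_1, \ldots, m_k$ summing to $d$, I would fix $\eta > 0$ so small that the closed balls $\cl{B_s}(x_i, 2\eta)$ are pairwise disjoint and $2\eta < \delta_2$. A Hurwitz-style argument principle applied on each boundary circle $\partial B_s(x_i, \eta)$, where $T - p$ is bounded away from zero, then shows that for all sufficiently large $n$ both $T_n^{-1}(p_n)$ and $T_n^{-1}(q_n)$ contain exactly $m_i$ elements (counted with multiplicity) inside $B_s(x_i, \eta)$; since these counts sum to $d$, both multisets lie entirely inside $\bigcup_i B_s(x_i, \eta)$.

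Finally, for each such $n$ I would construct $\Phi_n$ by choosing, independently inside each ball $B_s(x_i, \eta)$, an arbitrary bijection between the $m_i$ elements of $T_n^{-1}(p_n)$ in that ball and the $m_i$ elements of $T_n^{-1}(q_n)$ in that ball (with the convention that distinct copies of a repeated point may be matched separately, per the paper's multiset conventions). Each pair $(x, \Phi_n(x))$ then lies in a common ball of radius $\eta$, so $\dist_s(x, \Phi_n(x)) \leq 2\eta < \delta_2$, contradicting the choice of the sequence.

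The main obstacle will be the Hurwitz step: I need the zero counts of \emph{both} $T_n - p_n$ and $T_n - q_n$ in each $B_s(x_i, \eta)$ to stabilize at $m_i$ for large $n$, uniformly in the moving map \emph{and} the moving target. The key input is the nonvanishing of $T - p$ on the boundary circles $\partial B_s(x_i, \eta)$, which together with the uniform convergence $T_n - q_n \to T - p$ on $\C$ allows one to pass to the limit in the winding number $\frac{1}{2\pi \imath}\oint \frac{(T_n - q_n)'}{T_n - q_n}$; the same argument applies with $p_n$ in place of $q_n$.
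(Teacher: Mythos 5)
The paper states this lemma without proof (``We shall furthermore need the following facts, which we will state without proof''), so there is no proof in the source to compare against. Your compactness-and-contradiction argument is correct, and it fills the gap cleanly. Let me just record the few points that deserve care. First, the degree cannot drop in the limit: this is exactly what your dominated-convergence (really, uniform-convergence) computation of $\int T_*^2\,\d\lambda_s = \lim_n\int (T_n)_*^2\,\d\lambda_s = d$ ensures, using that $(T_n)_*\to T_*$ uniformly on $\C$ (Cauchy estimates in local charts, glued over the compact sphere). In particular $d\geq 1$, so the limit $T$ is a non-constant rational map of the same degree, and $T^{-1}(p)$ is a finite multiset of total mass $d$. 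Second, in the Hurwitz step you need $T-p$ to be nonvanishing on each circle $\partial B_s(x_i,\eta)$; this holds automatically once $\eta$ is smaller than half the minimal spacing of the $x_i$, since $T^{-1}(p)$ as a set is exactly $\{x_1,\ldots,x_k\}$. With the uniform convergence $T_n\to T$ and $p_n,q_n\to p$, the zero counts of $T_n-p_n$ and $T_n-q_n$ in each $B_s(x_i,\eta)$ both stabilize at $m_i$, and since $\sum_i m_i = d = \deg(T_n)$ both preimage multisets are captured entirely by the balls. The arbitrary within-ball pairing, using the paper's convention that distinct copies of a repeated point may be matched separately, then gives the contradicting bijection with $\dist_s(x,\Phi_n(x))\leq 2\eta < \delta_2$. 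This is sound.
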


\begin{lemma}
\label{lemmamorecomplex}
Suppose that $T$ is a rational map. Fix $\delta_2 > 0$. Then there exists a neighborhoood $\BBB$ of $T$ such that for all $S\in\BBB$ and for all $p\in\C$, there exist bijections
\begin{align*}
\Phi_p &: T^{-1}(x)\rightarrow S^{-1}(x)\\
\Phi_\RP &: \RP_T\rightarrow \RP_S\\
\Phi_\BP &: \BP_T\rightarrow \BP_S\\
\Phi_\FP &: \FP_T\rightarrow \FP_S
\end{align*}
each with the property that \textup{(\ref{closetoid})} holds for all $x$ in the appropriate domain.
\end{lemma}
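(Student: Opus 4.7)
The plan is to reduce all four bijection claims to the classical continuity-of-roots theorem for complex polynomials: if polynomials $(P_n)$ of fixed degree $k$ converge coefficientwise to $P$, then the roots of $P_n$ can be labelled (with multiplicity) so that each converges to the corresponding root of $P$. This is a direct consequence of Rouch\'e's theorem applied on small disjoint disks around the roots of $P$. Each of the four multisets $T^{-1}(p), \RP_T, \BP_T, \FP_T$ is the zero multiset of a polynomial (or bihomogeneous form) that depends continuously on $T$, and in the first case also on $p$, so the continuity-of-roots principle will supply the required bijections once uniformity is verified.

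Write $d := \deg(T)$ and represent $T = P_T/Q_T$ using coprime homogeneous forms of degree $d$ on $\mathbb{P}^1$. Compact-open convergence $S \to T$ in $\RR_d$ is equivalent to projective coefficient convergence $(P_S,Q_S) \to (P_T,Q_T)$. For the preimages, $T^{-1}(p)$ is the zero multiset of $F_T(z,p) := P_T(z) - p\, Q_T(z)$ (with the usual bihomogeneous convention at $p = \infty$); since $F$ is jointly continuous in $(T,p)$, and since $\C$ is compact, a standard uniform Rouch\'e argument yields a neighborhood $\BBB_1$ of $T$ such that for every $S\in \BBB_1$ and every $p\in\C$ there is a bijection $\Phi_p : T^{-1}(p)\to S^{-1}(p)$ satisfying $\dist_s(x,\Phi_p(x)) \leq \delta_2$. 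The ramification and fixed-point cases are identical: $\RP_T$ is the zero multiset of the Wronskian-type form $P_T'Q_T - P_T Q_T'$ of degree $2d-2$, and $\FP_T$ is the zero multiset of $P_T(z) - z\,Q_T(z)$ of degree $d+1$; both depend continuously on $T$, and the same argument produces neighborhoods $\BBB_2,\BBB_3$ and bijections $\Phi_\RP, \Phi_\FP$ with the required estimate (possibly after shrinking $\delta_2$ first).

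For the branch points we use the tautological multiset bijection $T : \RP_T \to \BP_T$ and set $\Phi_\BP(T(x)) := S(\Phi_\RP(x))$ for $x \in \RP_T$; since multiset maps are allowed to split copies of the same point (as remarked before Lemma \ref{lemmacomplex}), this is well-defined. Writing
\[
\dist_s(T(x), S(\Phi_\RP(x))) \leq \dist_s(T(x), T(\Phi_\RP(x))) + \dist_s(T(\Phi_\RP(x)), S(\Phi_\RP(x))),
\]
the first term is bounded by the spherical Lipschitz constant of $T$ applied to $\dist_s(x,\Phi_\RP(x))$, and the second by $\|T - S\|_{\infty,s}$; both can be made at most $\delta_2/2$ by shrinking $\BBB$. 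Setting $\BBB := \BBB_1\cap\BBB_2\cap\BBB_3$ intersected with a suitable $\|\cdot\|_{\infty,s}$-ball around $T$ finishes the proof. The only real obstacle is the routine bookkeeping around normalization and the point at infinity, ensuring compact-open closeness on $\RR_d$ gives genuine coefficient closeness of the relevant forms (no root lost at $\infty$ when $\deg(P_T)<d$) and that the Rouch\'e estimates, naturally Euclidean, translate into spherical estimates uniformly on $\C$; this is transparent once one works bihomogeneously on $\mathbb{P}^1$.
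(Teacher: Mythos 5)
The paper states Lemma \ref{lemmamorecomplex} (along with Lemma \ref{lemmacomplex}) explicitly \emph{without proof}, so there is no internal argument to compare yours against; the relevant evaluation is simply whether your proof is correct, and it is. Your strategy is the natural one: each of the multisets $T^{-1}(p)$, $\RP_T$, $\FP_T$ is the zero divisor of a bihomogeneous form depending continuously on $T$ (and jointly on $p$ for the first), Rouch\'e-type continuity of roots then gives the bijections with a $\delta_2$ estimate on a neighborhood of $(T,p)$, and compactness of $\C$ in the $p$ slot upgrades this to a single neighborhood $\BBB$ working uniformly in $p$. The branch-point case is correctly obtained by transporting $\Phi_\RP$ through the tautological multiset bijections $T:\RP_T\to\BP_T$ and $S:\RP_S\to\BP_S$; this is exactly the kind of map the paper's multiset conventions in Section \ref{sectionintroduction} are set up to accommodate (the allowance that two copies of a point may be sent to distinct places). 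Your triangle-inequality bound for $\Phi_\BP$ is right, and you correctly flag that the first term forces you to run the $\Phi_\RP$ construction with $\delta_2$ pre-shrunk by the factor $\sup(T_*)$, while the second term is controlled by $\|T-S\|_{\infty,s}$, which is small on a suitable compact-open (equivalently uniform) neighborhood of $T$. The only thing that deserves a sentence rather than a parenthetical is the bihomogenization at $\infty$ (so that no root of the defining form escapes off the affine chart and the Rouch\'e disks can be taken uniformly small in the spherical metric), but you have already identified that as the bookkeeping point, and it goes through exactly as you indicate.
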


\begin{lemma}
\label{lemmamiranda}
Suppose $x\in\C$ and $T\in\RR$. Then there exist arbitrarily small neighborhoods $B_x$ of $x$ such that
\[
T\on B_x:B_x\rightarrow T(B_x)
\]
is proper of degree $k := \mult_T(x)$. In particular, $T(\del B_x)\cap T(B_x) = \emptyset$.
\end{lemma}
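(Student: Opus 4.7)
The plan is to reduce the statement to the classical local normal form for non-constant holomorphic maps between Riemann surfaces. Let $k := \mult_T(x) \geq 1$. Since $T\in\RR$ is holomorphic at $x$ and at $T(x)$, standard complex analysis (factor the power series as $T(z)-T(x) = (z-x)^k g(z)$ with $g(x)\neq 0$, extract a local holomorphic $k$-th root of $g$, then take $u := (z-x)h(z)$ as a new coordinate) yields holomorphic charts $\phi:U\to\B$ with $\phi(x)=0$ and $\psi:V\to\B$ with $\psi(T(x))=0$, defined on neighborhoods $U\ni x$ and $V\supseteq T(U)$, such that
\[
\psi\circ T\circ\phi^{-1}(z) = z^k \all z\in\B.
\]
On the Riemann sphere the only additional point is that if $x$ or $T(x)$ is $\infty$, one first composes with a M\"obius transformation to move it to $0$ before extracting the power-series normal form.

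Given the normal form, for each $r\in(0,1)$ I would define $B_x := \phi^{-1}(\{|z|<r\})$. Under the charts, $T\on B_x$ is conjugate to $z\mapsto z^k$ from $\{|z|<r\}$ onto $\{|w|<r^k\}$. This map is proper of degree $k$: if $K\Kin\{|w|<r^k\}$, then $\sup_{w\in K}|w| < r^k$, so the preimage lies in $\{|z|\leq (\sup_K|w|)^{1/k}\}$, which is compact in $\{|z|<r\}$; and every $w\in\{|w|<r^k\}$ has exactly $k$ preimages counted with multiplicity. Transporting back through $\phi$ and $\psi$ shows $T\on B_x : B_x\to T(B_x)$ is proper of degree $k$, with $T(B_x) = \psi^{-1}(\{|w|<r^k\})$. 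The final assertion is then immediate: $T(\del B_x)=\psi^{-1}(\{|w|=r^k\})$ is disjoint from $T(B_x)=\psi^{-1}(\{|w|<r^k\})$. Finally, by continuity of $\phi^{-1}$ at $0$, the spherical diameter of $B_x$ tends to $0$ as $r\to 0$, so these neighborhoods are arbitrarily small.

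The main (and essentially only) content is the local normal form theorem, so I anticipate no substantive obstacle; the remaining verifications — properness, degree, disjointness of images of boundary and interior, and shrinking of $B_x$ — are all immediate from the model map $z\mapsto z^k$ on the disk.
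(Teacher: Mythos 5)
Your proof is correct and follows the same route as the paper: the paper simply reduces without loss of generality to $x = 0$ and $T(z) = z^k$ on a small ball, citing Miranda's Proposition 4.1 for the local normal form, and you carry out exactly this reduction (with more explicit verification of properness and the boundary-disjointness claim). There is no substantive difference.
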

\begin{proof}
Without loss of generality we may assume $x = 0$ and $T(z) = z^k$ on a small ball [\cite{Mir}, Proposition 4.1, p.44].
\end{proof}

\end{section}

\begin{section}{Inverse Branch Formalism}\label{sectionformalism}
In this section we systematize a technique found for example in [\cite{DU} Lemma 4, p.108]. We will use this technique repeatedly. The idea is to count the number of inverse branches of the map $T_j^n$ on a ``nice'' set $U\implies \C$ which are ``good'' (meaning the area of the image is small, to apply Koebe Distortion). By choosing the right numbers, we can make it so there is a large number of ``good'' inverse branches, implying that for any $p\in U$, the summation $L_j^n[f](p)=\sum_{x\in T_n^j(p)}\exp(\phi_j^n(x))f(x)$ has a large contribution from those terms where $x$ is given by a ``good'' inverse branch. These terms are not affected much as $p$ varies, thus the oscillation of $L_j^n[f]$ is slightly less than that of $f$.

We do this construction in greater generality than is done in \cite{DU}. The first generalization, from the autonomous case to the non-autonomous case, is not very significant. The same arguments still work. The second generalization is more important, and in fact corrects an error found in \cite{DU}. See Lemma \ref{lemmaerrorfix}.

Suppose that $U$ is a simply connected hyperbolic open set and that $T$ is a rational map. If $V$ is a connected component of $T^{-1}(U)$ containing no critical points, then there exists a unique map $T_V^{-1}:U\rightarrow V$ such that $T\circ T_V^{-1}=\id$. However, we can still think of $T_V^{-1}$ as being a lift of the identity map under $T$. It turns out that this construction is not sufficiently general. Instead of considering lifts of the identity map, we will instead consider lifts of arbitrary locally injective maps whose domains are simply connected hyperbolic open sets. Each such map can be thought of as a ``set which overlaps with itself''. For example, let $U = B_e(1,.8)$ and let $\zeta:U\rightarrow\C$ be the 4th power map $\zeta(z) := P_4(z) := z^4$. $\zeta$ is not injective since $\zeta(\frac{\sqrt{2}}{2}(1 + i)) = \zeta(\frac{\sqrt{2}}{2}(1 - i)) = -1$. We can think of $\zeta$ as encoding a multiset $V := \zeta(U)$ for which $\mult_V(-1) = 2$. Since $V$ is not simply connected, there is no way of proceeding with the inverse branch formalism based on the $V$ alone; the map $\zeta$ gives necessary information. On the other hand, the domain $U$ plays no particular role. Using the Riemann mapping theorem, we could in fact require that $U = \B$, but there seems to be no reason to do this.

Suppose $\ZZ$ is a finite collection of maps such that each $\zeta\in\ZZ$ is a holomorphic map from some set $U_\zeta\implies\C$ to $\C$. We define the \emph{multiplicity} of $\ZZ$ by
\[
\mult(\ZZ) := \mult\left(\bigcup_{\zeta\in\ZZ}\zeta(U_\zeta)\right) := \max_{p\in\C}\sum_{\zeta\in\ZZ}\#(\zeta^{-1}(p)).
\]

Our first step is to define the concept of an inverse branch in this context, and to define which branches are the ``good'' ones:

\begin{definition}
Suppose that $U$ is a simply connected hyperbolic open set, and suppose $\zeta:U\rightarrow\C$ is holomorphic and locally injective (we shall often take $\zeta = \id$), and suppose $T$ is a rational map. A holomorphic map $\eta:U\rightarrow\C$ is an \emph{inverse branch} or \emph{lift} of $\zeta$ under $T$ if $T\circ\eta = \zeta$. If $c \leq 1$, then $\eta$ is \emph{$c$-good} if $\lambda_s(\eta(U))\leq c$. We denote the collection of inverse branches of $\zeta$ under $T$ by $\I(\zeta,T)$, and the collection of $c$-good inverse branches of $\zeta$ under $T$ by $\I(\zeta,T,c)$. If $\ZZ$ is a collection of holomorphic maps from $U$ to $\C$, then
\begin{align*}
\I(\ZZ,T) &:= \bigcup_{\zeta\in\ZZ}\I(\zeta,T)\\
\I(\ZZ,T,c) &:= \bigcup_{\zeta\in\ZZ}\I(\zeta,T,c)
\end{align*}
\end{definition}
Next, we give an upper bound for the number of preimages of a point which are not given by good inverse branches:
\begin{proposition}
\label{propositionmrecursion}
For each $i=1,\ldots,m$, suppose that $\zeta_i:U_i\rightarrow\C$ is holomorphic and locally injective, suppose that the collection $\{\zeta_1,\ldots,\zeta_m\}$ has multiplicity $r$, and suppose that $T$ is a rational map. Then
\begin{equation}
\label{multrbounds}
\mult\left(\bigcup_{i=1}^m\I(\zeta_i,T)\right)\leq r.
\end{equation}
Fix $c > 0$. Then
\begin{equation}
\label{mrecursion}
\sum_{i=1}^m\max_{x\in U_i}\#\{z\in T^{-1}(\zeta_i(x)):\nexists \eta\in\I(\zeta_i,T,c)\text{ such that }z=\eta(x)\}\leq 2r\deg^2(T) + \frac{r}{c}.
\end{equation}
\end{proposition}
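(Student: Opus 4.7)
The plan is as follows. To prove (\ref{multrbounds}) I will use a direct double-counting argument. Fix $p\in\C$; the quantity $\sum_{i}\sum_{\eta\in\I(\zeta_i,T)}\#(\eta^{-1}(p))$ enumerates triples $(i,\eta,x)$ with $\eta\in\I(\zeta_i,T)$ and $\eta(x)=p$. For any such triple the equation $T\circ\eta=\zeta_i$ forces $x\in\zeta_i^{-1}(T(p))$, while the identity principle (using the simple connectivity of $U_i$) guarantees that for fixed $i$ and $x$ there is at most one lift $\eta$ satisfying $\eta(x)=p$. Summing over $i$ therefore bounds the total by $\sum_i\#(\zeta_i^{-1}(T(p)))\leq r$.

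For (\ref{mrecursion}) I would first observe that the summand is independent of $x$. Since distinct elements of $\I(\zeta_i,T,c)$ take distinct values at any given $x\in U_i$, and since each such $\eta(x)$ contributes with multiplicity one to the multiset $T^{-1}(\zeta_i(x))$ (good inverse branches never pass through ramification points of $T$), the summand equals $\deg(T)-\#\I(\zeta_i,T,c)$ for every $x$. I would then split this shortfall as
\[
\bigl(\deg(T)-\#\I(\zeta_i,T)\bigr)+\bigl(\#\I(\zeta_i,T)-\#\I(\zeta_i,T,c)\bigr)
\]
and bound the two pieces separately. The non-good branches are controlled by a spherical-area count: by (\ref{multrbounds}) and $\lambda_s(\C)=1$,
\[
\sum_i\sum_{\eta\in\I(\zeta_i,T)}\lambda_s(\eta(U_i))\leq r,
\]
and since each non-$c$-good branch has area strictly exceeding $c$, the total number of non-good branches across all $i$ is at most $r/c$, producing the $r/c$ contribution in (\ref{mrecursion}).

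The main obstacle will be bounding $\sum_i(\deg(T)-\#\I(\zeta_i,T))$ by $2r\deg^2(T)$. The key tool is the fiber product $\wtilde U_i:=\{(x,z)\in U_i\times\C:T(z)=\zeta_i(x)\}$, whose projection $\pi_i:\wtilde U_i\to U_i$ is proper of degree $\deg(T)$. Inverse branches correspond exactly to the connected components $W$ of $\wtilde U_i$ on which $\pi_i|_W$ has degree one, so
\[
\deg(T)-\#\I(\zeta_i,T)=\sum_{W:\deg(\pi_i|_W)\geq 2}\deg(\pi_i|_W)\leq \deg(T)\cdot\#\{W:\deg(\pi_i|_W)\geq 2\}.
\]
The subtle step is verifying that every higher-degree component must carry a ramification point: because $U_i$ is simply connected, a connected unramified cover of $U_i$ of degree $\geq 2$ would necessarily split into several sheets, contradicting connectedness. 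Thus each such $W$ contains at least one pair $(x,z)\in W$ with $z\in\RP_T$ and $\zeta_i(x)=T(z)$, and distinct higher-degree components contain disjoint such pairs.

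Interchanging the sum over $i$ with the sum over $z\in\RP_T$ and applying the multiplicity bound $\sum_i\#(\zeta_i^{-1}(T(z)))\leq r$ at each fixed $z$ yields
\[
\sum_i\#\{W:\deg(\pi_i|_W)\geq 2\}\leq\sum_{z\in\RP_T}\sum_i\#(\zeta_i^{-1}(T(z)))\leq r\cdot\#\RP_T=r(2\deg(T)-2)\leq 2r\deg(T).
\]
Multiplying by $\deg(T)$ delivers the bound $2r\deg^2(T)$, and combining with the $r/c$ bound from the second paragraph completes the proof. The only place where anything nontrivial is used is the simple-connectivity argument ruling out unramified higher-degree components; everything else is bookkeeping with multisets and the definition of $r$.
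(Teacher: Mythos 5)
Your proof of (\ref{multrbounds}) is essentially the paper's argument, with one small slip in attribution: the uniqueness of the lift through a fixed point $(x,\eta(x))$ comes from the \emph{local injectivity} of $\zeta_i$ (which makes $T$ locally invertible near $\eta(x)$) combined with the identity principle on a connected domain; simple connectivity of $U_i$ is not what is used there. Your argument for (\ref{mrecursion}) is correct but takes a genuinely different route from the paper. The paper fixes for each $i$ a point $x_i$ realizing the maximum and then sorts the pairs $(i,z)$ according to whether $\zeta_i(U_i)$ contains \emph{any} branch point of $T$: if it does, all $\deg(T)$ preimages of $\zeta_i(x_i)$ are discarded at once and charged to the at most $r(2\deg(T)-2)$ indices $i$ whose image sees a branch point; if not, every preimage comes from a unique lift (by homotopy lifting) and the non-good ones are counted by spherical area. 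You instead first observe that the summand is constant in $x$ --- equal to $\deg(T)-\#\I(\zeta_i,T,c)$ since good branches avoid $\RP_T$ and take distinct values at each $x$ --- a fact the paper does not note and which makes the $\max_x$ in (\ref{mrecursion}) superfluous. You then split this shortfall into sheets carrying no inverse branch at all (the degree-$\geq 2$ components of the fiber product $\wtilde U_i$, which must be ramified because a connected unramified cover of the simply connected $U_i$ has degree one, hence are charged to $\RP_T$) and branches that exist but are non-good (the same area count). Your decomposition is locally sharper: the paper throws away all of $T^{-1}(\zeta_i(x_i))$ as soon as $\zeta_i(U_i)$ grazes a single branch point, while you discard only the sheets that are actually obstructed. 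Both routes then apply the same crude estimates $\deg(\pi_i|_W)\leq\deg(T)$ and $\#(\RP_T)\leq 2\deg(T)$ and land on the same constant $2r\deg^2(T)+r/c$, so the refinement does not tighten the final bound, but it does make clearer exactly which preimages are being lost.
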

\begin{proof}
We begin by showing (\ref{multrbounds}). Fix $z\in\C$. Since $\{\zeta_1,\ldots,\zeta_m\}$ has multiplicity $r$, we have that $\sum_{i=1}^m\#(\zeta_i^{-1}(T(z)))\leq r$. Thus it is enough to show that for each $i=1,\ldots,m$,
\begin{equation}
\label{uniqueness}
\sum_{\eta\in\I(\zeta_i,T)}\#(\eta^{-1}(z))\leq \#(\zeta_i^{-1}(T(z))).
\end{equation}
For each $\eta\in\I(\zeta_i,T)$ and for each $x\in\eta^{-1}(z)$, we have $x\in\zeta_i^{-1}(T(z))$. Thus
\[
\pi_2:\bigcup_{\eta\in\I(\zeta_i,T)}\{\eta\}\times\eta^{-1}(z)\rightarrow \zeta_i^{-1}(T(z)).
\]
(Here $\pi_2$ is projection onto the second coordinate.) We will be done if $\pi_2$ is injective.

Suppose that $\pi_2(\eta_1,x_1) = \pi_2(\eta_2,x_2)$, i.e. $x := x_1 = x_2$. Since $\zeta_i$ is locally injective and $\zeta_i = T \circ \eta_1$, $T$ is injective in a neighborhood of $\eta_1(x) = z$. Thus $T$ is invertible in a neighborhood of $z$, so we have $\eta_1 = T^{-1}\circ\zeta_i = \eta_2$ in a neighborhood of $x$. By the identity principle, $\eta_1 = \eta_2$. Thus $\pi_2$ is injective, and we have shown (\ref{multrbounds}).

Next, we show (\ref{mrecursion}). For each $i=1,\ldots,m$ fix $x_i\in U_i$ at which the maximum in (\ref{mrecursion}) is attained. Fix $i=1,\ldots,m$, and suppose that $z\in T^{-1}(\zeta_i(x_i))$. If $\zeta_i(U_i)$ does not contain a branch point of $T$, then by the homotopy lifting principle $\zeta_i$ has a unique inverse branch $\eta_{i,z}:U_i\rightarrow\C$ such that $\eta_{i,z}(x_i) = z$. If furthermore $\eta_{i,z}$ is $c$-good, then $z$ is not counted in (\ref{mrecursion}). Thus for each $i=1,\ldots,m$ and for each $z\in T^{-1}(\zeta_i(x_i))$, exactly one of the following the three possibilities holds:

\begin{itemize}
\item[A)] $\zeta_i(U_i)$ contains a branch point of $T$.
\item[B)] $\zeta_i(U_i)$ does not contain a branch point of $T$, but the inverse branch $\eta_{i,z}$ is not $c$-good i.e. $\lambda_s(\eta_{i,z}(U_i))>c$.
\item[C)] $\zeta_i(U_i)$ does not contain a branch point of $T$, and the inverse branch $\eta_{i,z}$ is $c$-good i.e. $\lambda_s(\eta_{i,z}(U_i))\leq c$.
\end{itemize}

We have already established that category (C) is not counted in (\ref{mrecursion}). Thus to complete the proof, it suffices to show that category (A) represents at most $2r\deg^2(T)$ pairs $(i,z)$ (counting multiplicity), and that category (B) represents at most $\frac{r}{c}$ pairs $(i,z)$ (multiplicity is not needed since every ramification point is in category (A))

\begin{itemize}
\item[A)] It suffices to show that for at most $2r\deg(T)$ values of $i=1,\ldots,m$, $\zeta_i(U_i)$ contains a branch point. By the Riemann-Hurwitz formula there are at most $2\deg(T) - 2$ branch points (exactly that many counting multiplicity), and since $\mult(\zeta_i)_{i=1}^m = r$, each branch point is contained in at most $r$ sets of the form $\zeta_i(U_i)$.
\item[B)] Let
\[
\mathscr{C} := \bigcup_{i=1}^m \{i\}\times(\I(\zeta_i,T)\butnot \I(\zeta_i,T,c)),
\]
so that (\ref{multrbounds}) implies
\[
\sum_{\eta\in\mathscr{C}}\one_{\eta(U_i)}\leq r\one.
\]
Integrating with respect to $\d\lambda_s$ and using the fact that $\lambda_s(\eta(U_i))>c$ for all $\eta\in\mathscr{C}$ to simplify yields $\#(\mathscr{C})\leq\frac{r}{c}$. (Recall that we have normalized $\lambda_s(\C) = 1$.) Now the map $(i,\eta)\mapsto (i,\eta(x_i))$ is a surjection from $\mathscr{C}$ onto category (B). Thus we are done.
\end{itemize}
\end{proof}

\begin{remark}
It is possible to get a bound which is linear in $\deg(T)$ instead of quadratic by using the monodromy theorem rather than the homotopy lifting principle; however this requires more work. A quadratic bound is sufficient for our purposes.
\end{remark}

Next, we formalize the idea, hinted at above, that $L_j^n$ can be split up into a summation over terms which are a result of good inverse branches and those which are not:

\begin{definition}
\label{definitionZAB}
Fix $n\in\Z$ and $m\in\N$. For each $i=1,\ldots,m$, suppose that $\zeta_i:U_i\rightarrow\C$ is holomorphic and locally injective, and suppose that $(T_j)_{j=0}^{n-1}$ is a finite sequence of rational maps. Fix $0<c<1$. Using backwards recursion, we define
\begin{align*}
\ZZ_n^{(i)}&:=\{\zeta_i\}\\
\ZZ_j^{(i)}&:=\I\left(\ZZ_{j+1}^{(i)},T_j,\frac{c^{2(n - j)}}{1 + c^{2(n - j)}}\right)
\end{align*}
Suppose also that $(\phi_j)_{j=0}^{n-1}$ is a finite sequence of potential functions, so that $L_j^n$ is defined for all $j=0,\ldots,n$. For each $j=0,\ldots,n$, we define auxiliary operators $A_j^n,B_j^n:\CC(\C)\rightarrow\bigoplus_{i=1}^m \CC(U_i)$:
\begin{align}
\label{Adef}
(A_j^n[f])_i(x) &:= \sum_{\eta\in\ZZ_j^{(i)}}e^{\phi_j^n(\eta(x))}f(\eta(x))\\
\label{Bdef}
B_j^n[f] &:= A_{j+1}^n[L_j[f]] - A_j^n[f]
\end{align}
These definitions are called the \emph{inverse branch formalism}.
\end{definition}

\begin{remark}
An element $(f_i)_{i=1}^m$ of $\bigoplus_{i=1}^m \CC(U_i)$ can be thought of as a function from $\C$ to $\R$ which is undefined at some points (i.e. $\C\butnot\cup_{i=1}^m\zeta_i(U_i)$) and takes on multiple values at others (if $p\in\C$, then for each $i=1,\ldots,m$ and for each $x\in\zeta_i^{-1}(p)$, the function takes on the value $f_i(x)$ at $p$). If $f\in\CC(\C)$, a natural way to get an element of $\bigoplus_{i=1}^m \CC(U_i)$ is to consider $(f\circ\zeta_i)_{i=1}^m$.
\end{remark}

The idea is that $A_j^n$ is the approximation of $L_j^n$ obtained by summing over the good branches, and that $B_j^n$ is a summation over the branches thrown out in the $(n-j)$th step. This is made explicit in Proposition \ref{propositionZAB} below.

Note: the value $\wtilde{c}$ is chosen merely to simplify calculations with Lemma \ref{lemmakoebe}; the point is just to choose some exponentially decaying quantity depending only on $n-j$.

For the remainder of this section, we will suppose that $n$, $m$, $(U_i)_{i=1}^m$, $(\zeta_i)_{i=1}^m$, $(T_j)_{j=0}^{n-1}$, $(\phi_j)_{j=0}^{n-1}$, and $0<c<1$ are as in Definition \ref{definitionZAB}, and that for each $j=0,\ldots,n$, $(\ZZ_j^{(i)})_{i=1}^m$, $A_j^n$, and $B_j^n$ are given by Definition \ref{definitionZAB}. We will write $r := \mult(\zeta_i)_{i=1}^m$.

\begin{proposition}
\label{propositionZAB}
Fix $f\in\CC(\C)$. For all $k=1,\ldots,n$,
\begin{equation}
\label{Aequals}
A_k^n[f] = \left(L_k^n[f]\circ\zeta_i\right)_{i=1}^m - \sum_{j=k}^{n-1} B_j^n[L_k^j[f]].
\end{equation}
Furthermore, if $K\implies\C$, then for all $j=0,\ldots,n-1$
\begin{equation}
\label{Bislessthan}
\sum_{i=1}^m \sup_{\zeta_i^{-1}(K)}(B_j^n[f])_i\leq r(2\deg^2(T_j) + 1 + c^{-2(n-j)})e^{\sup_{K_j}(\phi_j^n)}\sup_{K_j}(f).
\end{equation}
where $K_j := T_n^j(K)$. If $f\geq 0$, then the left hand side is positive.
\end{proposition}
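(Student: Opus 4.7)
The first identity (\ref{Aequals}) will follow from a telescoping argument, while (\ref{Bislessthan}) will follow by identifying $(B_j^n[f])_i(x)$ as a summation over the ``bad'' preimages of $\eta(x)$ discarded in passing from $\ZZ_{j+1}^{(i)}$ to $\ZZ_j^{(i)}$, and then applying Proposition \ref{propositionmrecursion}. Specifically, to prove (\ref{Aequals}) I plan to apply (\ref{Bdef}) with $f$ replaced by $L_k^j[f]$ and use the semigroup identity $L_j \circ L_k^j = L_k^{j+1}$ to rewrite $B_j^n[L_k^j[f]] = A_{j+1}^n[L_k^{j+1}[f]] - A_j^n[L_k^j[f]]$. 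Summing from $j=k$ to $j=n-1$ telescopes to $A_n^n[L_k^n[f]] - A_k^n[f]$; since $\ZZ_n^{(i)} = \{\zeta_i\}$ and the empty Birkhoff sum $\phi_n^n$ vanishes, (\ref{Adef}) gives $A_n^n[g]_i = g\circ\zeta_i$, and rearranging yields (\ref{Aequals}).

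For (\ref{Bislessthan}), the first step is to obtain an explicit formula for $(B_j^n[f])_i(x)$. Expanding $A_{j+1}^n[L_j[f]]$ via (\ref{Ldef}) and using the cocycle identity $\phi_j^n(z) = \phi_j(z) + \phi_{j+1}^n(T_j(z))$ for any $z\in T_j^{-1}(\eta(x))$ gives
\[
(A_{j+1}^n[L_j[f]])_i(x) = \sum_{\eta \in \ZZ_{j+1}^{(i)}} \sum_{z \in T_j^{-1}(\eta(x))} e^{\phi_j^n(z)} f(z).
\]
Every $\eta' \in \ZZ_j^{(i)} = \I(\ZZ_{j+1}^{(i)}, T_j, c_{n-j})$, where $c_{n-j} := \frac{c^{2(n-j)}}{1+c^{2(n-j)}}$, is an inverse branch of a unique $\eta \in \ZZ_{j+1}^{(i)}$ under $T_j$ with $\eta'(x)\in T_j^{-1}(\eta(x))$, so $(A_j^n[f])_i(x)$ is precisely the subsum of the display over those $z$ arising from good inverse branches. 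Subtracting, $(B_j^n[f])_i(x)$ is the residual sum over the ``bad'' triples $(\eta, z)$; this at once proves nonnegativity when $f\geq 0$.

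It remains to bound the residual sum. A short induction on $j$ shows $T_j^n\circ\eta = \zeta_i$ for every $\eta \in \ZZ_j^{(i)}$, so for $x \in \zeta_i^{-1}(K)$ and $\eta \in \ZZ_{j+1}^{(i)}$ one has $\eta(x)\in T_n^{j+1}(K)$ and hence every bad $z$ lies in $T_n^j(K) = K_j$; thus each summand is at most $e^{\sup_{K_j}(\phi_j^n)}\sup_{K_j}(f)$. To count the bad triples I will apply Proposition \ref{propositionmrecursion} to the composite collection $\bigcup_i \ZZ_{j+1}^{(i)}$ with rational map $T_j$ and parameter $c_{n-j}$; each $\eta$ in this collection is locally injective as an iterated holomorphic lift of the locally injective $\zeta_i$, and by (\ref{multrbounds}) the multiplicity of the collection is at most $r$, so (\ref{mrecursion}) gives a total count of at most $2r\deg^2(T_j) + r/c_{n-j} = r(2\deg^2(T_j) + 1 + c^{-2(n-j)})$. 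Combining this with the pointwise bound, using $\sup_x \sum_\eta \leq \sum_\eta \max_x$, and summing over $i$ yields (\ref{Bislessthan}). The main obstacle is the bookkeeping in the second paragraph---aligning the summands of $A_j^n[f]$ and $A_{j+1}^n[L_j[f]]$ so that $B_j^n[f]$ is manifestly a sum over bad branches---after which the counting and value bounds combine immediately.
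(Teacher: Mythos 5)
Your proof is correct and follows essentially the same route as the paper: for (\ref{Aequals}) your telescoping sum is the unwound form of the paper's backwards induction on $k$, and for (\ref{Bislessthan}) you perform the same cancellation to identify $(B_j^n[f])_i$ as a sum over discarded (bad) preimages, the same invocation of Proposition \ref{propositionmrecursion} on the collection $\cup_i \ZZ_{j+1}^{(i)}$ with parameter $\wtilde{c}$, and the same pointwise-bound-plus-count combination. Your extra remark that $T_j^n\circ\eta=\zeta_i$ forces every summand's argument into $K_j$ is a point the paper uses but leaves implicit.
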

\begin{proof}
The proof of (\ref{Aequals}) is by backwards induction on $k$. If $k=n$, the identity is trivial. If we suppose that the formula is true for $\wtilde{k} = k+1$ for all $f\in\CC(\C)$, then we can substitute $L_k[f]$ for $f$, yielding
\begin{equation*}
A_{k+1}^n[L_k[f]] = \left(L_k^n[f]\circ\zeta_i\right)_{i=1}^m - \sum_{j=k+1}^{n-1} B_j^n[L_k^j[f]]
\end{equation*}
Subtracting the reverse of (\ref{Bdef}) with $j=k$ yields (\ref{Aequals}). Thus we are done.

We now wish to show (\ref{Bislessthan}). By backwards induction on $j$, we see that Proposition \ref{propositionmrecursion} implies that $\mult(\cup_{i=1}^m\ZZ_{j+1}^{(i)})\leq r$. Fix $j=0,\ldots,n-1$ and $f\in\CC(\C)$.

We are now in a position to apply the second half of Proposition \ref{propositionmrecursion} to the collection $\cup_{i=1}^m \ZZ_{j+1}^{(i)}$ (which is a collection locally injective of holomorphic maps with multiplicity at most $r$), to the map $T_j$, and to the value
\[
\wtilde{c} := \displaystyle{\frac{c^{2(n-j)}}{1 + c^{2(n-j)}}}>0.
\]
With these inputs, (\ref{mrecursion}) becomes
\begin{equation}
\label{mrecursionnew}
\sum_{i=1}^m\sum_{\eta\in\ZZ_{j+1}^{(i)}}\max_{x\in U_i}\#(S_x) \leq 2r\deg^2(T_j) + r(1 + c^{-2(n-j)}),
\end{equation}
where
\[
S_x := \left\{\left.z\in(T_j)^{-1}(\eta(x)) \right| \nexists \xi\in\I\left(\eta,T_j,\wtilde{c}\right)\text{ such that }z = \xi(x)\right\}.
\]
Fix $i = 1,\ldots,m$ and $x\in\zeta_i^{-1}(K)$. Consider (\ref{Bdef}). On both sides we take the $i$th coordinate and evaluate at $x$. We use (\ref{Adef}) and (\ref{Ldef}) to evaluate further. The result is
\begin{equation*}
(B_j^n[f])_i(x)
= \sum_{\eta\in\ZZ_{j+1}^{(i)}}\sum_{z\in(T_j)^{-1}(\eta(x))}e^{\phi_j^n(z)}f(z)
- \sum_{\xi\in\ZZ_j^{(i)}}e^{\phi_j^n(\xi(x))}f(\xi(x))
\end{equation*}
Since $\ZZ_j^{(i)}=\I\left(\ZZ_{j+1}^{(i)},T_j,\wtilde{c}\right)$, we can rewrite the right-hand summation as a double summation; $\eta$ runs over $\ZZ_{j+1}^{(i)}$, and $\xi$ runs over $\I\left(\eta,T_j,\wtilde{c}\right)$. Factoring the common summation $\eta\in\ZZ_{j+1}^{(i)}$ yields
\begin{equation*}
(B_j^n[f])_i(x)
= \sum_{\eta\in\ZZ_{j+1}^{(i)}}\left[\sum_{z\in(T_j)^{-1}(\eta(x))}e^{\phi_j^n(z)}f(z)
- \sum_{\xi\in\I\left(\eta,T_j,\wtilde{c}\right)}e^{\phi_j^n(\xi(x))}f(\xi(x))\right]
\end{equation*}
at which point we notice that every term in the right hand summation appears in the left hand summation as well. Cancelling the common terms yields
\begin{equation*}
(B_j^n[f])_i(x)
= \sum_{\eta\in\ZZ_{j+1}^{(i)}}\sum_{z\in S_x}e^{\phi_j^n(z)}f(z)
\end{equation*}
Bounding the right-hand summand by $\exp(\sup_{K_j}(\phi_j^n))\sup_{K_j}(f)$, we see that
\begin{equation*}
(B_j^n[f])_i(x)
\leq \sum_{\eta\in\ZZ_{j+1}^{(i)}}\#(S_x)e^{\sup_{K_j}(\phi_j^n)}\sup_{K_j}(f)
\end{equation*}
Taking the supremum over $x\in\zeta_i^{-1}(K)$, summing over $i=1,\ldots,m$, and combining with (\ref{mrecursionnew}) yields (\ref{Bislessthan}).
\end{proof}

We shall be interested in (\ref{Aequals}) only in the case where $k=0$.

\begin{corollary}
\label{corollaryZAB}
Suppose that $D,C_2 < \infty$, $\tau < c^2$, and $\beta > 0$ are such that for all $j=0,\ldots,n-1$, \textup{(\ref{Disaboundmod})} and \textup{(\ref{tauisaboundmod})} hold. Fix $K\implies \C$, and suppose that for all $j=0,\ldots,n-1$, \textup{(\ref{Misaboundzero})} holds for $X := T_n^j(K)$. Then there exists $C_3 < \infty$ depending only on $r$, $c$, $D$, $C_2$, $\tau$, and $\beta$ such that for all $f\in\CC(\C)$,
\begin{equation}
\label{ZAB}
\sum_{i=1}^m \sup_{\zeta_i^{-1}(K)}[L_0^n[f]\circ\zeta_i - (A_0^n[f])_i] \leq C_3 e^M \inf_K L_0^n[f]
\end{equation}
\end{corollary}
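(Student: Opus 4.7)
My plan is to start from the identity (\ref{Aequals}) at $k = 0$, rearranged as
\[
(L_0^n[f]\circ\zeta_i)_{i=1}^m - A_0^n[f] = \sum_{j=0}^{n-1} B_j^n[L_0^j[f]],
\]
which presents the left-hand side of (\ref{ZAB}) as the total ``bad-branch contribution'' aggregated over the generations at which each branch was discarded. By splitting $f = f^+ - f^-$ and absorbing a harmless factor of $2$, I may assume $f \ge 0$; then every summand on the right is nonnegative in each coordinate, so subadditivity of the supremum yields
\[
\sum_{i=1}^m \sup_{\zeta_i^{-1}(K)}\bigl[L_0^n[f]\circ\zeta_i - (A_0^n[f])_i\bigr] \le \sum_{j=0}^{n-1}\sum_{i=1}^m\sup_{\zeta_i^{-1}(K)} (B_j^n[L_0^j[f]])_i.
\]
Feeding $L_0^j[f]$ into (\ref{Bislessthan}) bounds each inner double sum by $r\bigl(2\deg^2(T_j)+1+c^{-2(n-j)}\bigr)\,e^{\sup_{K_j}\phi_j^n}\,\sup_{K_j} L_0^j[f]$.

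The technical heart of the proof is the termwise estimate
\[
e^{\sup_{K_j}\phi_j^n}\,\sup_{K_j} L_0^j[f] \;\le\; C\,e^M\,\tau^{n-j}\,\inf_K L_0^n[f]
\]
for a constant $C$ depending only on $C_2$. To establish it I would exploit the factorization $L_0^n = L_j^n \circ L_0^j$ and apply Lemma \ref{lemmanonincreasing} to $L_j^n$ with numerator $L_0^j[f]$ and denominator $L_0^j[\one]$, obtaining $\inf_K L_0^n[f] \ge \inf_{K_j}\bigl(L_0^j[f]/L_0^j[\one]\bigr)\cdot\inf_K L_0^n[\one]$. The bounded-distortion hypothesis $\|\ln L_0^j[\one]\|_{\osc,K_j}\le M$, together with its equivalent form $\|\ln L_j^n[\one]\|_{\osc,K}\le M$ furnished by Definition \ref{definitionboundeddistortion}, lets me pass between sups and infs of $L_0^j[\one]$ and $L_j^n[\one]$ at the cost of factors of $e^M$. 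Meanwhile (\ref{tauisaboundmod}) supplies $e^{\sup_\C\phi_j^n}\le e^{C_2}\tau^{n-j}\inf_\C L_j^n[\one]\le e^{C_2}\tau^{n-j}\inf_K L_j^n[\one]$, and a purely algebraic combination of these three ingredients produces the claimed estimate. This is the main obstacle, since it requires reconciling three distinct transfer operators $L_0^j$, $L_j^n$, and $L_0^n$ on three different subsets of $\C$ while tracking all distortion bounds through the single constant $M$.

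With the termwise estimate in hand, summing over $j = 0,\dots,n-1$ shows that the left side of (\ref{ZAB}) is at most
\[
rC\,e^M\,\inf_K L_0^n[f]\cdot\sum_{j=0}^{n-1}\bigl(2\deg^2(T_j)+1+c^{-2(n-j)}\bigr)\tau^{n-j}.
\]
Using (\ref{Disaboundmod}) to replace $\deg^2(T_j)$ by $D^2(n-j)^{2/\beta}$ and reindexing by $k = n-j$, the final sum is dominated by $\sum_{k=1}^\infty\bigl(2D^2 k^{2/\beta}+1+c^{-2k}\bigr)\tau^k$, which converges because the hypothesis $\tau<c^2$ forces both $k^{2/\beta}\tau^k$ and $c^{-2k}\tau^k=(\tau/c^2)^k$ to be geometrically small. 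Setting $C_3$ equal to $rC$ times the value of this series completes the proof, with $C_3$ depending only on $r$, $c$, $D$, $C_2$, $\tau$, and $\beta$ as claimed.
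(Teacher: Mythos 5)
Your overall architecture — expand the difference via (\ref{Aequals}) at $k=0$, bound each generation via (\ref{Bislessthan}), and sum a series controlled by $\tau < c^2$ — is exactly the paper's. Two steps, however, do not survive scrutiny.

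The reduction to $f\ge 0$ by writing $f=f^+-f^-$ does not work: infima are not additive, so $\inf_K L_0^n[f^+]$ and $\inf_K L_0^n[f^-]$ cannot be recombined into $\inf_K L_0^n[f]$, and (\ref{ZAB}) does not decompose linearly in $f$. The paper makes no such reduction; its proof silently assumes the oscillation control $\sup_{K_j}L_0^j[f]\le e^M\inf_{K_j}L_0^j[f]$, which in practice means $f=\one$ (the only case ever used). You should simply restrict to $f\ge 0$ with the requisite positivity, or to $f=\one$, rather than pretend to derive it.

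Your termwise estimate cites $\|\ln L_j^n[\one]\|_{\osc,K}\le M$ as ``furnished by Definition~\ref{definitionboundeddistortion}.'' The equivalence recorded there between (\ref{Misaboundzero}) and (\ref{Misabound}) is an almost-sure translation-invariance statement about a fixed $X$ with the bounded distortion property; it is not available from the corollary's hypothesis, which only gives the pointwise bounds $\|\ln L_0^j[\one]\|_{\osc,T_n^j(K)}\le M$ for $j=0,\dots,n-1$. Fortunately you never actually need the $L_j^n$ bound: for $f=\one$ your chain reduces to
\[
\inf_K L_0^n[\one]\ge \inf_{K_j}L_0^j[\one]\cdot\inf_K L_j^n[\one]
\quad\text{and}\quad
\sup_{K_j}L_0^j[\one]\le e^M\inf_{K_j}L_0^j[\one],
\]
and the factor $\inf_K L_j^n[\one]$ cancels outright. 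This is precisely the paper's shorter route — replace $\sup_{K_j}L_0^j[f]$ by $e^M\inf_{K_j}L_0^j[f]$ via (\ref{Misaboundzero}) and absorb $\inf(L_j^n[\one])\inf_{K_j}L_0^j[f]\le\inf_K L_0^n[f]$. No distortion bound on $L_j^n[\one]$ over $K$ is used anywhere. Strip out the detour through Lemma~\ref{lemmanonincreasing} with test function $L_0^j[\one]$ and the argument collapses to the paper's.
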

\begin{proof}
\begin{align*}
&\sum_{i=1}^m \sup_{\zeta^{-1}(K)}[L_0^n[f]\circ\zeta_i - (A_0^n[f])_i]\\
&\leq \sum_{j=0}^{n-1} \sum_{i=1}^m \sup_{\zeta^{-1}(K)}(B_j^n[L_0^j[f]])_i\\
&\leq \sum_{j=0}^{n-1} r(2\deg^2(T_j) + 1 + c^{-2(n-j)})e^{\sup(\phi_j^n)}\sup_{T_n^j(K)}(L_0^j[f])\\
&\leq \sum_{j=0}^{n-1} r(2D^2 (n - j)^{2/\beta} + 1 + c^{-2(n-j)})e^{C_2}\tau^{n-j} \inf(L_j^n[\one]) e^M \inf_{T_n^j(K)}(L_0^j[f])\\
&\leq \sum_{j=0}^{n-1} r(2D^2 (n - j)^{2/\beta} + 1 + c^{-2(n-j)})e^{C_2}\tau^{n-j} e^M \inf_K (L_0^n[f])\\
&\leq C_3 e^M \inf_K (L_0^n[f]),
\end{align*}
where
\begin{equation}
\label{C5def}
C_3 := e^{C_2}\sum_{k=1}^\infty r(2D^2 k^{2/\beta} + 1 + c^{-2k})\tau^k;
\end{equation}
$C_3 < \infty$ since $\tau < c^2 < 1$. As promised, $C_3$ depends only on $r$, $c$, $D$, $C_2$, $\tau$, and $\beta$. Thus we are done.
\end{proof}
\begin{lemma}
\label{lemmahyperbolic}
Fix $C_1 < \infty$ and $\alpha,\sigma,\beta > 0$. Suppose that \textup{(\ref{C1isaboundmod})} is satisfied for all $j = 0,\ldots,n - 1$. Fix $i=1,\ldots,m$, and suppose that $\zeta_i$ is $\sigma$-locally injective. Then there exists $C_4 < \infty$ depending only on $C_1$, $\alpha$, $\sigma$, $\beta$, and $c$ such that
\begin{enumerate}[A)]
\item For all $\eta\in \ZZ_0^{(i)}$ and for all $x,y\in U_i$,
\begin{align}\label{distancesmall}
\dist_s(\eta(x),\eta(y)) &\leq \coth(\sigma/2) c^n \dist_{U_i}(x,y)\\ \label{equationlemma}
\phi_0^n(\eta(x)) - \phi_0^n(\eta(y)) &\leq C_4 \dist_{U_i}(x,y)^\alpha
\end{align}
\item For all $g\in\CC(\C)$ with $g > 0$, and for all $\varepsilon > 0$,
\begin{equation}
\label{Aislessthan}
\rho_{\ln(A_0^n[g])_i}(\varepsilon) \leq C_4 \varepsilon^\alpha + \rho_{\ln(g)}(\coth(\sigma/2) c^n \varepsilon)
\end{equation}
\end{enumerate}
\end{lemma}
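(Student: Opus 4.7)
The plan is to prove (A) first, then deduce (B) by a termwise logarithm argument. Fix $\eta \in \ZZ_0^{(i)}$ and set $\eta_j := T_0^j \circ \eta$ for $j = 0, \ldots, n$. By induction on $j$, each $\eta_j$ lies in $\ZZ_j^{(i)}$: indeed $\eta_0 = \eta \in \ZZ_0^{(i)}$ by choice, and since $\ZZ_j^{(i)} = \I(\ZZ_{j+1}^{(i)}, T_j, c^{2(n-j)}/(1+c^{2(n-j)}))$, membership of $\eta_j$ gives $\eta_{j+1} = T_j \circ \eta_j \in \ZZ_{j+1}^{(i)}$. In particular $\eta_n = \zeta_i$ and, for each $j < n$, the goodness condition yields
\[
\lambda_s(\eta_j(U_i)) \leq \frac{c^{2(n-j)}}{1 + c^{2(n-j)}}.
\]
Moreover, each $\eta_j$ inherits $\sigma$-local injectivity from $\zeta_i = T_j^n \circ \eta_j$: if $\eta_j(x) = \eta_j(y)$ with $\dist_{U_i}(x,y) \leq \sigma$, then $\zeta_i(x) = \zeta_i(y)$, forcing $x = y$.

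Thus Lemma \ref{lemmakoebetwo} applies to $\eta_j$, and the algebraic identity $a/(1-a) \leq c^{2(n-j)}$ for $a \leq c^{2(n-j)}/(1+c^{2(n-j)})$ gives
\[
\sqrt{\lambda_s(\eta_j(U_i))/(1 - \lambda_s(\eta_j(U_i)))} \leq c^{n-j},
\]
so $\eta_j$ is Lipschitz with constant at most $\coth(\sigma/2) c^{n-j}$ from $(U_i, \dist_{U_i})$ to $(\C, \dist_s)$. Taking $j = 0$ yields (\ref{distancesmall}). For (\ref{equationlemma}) I would telescope
\[
\phi_0^n(\eta(x)) - \phi_0^n(\eta(y)) = \sum_{j=0}^{n-1} [\phi_j(\eta_j(x)) - \phi_j(\eta_j(y))]
\]
and bound each summand using the hypothesis $\|\phi_j\|_\AL \leq C_1 (n-j)^{1/\beta}$ together with the Lipschitz estimate just established, obtaining
\[
|\phi_j(\eta_j(x)) - \phi_j(\eta_j(y))| \leq C_1 (n-j)^{1/\beta} (\coth(\sigma/2))^\alpha c^{\alpha(n-j)} \dist_{U_i}(x,y)^\alpha.
\]
Summing over $j$ gives (\ref{equationlemma}) with $C_4 := C_1 (\coth(\sigma/2))^\alpha \sum_{k=1}^\infty k^{1/\beta} c^{\alpha k}$, which is finite since $c < 1$ and depends only on $C_1, \alpha, \sigma, \beta, c$.

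Part (B) then follows from (A) by a standard termwise argument. Writing $(A_0^n[g])_i(x) = \sum_{\eta \in \ZZ_0^{(i)}} a_\eta(x)$ with $a_\eta(x) := e^{\phi_0^n(\eta(x))} g(\eta(x)) > 0$, for $x, y \in U_i$ with $\dist_{U_i}(x,y) \leq \varepsilon$ part (A) and the definition of modulus of continuity yield the uniform-in-$\eta$ bound
\[
|\ln a_\eta(x) - \ln a_\eta(y)| \leq C_4 \varepsilon^\alpha + \rho_{\ln(g)}(\coth(\sigma/2) c^n \varepsilon) =: K.
\]
Exponentiating gives $e^{-K} \leq a_\eta(x)/a_\eta(y) \leq e^K$ for every $\eta$; positivity of every $a_\eta$ allows this to pass directly to the ratio of the sums, yielding $|\ln(A_0^n[g])_i(x) - \ln(A_0^n[g])_i(y)| \leq K$, which is (\ref{Aislessthan}).

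The proof is essentially bookkeeping and presents no serious obstacle, since all of the ingredients are already in place. The one point requiring care is matching constants: the geometric threshold $c^{2(n-j)}/(1 + c^{2(n-j)})$ built into Definition \ref{definitionZAB} was tuned precisely so that the Koebe-type bound from Lemma \ref{lemmakoebetwo} telescopes into the exponentially decaying quantity $\coth(\sigma/2) c^{n-j}$, which in turn beats the polynomial growth $(n-j)^{1/\beta}$ of the Hölder norm and produces a convergent series.
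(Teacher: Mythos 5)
Your proof is correct and follows essentially the same route as the paper: apply Lemma \ref{lemmakoebetwo} to the intermediate lifts $\eta_j = T_0^j\circ\eta\in\ZZ_j^{(i)}$, convert the goodness constant $c^{2(n-j)}/(1+c^{2(n-j)})$ to the Lipschitz bound $\coth(\sigma/2)c^{n-j}$, telescope the Birkhoff sum to get (\ref{equationlemma}), and pass to the ratio of positive sums for (\ref{Aislessthan}). Your explicit check that each $\eta_j$ inherits $\sigma$-local injectivity from $\zeta_i$ (via $\zeta_i = T_j^n\circ\eta_j$) is a welcome clarification of a hypothesis the paper invokes only implicitly.
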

\begin{proof} Fix $i=1,\ldots,m$ and $x,y\in U_i$. Fix $\eta\in\ZZ_{-n}^{(i)}$.
\begin{itemize}
\item[(\ref{distancesmall}):]  For each $j=0,\ldots,n-1$, $\eta$ is $\frac{c^{2(n-j)}}{1 + c^{2(n-j)}}$-good. By Lemma \ref{lemmakoebetwo}, $\eta$ is Lipschitz continuous with a corresponding constant of $\coth(\sigma/2)c^{n-j}$. Plugging in $j=0$ gives (\ref{distancesmall}).
\item[(\ref{equationlemma}):]
\begin{align*}
\max_{\eta\in \ZZ_0^{(i)}}[\phi_0^n(\eta(x)) - \phi_0^n(\eta(y))]
&\leq \sum_{j=0}^{n-1}\max_{\eta\in \ZZ_j^{(i)}}[\phi_j(\eta(x)) - \phi_j(\eta(y))]\\
&\leq \sum_{j=0}^{n-1} \|\phi_j\|_\AL \left(\max_{\eta\in \ZZ_j^{(i)}}(\dist_s(\eta(x),\eta(y)))\right)^\alpha\\
&\leq \sum_{j=0}^{n-1} C_1 (n-j)^{1/\beta} (\coth(\sigma/2) c^{n-j} \dist_s(x,y))^\alpha\\
&\leq C_4 \dist_s(x,y)^\alpha,
\end{align*}
where
\begin{equation}
\label{C6def}
C_4 := \sum_{k=1}^\infty C_1 k^{1/\beta} (\coth(\sigma/2) c^k)^\alpha < \infty.
\end{equation}
As promised, $C_4$ depends only on $C_1$, $\alpha$, $\sigma$, $\beta$, and $c$.

\item[(\ref{Aislessthan}):] For all $x,y\in U_i$,
\begin{equation*}
\frac{(A_0^n[g])_i(x)}{(A_0^n[g])_i(y)}
\leq \max_{\eta\in\ZZ_0^{(i)}}\frac{e^{\phi_0^n(\eta(x))}g(\eta(x))}{e^{\phi_0^n(\eta(y))}g(\eta(y))}
\leq \exp\left(C_4 \dist_s(x,y)^\alpha + \rho_{\ln(g)}(\coth(\sigma/2) c^n \dist_s(x,y))\right).
\end{equation*}
\end{itemize}
\end{proof}

Unlike the previous propositions, the following proposition will be used only in the proof of Theorem \ref{maintheorem}, and not in the proof of Theorem \ref{theoremcondition}. The idea is to show that when the Perron-Frobenius operator is applied, then the oscillation norm of $f$ computed relative to $g$, as in Lemma \ref{lemmanonincreasing}, actually goes down some of the time, instead of staying constant. All we need is a small negative term beyond what Lemma \ref{lemmanonincreasing} would give us. The idea behind Proposition \ref{propositionhyperbolic} is the same: (\ref{formulahyperbolic}) would be trivial without the second term of the right hand side (note that this term is actually negative).

We include a diagram (Figure \ref{figurehyperbolic}) which we feel is useful for understanding the structure of the remaining proof of Theorem \ref{maintheorem}.

\begin{figure}
\centerline{\mbox{\includegraphics[scale=.5]{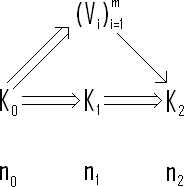}}}
\caption{The double arrows represent that one set covers another with high probability (close to $1$), in the sense that all preimages of the latter set lie in the former. The single arrow represents that one set covers another with low but positive probability, in the weaker sense that every point in the latter set contains at least one preimage in the former. The numbers represent in which universe $X_n$ the objects live. The left hand side of the diagram is analyzed in Proposition \ref{propositionhyperbolic}, and the right hand side is analyzed in Lemma \ref{lemmamultiexact}. Both sides together are analyzed in Proposition \ref{propositiontinysteps}. The diagram is then iterated to prove Theorem \ref{maintheorem}.}
\label{figurehyperbolic}
\end{figure}

\begin{proposition}
\label{propositionhyperbolic}
Fix $C_1,C_2,C_5,C_6,M,D<\infty$, $r\in\N$, $\tau < c^2 < 1$, and $\alpha,\sigma,\beta>0$; we call these ``the parameters''. Fix $m\in\N$ and $K \implies \C$. For each $j = 0,\ldots,n - 1$, assume that \textup{(\ref{Disaboundmod})} - \textup{(\ref{tauisaboundmod})} are satisfied, and that \textup{(\ref{Misaboundzero})} is satisfied for $X := T_n^j(K)$. Next, for each $i=1,\ldots,m$, fix $x_i,y_i\in U$ satisfying
\begin{align}\label{C3isabound}
\dist_h(x_i,y_i) &\leq C_5\\ \label{Kisabound}
\zeta_i(x_i),\zeta_i(y_i) &\in K.
\end{align}
Assume that $\zeta_i$ is $\sigma$-locally injective. Then for each $f,g\in\CC(\C)$ with $g>0$, and such that $\|\ln(g)\|_{\osc,T_n^0(K)}\leq C_6$, we have
\begin{equation}
\label{formulahyperbolic}
\sum_{i=1}^m\left[\frac{L_0^n[f](\zeta_i(x_i))}{L_0^n[g](\zeta_i(x_i))} - \frac{L_0^n[f](\zeta_i(y_i))}{L_0^n[g](\zeta_i(y_i))}\right]
\leq m\|f/g\|_{\osc,T_n^0(K)} + \varepsilon_m\left(\rho_{f/g}^{(T_n^0(K))}(C_5 c^n) - \|f/g\|_{\osc,T_n^0(K)}\right)
\end{equation}
where $\varepsilon_m\in\R$ depends only on $m$ and the parameters (in particular it does not depend on $n$ or on the sequences of rational maps and potential functions) in such a way that if the parameters are fixed, then there exists $m\in\N$ such that $\varepsilon_m > 0$.
\end{proposition}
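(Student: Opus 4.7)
The plan is to apply the inverse branch formalism of Definition \ref{definitionZAB} to the data of the proposition, decomposing $L_0^n[f] \circ \zeta_i = (A_0^n[f])_i + [L_0^n[f] \circ \zeta_i - (A_0^n[f])_i]$ into a ``good'' piece summed over the inverse branches $\eta \in \ZZ_0^{(i)}$ (whose images have spherical area at most $c^{2n}/(1+c^{2n})$) and a residual ``bad'' piece. Since the hypotheses of Corollary \ref{corollaryZAB} hold, applying it to the positive test function $g$ gives
\[
\sum_{i=1}^m \bigl[L_0^n[g](\zeta_i(x_i)) - (A_0^n[g])_i(x_i)\bigr] \leq C_3 e^M \inf_K L_0^n[g].
\]
Setting $\alpha_i := (A_0^n[g])_i(x_i)/L_0^n[g](\zeta_i(x_i))$ and defining $\alpha_i'$ analogously at $y_i$, since $L_0^n[g] \geq \inf_K L_0^n[g]$ on $K$ we obtain $\sum_i (1 - \alpha_i) \leq C_3 e^M$ and, by symmetry, $\sum_i (1 - \alpha_i') \leq C_3 e^M$. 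The elementary inequality $1 - \alpha \alpha' \leq (1 - \alpha) + (1 - \alpha')$ then gives $\sum_i \alpha_i \alpha_i' \geq m - 2 C_3 e^M$.

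Set $h := f/g$. Each ratio $u_i := L_0^n[f](\zeta_i(x_i))/L_0^n[g](\zeta_i(x_i))$ decomposes as a convex combination $\alpha_i a_i + (1 - \alpha_i) b_i$, where $a_i := (A_0^n[f])_i(x_i)/(A_0^n[g])_i(x_i)$ is the good-branch weighted average of $h$ and $b_i$ is the bad-branch counterpart; both lie in $[\inf_{T_n^0(K)} h, \sup_{T_n^0(K)} h]$. An analogous decomposition $v_i = \alpha_i' a_i' + (1 - \alpha_i') b_i'$ holds at $y_i$. Writing $u_i - v_i$ over the common denominator $L_0^n[g](\zeta_i(x_i)) L_0^n[g](\zeta_i(y_i))$, expanding the numerator into the four pure/cross products of good and bad factors, and bounding each of the three products that contain at least one bad factor by $\|h\|_{\osc,T_n^0(K)}$ times its $g$-product analogue, yields the pointwise estimate
\[
u_i - v_i \leq \|h\|_{\osc,T_n^0(K)} - \alpha_i \alpha_i' \bigl(\|h\|_{\osc,T_n^0(K)} - (a_i - a_i')\bigr).
\]

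The core of the argument is to bound the good-branch discrepancy $a_i - a_i'$ by essentially $\rho_{f/g}^{(T_n^0(K))}(C_5 c^n)$. By Lemma \ref{lemmahyperbolic} applied to $\zeta_i$, for every $\eta \in \ZZ_0^{(i)}$ the paired preimages satisfy $\dist_s(\eta(x_i),\eta(y_i)) \leq \coth(\sigma/2) c^n C_5$ and $|\phi_0^n(\eta(x_i)) - \phi_0^n(\eta(y_i))| \leq C_4 (\coth(\sigma/2) c^n C_5)^\alpha$; combined with $\|\ln g\|_{\osc,T_n^0(K)} \leq C_6$, this makes the weights $w_\eta^{x_i} := e^{\phi_0^n(\eta(x_i))} g(\eta(x_i))$ and $w_\eta^{y_i}$ multiplicatively within a factor $e^\delta$ of each other, where $\delta$ depends only on the parameters. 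Inserting the auxiliary weighted average $\tilde{a}_i := \sum_\eta w_\eta^{x_i} h(\eta(y_i))/\sum_\eta w_\eta^{x_i}$ and absorbing $\coth(\sigma/2)$ into the constant $C_5$ yields $|a_i - \tilde{a}_i| \leq \rho_{f/g}^{(T_n^0(K))}(C_5 c^n)$, while $|\tilde{a}_i - a_i'|$ is bounded by a parameter-dependent multiple of $\|h\|_{\osc,T_n^0(K)}$ arising from the weight-comparison error.

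Summing the pointwise bound over $i$ and substituting $\sum_i \alpha_i \alpha_i' \geq m - 2 C_3 e^M$ then gives the proposition with $\varepsilon_m$ of the form $m - 2 C_3 e^M$ modified by a uniform correction coming from the weight-comparison error; since $C_3$, $M$, and this correction depend only on the parameters, $\varepsilon_m$ becomes strictly positive once $m$ is taken sufficiently large. The main obstacle lies in the weight-comparison step of the previous paragraph: the error must be absorbed into $\varepsilon_m$ without destroying the precise linear form $m\|f/g\|_{\osc,T_n^0(K)} + \varepsilon_m\bigl(\rho_{f/g}^{(T_n^0(K))}(C_5 c^n) - \|f/g\|_{\osc,T_n^0(K)}\bigr)$ demanded by the statement, which requires careful additive bookkeeping of signs and magnitudes while exploiting that $c < 1$ renders the geometric factors controllable uniformly in $n$.
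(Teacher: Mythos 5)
Your decomposition $u_i = \alpha_i a_i + (1-\alpha_i) b_i$ (with $\alpha_i$ the relative mass of the good branches in $L_0^n[g](\zeta_i(x_i))$ and $a_i$ the good-branch $g$-weighted average of $f/g$) is a valid rewriting, and the resulting pointwise bound $u_i - v_i \leq \|h\|_{\osc} - \alpha_i\alpha_i'(\|h\|_{\osc} - (a_i-a_i'))$ together with $\sum_i\alpha_i\alpha_i' \geq m - 2C_3 e^M$ does follow from Corollary \ref{corollaryZAB}. The genuine gap is exactly where you flag it: you cannot bound $a_i - a_i'$ by $\rho_h(C_5 c^n)$ plus an error that ``absorbs into $\varepsilon_m$.'' Since the hypothesis on $g$ is only $\|\ln g\|_{\osc,T_n^0(K)} \leq C_6$ (a crude oscillation bound, \emph{not} a modulus of continuity), the two weight systems $w_\eta^{x_i} = e^{\phi_0^n(\eta(x_i))}g(\eta(x_i))$ and $w_\eta^{y_i}$ differ by a fixed multiplicative factor independent of $n$, so $|\tilde a_i - a_i'|$ is genuinely of order $C\|h\|_{\osc}$ with $C$ bounded away from zero. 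After substituting $a_i - a_i' \leq \rho_h(C_5 c^n) + C\|h\|_{\osc}$, the required inequality $\sum_i\alpha_i\alpha_i'(\|h\|_{\osc} - (a_i - a_i')) \geq \varepsilon_m(\|h\|_{\osc} - \rho_h(C_5 c^n))$ forces $\varepsilon_m = 0$: in the regime $\rho_h(C_5 c^n)$ close to $(1-C)\|h\|_{\osc}$ the left side is near $0$ while the right side is $\approx \varepsilon_m C\|h\|_{\osc}$. So the precise linear form of (\ref{formulahyperbolic}) cannot be recovered by additive bookkeeping of a multiplicative weight error, no matter how large $m$ is taken.

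The way the paper dodges this is to never compare two different weighted averages. Instead of your $\alpha_i$ and $a_i$ (computed with $x_i$-weights) versus $\alpha_i'$ and $a_i'$ (computed with $y_i$-weights), it pairs each good branch $\eta$ with the common weight $\min(\Psi_0^n(\eta(x_i)),\Psi_0^n(\eta(y_i)))$: in your notation this produces a \emph{single} coefficient $\gamma_i = \sum_\eta \min(\cdot,\cdot)$ in place of the pair $(\alpha_i, \alpha_i')$, a decomposition $u_i = \gamma_i\check a_i + (1-\gamma_i)\check b_i$ and $v_i = \gamma_i\check a_i' + (1-\gamma_i)\check b_i'$ with the \emph{same} $\gamma_i$, and averages $\check a_i, \check a_i'$ taken against the \emph{same} weights. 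The pointwise bound then collapses to the two-term estimate $u_i - v_i \leq \gamma_i(\check a_i - \check a_i') + (1-\gamma_i)\|h\|_{\osc}$, where $\check a_i - \check a_i' \leq \rho_h(C_5 c^n)$ is now genuine because the weights agree. The weight discrepancy still enters, but only once and multiplicatively, through the bound $\min(e^{\phi_0^n(\eta(x_i))},e^{\phi_0^n(\eta(y_i))}) \geq e^{-C_4 C_5^\alpha}e^{\phi_0^n(\eta(x_i))}$ from (\ref{equationlemmatwo}) combined with $\inf g/\sup g \geq e^{-C_6}$, which lower-bounds $\sum_i\gamma_i$ by $e^{-(C_6 + C_4C_5^\alpha + M)}(m - e^M C_3)$; this is the paper's $\varepsilon_m$, and it makes the required inequality an identity in the oscillation and modulus of continuity of $h$ rather than an inequality whose validity depends on their ratio. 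If you replace your pair $(\alpha_i,\alpha_i')$ with this single $\gamma_i$, the rest of your outline goes through.
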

\begin{proof}
For each $j=0,\ldots,n$, let $K_j = T_n^j(K)$.

For convenience, we now make the following notational conventions:
\begin{itemize}
\item By a subscript of $\IETA$, we mean that the sum, maximum, or minimum is to be taken over all $i=1,\ldots,m$ and over all $\eta\in\ZZ_0^{(i)}$. We illustrate this notation with two facts which we will use in the sequel:
\begin{itemize}
\item $\eta(x_i),\eta(y_i)\in K_0$ for all $\IETA$
\item (\ref{C3isabound}), (\ref{distancesmall}) and (\ref{equationlemma}) imply that
\begin{align} \label{distancesmalltwo}
\displaystyle{\max_\IETA}(\dist_s(\eta(x_i),\eta(y_i)))\leq C_5 c^n\\ \label{equationlemmatwo}
\displaystyle{\max_\IETA}(\phi_0^n(\eta(x_i)) - \phi_0^n(\eta(y_i))) \leq C_4 C_5^\alpha
\end{align}
\end{itemize}
\item We define
\[
\Psi_0^n(z):=\displaystyle{\frac{e^{\phi_0^n(z)}g(z)}{L_0^n[g](T_0^n(z))}}.
\]
Simple calculation demonstrates the following identities:
\begin{align*}
\frac{L_0^n[f](x)}{L_0^n[g](x)}
&=\sum_{z\in T_n^0(x)}\Psi_0^n(z)\frac{f(z)}{g(z)}\\
\sum_{z\in T_n^0(x)}\Psi_0^n(z)&=1
\end{align*}
\end{itemize}

We now begin calculation. Note that
\begin{align*}
\frac{L_0^n[f](x)}{L_0^n[g](x)}
&=\sum_{z\in T_n^0(x)}\Psi_0^n(z)\frac{f(z)}{g(z)}\\
&=\sup_{K_0}(f/g) + \sum_{z\in T_n^0(x)}\Psi_0^n(z)\left[\frac{f(z)}{g(z)} - \sup_{K_0}(f/g)\right]\\
&=\inf_{K_0}(f/g) + \sum_{z\in T_n^0(x)}\Psi_0^n(z)\left[\frac{f(z)}{g(z)} - \inf_{K_0}(f/g)\right]
\end{align*}
Plugging in $x = \zeta_i(x_i)$ and $x = \zeta_i(y_i)$, subtracting, and summing over all $i=1,\ldots,m$ yields
\begin{align*}
&\sum_{i=1}^m\left[\frac{L_0^n[f](\zeta_i(x_i))}{L_0^n[g](\zeta_i(x_i))} - \frac{L_0^n[f](\zeta_i(y_i))}{L_0^n[g](\zeta_i(y_i))}\right]\\
&=m\left\|\frac{f}{g}\right\|_{\osc,K_0} + \sum_{i=1}^m\left[\sum_{z\in T_n^0(\zeta_i(x_i))}\Psi_0^n(z)\left[\frac{f(z)}{g(z)} - \sup_{K_0}\left(\frac{f}{g}\right)\right] - \sum_{w\in T_n^0(\zeta_i(y_i))}\Psi_0^n(w)\left[\frac{f(w)}{g(w)} - \inf_{K_0}\left(\frac{f}{g}\right)\right]\right]\\
&\leq m\left\|\frac{f}{g}\right\|_{\osc,K_0} + \sum_\IETA\left[\min\left(\Psi_0^n(\eta(x_i)),\Psi_0^n(\eta(y_i))\right)\left(\frac{f(\eta(x_i))}{g(\eta(x_i))} - \sup_{K_0}\left(\frac{f}{g}\right) - \frac{f(\eta(y_i))}{g(\eta(y_i))} + \inf_{K_0}\left(\frac{f}{g}\right)\right)\right]\\
&\leq m\left\|\frac{f}{g}\right\|_{\osc,K_0} + \left[\sum_\IETA\min\left(\Psi_0^n(\eta(x_i)),\Psi_0^n(\eta(y_i))\right)\right]\left(\rho_{f/g}^{(K_0)}\left(\max_\IETA\dist_s(\eta(x_i),\eta(y_i))\right) - \left\|\frac{f}{g}\right\|_{\osc,K_0}\right)\\
\end{align*}
which together with (\ref{distancesmalltwo}) implies (\ref{formulahyperbolic}) as long as $\varepsilon_m$ is defined in such a way as to be a lower bound for
\[
\sum_\IETA\min\left(\Psi_0^n(\eta(x_i)),\Psi_0^n(\eta(y_i))\right).
\]
Thus, we aim at finding such a lower bound; for all $\IETA$,

\begin{align*}
\Psi_0^n(\eta(x_i))
&\geq \frac{\inf_{K_0}(g)}{\sup_{K_0}(g)}\frac{1}{\sup_{K_n}(L_0^n[\one])}e^{\phi_0^n(\eta(x_i))}\\
\Psi_0^n(\eta(y_i))
&\geq \frac{\inf_{K_0}(g)}{\sup_{K_0}(g)}\frac{1}{\sup_{K_n}(L_0^n[\one])}e^{\phi_0^n(\eta(y_i))}
\end{align*}
We take the minimum of the two equations, and sum over all $\IETA$:

\begin{equation}
\label{whereweleftoff}
\sum_\IETA\min\left(\Psi_0^n(\eta(x_i)),\Psi_0^n(\eta(y_i))\right)
\geq \frac{e^{-C_6}}{\sup_{K_n}(L_0^n[\one])}\sum_\IETA \min\left(e^{\phi_0^n(\eta(x_i))},e^{\phi_0^n(\eta(y_i))}\right)
\end{equation}
Raising $e$ to both sides of (\ref{equationlemmatwo}), solving for $e^{\phi_0^n(\eta(y_i))}$, and taking the minimum with the inequality $e^{\phi_0^n(\eta(x_i))} \geq e^{-C_4 C_5^\alpha} e^{\phi_0^n(\eta(x_i))}$ yields that for all $\IETA$,
\[
\min\left(e^{\phi_0^n(\eta(x_i))},e^{\phi_0^n(\eta(y_i))}\right)
\geq e^{-C_4 C_5^\alpha}e^{\phi_0^n(\eta(x_i))}.
\]
Combining with (\ref{whereweleftoff}) and using (\ref{Adef}) to simplify,
\begin{equation}
\label{leavingoffagain}
\sum_\IETA\min\left(\Psi_0^n(\eta(x_i)),\Psi_0^n(\eta(y_i))\right)
\geq \frac{e^{-(C_6 + C_4 C_5^\alpha)}}{\sup_{K_n}(L_0^n[\one])}\sum_{i=1}^m(A_0^n[\one])_i(x_i).
\end{equation}
By Corollary \ref{corollaryZAB},
\begin{align*}
\sum_{i=1}^m(A_0^n[\one])_i(x_i)
&\geq \sum_{i=1}^m L_0^n[\one](\zeta_i(x_i)) - e^M C_3 \inf_{K_n}(L_0^n[\one])\\
&\geq (m - e^M C_3)\inf_{K_n}(L_0^n[\one])
\end{align*}
Combining with (\ref{leavingoffagain}) and (\ref{Misaboundzero}),
\[
\sum_\IETA\min\left(\Psi_0^n(\eta(x_i)),\Psi_0^n(\eta(y_i))\right)
\geq e^{-(C_6 + C_4 C_5^\alpha + M)}\left(m -  e^M C_3\right)
\]
The right hand side we call $\varepsilon_m$; as promised, it depends only on $m$ and the parameters. To finish the proof, suppose that the parameters are fixed; we wish to find $m\in\N$ so that $\varepsilon_m>0$. Let $m=\lceil e^M C_3\rceil + 1$; we have $\varepsilon_m \geq e^{-(C_6 + C_4 C_5^\alpha + M)}> 0$.
\end{proof}
\end{section}

\begin{section}{Proof of Theorem \ref{maintheorem}}\label{sectionconvergence}
In this section, we fix $\alpha,\beta > 0$, a holomorphic random dynamical system $(T,\Omega,\basemeasure,\theta)$ on $\C$ with a potential function $\phi:\Omega\rightarrow\CC(\C)$, and $X\implies\C$ which satisfy the hypotheses of Theorem \ref{maintheorem}, i.e. $(T,\Omega,\basemeasure,\theta)$ is nonsingualar, $X$ has the bounded distortion property, and (\ref{Disaboundintegral}) - (\ref{tauisaboundintegral}) are satisfied.

The idea is to use Theorem \ref{theoremexact} in combination with Corollary \ref{corollarysixquantitative} to show that with positive probability, there exist disjoint sets $U_i$ with relatively compact subsets whose iterates cover all of $X\butnot B_s(\SS_0,\kappa)$ within a bounded number of steps. Since there are infinitely many chances, it must happen sometime. The contribution in oscillation from these particular inverse images is slightly lower than expected, so the oscillation as a whole must go down.

\begin{lemma}
\label{lemmamultiexact}
Fix $C_5,\varepsilon_1>0$. Then there exists $\kappa > 0$ such that
\begin{equation}
\label{kappaadmissible}
\pr(B_s(\SS_0,\kappa)\Kin \FF_0) \geq 3/4,
\end{equation}
and such that for each $m\in\N$ there exists $N\in\N$ such that the following event occurs with probability at least $1 - \varepsilon_1$:
\begin{quote}
\begin{event}
\label{eventmultiexact}
There exist a disjoint collection of open disks $(U_i)_{i=1}^m$ and relatively compact subsets $V_i\Kin U_i$ such that
\begin{align}\label{multiexactstart}
\diam_{U_i}(V_i)&\leq C_5\\ \label{multiexactmiddle}
T_0^N(V_i)&\supseteq \C\butnot B_s(\SS_N,\kappa)\\ \label{multiexactend}
T_0^N(B_s(\SS_0,\kappa))&\implies B_s(\SS_N,\kappa)
\end{align}
\end{event}
\end{quote}
\end{lemma}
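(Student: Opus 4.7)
The plan is to apply Event \ref{eventexact} at the shifted time $\ell$ to produce a point $x^{*}\in\JJ_\ell$ whose $\delta$-ball eventually covers $\C\butnot B_s(\SS_n,\kappa)$ under $T_\ell^n$, and then to pull a small neighborhood of $x^{*}$ back under $T_0^\ell$ so that the $m$ separated preimages supplied by Corollary \ref{corollarysixquantitative} land in $m$ disjoint disks. All random thresholds are then promoted to deterministic ones via continuity of measures.

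First, Proposition \ref{propositionnonsingular} gives $\SS_0\implies\FF_0$ almost surely, so continuity of measures furnishes $\kappa>0$ with $\pr(B_s(\SS_0,\kappa)\Kin\FF_0)\geq 3/4$; on this event also $B_s(\SS_0,\kappa)\cap\JJ_0=\emptyset$. For the prescribed $m$, applying Corollary \ref{corollarysixquantitative} with multiplicity $m$, radius $\kappa$, and tolerance $\varepsilon_1/5$ yields deterministic $\ell\in\N$ and $\delta_2>0$ such that, with probability $\geq 1-\varepsilon_1/5$, every $p\in\C\butnot B_s(\SS_\ell,\kappa)$ admits $m$ distinct $\delta_2$-separated preimages in $T_\ell^0(p)$. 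Then, for small deterministic $\delta<\delta'$ to be specified below, Event \ref{eventexact} applied at time $\ell$ with pair $(\kappa,\delta)$ almost surely yields random $N_0(\omega)$ and $x^{*}(\omega)\in\JJ_\ell$ satisfying $T_\ell^n(B_s(x^{*},\delta))\supseteq\C\butnot B_s(\SS_n,\kappa)$ for all $n\geq\ell+N_0$; continuity of measures produces a deterministic $N_1$ with $\pr(N_0\leq N_1)\geq 1-\varepsilon_1/5$, and Lemma \ref{lemmafatoucompact} similarly yields a deterministic $N_2$ beyond which (\ref{multiexactend}) holds with probability $\geq 1-\varepsilon_1/5$. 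Set $N:=\max(\ell+N_1,N_2)$.

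On the intersection of the events listed, $x^{*}\in\JJ_\ell\subseteq\C\butnot B_s(\SS_\ell,\kappa)$, so there are $m$ distinct $\delta_2$-separated preimages $z_1,\ldots,z_m$ of $x^{*}$ under $T_0^\ell$. I define $U_i$ as the connected component of $(T_0^\ell)^{-1}(B_s(x^{*},\delta'))$ containing $z_i$, and $V_i$ as the connected component of $(T_0^\ell)^{-1}(B_s(x^{*},\delta))$ containing $z_i$; then $V_i\Kin U_i$ automatically. For $\delta'$ sufficiently small, each $U_i$ lies in $B_s(z_i,\delta_2/3)$, forcing pairwise disjointness and that $z_i$ is the unique preimage of $x^{*}$ in $U_i$; then $T_0^\ell|_{U_i}\colon U_i\to B_s(x^{*},\delta')$ is a proper holomorphic map of degree $k_i:=\mult_{T_0^\ell}(z_i)$ branched only at $z_i$ (shrinking $\delta'$ further if necessary to exclude any other critical values of $T_0^\ell$), so by Riemann-Hurwitz each $U_i$ is a topological disk, conformally equivalent to $\B$ via coordinates in which $T_0^\ell|_{U_i}$ is $z\mapsto z^{k_i}$. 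In these coordinates, $V_i$ is a Euclidean ball of radius comparable to $(\delta/\delta')^{1/k_i}$ about the origin of $\B$, so $\diam_{U_i}(V_i)\leq C_5$ is ensured by the bound $\delta/\delta'\leq\tanh(C_5/2)^{k_i}$.

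Two further applications of continuity of measures complete the construction: a deterministic $K^{*}$ with $\pr(\deg(T_0^\ell)\leq K^{*})\geq 1-\varepsilon_1/5$ (so $k_i\leq K^{*}$), and a deterministic $\delta'>0$ small enough that the disjointness condition on $(U_i)$ holds with probability $\geq 1-\varepsilon_1/5$. Then set $\delta:=\delta'\tanh(C_5/2)^{K^{*}}$; on the intersection of the good events (of probability $\geq 1-\varepsilon_1$) the construction gives (\ref{multiexactstart}) and (\ref{multiexactend}), while (\ref{multiexactmiddle}) is $T_0^N(V_i)=T_\ell^N(B_s(x^{*},\delta))\supseteq\C\butnot B_s(\SS_N,\kappa)$, immediate from Event \ref{eventexact}. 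The main obstacle is the circular dependence between the deterministic $\delta'$ and the random point $x^{*}$ (which itself depends on $\delta$); I would resolve this by applying Event \ref{eventexact} along a countable sequence $(2^{-k})_k$ of values of $\delta$ and invoking the monotonicity that any valid $(N_0,x^{*})$ for smaller $\delta$ remains valid for larger $\delta$, decoupling the two choices so that each continuity-of-measures threshold can be selected in sequence.
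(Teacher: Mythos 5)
Your outline follows the paper's own proof through the choice of $\kappa$, the application of Corollary \ref{corollarysixquantitative} at the shifted time $\ell$ to get $m$ separated preimages, and the invocation of Theorem \ref{theoremexact} plus Lemma \ref{lemmafatoucompact} with continuity-of-measures promotions. It diverges, however, at the construction of $U_i$ and $V_i$, and that divergence opens a genuine gap.

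The paper takes $U_i := B_s(x_i,\delta_3/2)$ and $V_i := \cl{B_s}(x_i,\delta_2)$, \emph{round spherical balls} centered at a $\delta_3$-separated selection $(x_i)_i$ of preimages of $p$ under $T_0^\ell$, with $\delta_2$ determined deterministically (purely arithmetically) by $\delta_3$ and $C_5$ so that $\diam_{U_i}(V_i) = C_5$. Disjointness is a triviality from the $\delta_3$-separation, relative compactness is built in, and the covering $T_0^\ell(V_i) \supseteq \cl{B_s}(p,\delta_1)$ is extracted from Lemma \ref{lemmacomplex} once $\sup(T_0^\ell)_* \leq H$ holds (promoted via continuity of measures) --- no conformal surgery and, crucially, \emph{no reference to where the branch points of $T_0^\ell$ sit}. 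You instead define $U_i$ and $V_i$ as connected components of $(T_0^\ell)^{-1}(B_s(x^*,\delta'))$ and $(T_0^\ell)^{-1}(B_s(x^*,\delta))$, and then control $\diam_{U_i}(V_i)$ through a Riemann--Hurwitz picture in which $T_0^\ell|_{U_i}$ is modeled by $z\mapsto z^{k_i}$. To make that model valid (indeed, even to make $U_i$ a disk at all; Riemann--Hurwitz fails if $U_i$ carries more than one critical point) you need $B_s(x^*,\delta')$ to contain no branch point of $T_0^\ell$ other than possibly $x^*$ itself, and you acknowledge this with ``shrinking $\delta'$ further if necessary.''

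That further shrinking is where the proof breaks: the quantity $\dist_s\bigl(x^*,\BP_{T_0^\ell}\setminus\{x^*\}\bigr)$ is a random variable with no deterministic lower bound, since $x^*$ is the point handed to you by the exactness theorem (a cluster point from Lemma \ref{lemmauniformnonempty}) and nothing in that construction steers it away from the branch locus --- which itself is dense near the Julia set in general --- while $\BP_{T_0^\ell}$ varies with $\omega$. To pick a deterministic $\delta'$ by continuity of measures you therefore need $x^*$ to be fixed \emph{before} you choose $\delta'$. But $x^*$ is produced by Event \ref{eventexact} only after you commit to $\delta$, and your Riemann--Hurwitz bound forces $\delta = \delta'\tanh(C_5/2)^{K^*} < \delta'$. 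Your proposed escape via a countable sequence $(2^{-k})_k$ and monotonicity does not break this cycle. The monotonicity is real, but it points the wrong way: an $(N_0,x^*)$ valid for small $\delta$ remains valid for larger $\delta$, so each level $k$ produces its own $x^{*,(k)}$ and there is no reason the family $\bigl(\dist_s(x^{*,(k)},\BP_{T_0^\ell}\setminus\{x^{*,(k)}\})\bigr)_k$ is bounded below in probability as $k\to\infty$; indeed $x^{*,(k)}$ could drift into a branch point as $k$ grows. The paper's argument is immune to all of this precisely because its $U_i$ and $V_i$ never see the branch locus; the covering estimate is a global Lipschitz statement (Lemma \ref{lemmacomplex}) rather than a local conformal one.

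You would either need to reproduce the paper's round-ball construction, or else find an additional argument showing that $x^*$ can be chosen at a uniformly positive distance from the critical values of $T_0^\ell$ with high probability --- which, since the critical values may lie in the Julia set where exactness must be witnessed, does not appear available in this generality.
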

\begin{proof}
Since $(T,\Omega,\basemeasure,\theta)$ is nonsingular, there exists $\kappa > 0$ such that
\begin{equation}
\label{K4containsjulia}
\pr\left(\dist_s(\JJ_0,\SS_0) > \kappa\right) \geq \max(1 - \varepsilon_1/5,3/4).
\end{equation}
Of course, this implies (\ref{kappaadmissible}).

Suppose $m\in\N$. Corollary \ref{corollarysixquantitative} guarantees the existence of $\ell\in\N$ and $\delta_3 > 0$ such that

\begin{equation}
\label{probabilitydeltaone}
\pr\left(\diam_m(T_\ell^0(p))\geq\delta_3\all p\in \C\butnot B_s(\SS_\ell,\kappa)\right) \geq 1 - \varepsilon_1/5.
\end{equation}
Let $\delta_2>0$ be such that $\diam_{B_s(0,\delta_3/2)}(B_s(0,\delta_2))=C_5$. An exact formula can be given for $\delta_2$ in terms of $\delta_3$ and $C_5$, but it is irrelevant here.

Let $H < \infty$ be large enough so that 
\begin{equation}
\label{czero}
\pr\left(\sup(T_0^\ell)_*\leq H\right) \geq 1 - \varepsilon_1/5.
\end{equation}
Let $\delta_1 > 0$ be given by Lemma \ref{lemmacomplex}. By Theorem \ref{theoremexact}, for all sufficiently large $n\in\N$
\begin{equation}
\label{equationexample}
\pr\left(\exists p\in \JJ_0\text{ such that }T_0^n(\cl{B_s}(p,\delta_1))\supseteq \C\butnot B_s(\SS_n,\kappa)\right) \geq 1 - \varepsilon_1/5.
\end{equation}
Without loss of generality, suppose that $n$ is also large enough so that
\begin{equation}
\label{probabilitylast}
\pr\left(T_0^{\ell + n}(\cl{B_s}(\SS_0,\kappa))\implies \cl{B_s}(\SS_{\ell + n},\kappa)\right) \geq 1 - \varepsilon_1/5
\end{equation}
this is valid by Lemma \ref{lemmafatoucompact}.

Let $N = \ell + n$.

Now, fix $\omega\in\Omega$ such that:
\begin{enumerate}[A)]
\item $\dist_s(\JJ_\ell,\SS_\ell) > \kappa$ \label{B}
\item $T_0^N(B_s(\SS_0,\kappa))\implies B_s(\SS_N,\kappa)$ \label{C}
\item $\diam_m(T_\ell^0(p))\geq\delta_3\all p\in \C\butnot B_s(\SS_\ell,\kappa)$ \label{D}
\item $\sup(T_0^\ell)_* \leq H$ \label{E}
\item There exists $p\in \JJ_\ell$ such that $T_\ell^N(\cl{B_s}(p,\delta_1))\supseteq \C\butnot B_s(\SS_N,\kappa)$ \label{F}
\end{enumerate}
By (\ref{K4containsjulia}) - (\ref{probabilitylast}), the set of all such $\omega$ is of measure at least $1 - \varepsilon_1$. We claim that $\omega$ satisfies Event \ref{eventmultiexact}.

Let $p\in \JJ_\ell$ be as in event (\ref{F}).

Fix $x\in T_\ell^0(p)$. For all $q\in \cl{B_s}(p,\delta_1)$, event (\ref{E}) and Lemma \ref{lemmacomplex} imply that $\dist_s(x,\Phi_{p,q}(x))\leq\delta_2$, and so $q\in T_0^\ell(\cl{B_s}(x,\delta_2))$. Thus we have $T_0^\ell(\cl{B_s}(x,\delta_2))\supseteq \cl{B_s}(p,\delta_1)$. Applying $T_0^n$ to both sides yields $T_0^N(\cl{B_s}(x,\delta_2))\supseteq \C\butnot B_s(\SS_0,\kappa)$.

Event \ref{B} implies that $p\in \C\butnot B_s(\SS_0,\kappa)$; thus event (\ref{D}) implies that $\diam_m(T_\ell^0(p))\geq\delta_3$. Let $(x_i)_{i=1}^m$ be a $\delta_3$-separated subset of $T_\ell^0(p)$.

Next we define
\begin{align*}
V_i &:= \cl{B_s}(x_i,\delta_2)\\
U_i &:= B_s(x_i,\delta_3/2)
\end{align*}
The fact that $(x_i)_i$ is $\delta_3$-separated implies that $(U_i)_i$ is disjoint. The definition of $\delta_2$ implies that $\diam_{U_i}(V_i)=C_5$. Finally, the fact that $x_i\in T_\ell^0(p)$ implies that $T_0^N(V_i)\supseteq \C\butnot B_s(\SS_N,\kappa)$. Thus we are done.
\end{proof}

We are almost ready to prove Theorem \ref{maintheorem}. We start by tiny steps: more often than not, the oscillation goes down a small amount. We combine the results of Proposition \ref{propositionhyperbolic} and Lemma \ref{lemmamultiexact} into the following proposition:

\begin{proposition}
\label{propositiontinysteps}
Fix $C_6<\infty$. Then there exist $\kappa,\varepsilon_2 > 0$ such that \textup{(\ref{kappaadmissible})} is satisfied, and such that the following event is almost certain to occur:
\begin{quote}
\begin{event}
\label{eventtinysteps}
Suppose that $\sigma$ is a modulus of continuity, and suppose that $B_s(\SS_0,\kappa)\Kin \FF_0$. Then there exists $n_2\in\N$ such that (\ref{exceptionalfatou}) is satisfied, and such that for all $f,g\in\CC(\C)$ with $\rho_{f/g}^{(X\butnot B_s(\SS_0,\kappa))}\leq\gamma$, $g>0$, and $\|\ln(g)\|_{\osc,X\butnot B_s(\SS_0,\kappa)}\leq C_6$,
\begin{equation}
\label{tinysteps}
\left\|\frac{L_0^{n_2}[f]}{L_0^{n_2}[g]}\right\|_{\osc,X\butnot B_s(\SS_{n_2},\kappa)}\leq(1 - \varepsilon_2)\gamma(\pi/2).
\end{equation}
\end{event}
\end{quote}
\end{proposition}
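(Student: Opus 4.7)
The plan is to combine the geometric covering from Lemma \ref{lemmamultiexact} with the strict--contraction estimate of Proposition \ref{propositionhyperbolic}, using Lemma \ref{lemmamotivation} to control the random parameters.

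First, for a target failure probability $\varepsilon > 0$ (to be sent to $0$ at the end), I would invoke Lemma \ref{lemmamotivation} to fix deterministic constants $D, C_1, C_2 < \infty$ and $\tau < 1$ such that (\ref{Disaboundmod})--(\ref{tauisaboundmod}) hold simultaneously for all $j \le n-1$ with probability at least $1 - \varepsilon/3$. Pick any $c \in (\sqrt{\tau}, 1)$. Together with $M$ from the bounded-distortion property (\ref{Misaboundzero}) on $X$, and the given $C_6$, these fix all the parameters in Proposition \ref{propositionhyperbolic} except $C_5$ and $m$. Set $C_5 = 1$ (say); by the explicit form $\varepsilon_m = e^{-(C_6 + C_4 C_5^\alpha + M)}(m - e^M C_3)$ from the proof of Proposition \ref{propositionhyperbolic}, choose $m$ large enough that $\varepsilon_m > 0$.

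Second, apply Lemma \ref{lemmamultiexact} with this $m$, $C_5$, and $\varepsilon_1 := \varepsilon/3$ to obtain $\kappa > 0$ satisfying (\ref{kappaadmissible}) and some $N = N(\varepsilon) \in \N$ such that Event \ref{eventmultiexact} occurs with probability at least $1 - \varepsilon/3$. On the intersection of the relevant events (probability $\ge 1 - \varepsilon$), we have disjoint disks $(U_i)_{i=1}^m$, subsets $V_i \Kin U_i$ with $\diam_{U_i}(V_i) \le C_5$, and the covering $T_0^N(V_i) \supseteq \C \butnot B_s(\SS_N, \kappa)$. The forward invariance $T(B) \Kin B$ of $B := \C \butnot X$ gives backward invariance $T_N^j(X) \subseteq X$, and combined with (\ref{multiexactend}) this yields $T_N^j(K) \subseteq X \butnot B_s(\SS_j, \kappa)$ where $K := X \butnot B_s(\SS_N, \kappa)$, so the hypothesis $\|\ln(g)\|_{\osc, T_N^j(K)} \le C_6$ of Proposition \ref{propositionhyperbolic} follows from the given bound on $X \butnot B_s(\SS_0, \kappa)$.

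Third, for each pair $p, q \in K$, use (\ref{multiexactmiddle}) to pick $x_i, y_i \in V_i$ with $T_0^N(x_i) = p$, $T_0^N(y_i) = q$, so $\dist_{U_i}(x_i, y_i) \le C_5$. With the natural choice $\zeta_i := T_0^N|_{U_i}$, Proposition \ref{propositionhyperbolic} applied at $n = N$ gives, after summing over $i$ (with LHS equal to $m$ times a single difference) and dividing by $m$,
\[
\frac{L_0^N[f](p)}{L_0^N[g](p)} - \frac{L_0^N[f](q)}{L_0^N[g](q)} \le \Bigl(1 - \tfrac{\varepsilon_m}{m}\Bigr) \|f/g\|_{\osc, K} + \tfrac{\varepsilon_m}{m} \rho_{f/g}(C_5 c^N).
\]
Taking the supremum over $p, q$, bounding $\|f/g\|_{\osc, K} \le \gamma(\pi/2)$ and $\rho_{f/g}(C_5 c^N) \le \gamma(C_5 c^N)$, then choosing $N$ (and hence $n_2 := N$) large enough so that $\gamma(C_5 c^N) \le \gamma(\pi/2)/2$ (possible since $\gamma(t) \to 0$; this is where $n_2$ depends on $\gamma$), gives (\ref{tinysteps}) with $\varepsilon_2 := \varepsilon_m/(2m)$. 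Almost certainty follows by sending $\varepsilon \to 0$.

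The main obstacle is realizing the inverse-branch data so that $\zeta_i$ is $\sigma$-locally injective on $U_i$ (as Lemma \ref{lemmahyperbolic} requires within Proposition \ref{propositionhyperbolic}) while still having $\zeta_i(V_i) \supseteq K$. The naive $\zeta_i = T_0^N|_{U_i}$ fails literal local injectivity because the covering property forces $V_i$ to contain many ramification points of $T_0^N$. The resolution almost certainly uses the two-stage factorization $T_0^N = T_\ell^N \circ T_0^\ell$ built into the proof of Lemma \ref{lemmamultiexact}: $T_0^\ell|_{U_i}$ is (uniformly) locally injective on a neighborhood of $V_i$ in the inverse-branch formalism's sense, with $T_\ell^N$ supplying the topological exactness that produces the covering of $K$. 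Once this choice of $\zeta_i$ is made and the $\sigma$ is controlled by the $\delta_3, \delta_2$ from the proof of Lemma \ref{lemmamultiexact}, the remaining steps are the bookkeeping above.
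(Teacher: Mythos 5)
Your proposal has the right ingredients — Lemma \ref{lemmamotivation}, Lemma \ref{lemmamultiexact}, and Proposition \ref{propositionhyperbolic} — but the key step of instantiating the $\zeta_i$ is not resolved, and your proposed fix does not work. In Proposition \ref{propositionhyperbolic} the map $\zeta_i$ must land in the universe $X_n$ in which $K$ lives, so that the hypothesis $\zeta_i(x_i),\zeta_i(y_i)\in K$ and the expression $L_0^n[f](\zeta_i(x_i))$ make sense. If you apply the proposition at $n=N$ you therefore need $\zeta_i:U_i\rightarrow X_N$, and $T_0^\ell|_{U_i}$ lands in $X_\ell$, not $X_N$; if you instead apply it at $n=\ell$ you lose the covering of $K$ at time $N$, since nothing forces $\zeta_i(x_i),\zeta_i(y_i)$ at time $\ell$ to be preimages of your chosen $p,q$. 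So neither $T_0^N|_{U_i}$ (not locally injective, as you note) nor $T_0^\ell|_{U_i}$ (wrong universe) works. The paper's actual resolution is different and simpler: place $U_i,V_i$ at an \emph{intermediate} time $n_1$ (apply Lemma \ref{lemmamultiexact} to $\theta^{n_1}\omega$), take $\zeta_i$ to be the \emph{identity} map on $U_i$ (trivially injective, so $\sigma=\infty$, $r=1$), read (\ref{multiexactmiddle}) as $T_{n_1}^{n_2}(V_i)\supseteq K_2$ with $n_2=n_1+N_2$, pick $x_i,y_i\in V_i$ with $T_{n_1}^{n_2}(x_i)=p$, $T_{n_1}^{n_2}(y_i)=q$, and apply Proposition \ref{propositionhyperbolic} only up to time $n_1$ — so $\zeta_i(x_i)=x_i,\zeta_i(y_i)=y_i\in K_1=X\butnot B_s(\SS_{n_1},\kappa)$, not in $K_2$. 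It then pushes forward from $n_1$ to $n_2$ via the weighted-average identity
\[
\frac{L_0^{n_2}[f](p)}{L_0^{n_2}[g](p)}=\sum_{x\in T_{n_2}^{n_1}(p)}\Psi_{n_1}^{n_2}(x)\frac{L_0^{n_1}[f](x)}{L_0^{n_1}[g](x)},
\]
harvesting the contraction only from the $m$ identified preimages $x_i$. This charges the improvement by a factor $\inf\Psi_{n_1}^{n_2}\geq e^{-(N_2 C_1(\pi/2)^\alpha+M+C_6)}/D^{N_2}$, which is exactly why $\varepsilon_2=\tfrac12\varepsilon_m e^{-(N_2 C_1(\pi/2)^\alpha+M+C_6)}/D^{N_2}$ rather than your $\varepsilon_m/(2m)$; your value is what you would get if $\zeta_i=T_0^N|_{U_i}$ were admissible, and it is not.

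There is a second gap in the $\gamma$-dependence. You propose to re-choose $N$ after fixing $\gamma$ so that $\gamma(C_5 c^N)\leq\gamma(\pi/2)/2$, but the $N$ from Lemma \ref{lemmamultiexact} and the probability-$\geq 1-\varepsilon$ event you assemble are tied to that $N$. Letting $N$ depend on $\gamma$ turns the argument into an uncountable union over moduli of continuity, and "sending $\varepsilon\rightarrow 0$" at the end does not cure that. The paper instead upgrades the positive-probability statements to an almost-sure recurrence statement via Lemma \ref{lemmapoincare}: with probability one there are infinitely many $n$ at which events (\ref{Ci})--(\ref{Ciii}) hold, and then, with $\omega$ already fixed, for any given $\gamma$ one selects $n_1$ from that infinite set with $n_1\geq N_1$, where $N_1$ is chosen so that $\gamma(c^{N_1+N_2})\leq\tfrac12\gamma(\pi/2)$ and $T_0^{n_1}(B_s(\SS_0,\kappa))\implies B_s(\SS_{n_1},\kappa)$. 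This Poincar\'e-recurrence step is a genuine ingredient your sketch omits.
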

\begin{proof}
The first half of the proof is an exercise in quantifier logic:

Let $C_5 = 1$ and let $\varepsilon_1 = 1/4$. Lemma \ref{lemmamultiexact} guarantees the existence of $\kappa > 0$. Lemma \ref{lemmamotivation} guarantees the existence of $D,C_1,C_2 < \infty$, and $\tau < 1$.

Let $M < \infty$ be the constant guaranteed by the fact that $X$ has the bounded distortion property, let $r=1$, and let $\sigma = \infty$. Let $c = \tau^{1/3}$, so that $\tau < c^2 < 1$. Since $\alpha$, $\beta$, and $C_6$ were given by hypothesis, this completes the specification of the parameters. Thus, the final clause of Proposition \ref{propositionhyperbolic} guarantees the existence of $m\in\N$ such that $\varepsilon_m > 0$.

Lemma \ref{lemmamultiexact} guarantees the existence of $N_2\in\N$.

Let
\begin{equation*}
\varepsilon_2 = \frac{1}{2}\varepsilon_m\frac{e^{-(N_2 C_1 (\pi/2)^\alpha + M + C_6)}}{D^{N_2}} > 0.
\end{equation*}

We now consider the following events:

\begin{enumerate}[A)]
\item (\ref{Misaboundzero}) for all $j\in\N$, with $K := X$ \label{tinystepsenumeratestart}
\item $T_j^n(B)\implies B$ for all $j,n\in\N$ with $j\leq n$ \label{tinystepsenumeratemiddle}
\item For infinitely many $n\in\N$, \label{tinystepsenumerateend}
\begin{enumerate}[i)]
\item Event \ref{eventmultiexact} occurs for $\theta^n\omega$ \label{Ci}
\item (\ref{Disaboundmod}) - (\ref{tauisaboundmod}) hold for $j = 0,\ldots,n - 1$ \label{Cii}
\item \label{Ciii}
\begin{equation}
\label{exceptionalfatou}
B_s(\SS_{n + N_2},\kappa)\Kin \FF_{n + N_2}.
\end{equation}
\end{enumerate}
\end{enumerate}

From the fact that $X$ is backward invariant and has the bounded distortion property, it follows that events (\ref{tinystepsenumeratestart}) - (\ref{tinystepsenumeratemiddle}) have full measure. By Lemmas \ref{lemmamultiexact} and \ref{lemmamotivation}, (\ref{Ci}) - (\ref{Ciii}) have probability at least $1/4 > 0$; by Lemma \ref{lemmapoincare}, event (\ref{tinystepsenumerateend}) has full measure.

Next, we fix a sequence $(T_j)_{j\in\N}$ for which events (\ref{tinystepsenumeratestart}) - (\ref{tinystepsenumerateend}) occur. Our goal is to show that Event \ref{eventtinysteps} also occurs. To this end, we fix $\gamma$ a modulus of continuity, and suppose that $B_s(\SS_0,\kappa)\Kin \FF_0$. Let $N_1\in\N$ be large enough so that
\begin{align}
\gamma(c^{N_1 + N_2}) &\leq \frac{1}{2}\gamma(\pi/2)\\ \label{K1K0}
T_0^n(B_s(\SS_0,\kappa)) &\implies B_s(\SS_n,\kappa) \all n\geq N_1
\end{align}
Such $N_1$ exists by Lemma \ref{lemmafatoucompact}.

By event (\ref{tinystepsenumerateend}), there exists $n\geq N_1$ such that (\ref{Ci}) - (\ref{Ciii}) hold.

Fix $f,g\in\CC(\C)$ with $\rho_{f/g}^{(X\butnot B_s(\SS_0,\kappa))}\leq\gamma$, $g>0$, and $\|\ln(g)\|_{\osc,X\butnot B_s(\SS_0,\kappa)}\leq C_6$. We are done if (\ref{tinysteps}) holds.

Let $n_0 = 0$, $n_1 = n$, and $n_2 = n + N_2$. Let $K_i = X\butnot B_s(\SS_{n_i},\kappa)$ for $i=0,1,2$. Now (\ref{K1K0}) and (\ref{multiexactend}) can be rewritten:
\begin{align}\label{Ksubset01}
T_{n_1}^0(K_1)\implies K_0\\ \label{Ksubset12}
T_{n_2}^{n_1}(K_2)\implies K_1
\end{align}
respectively. Combining with Lemma \ref{lemmanonincreasing} gives
\begin{align}
\left\|\frac{L_0^{n_1}[f]}{L_0^{n_1}[g]}\right\|_{\osc,K_1}
&\leq \left\|\frac{f}{g}\right\|_{\osc,K_0}\\
\left\|\frac{L_0^{n_2}[f]}{L_0^{n_2}[g]}\right\|_{\osc,K_2}
&\leq \left\|\frac{L_0^{n_1}[f]}{L_0^{n_1}[g]}\right\|_{\osc,K_1}
\end{align}
We also rewrite (\ref{tinysteps}); to complete the proof it suffices to show
\begin{equation}
\label{enoughtoshow}
\left\|\frac{L_0^{n_2}[f]}{L_0^{n_2}[g]}\right\|_{\osc,K_2}\leq(1 - \varepsilon_2)\gamma(\pi/2).
\end{equation}
Fix $p,q\in K_2$. Since Event \ref{eventmultiexact} is satisfied for $\theta^n\omega$, we have that there exist a disjoint collection $(U_i)_{i=1}^m$ of open disks, and relatively compact subsets $V_i\Kin U_i$ satisfying (\ref{multiexactstart}) and (\ref{multiexactmiddle}); rewriting (\ref{multiexactmiddle}) in terms of $\omega$ yields
\begin{equation}
\label{multiexactmiddlenew}
T_{n_1}^{n_2}(V_i)\supseteq K_2.
\end{equation}
For each $i=1,\ldots,m$ let $\zeta_i:U_i\rightarrow\C$ be the identity map. By (\ref{multiexactmiddlenew}), for each $i=1,\ldots,m$ there exist $x_i,y_i\in V_i$ with $T_{n_1}^{n_2}(\zeta_i(x_i)) = p$ and $T_{n_1}^{n_2}(\zeta_i(y_i)) = q$.

We verify the hypotheses of Proposition \ref{propositionhyperbolic}:
\begin{itemize}
\item $n_1,m\in\N$
\item $\{\zeta_1,\ldots,\zeta_m\}$ has multiplicity one (since the $U_i$s are disjoint)
\item $(T_j)_{j=0}^{n-1}$ is a finite sequence of rational maps
\item $(\phi_j)_{j=0}^{n-1}$ is a finite sequence of potential functions
\item $K_1\implies\C$
\item (\ref{Disaboundmod}) - (\ref{tauisaboundmod}) hold for all $j = 0,\ldots,n_1 - 1$
\item For each $i=1,\ldots,m$, $x_i,y_i\in U_i$
\item For each $i=1,\ldots,m$, $\zeta_i$ is injective; in particular, it is $\sigma$-locally injective since $\sigma = \infty$
\item (\ref{multiexactstart}) implies (\ref{C3isabound})
\item (\ref{Ksubset12}) implies (\ref{Kisabound})
\item $f,g\in\CC(\C)$ with $g>0$
\item (\ref{Ksubset01}) together with the fact that $\|\ln(g)\|_{\osc,K_0}\leq C_6$ implies $\|\ln(g)\|_{\osc,T_{n_1}^0(K)}\leq C_6$
\end{itemize}

Thus by Proposition \ref{propositionhyperbolic}, we have (\ref{formulahyperbolic}) for $n = n_1$ and $K = K_1$.

Having discharged the quantifiers, we move on to the calculation:

We write
\[
\Psi_{n_1}^{n_2}(x) = \frac{e^{\phi_{n_1}^{n_2}(x)}L_0^{n_1}[g](x)}{L_0^{n_2}[g](T_{n_1}^{n_2}(x))}
\]

The next several lines are modified from the proof of Proposition \ref{propositionhyperbolic}:

\begin{align*}
\sum_{x\in T_{n_2}^{n_1}(p)}\Psi_{n_1}^{n_2}(x)&=1\\
\frac{L_0^{n_2}[f](p)}{L_0^{n_2}[g](p)}
&=\sum_{x\in T_{n_2}^{n_1}(p)}\Psi_{n_1}^{n_2}(x)\frac{L_0^{n_1}[f](x)}{L_0^{n_1}[g](x)}\\
&=\sup_{K_0}(f/g) + \sum_{x\in T_{n_2}^{n_1}(p)}\Psi_{n_1}^{n_2}(x)\left[\frac{L_0^{n_1}[f](x)}{L_0^{n_1}[g](x)} - \sup_{K_0}(f/g)\right]\\
&=\inf_{K_0}(f/g) + \sum_{x\in T_{n_2}^{n_1}(p)}\Psi_{n_1}^{n_2}(x)\left[\frac{L_0^{n_1}[f](x)}{L_0^{n_1}[g](x)} - \inf_{K_0}(f/g)\right]
\end{align*}
\begin{align*}
&\frac{L_0^{n_2}[f](p)}{L_0^{n_2}[g](p)} - \frac{L_0^{n_2}[f](q)}{L_0^{n_2}[g](q)}\\
&=\left\|\frac{f}{g}\right\|_{\osc,K_0} + \left[\sum_{x\in T_{n_2}^{n_1}(p)}\Psi_{n_1}^{n_2}(x)\left[\frac{L_0^{n_1}[f](x)}{L_0^{n_1}[g](x)} - \sup_{K_0}\left(\frac{f}{g}\right)\right] - \sum_{y\in T_{n_2}^{n_1}(q)}\Psi_{n_1}^{n_2}(y)\left[\frac{L_0^{n_1}[f](y)}{L_0^{n_1}[g](y)} - \inf_{K_0}\left(\frac{f}{g}\right)\right]\right]\\
&\leq \left\|\frac{f}{g}\right\|_{\osc,K_0} + \inf_{T_{n_2}^{n_1}(K_2)}(\Psi_{n_1}^{n_2})\sum_{i=1}^m\left(\frac{L_0^{n_1}[f](\zeta_i(x_i))}{L_0^{n_1}[g](\zeta_i(x_i))} - \frac{L_0^{n_1}[f](\zeta_i(y_i))}{L_0^{n_1}[g](\zeta_i(y_i))} - \left\|\frac{f}{g}\right\|_{\osc,K_0}\right)
\end{align*}

We now begin new calculations. By (\ref{formulahyperbolic}),
\begin{align*}
&\sum_{i=1}^m\left(\frac{L_0^{n_1}[f](\zeta_i(x_i))}{L_0^{n_1}[g](\zeta_i(x_i))} - \frac{L_0^{n_1}[f](\zeta_i(y_i))}{L_0^{n_1}[g](\zeta_i(y_i))} - \left\|\frac{f}{g}\right\|_{\osc,K_0}\right)\\
&\leq \left[m\|f/g\|_{\osc,K_0} + \varepsilon_m\left(\rho_{f/g}^{(K_0)}(C_5 c^{n_2}) - \|f/g\|_{\osc,K_0}\right)\right] - m\|f/g\|_{\osc,K_0}\\
&\leq \varepsilon_m\left(\frac{1}{2}\gamma(\pi/2) - \|f/g\|_{\osc,K_0}\right)
\end{align*}
Now,
\begin{align*}
\inf_{T_{n_2}^{n_1}(K_2)}(\Psi_{n_1}^{n_2})
&\geq \frac{e^{\inf(\phi_{n_1}^{n_2})}\displaystyle\inf_{X\cap K_0}(L_0^{n_1}[g])}{\displaystyle\sup_{K_2}(L_0^{n_2}[g])}\\
&\geq e^{\|\phi_{n_1}^{n_2}\|_\osc}\frac{e^{\sup(\phi_{n_1}^{n_2})}}{\sup(L_{n_1}^{n_2}[\one])}\frac{\inf(L_0^{n_1}[\one])}{\sup(L_0^{n_1}[\one])}\frac{\displaystyle\inf_{K_0}(g)}{\displaystyle\sup_{K_0}(g)}\\
&\geq \frac{e^{-(N_2 C_1 (\pi/2)^\alpha + M + C_6)}}{D^{N_2}}
\end{align*}
Combining, we continue our calculation:
\begin{align*}
&\frac{L_0^{n_2}[f](p)}{L_0^{n_2}[g](p)} - \frac{L_0^{n_2}[f](q)}{L_0^{n_2}[g](q)}\\
&\leq \left\|\frac{f}{g}\right\|_{\osc,K_0} + \frac{e^{-(N_2 C_1 (\pi/2)^\alpha + M + C_6)}}{D^{N_2}}\varepsilon_m\left(\frac{1}{2}\gamma(\pi/2) - \|f/g\|_{\osc,K_0}\right)\\
&\leq \gamma(\pi/2)\left[1 + \frac{e^{-(N_2 C_1 (\pi/2)^\alpha + M + C_6)}}{D^{N_2}}\varepsilon_m\left(\frac{1}{2} - 1\right)\right]\\
&= (1 - \varepsilon_2)\gamma(\pi/2)
\end{align*}
Taking the supremum over all $p,q\in K_2$ yields (\ref{enoughtoshow}). Thus we are done.
\end{proof}

Next, we need a technical lemma, which in some sense says that the Perron-Frobenius operator is uniformly continuous:

\begin{lemma}
\label{lemmatechnical}
Suppose that $\gamma_1$ and $\gamma_2$ are moduli of continuity, suppose $T$ is a rational map, and suppose $\phi:\C\rightarrow\R$ is continuous. Then there exists a modulus of continuity $\gamma_3$ such that if $K\implies\C$ and if $f,g:\C\rightarrow\R$ are continuous functions such that $g>0$ on $T^{-1}(K)$, $\rho_{f/g}^{(T^{-1}(K))}\leq\gamma_1$, and $\rho_{\ln(g)}^{(T^{-1}(K))}\leq\gamma_2$, then $\rho_{\frac{L[f]}{L[g]}}^{(K)}\leq\gamma_3$.
\end{lemma}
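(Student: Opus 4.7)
The plan is to write $L[f]/L[g]$ as a weighted average of $f/g$ over the preimages of a point, pair up preimages of nearby points via Lemma \ref{lemmacomplex}, and estimate the resulting perturbation. For $p\in K$, set $\Psi_p(x) := e^{\phi(x)}g(x)/L[g](p)$ so that $\sum_{x\in T^{-1}(p)}\Psi_p(x) = 1$ and
\[
\frac{L[f]}{L[g]}(p) = \sum_{x\in T^{-1}(p)}\Psi_p(x)\frac{f}{g}(x).
\]

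Since $T$ is fixed, the quantity $H := \sup(T_*)$ is a finite constant and, by compactness of $\C$, the potential $\phi$ is uniformly continuous with some modulus $\omega_\phi$. Given any $\delta_2 > 0$, Lemma \ref{lemmacomplex} produces $\delta_1 = \delta_1(\delta_2) > 0$ so that whenever $p,q\in K$ satisfy $\dist_s(p,q)\leq\delta_1$, there is a bijection $\Phi:T^{-1}(p)\to T^{-1}(q)$ with $\dist_s(x,\Phi(x))\leq\delta_2$ for every $x\in T^{-1}(p)$. Note $x,\Phi(x)\in T^{-1}(K)$, so the hypotheses on $f/g$ and $\ln(g)$ apply to them.

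The first step of the estimate is the algebraic identity
\[
\frac{L[f]}{L[g]}(p) - \frac{L[f]}{L[g]}(q) = \sum_{x\in T^{-1}(p)}\Psi_p(x)\left[\frac{f}{g}(x) - \frac{f}{g}(\Phi(x))\right] + \sum_{x\in T^{-1}(p)}\bigl[\Psi_p(x) - \Psi_q(\Phi(x))\bigr]\frac{f}{g}(\Phi(x)).
\]
The first sum is bounded by $\gamma_1(\delta_2)$ directly, since $\sum_x\Psi_p(x)=1$. For the second, because both $(\Psi_p(x))_x$ and $(\Psi_q(\Phi(x)))_x$ sum to one, I may subtract any constant $c$ from $(f/g)(\Phi(x))$; choosing $c := (f/g)(\Phi(x_0))$ for some fixed $x_0$ gives the bound $\gamma_1(\pi/2)\sum_x|\Psi_p(x) - \Psi_q(\Phi(x))|$, using that $\|f/g\|_{\osc,T^{-1}(K)}\leq\gamma_1(\diam_s(\C))\leq\gamma_1(\pi/2)$.

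It remains to bound the ratio $\Psi_q(\Phi(x))/\Psi_p(x)$. Writing
\[
\frac{\Psi_q(\Phi(x))}{\Psi_p(x)} = e^{\phi(\Phi(x))-\phi(x)}\cdot\frac{g(\Phi(x))}{g(x)}\cdot\frac{L[g](p)}{L[g](q)},
\]
the first two factors are within $e^{\pm(\omega_\phi(\delta_2)+\gamma_2(\delta_2))}$ of $1$. For the third, pairing summands of $L[g](q)$ and $L[g](p)$ via $\Phi$ and applying the same bound term-by-term shows $L[g](q)/L[g](p)$ lies in the same range. Hence there is a function $R(\delta_2)$, with $R(\delta_2)\to 0$ as $\delta_2\to 0$, such that $|\Psi_q(\Phi(x))/\Psi_p(x) - 1|\leq R(\delta_2)$ uniformly in $x$, giving $\sum_x|\Psi_p(x)-\Psi_q(\Phi(x))|\leq R(\delta_2)$.

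Combining yields $|L[f]/L[g](p) - L[f]/L[g](q)|\leq\gamma_1(\delta_2) + \gamma_1(\pi/2)R(\delta_2)$ whenever $\dist_s(p,q)\leq\delta_1(\delta_2)$, which defines the desired modulus of continuity $\gamma_3$ (invert $\delta_2\mapsto\delta_1(\delta_2)$). The main technical obstacle is producing a bound on $\sum_x|\Psi_p(x)-\Psi_q(\Phi(x))|$ that is uniform in $f$ and $g$; this works because $\phi$ is uniformly continuous on the compact space $\C$ and the bijection from Lemma \ref{lemmacomplex} behaves well in $\delta_2$, so the bound depends only on $\omega_\phi$, $\gamma_2$, and $H$, not on the particular functions $f$, $g$, or the set $K$.
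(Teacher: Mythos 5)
Your proof is correct and has the same skeleton as the paper's: represent $L[f]/L[g]$ as a $\Psi$-weighted average of $f/g$ over preimages, pair up preimages of nearby points via Lemma \ref{lemmacomplex}, and control the perturbation using $\rho_\phi$, $\gamma_2$ (for the weights) and $\gamma_1$ (for the values). Where you diverge is the key estimate: the paper cites an abstract interpolation inequality for probability vectors (its Lemma \ref{lemmaepsilon}), which yields the bound $(1-\varepsilon)\gamma_1(\pi/2) + \varepsilon\gamma_1(\delta_2)$ with $\varepsilon = 2/(1+e^{2[\rho_\phi(\delta_2)+\gamma_2(\delta_2)]})\to 1$; you instead split the difference of weighted means directly into a ``same-weight, different-value'' term (bounded by $\sup_x |c_x - d_x|\le\gamma_1(\delta_2)$) and a ``different-weight, same-value'' term, where you re-center by the probability-vector property and bound the total variation $\sum_x|\Psi_p(x)-\Psi_q(\Phi(x))|$ by $\sup_x|\Psi_q(\Phi(x))/\Psi_p(x)-1|$, which you estimate multiplicatively from $\rho_\phi$, $\gamma_2$, and the corresponding term-by-term bound on $L[g](p)/L[g](q)$. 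This yields $\gamma_1(\delta_2) + \gamma_1(\pi/2)R(\delta_2)$ with $R(\delta_2)\to 0$, which serves equally well. The two bounds are of the same qualitative form (small oscillation plus a vanishing multiple of the full oscillation), so nothing is lost; your version is more elementary in that it avoids appealing to a separate external lemma, while the paper's form is slightly sharper as an inequality (its coefficients sum to one). Either way the conclusion---a modulus $\gamma_3$ depending only on $\gamma_1$, $\gamma_2$, $T$, $\phi$ and not on $f$, $g$, $K$---is the same.
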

\begin{proof}
Fix $\delta_2 > 0$; let $\delta_1 > 0$ be the number guaranteed by Lemma \ref{lemmacomplex}.

Fix $x,y\in K$ with $\dist_s(x,y) \leq \delta_1$, and let $\Phi$ be the bijection guaranteed by Lemma \ref{lemmacomplex}. We make the notational convention that summation over $\ZW$ indicates that the summation is taken over $z\in T^{-1}(x)$, and that $w$ is shorthand for $\Phi(z)$.

We begin calculation:
\[
\frac{L[f](x)}{L[g](x)} - \frac{L[f](y)}{L[g](y)}
= \sum_\ZW \left[\frac{e^\phi(z)g(z)}{L[g](x)}\frac{f(z)}{g(z)} - \frac{e^\phi(w)g(w)}{L[g](y)}\frac{f(w)}{g(w)}\right]\\
\]
We recall the following lemma:
\begin{lemma}
\label{lemmaepsilon}
Suppose $S$ is a finite or countably infinite set, and suppose $(a_s)_{s\in S}$ and $(b_s)_{s\in S}$ are probability vectors i.e. positive sequences that sum to one. Suppose $(c_s)_{s\in S}$ and $(d_s)_{s\in S}$ are bounded sequences. Let $K=\sup_{s\in S}|\ln(a_s) - \ln(b_s)|$, and let $\varepsilon=\frac{2}{1 + e^K}$. Then
\begin{equation}
\label{equationepsilon}
\sum_{s\in S}[a_s c_s - b_s d_s]\leq (1-\varepsilon) \left(\sup_{s\in S\in S}(c_s)-\inf_{s\in S}(d_s)\right) + \varepsilon\left(\sup_{s\in S}(c_s - d_s)\right)
\end{equation}
\end{lemma}
\begin{proof}
\cite{SG}, or as an exercise left to the reader.
\QEDmod\end{proof}

Applying this lemma to our particular circumstances:

\begin{align*}
\max_\ZW\frac{f(z)}{g(z)} - \min_\ZW\frac{f(w)}{g(w)}
&\leq \gamma_1(\pi/2)\\
\max_\ZW\left[\frac{f(z)}{g(z)} - \frac{f(w)}{g(w)}\right]
&\leq \gamma_1(\delta_2)
\end{align*}
and finally
\begin{align*}
&\max_\ZW|\phi(z) + \ln(g(z)) - \ln(L[g](x)) - \phi(w) - \ln(g(w)) + \ln(L[g](y))|\\
&\leq 2[\rho_\phi(\delta_2) + \gamma_2(\delta_2)]\\
&\frac{L[f](x)}{L[g](x)} - \frac{L[f](y)}{L[g](y)}\\
&\leq \left(1 - \frac{2}{1 + e^{2[\rho_\phi(\delta_2) + \gamma_2(\delta_2)]}}\right)\gamma_1(\pi/2) + \frac{2}{1 + e^{2[\rho_\phi(\delta_2) + \gamma_2(\delta_2)]}}\gamma_1(\delta_2)
\end{align*}
which tends to zero as $\delta_2$ tends to zero. Thus for all $\sigma > 0$, there exists $\delta_1 > 0$ not depending on $K$, $f$, or $g$, such that for all $x,y\in K$ with $\dist_s(x,y)\leq\delta_2$, we have
\[
\frac{L[f](x)}{L[g](x)} - \frac{L[f](y)}{L[g](y)} \leq \sigma
\]
Thus we are done.
\end{proof}
\begin{proof}[Proof of Theorem \ref{maintheorem}]
Fix $C_6 < \infty$. Let $\kappa,\varepsilon_2 > 0$ be as in Proposition \ref{propositiontinysteps}.

Fix $\omega\in\Omega$ and assume that Event \ref{eventtinysteps} is satisfied for all forward translates of $\omega$, and that $B_s(\SS_n,\kappa)\Kin \FF_n$ for at least one $n\in\N$. By Proposition \ref{propositiontinysteps}, this assumption is almost certainly valid.

Fix $\gamma_1$ and $\gamma_2$ moduli of continuity with $\gamma_2(\pi/2)\leq C_6$.

\begin{claim}
For every $k\in\N$, there exists $n\in\N$ so that
\begin{itemize}
\item $B_s(\SS_n,\kappa)\Kin \FF_n$
\item For all $f,g\in\CC(\C)$ with $g>0$, $\rho_{f/g}^{(X)}\leq\gamma_1$, and $\rho_{\ln(g)}^{(X)}\leq \gamma_2$,
\begin{equation}
\label{Lzeron}
\left\|\frac{L_0^n[f]}{L_0^n[g]}\right\|_{\osc,X\butnot B_s(\SS_n,\kappa)} \leq (1 - \varepsilon_2)^k \gamma_1(\pi/2).
\end{equation}
\end{itemize}
\end{claim}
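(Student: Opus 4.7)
The plan is to prove the claim by induction on $k$.

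For the base case $k=0$, the first bullet follows from the standing assumption on $\omega$ that $B_s(\SS_n,\kappa)\Kin\FF_n$ for at least one $n$, and the second from a single application of Lemma \ref{lemmanonincreasing}: since $T(B)\Kin B$ implies that $X$ is backward invariant, we have $T_n^0(X\butnot B_s(\SS_n,\kappa))\implies X$, so
\[
\left\|\frac{L_0^n[f]}{L_0^n[g]}\right\|_{\osc,X\butnot B_s(\SS_n,\kappa)} \leq \left\|\frac{f}{g}\right\|_{\osc,X} \leq \rho_{f/g}^{(X)}(\pi/2) \leq \gamma_1(\pi/2).
\]

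For the inductive step, suppose the claim holds at stage $k$ with witness $n_k$. I will apply Event \ref{eventtinysteps} to the time-translated system $(T_{n_k+j})_j$; this is legitimate because Event \ref{eventtinysteps} is assumed for every forward translate of $\omega$. Given admissible $f,g$, set $f':=L_0^{n_k}[f]$ and $g':=L_0^{n_k}[g]$. Three hypotheses must be checked: (i) $B_s(\SS_{n_k},\kappa)\Kin\FF_{n_k}$, which is the first bullet at stage $k$; (ii) $\|\ln g'\|_{\osc,X\butnot B_s(\SS_{n_k},\kappa)}\leq C_6$, which follows from backward invariance of $X$ and the bounded distortion property via the estimate $\|\ln L_0^{n_k}[g]\|_{\osc,X}\leq\|\ln g\|_{\osc,X}+\|\ln L_0^{n_k}[\one]\|_{\osc,X}\leq\gamma_2(\pi/2)+M$, so I should arrange $C_6\geq\gamma_2(\pi/2)+M$ at the outset; and (iii) a modulus bound $\rho_{f'/g'}^{(X\butnot B_s(\SS_{n_k},\kappa))}\leq\gamma^{(k)}$ for some fixed modulus of continuity $\gamma^{(k)}$ (independent of $f,g$) with $\gamma^{(k)}(\pi/2)\leq(1-\varepsilon_2)^k\gamma_1(\pi/2)$.

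Once $\gamma^{(k)}$ is in hand, Event \ref{eventtinysteps} produces $m_k\in\N$ with $B_s(\SS_{n_k+m_k},\kappa)\Kin\FF_{n_k+m_k}$ and
\[
\left\|\frac{L_{n_k}^{n_k+m_k}[f']}{L_{n_k}^{n_k+m_k}[g']}\right\|_{\osc,X\butnot B_s(\SS_{n_k+m_k},\kappa)} \leq (1-\varepsilon_2)\gamma^{(k)}(\pi/2) \leq (1-\varepsilon_2)^{k+1}\gamma_1(\pi/2).
\]
Setting $n_{k+1}:=n_k+m_k$ and invoking the semigroup identity $L_{n_k}^{n_{k+1}}\circ L_0^{n_k}=L_0^{n_{k+1}}$ closes the induction.

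The main obstacle is (iii): exhibiting a \emph{single} modulus $\gamma^{(k)}$ that simultaneously dominates $\rho_{f'/g'}$ for all admissible $f,g$. The inductive hypothesis only supplies the value $(1-\varepsilon_2)^k\gamma_1(\pi/2)$ at $\pi/2$ (via the trivial bound $\rho_\cdot\leq\|\cdot\|_\osc$) and says nothing about continuity at $0$. I will extract the small-scale continuity by iterating Lemma \ref{lemmatechnical} $n_k$ times: starting from the input moduli $\gamma_1,\gamma_2$ for $f/g$ and $\ln g$, each application produces a modulus for the ratio $L_j[\cdot]/L_j[\cdot]$, and a short supplementary calculation using bounded distortion controls the modulus of $\ln L_0^{j+1}[g]$ at the next step. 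The resulting modulus $\tilde\gamma^{(k)}$ depends only on $\omega$, $\gamma_1$, $\gamma_2$, and $n_k$, never on $f,g$ individually, so $\gamma^{(k)}:=\min(\tilde\gamma^{(k)},(1-\varepsilon_2)^k\gamma_1(\pi/2))$ is the desired uniform modulus. Everything else is routine bookkeeping with the Perron--Frobenius semigroup.
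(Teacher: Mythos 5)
Your structure—induction on $k$, with the base case from (\ref{osclessthan}) and the inductive step by applying Event \ref{eventtinysteps} at time $n_k$, supplying the needed modulus bound via Lemma \ref{lemmatechnical}—is exactly the paper's argument, and you correctly identify the one genuinely delicate point: exhibiting a single modulus $\gamma^{(k)}$ that dominates $\rho_{L_0^{n_k}[f]/L_0^{n_k}[g]}$ uniformly over admissible $(f,g)$. The only place you deviate is in how Lemma \ref{lemmatechnical} is invoked: you propose iterating it $n_k$ times along the factorization $L_0^{n_k}=L_{n_k-1}\circ\cdots\circ L_0$, whereas the paper applies it \emph{once}, to the rational map $T=T_0^{n_k}$ and continuous potential $\phi=\phi_0^{n_k}$ (which the lemma permits, as it is stated for an arbitrary rational $T$ and continuous $\phi$). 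The single application is cleaner and avoids a small gap in your plan: to iterate, at each stage you need a modulus of continuity for $\ln L_0^j[g]$, and you attribute this to ``bounded distortion''; but the bounded distortion property (\ref{Misaboundzero}) only controls the \emph{oscillation} $\|\ln L_0^j[\one]\|_{\osc,X}\leq M$, not its modulus of continuity (that would be the equicontinuity property, which Theorem \ref{maintheorem} does not assume). For the fixed $\omega$ and $j$ one can still extract some modulus by bare continuity of $L_0^j[\one]$ on the compact $\C$, so the iteration can be salvaged, but it is not the quick step you suggest. Your observation that one should impose $C_6\geq\gamma_2(\pi/2)+M$ from the start (rather than the paper's literal $\gamma_2(\pi/2)\leq C_6$) to justify $\|\ln L_0^{n_k}[g]\|_{\osc,X}\leq C_6$ is a genuine and correct sharpening.
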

\begin{proof}
By induction:

Base case $k = 0$: By assumption, there exists $n\in\N$ satisfying $B_s(\SS_n,\kappa)\Kin \FF_n$. (\ref{Lzeron}) follows from (\ref{osclessthan}).

Inductive step: Assume the claim is true for $k\in\N$; let $n\in\N$ be the value that works. By Lemma \ref{lemmatechnical}, there exists a modulus of continuity $\gamma_3$ such that if $f,g:\C\rightarrow\R$ are continuous functions such that $g>0$ on $X$, $\rho_{f/g}^{(X)}\leq\gamma_1$, and $\rho_{\ln(g)}^{(X)}\leq\gamma_2$, then
\[
\rho_{\frac{L_0^n[f]}{L_0^n[g]}}^{(X)}\leq\gamma_3.
\]

Let $\gamma_4(\varepsilon) = \min(\gamma_3(\varepsilon),(1 - \varepsilon_2)^k\gamma_1(\pi/2))$. Clearly, $\gamma_4$ is a modulus of continuity, and $\gamma_4(\pi/2) \leq (1 - \varepsilon_2)^k\gamma_1(\pi/2)$. By the inductive hypothesis,
\begin{equation}
\label{gamma4}
\rho_{\frac{L_0^n[f]}{L_0^n[g]}}^{(X\butnot B_s(\SS_n,\kappa))}\leq\gamma_4
\end{equation}
for relevant $f,g$.

Let $n_2\in\N$ be given by Event \ref{eventtinysteps} for $\theta^n\omega$.

Let $\wtilde{k} = k + 1$ and let $\wtilde{n} = n + n_2$.
\begin{itemize}
\item By (\ref{exceptionalfatou}), we have $B_s(\SS_{\wtilde{n}},\kappa)\Kin \FF_{\wtilde{n}}$.
\item Fix $f,g\in\CC(\C)$ with $g>0$, $\rho_{f/g}^{(X)}\leq\gamma_1$, and $\rho_{\ln(g)}^{(X)}\leq \gamma_2$. Then (\ref{gamma4}) holds, and $\|\ln(L_0^n[g])\|_\osc\leq C_6$. Thus by (\ref{tinysteps}), we have
\begin{equation*}
\left\|\frac{L_0^{\wtilde{n}}[f]}{L_0^{\wtilde{n}}[g]}\right\|_{\osc,X\butnot B_s(\SS_{\wtilde{n}},\kappa)} \leq (1 - \varepsilon_2) \gamma_4(\pi/2) \leq (1 - \varepsilon_2)^{\wtilde{k}}\gamma_1(\pi/2)
\end{equation*}
completing the inductive step.
\end{itemize}
\QEDmod\end{proof}

We claim that Event \ref{mainevent} is satisfied, with the restriction that $\gamma_2(\pi/2)\leq C_6$. If we show this, we are done since $C_6 < \infty$ was arbitrary, and can be quantified countably.

Thus, we need to show that for all $\varepsilon > 0$ and for all $\wtilde{\kappa} > 0$, there exists $N\in\N$ not depending on $f,g$ such that for all $n\geq N$, (\ref{mainequation}) holds. To this end, fix $\varepsilon > 0$ and $\wtilde{\kappa} > 0$. Let $k\in\N$ be large enough so that $(1 - \varepsilon_2)^k \gamma_1(\pi/2) \leq \varepsilon$. Let $n\in\N$ be given by the claim. By Lemma \ref{lemmafatoucompact}, there exists $N\in\N$ such that for all $\wtilde{n}\geq N$,
\[
T_n^{\wtilde{n}}(B_s(\SS_n,\kappa))\implies B_s(\SS_{\wtilde{n}},\wtilde{\kappa})
\]
and thus the backwards invariance of $X$ and of the Julia set imply
\[
T_{\wtilde{n}}^n(X\butnot B_s(\SS_{\wtilde{n}},\wtilde{\kappa}) \cup \JJ_{\wtilde{n}})\implies X\butnot B_s(\SS_n,\kappa) \cup \JJ_n = X\butnot B_s(\SS_n,\kappa).
\]
(Here we have used that $\#(X)\geq 3$ plus backwards invariance to yield that $\JJ_n\implies X$.)

Now, (\ref{Lzeron}) and (\ref{osclessthan}) yield (\ref{mainequation}).
\end{proof}
\end{section}
\begin{section}{Lemmas Leading Up to the Proof of Theorem \ref{theoremcondition}}\label{sectioncondition}
First, we construct the set $B$ in the conclusion of Theorem \ref{theoremcondition}:

\begin{lemma}
\label{lemmaB}
Suppose that $\AA\implies\RR$ is a finite set, and suppose $F\implies\C$ is finite with $\AA(F)\implies F$. Also suppose that (ii) of Theorem \ref{theoremcondition} holds. Then there exists a neighborhood $B$ of $F$ and a neighborhood $\BBB_1$ of $\AA$ such that $X := \C\butnot B$ is closed, connected, and contains at least three points, and such that
\[
\BBB_1(B)\Kin B.
\]
\end{lemma}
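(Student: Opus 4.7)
The plan is to choose a radius function $p\mapsto r_p$ on $F$ and set $B:=\bigcup_{p\in F}B_s(p,r_p)$, arranging matters so that every $T\in\AA$ satisfies $T(\cl{B_s}(p,r_p))\Kin B_s(T(p),r_{T(p)})$ for each $p\in F$ and so that the closed balls are pairwise disjoint. Once this holds, $\AA(\cl{B})\implies B$ by construction. Because $\cl{B}$ is compact, the evaluation map $T\mapsto T(\cl{B})\in 2^{\C}$ is continuous from the compact-open topology into the Hausdorff topology, so by a standard compactness argument there exists a neighborhood $\BBB_1$ of $\AA$ with $\BBB_1(\cl{B})\implies B$, i.e.\ $\BBB_1(B)\Kin B$ as required.

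The heart of the argument is producing radii with the multiplicative contraction property. Writing $r_p=\varepsilon e^{\rho_p}$, the condition on the first-order linearization at $p$ becomes
\[
\rho_{T(p)}-\rho_p\;>\;\ln T_*(p)-\ln\tau \qquad \forall(p,T)\in F\times\AA,
\]
for some $\tau<1$. I would view this as a potential problem on the finite directed multigraph $G$ with vertex set $F$ and edges $\{(p,T(p)):p\in F,\;T\in\AA\}$ carrying weights $c_{p,T}:=\ln T_*(p)-\ln\tau$. A standard Bellman--Ford argument guarantees a solution $\rho:F\to\R$ (with strict inequality) provided every directed cycle carries a strictly negative total weight. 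Along a simple cycle $p_0\to p_1\to\cdots\to p_\ell=p_0$ produced by maps $T_1,\ldots,T_\ell\in\AA$, the chain rule gives
\[
\sum_{i=1}^\ell c_{p_{i-1},T_i} \;=\; \ln T_*(p_0)-\ell\ln\tau,\qquad T:=T_\ell\circ\cdots\circ T_1,
\]
and $p_0\in\FP_T\cap F$, so hypothesis (ii) of Theorem \ref{theoremcondition} gives $T_*(p_0)<1$. Since $G$ is finite it has only finitely many simple cycles, and every closed walk decomposes into simple cycles, so the finite set of inequalities $\tau^\ell>T_*(p_0)$ can be satisfied simultaneously by choosing $\tau$ close enough to $1$.

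With $\rho$ and $\tau$ in hand, I would finish as follows. By uniform Taylor expansion in local coordinates near each point of $F$, together with compactness of $F$ and $\AA$, there exists $\varepsilon_0>0$ so that for every $0<\varepsilon\leq\varepsilon_0$ the balls $\cl{B_s}(p,\varepsilon e^{\rho_p})$ are pairwise disjoint and
\[
T(\cl{B_s}(p,\varepsilon e^{\rho_p}))\implies B_s\!\Bigl(T(p),\tfrac{1+\tau}{2}\,\varepsilon e^{\rho_{T(p)}}\Bigr)\Kin B_s(T(p),\varepsilon e^{\rho_{T(p)}})
\]
for every $p\in F$ and $T\in\AA$. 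Shrinking $\varepsilon$ once more, the union $B$ of these disks is a disjoint union of small topological disks on $\C$, hence $X=\C\butnot B$ is closed, contains more than three points, and is connected (the complement of finitely many pairwise disjoint closed topological disks on the Riemann sphere is connected). The main obstacle is the Lyapunov step above; everything else is a continuity/perturbation argument that I expect to be routine once $\rho$ and $\tau$ have been arranged.
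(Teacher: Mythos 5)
Your proof is correct and follows essentially the same route as the paper: produce a weight function $c=e^{\rho}$ on $F$ with $T_*(p)\,c(p)/c(T(p))<1$ for all $p\in F$ and $T\in\AA$, take $B$ to be a union of small disks with $c$-scaled radii about the points of $F$, and extend the strict invariance to a neighborhood $\BBB_1$ of $\AA$ by continuity and compactness. The only step the paper's own proof does not spell out is the existence of $c$ (it says ``We leave it to the reader to verify that condition (ii) implies that there exists a function $c:F\rightarrow(0,\infty)$ such that \dots''); your Bellman--Ford argument, reducing to the finitely many simple cycles of the weighted digraph and invoking hypothesis (ii) at $p_0\in\FP_T\cap F$ to get $T_*(p_0)<\tau^{\ell}$ once $\tau$ is near $1$, supplies exactly that omitted verification.
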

\begin{proof}
We leave it to the reader to verify that condition (ii) implies that there exists a function $c:F\rightarrow (0,\infty)$ such that for all $p\in F$ and for all $T\in\AA$,
\[
T_*(p)\frac{c(p)}{c(T(p))} < 1.
\]
Assume that such a $c$ is given. For each $p\in F$ let $\phi_p$ be any M\"obius transformation such that $\phi_p(0) = p$ and $\|(\phi_p)_*(0)\|_e^s = c(p)$. For each $p\in F$ and $T\in\AA$ let $\psi_{p,T} = \phi_{T(p)}^{-1}\circ T \circ \phi_p$, so that
\begin{align*}
\psi_{p,T}(0) = 0\\
\|(\psi_{p,T})_*(0)\|_e < 1
\end{align*}
i.e. $0$ is an attracting fixed point for the finite collection $(\psi_{p,T})_{p\in F,T\in\AA}$. By the definition of derivative, it follows that for each $p\in F$ and $T\in\AA$ there exists $\delta_{p,T}>0$ such that $|\psi_{p,T}(w)| < |w|$ for all $w\in \cl{B_e}(0,\delta_{p,T})$. Let $\delta = \min_{p,T}\delta_{p,T} > 0$, so that for all $p\in F$, for all $T\in\AA$, and for all $w\in \cl{B_e}(0,\delta)$, we have $|\psi_{p,T}(w)| < |w| \leq \delta$, i.e. $\psi_{p,T}(w)\in B_e(0,\delta)$.

Let $\BBB_0 = \{\psi\in\RR: \psi(\cl{B_e}(0,\delta))\implies B_e(0,\delta)\}$; $\BBB_0$ is subbasic open in the compact-open topology, and contains $\psi_{p,T}$ for all $p\in F$ and $T\in\AA$.  Let $B_0 = B_e(0,\delta)$, so that $\BBB_0(B_0)\Kin B_0$.

Let $B = \cup_{p\in F}\phi_p(B_0)$ and $\BBB_1 = \cap_{p\in F}\cup_{w\in F}\phi_w\BBB_0\phi_p^{-1}$. It is easily verified that $B$ is an open neighborhood of $F$, that $\BBB_1$ is an open neighborhood of $\AA$, that $X := \C\butnot B$ is closed, connected, and contains at least three points (assuming $\delta$ is sufficiently small) and that $\BBB_1(B)\Kin B$.
\end{proof}

Next, we give an elementary bound on the multiplicity of a point $p\notin F$:

\begin{lemma}
\label{lemmaR}
Suppose that $\AA\implies\RR$ is a finite set, and suppose $F\implies\C$ is finite with $\AA(F)\implies F$. Also suppose that (i) of Theorem \ref{theoremcondition} holds. Let
\[
r = \prod_{S\in\AA}\prod_{w\in\RP_S\butnot F}\mult_S(w).
\]
For all $\ell\in\N$, for all $T\in\AA^\ell$, and for all $p\in\C\butnot T^{-1}(F)$,
\begin{equation}
\label{equationR}
\mult_T(p)\leq r.
\end{equation}
\end{lemma}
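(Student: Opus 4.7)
The plan is to combine the multiplicativity of ramification along a composition with condition (i) of Theorem \ref{theoremcondition} to show that no ``ramified pair'' is revisited along the orbit of $p$. Write $T = T_{\ell-1}\circ\cdots\circ T_0$ with each $T_j \in \AA$, and recall the chain rule for ramification
\[
\mult_T(p) = \prod_{j=0}^{\ell-1}\mult_{T_j}(T_0^j(p)).
\]
The first easy observation is that $T_0^j(p)\notin F$ for every $j=0,\ldots,\ell$: since $p\notin T^{-1}(F)$ we have $T_0^\ell(p)\notin F$, and the forward invariance $\AA(F)\implies F$ would propagate any $T_0^j(p)\in F$ to $T_0^\ell(p)\in F$.

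The heart of the argument is to rule out repetition of ``ramified pairs''. Suppose for contradiction that there exist $0\leq j_1 < j_2 \leq \ell-1$ with $T_{j_1}=T_{j_2}=:S$ and $T_0^{j_1}(p)=T_0^{j_2}(p)=:w$, where $w\in\RP_S\butnot F$. Set
\[
\wtilde T := T_{j_2-1}\circ\cdots\circ T_{j_1}\in\AA^{j_2-j_1}.
\]
Then $\wtilde T(w)=w$ and, since $\wtilde T$ begins with the map $T_{j_1}=S$ at the point $w\in\RP_S$, also $w\in\RP_{\wtilde T}$. Condition (i) of Theorem \ref{theoremcondition} then forces $w\in F$, contradicting our choice of $w$. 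Hence the map
\[
\{j\in\{0,\ldots,\ell-1\}:T_0^j(p)\in\RP_{T_j}\butnot F\}\longrightarrow \{(S,w):S\in\AA,\ w\in\RP_S\butnot F\},\qquad j\longmapsto (T_j,T_0^j(p))
\]
is injective.

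Finally, since $\mult_{T_j}(T_0^j(p))=1$ whenever $T_0^j(p)\notin\RP_{T_j}$, and since the first observation excludes $T_0^j(p)\in F$, only indices $j$ with $T_0^j(p)\in\RP_{T_j}\butnot F$ contribute nontrivially to the product; by the injectivity just established, each factor $\mult_S(w)$ for $(S,w)$ with $S\in\AA,\ w\in\RP_S\butnot F$ appears at most once. Since every such $\mult_S(w)\geq 1$, we conclude
\[
\mult_T(p)=\prod_{j=0}^{\ell-1}\mult_{T_j}(T_0^j(p))\leq\prod_{S\in\AA}\prod_{w\in\RP_S\butnot F}\mult_S(w)=r,
\]
proving (\ref{equationR}). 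The one nontrivial step is the no-repetition argument, for which condition (i) is precisely designed; the rest is bookkeeping.
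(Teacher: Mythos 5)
Your proof is correct and follows essentially the same route as the paper: decomposing $\mult_T(p)$ via the chain rule, ruling out $T_0^j(p)\in F$ by forward invariance, and using condition (i) applied to the intermediate composition $T_{j_1}^{j_2}$ to prove that no pair $(S,w)$ with $w\in\RP_S\butnot F$ is visited twice along the orbit. The paper phrases this last step as bounding the exponent $\#(j: q = T_0^j(p),\ S = T_j)$ by one, but your injectivity formulation is identical in substance.
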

\begin{proof}
Suppose $T = T_0^\ell$, where $T_j\in\AA$ for $j=0,\ldots,\ell-1$. Then
\[
\mult_T(p) = \prod_{j=0}^{\ell-1}\mult_{T_j}(T_0^j(p))
= \prod_{S\in\AA}\prod_{q\in\RP_S}(\mult_S(q))^{\#(j: q = T_0^j(p)\text{ and }S = T_j)}
\]
so it suffices to show that for all $S\in\AA$ and for all $q\in\RP_S$, $\#(j: q = T_0^j(p)\text{ and }S = T_j)$ is at most one, and is zero when $q\in F$.

To prove the latter, note that if $T_0^j(p)\in F$, then $T_0^\ell(p)\in F$ since $\AA(F)\implies F$, contradicting the hypothesis that $p\notin T^{-1}(F)$.

To prove the former, by contradiction we suppose $0 \leq j_1 < j_2 < \ell$ with $T_0^{j_1}(p) = T_0^{j_2}(p)\in \RP_{T_{j_1}} = \RP_{T_{j_2}}$. But then
\[
T_0^{j_1}(p) \in \RP_{T_{j_1}^{j_2}}\cap\FP_{T_{j_1}^{j_2}} \implies F,
\]
and contradiction follows as above.
\end{proof}

\begin{lemma}
\label{lemmaL}
Suppose that $\AA\implies\RR\butnot\RR_1$ is a finite set, and suppose $F\implies\C$ is finite with $\AA(F)\implies F$. Also suppose that (i) of Theorem \ref{theoremcondition} holds. If $B\implies\C$ is a neighborhood of $F$, then there exist $\inc\in\N$, $\delta_3 > 0$, and $\BBB_2$ a neighborhood of $\AA$ and  so that for all $T\in\BBB_2^\inc$ and for all $p\in\C\butnot B$,
\begin{equation}
\label{equationL}
\diam(T^{-1}(p))\geq \delta_3.
\end{equation}
\end{lemma}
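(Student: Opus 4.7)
The plan is to prove the lemma first for $\BBB_2 = \AA$ itself (so that only the finitely many compositions in $\AA^b$ are involved) and then enlarge to a neighborhood of $\AA$ by invoking the perturbation estimates of Lemma \ref{lemmamorecomplex} together with continuity of composition on $\RR$. Step 1 is a combinatorial pigeonhole argument driven by hypothesis (i); step 2 upgrades a qualitative non-collapse to a quantitative diameter bound via compactness; step 3 extends the bound to a full $\RR$-neighborhood.

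For step 1, the key point is that every $S\in\AA$ is nonlinear, so by Remark \ref{remarkatmosttwo} (deterministic case) the set $\SS_S$ of totally ramified points of $S$ has at most two elements, and therefore $R:=\bigcup_{S\in\AA}\SS_S$ is a finite set with $\#(R)\leq 2\#(\AA)$. I would choose any integer $\inc>\#(R)$. Suppose, toward a contradiction, that some $T=T_{\inc-1}\circ\cdots\circ T_0\in\AA^\inc$ and some $p\in\C\butnot F$ satisfy $T^{-1}(p)=\{q\}$ as a multiset, i.e.\ $\mult_T(q)=\deg(T)$. Multiplicativity of the local degree forces $q_j:=T_0^j(q)$ to satisfy $\mult_{T_j}(q_j)=\deg(T_j)$ for every $j=0,\ldots,\inc-1$, so $q_j\in\SS_{T_j}\subseteq R$. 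Since $\inc>\#(R)$, pigeonhole yields $0\leq i<j\leq \inc-1$ with $q_i=q_j$. Then $q_i$ is a fixed point of $T_i^j\in\AA^{j-i}$, and by the same multiplicativity it is a ramification point of $T_i^j$; hypothesis (i) places $q_i\in F$, and the forward invariance $\AA(F)\implies F$ iterates to give $p=q_\inc\in F$, contradicting $p\notin F$. Hence $\#^*(T^{-1}(p))\geq 2$, and in particular $\diam(T^{-1}(p))>0$, for every $T\in\AA^\inc$ and every $p\in\C\butnot F\supseteq\C\butnot B$.

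For step 2, I would use that $p\mapsto\diam(T^{-1}(p))$ is continuous on $\C$ for every fixed rational map $T$ (an immediate consequence of Lemma \ref{lemmacomplex}, which produces near-identity bijections between close preimage multisets). Since $\C\butnot B$ is compact and this function is strictly positive there by step 1, it attains a positive minimum $\delta_3(T)$. Taking the minimum of $\delta_3(T)$ over the finite set $T\in\AA^\inc$ gives a single $\delta_3>0$ that works uniformly, establishing the lemma with $\BBB_2=\AA$.

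For step 3, I would perturb. Lemma \ref{lemmamorecomplex} gives, for each $T\in\AA^\inc$, an open neighborhood $\BBB_T\implies\RR$ of $T$ and, for each $S\in\BBB_T$, a bijection $T^{-1}(p)\to S^{-1}(p)$ moving every point by at most $\delta_3/4$; hence $\diam(S^{-1}(p))\geq\delta_3/2$ for all $p\in\C\butnot B$ and all $S\in\BBB_T$. Continuity of the composition map $\circ:\RR^\inc\to\RR$, combined with the finiteness of both $\AA$ and $\AA^\inc$, allows me to choose an open neighborhood $\BBB_2$ of $\AA$ in $\RR$ (by intersecting finitely many coordinate neighborhoods, one for each tuple in $\AA^\inc$) so that $\BBB_2^\inc\implies\bigcup_{T\in\AA^\inc}\BBB_T$. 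Replacing $\delta_3$ by $\delta_3/2$ then yields the lemma. The only genuinely interesting step is the pigeonhole argument in step 1; steps 2 and 3 are routine compactness and perturbation considerations.
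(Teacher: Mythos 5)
Your proof is correct, and reaches the same endpoint as the paper's, but by a more direct combinatorial route. The paper first invokes Lemma \ref{lemmaR} to produce the multiplicity bound $r=\prod_{S\in\AA}\prod_{w\in\RP_S\butnot F}\mult_S(w)$ valid off $T^{-1}(F)$, chooses $\inc$ with $2^\inc>r$, and notes that $\deg(T)\geq 2^\inc>r\geq\mult_T(q)$, so no point of $\C\butnot F$ is totally branched; you instead pigeonhole directly on the finite set $R=\bigcup_{S\in\AA}\SS_S$ of totally ramified points of the individual maps: choosing $\inc>\#(R)$ forces a repeat $q_i=q_j$ in any would-be totally ramified orbit, so $q_i$ is a fixed ramification point of $T_i^j\in\AA^{j-i}$, which hypothesis (i) places in $F$, contradicting $p\notin F$. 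Both arguments turn on the same fact (fixed ramification points of finite compositions lie in $F$); yours simply applies it directly rather than via the multiplicity estimate, is self-contained, and gives a threshold $\inc$ bounded by $2\#(\AA)+1$ rather than by a product of local degrees, which is typically much smaller. Your steps~2 and~3 (uniform positivity of $p\mapsto\diam(T^{-1}(p))$ over the finitely many $T\in\AA^\inc$, then enlargement to a neighborhood via Lemma \ref{lemmamorecomplex} and continuity of composition) recover what the paper does in one stroke by citing joint continuity of $(T,p)\mapsto\diam(T^{-1}(p))$ and compactness of $\C\butnot B$; the two packagings are equivalent.
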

\begin{proof}
Let $r\in\N$ be as in Lemma \ref{lemmaR}. Let $\inc$ be large enough so that $2^\inc > r$. Then (\ref{equationR}) yields that for all $T\in\AA^\inc$ and for all $x\in\C\butnot T^{-1}(F)$, we have
\[
\mult_T(x) < 2^\inc \leq \deg(T).
\]
Thus no point of $\C\butnot T^{-1}(F)$ is totally ramified, and so no point of $\C\butnot F$ is totally branched.

Now, the map $(T,p)\mapsto \diam(T^{-1}(p))$ is continuous; since $\C\butnot B$ is compact the map
\[
T\mapsto\inf_{p\in\C\butnot B}\diam(T^{-1}(p))
\]
is continuous, and strictly positive on $\AA^\inc$. Thus there exists $\delta_3 > 0$ and a neighborhood $\BBB_\inc$ of $\AA^\inc$ on which (\ref{equationL}) holds for all $p\in\C\butnot B$; letting $\BBB_2$ be a neighborhood of $\AA$ such that $\BBB_2^\inc\implies\BBB_\inc$ yields the lemma.
\end{proof}

In the following lemma, the idea is to construct a set $U$ which can be used as a domain on which to bound the Perron-Frobenius operator by considering inverse branches. The set $U$ should be simply connected and should not contain any branch points of $T$, so that it has $\deg(T)$ conformally isomorphic preimages. Since $U$ must not contain branch points, it must vary depending on $T$, but in a predictable way. Specifically, if $S$ is a perturbation of $T$ then the the isomorphic preimages of $U_S$ under $S$ will be close in some sense to the isomorphic preimages of $U_T$ under $T$. We now state our lemma:

\begin{lemma}
\label{lemmaspiderperturbation}
Suppose $F\implies\C$ is finite, and suppose $B\implies\C$ is a neighborhood of $F$. Let $X = \C\butnot B$. Fix $r\in\N$ and $d\geq 1$. Suppose $T\in\RR_d$ is such that $\mult_T(p)\leq r$ for all $p\in\C\butnot T^{-1}(F)$. Then there exist $C<\infty$, $\sigma>0$, $\BBB\implies\RR_d$ a neighborhood of $T$, and $(\zeta_i)_{i=1}^d$ a collection of $\sigma$-locally injective holomorphic maps from $\B$ to $\C$, such that
\begin{equation}
\label{multzetabounds}
\mult(\zeta_i)_{i=1}^m\leq 9r
\end{equation}
Furthermore, for all $S\in\BBB$, there is a simply connected hyperbolic open set $U_S$ such that
\begin{itemize}
\item[i)] $\cl{U_S\cap X} = X$
\item[ii)] $U_S\cap \BP_S = \emptyset$
\end{itemize}
Furthermore, for each $i = 1,\ldots,d$ there is a holomorphic map $\xi_{i,S}:U_S\rightarrow\B$ such that
\begin{itemize}
\item[iii)] $S\circ\zeta_i\circ\xi_{i,S} = \id$
\item[iv)] $\diam_h(\xi_{i,S}(U_S))\leq C$
\item[v)] For all $p\in U_S$, we have the equality of multisets
\[
S^{-1}(p) = \{\zeta_i\circ\xi_{i,S}(p):i=1,\ldots,d\}.
\]
\end{itemize}
\end{lemma}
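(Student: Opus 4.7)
The plan is to construct a ``spider'' domain in $\C$ whose complement is a closed connected set containing (a neighborhood of) $\BP_T$, then pull back the inverse branches of $T$ on this domain to $\B$ via a Riemann map, and finally show that the analogous construction for perturbed $S$ factors through the same $\zeta_i$'s.

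Concretely, first I would fix a base point $* \in B$ together with a relatively compact connected neighborhood $B_0 \Subset B$ of $*$, and for each $b \in \BP_T$ choose a simple arc $\alpha_b$ from $b$ to $\partial B_0$, with these arcs pairwise disjoint except at $\partial B_0$. Setting $K_T := \bar{B_0} \cup \bigcup_{b\in\BP_T}\alpha_b$, the complement $U_T := \C \setminus K_T$ is open, avoids $\BP_T$, and (since $K_T$ has empty interior in $X$, the arcs being $1$-dimensional and $B_0\subseteq B$) satisfies $\cl{U_T\cap X}=X$. Because $K_T$ is closed and connected on the sphere, $U_T$ is simply connected; it is clearly hyperbolic. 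Let $\Phi : \B \to U_T$ be a Riemann map and $\eta_1^T,\dots,\eta_d^T : U_T\to\C$ the $d$ single-valued inverse branches of $T$ on $U_T$ (well-defined by simple connectedness and $U_T\cap \BP_T = \emptyset$). I would then define $\zeta_i := \eta_i^T \circ \Phi : \B \to \C$. Each $\zeta_i$ is injective, hence $\sigma$-locally injective for any $\sigma>0$; and the collection has multiplicity $1 \leq 9r$ since the images $\eta_i^T(U_T)$ are pairwise disjoint subsets of $\C$.

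Next, by Lemma \ref{lemmamorecomplex}, for $S$ in a sufficiently small neighborhood $\BBB$ of $T$, there are bijections $\Phi_\BP : \BP_T \to \BP_S$ and $\Phi_p : T^{-1}(p)\to S^{-1}(p)$ (for each $p\in\C$) that are uniformly close to the identity. Using $\Phi_\BP$, I would deform the spider: replace each arc $\alpha_b$ by a simple arc $\alpha_{b,S}$ from $\Phi_\BP(b)\in\BP_S$ to $\partial B_0$, close in Hausdorff distance to $\alpha_b$, and set $K_S$ analogously. Then $U_S := \C\setminus K_S$ satisfies (i) and (ii) by the same arguments. Using $\Phi_p$, I would label $S^{-1}(p)$ consistently with $T^{-1}(p)$, giving $d$ single-valued holomorphic inverse branches $\eta_i^S : U_S \to \C$ of $S$ on $U_S$ with $\eta_i^S$ close to $\eta_i^T$. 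The lift $\xi_{i,S}$ is then forced to be $\xi_{i,S} := \Phi^{-1}\circ(\eta_i^T)^{-1}\circ\eta_i^S$, which automatically gives $\zeta_i\circ \xi_{i,S} = \eta_i^S$, hence properties (iii) and (v).

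The main obstacle is the well-definedness of $\xi_{i,S}$: this requires the inclusion $\eta_i^S(U_S) \subseteq \eta_i^T(U_T)$, which is delicate because near the arcs $\alpha_b$ and $\alpha_{b,S}$ the inverse branches of $T$ and $S$ disagree via monodromy. I expect the resolution to proceed by thickening the spider near each $b\in\BP_T$—absorbing a small closed disk around each branch point into $K_T$—which introduces local covering multiplicity up to the ramification index, bounded by $r$ via the hypothesis; a small geometric factor (bounded by $9$) accounts for the overlap between the disk and the arcs and for the perturbation. This is where the bound $\mult \leq 9r$ (rather than $\leq 1$) arises. For branch points $b \notin B$, the disks intersect $X$, so one must either use extra $\zeta_i$'s covering each ramification point locally (as $z\mapsto b + z^{\mult_T}$ composed with a conformal chart) or thin the disks to arcs absorbed into $K_T$. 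The hyperbolic diameter estimate (iv) then follows because $\xi_{i,S}(U_S) = \Phi^{-1}(T(\eta_i^S(U_S)))$, and $T\circ\eta_i^S$ differs from $\id_{U_S}$ by $T - S$, which is uniformly small on $\BBB$; hence $\xi_{i,S}(U_S)$ lies in $\Phi^{-1}$ of a subset of $U_T$ staying a definite Euclidean distance from $K_T$, which is precompact in $\B$ with uniformly bounded hyperbolic diameter across $S\in\BBB$.
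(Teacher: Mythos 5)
Your proposal follows the same broad strategy as the paper (spider domain, inverse branches pulled back to $\B$, perturbation controlled by Lemma \ref{lemmamorecomplex}), but the specific choice $\zeta_i := \eta_i^T\circ\Phi$ cannot work, and the obstacle you correctly flagged is not resolvable by the ``thickening'' you sketch. Two concrete failures. First, your $\zeta_i$ is a biholomorphism $\B\to V_i$, so $\xi_i(U_T)=\Phi^{-1}(U_T)=\B$, which has \emph{infinite} hyperbolic diameter; as $S\to T$ the set $\eta_i^S(U_S)$ fills up $V_i$, so $\diam_h(\xi_{i,S}(U_S))\to\infty$ and no uniform $C$ in (iv) exists. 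To get (iv) you need the image of $\zeta_i$ to be a set strictly bigger than $\cl{V_i}$, so that $\zeta_i^{-1}(\cl{V_i})\Kin\B$. Second, the inclusion $\eta_i^S(U_S)\subseteq V_i$ needed for your formula $\xi_{i,S}=\Phi^{-1}\circ(\eta_i^T)^{-1}\circ\eta_i^S$ really is false: near a ramification point of $T$ the inverse branches of $S$ jump sheets by an amount governed by the local degree, not by $\|T-S\|$, so $\eta_i^S$ escapes $V_i$ by a definite distance.

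The device you did not discover is the topological obstruction that underlies both failures: although each $V_i$ is simply connected, $\cl{V_i}$ typically is \emph{not} (the preimages of the spider legs enclose holes), so there is no simply connected open set sandwiched between $\cl{V_i}$ and any neighborhood of it, and hence no biholomorphic $\zeta_i$ onto a slightly enlarged domain can exist. This is precisely the error in \cite{DU} that the paper's Lemma \ref{lemmaerrorfix} exists to correct: from a nice set $V_i$ it constructs $\zeta_i:\B\to B_s(V_i,\delta)$ which is uniformly locally injective, has $\mult(\zeta_i)\leq 3$, and satisfies $\xi_i(V_i)\Kin\B$ (whence (iv)). Your accounting of the $9r$ is therefore off: in the paper it is $3\cdot 3r$, with one factor $3$ from $\mult(\zeta_i)\leq 3$ in Lemma \ref{lemmaerrorfix}, and $3r$ for $\mult(\cl{V_i})_{i=1}^d$ whose own factor $3$ comes from the ``niceness'' basis condition $\BB_U^{(3)}$ of Construction \ref{constructionspider} and whose $r$ comes from the ramification hypothesis via Lemma \ref{lemmaR}. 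Finally, the paper does not build $\xi_{i,S}$ by inverting $\eta_i^T$; it anchors a preimage $z_i\in S^{-1}(\infty)$ via $\Phi_\infty$, lifts along the slit domain $U_S$ by the homotopy lifting principle to get $\eta_{i,S}$, and establishes $\eta_{i,S}(p)\in\zeta_i(B_h(\xi_i(p),\sigma/4))$ by a multi-stage connectedness argument on $U\butnot B_e(\BP_T,\delta_1)$ with nested parameters $\delta_0\ll\delta_1\ll\delta_2$; only then is $\xi_{i,S}(p)$ defined as the unique local preimage of $\eta_{i,S}(p)$ under the locally-injective $\zeta_i$.
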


The proof of this lemma is very technical, and requires other lemmas which will not be used directly in the proof of Theorem \ref{theoremcondition}. It may be advisable to temporarily assume Lemma \ref{lemmaspiderperturbation}, and skip directly to the proof of Theorem \ref{theoremcondition}.

\begin{remark}
We shall use the following fact from topology repeatedly and without explicit mention: If $W$ is an open subset of a Riemann surface $X = \C\text{ or }\B$, then $P\implies X$ is clopen relative to $W$ if and only if $\del P\implies X\butnot W$. In particular,
\begin{itemize}
\item If $P\in\connected(W)$, then $\del P\implies \del W$
\item If $P\implies W$ and $\del P\implies \del W$ and if $W$ has finitely many connected components, then $P$ is the union of some subcollection of the connected components of $W$
\end{itemize}
\end{remark}

We begin with a technical definition:

\begin{definition}
A set $U\implies\C$ is \emph{nice} if:
\begin{itemize}
\item[i)] $U$ is open, simply connected, and hyperbolic
\item[ii)] $\cl{U}$ is finitely connected (Recall that a set is defined to be finitely connected if its complement has finitely many connected components)
\item[iii)] The set $\BB_U^{(3)}:=\{A\implies\C\text{ open}:\#(\connected(A\cap U))\leq 3\}$ is a basis of topology for $\C$
\end{itemize}
\end{definition}

Note: The occurence of the number $3$ here is best explained by the caption of Figure \ref{figureU}.

Next, we construct the set $U$ for a given value of $T$, not worrying about perturbations of $T$. The idea is that $F_1 := \BP_T$ and $F_2 := F$. We call the following a construction rather than a lemma since specific details of the construction will be used in the sequel.

\begin{construction}
\label{constructionspider}
Suppose $F_1,F_2\implies\C$ are finite, and suppose that $B\implies\C$ is a neighborhood of $F_2$. Let $X = \C\butnot B$. We construct a nice set $U\implies\C$ such that
\begin{itemize}
\item[i)] $\cl{U\cap X}\supseteq X$
\item[ii)] $U\cap F_1=\emptyset$, $\cl{U}\cap F_2=\emptyset$
\end{itemize}
\end{construction}

\begin{proof}
Let $F = F_1\cup F_2$; without loss of generality suppose that $F$ is contained in the upper half plane and that $\#(F)\geq 3$. For each $p\in F$, let $K_p=[0,\RE[p]]\cup[\RE[p],p]$. Fix $\delta > 0$ such that $B_e(F_2,\delta)\implies B$. Let
\[
K=\bigcup_{p\in F}K_p \cup \bigcup_{p\in F_2}\cl{B_e}(p,\delta),
\]
and let $U=\C\butnot K$. We leave it to the reader to verify that the set $U$ has the properties mentioned above, assuming that $\delta$ is sufficiently small.
\end{proof}

\begin{figure}
\centerline{\mbox{\includegraphics[scale=1.2]{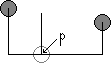}}}
\caption{The set $K$ of Construction \ref{constructionspider}. The point $p$ has no neighborhood $A\implies B$ such that $\#(\connected(A\cap U)) < 3$.}
\label{figureU}
\end{figure}

The following lemma is key to correcting an error made in [\cite{DU}, p.109, statement directly below (2.14)]. The paper implicitly assumes that if $U$ is a topological disc then there exist arbitrarily small neighborhoods of $\cl{U}$ which are also topological discs. However this is only true if we assume that $\cl{U}$ is itself simply connected, which is false in general. For example, consider the set $U$ constructed in \ref{constructionspider}. $U$ is simply connected, but the number of connected components of $\C\butnot\cl{U}$ is $\#(F_2)$ which could be greater than one. Furthermore, if $B$ is small enough this is true no matter how $U$ is chosen, because of requirements (i) and (ii).

The way in which we correct this error is as follows: Rather than considering a neighborhood of $\cl{U}$, we consider a map $\zeta:\B\rightarrow\C$ whose image contains $\cl{U}$. The image may intersect itself, but we give a bound on the multiplicity of self-intersection i.e. $\mult(\zeta)$. We also prove that the map $\zeta$ is uniformly locally injective, so as to be able to apply Lemma \ref{lemmakoebetwo}.

\begin{lemma}
\label{lemmaerrorfix}
Suppose that $U$ is a nice set. Then for every $\delta > 0$, there exist $\zeta:\B\rightarrow B_s(U,\delta)$ and $\xi:U\rightarrow\B$ such that
\begin{itemize}
\item[i)] $\zeta\circ\xi = \id$
\item[ii)] $\xi(U)\Kin \B$.
\item[iii)] $\mult(\zeta)\leq 3$
\item[iv)] $\zeta$ is uniformly locally injective.
\end{itemize}
\end{lemma}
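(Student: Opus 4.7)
The plan is to build a simply connected hyperbolic Riemann surface $V$, a holomorphic map $\zeta_0 \colon V \to B_s(U,\delta)$, and an open holomorphic embedding $\xi_0 \colon U \hookrightarrow V$ such that $\zeta_0 \circ \xi_0 = \id_U$, $\xi_0(U) \Kin V$, $\mult(\zeta_0) \leq 3$, and $\zeta_0$ is uniformly locally injective. Then by the Riemann mapping theorem there is a biholomorphism $\psi \colon \B \to V$, and setting $\zeta := \zeta_0 \circ \psi$ and $\xi := \psi^{-1} \circ \xi_0$ yields the required maps: (i) becomes $\zeta \circ \xi = \id_U$, (ii) becomes $\xi(U) = \psi^{-1}(\xi_0(U)) \Kin \B$, (iii) becomes $\mult(\zeta) = \mult(\zeta_0) \leq 3$, and (iv) is inherited from $\zeta_0$ via a local bilipschitz bound on a compact fundamental region of $\psi$.

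To construct $V$, I use condition (iii) of niceness together with the compactness of $\cl{U}$ to cover $\cl{U}$ by finitely many open sets $A_1,\ldots,A_N$ contained in $B_s(U,\delta)$, each drawn from the basis $\BB_U^{(3)}$, so that $A_i \cap U$ decomposes into $k_i \leq 3$ connected components $C_i^{(1)},\ldots,C_i^{(k_i)}$. For each $i$ and each $j \leq k_i$, take a formal copy $\wtilde{A}_i^{(j)}$ of $A_i$ together with a holomorphic identification $\wtilde{A}_i^{(j)} \to A_i$; glue $\wtilde{A}_i^{(j)}$ to $U$ by identifying the subset of $\wtilde{A}_i^{(j)}$ lying over $C_i^{(j)}$ with $C_i^{(j)} \implies U$, and identify overlapping copies $\wtilde{A}_i^{(j)}$ and $\wtilde{A}_{i'}^{(j')}$ along the pieces of $A_i \cap A_{i'}$ whose intersections with $U$ are common to $C_i^{(j)}$ and $C_{i'}^{(j')}$. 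The resulting quotient $V$ carries a natural Hausdorff complex structure and a canonical holomorphic projection $\zeta_0 \colon V \to \C$ whose image lies in $\bigcup_i A_i \implies B_s(U,\delta)$, with $U$ appearing as a canonical open subset $\xi_0(U)$.

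The required properties of $V$ can then be read off the construction. The inclusion $\xi_0 \colon U \hookrightarrow V$ is open with $\xi_0(U) \Kin V$ because every point of $\partial U$ lies in the interior of at least one attached piece $\wtilde{A}_i^{(j)}$, placing $\cl{\xi_0(U)}$ inside the open set $V$. Simple connectedness of $V$ follows because each attached copy $\wtilde{A}_i^{(j)}$ is contractible and is glued to $\xi_0(U)$ along the connected set $C_i^{(j)}$, so $V$ deformation retracts onto $\xi_0(U) \cong U$, which is simply connected by property (i) of niceness. Hyperbolicity of $V$ holds because $\zeta_0$ is a non-constant holomorphic map into a proper subset of $\C$ (obtained by taking $\delta$ small enough that $B_s(U,\delta) \neq \C$, using that $U$ is hyperbolic). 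The bound $\mult(\zeta_0) \leq 3$ is pointwise: the preimages in $V$ of any $q \in \bigcup_i A_i$ correspond bijectively with the local components of $A \cap U$ for $A$ a small neighborhood of $q$ (together with $q$ itself if $q \in U$), and condition (iii) caps this count at three. Finally, uniform local injectivity of $\zeta_0$ with some $\sigma > 0$ is obtained from a Lebesgue-number argument on the finite open cover $\{\xi_0(U)\} \cup \{\wtilde{A}_i^{(j)}\}$ of $V$, on each element of which $\zeta_0$ is injective.

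The main obstacle is to arrange that the cascade of gluings between the copies $\wtilde{A}_i^{(j)}$ is globally consistent, i.e.\ that the induced point identifications on the disjoint union form a genuine equivalence relation whose quotient is Hausdorff and locally Euclidean with multiplicity at most three. This reduces to a combinatorial coherence condition that can be forced by refining the initial cover $\{A_i\}$ sufficiently using condition (iii): for every pair $i,i'$ with $A_i \cap A_{i'} \neq \emptyset$, each connected component of $A_i \cap A_{i'} \cap U$ should lie in a unique component of $A_i \cap U$ and in a unique component of $A_{i'} \cap U$, so that the point-wise identifications compose transitively as one passes between adjacent pieces. With this combinatorial setup in place, the rest of the argument is topological bookkeeping.
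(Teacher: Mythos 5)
Your approach is genuinely different from the paper's: you build the Riemann surface $V$ bottom-up by gluing copies $\wtilde{A}_i^{(j)}$ of basis sets to $U$, whereas the paper works top-down, taking a universal cover $\basemeasure\colon\B\to B$ of a component of $B_s(U,\delta)\butnot K$ (where $K$ is a transversal of $\connected(\C\butnot\cl U)$), lifting $\id_U$ to $\psi\colon U\to \B$, and then cutting out a simply connected neighborhood $W$ of $\cl{\psi(U)}$. The hard part in either approach is the same — producing a \emph{simply connected} surface with the stated properties — and that is exactly where your proof has a gap. The ``combinatorial coherence condition'' you propose is vacuous as stated: every connected component of $A_i\cap A_{i'}\cap U$ is automatically a connected subset of $A_i\cap U$, hence lies in a unique component of $A_i\cap U$, regardless of any refinement of the cover. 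So the condition you would use to force well-definedness of the identifications, Hausdorffness, and the multiplicity bound never actually constrains the cover.

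Two concrete consequences. First, your deformation-retraction argument for $\pi_1(V)=0$ only accounts for the gluings of each $\wtilde{A}_i^{(j)}$ to $\xi_0(U)$ along the connected set $C_i^{(j)}$; it silently suppresses the gluings of $\wtilde{A}_i^{(j)}$ to $\wtilde{A}_{i'}^{(j')}$ along pieces of $A_i\cap A_{i'}$, which take place along subsets of $\C\butnot U$ and can in principle close up into loops in $V$ that do not retract into $\xi_0(U)$. This is precisely the point the paper must fight for in Claim~\ref{claimpsiU}(E), using a deck-transformation/fixed-point argument that relies essentially on niceness condition (ii), finite connectivity of $\cl U$ (see the footnote there). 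Your proof never touches condition (ii), which is a strong signal that the nontrivial content is missing. Second, the pointwise bound $\mult(\zeta_0)\le 3$ is not established by ``condition (iii) caps the count at three'': a copy $\wtilde{A}_i^{(j)}$ is a copy of \emph{all} of $A_i$, and therefore contributes a sheet over every $q\in A_i$, including $q$ for which $C_i^{(j)}$ is not a local component of $U$ near $q$; you have to show these superfluous sheets get merged by the inter-copy gluings, which is again the deferred ``topological bookkeeping''. To salvage the approach you would need, at minimum, a correct and nontrivial coherence hypothesis on the cover, an explicit check that the resulting identifications form an equivalence relation with Hausdorff locally-Euclidean quotient, and a van Kampen (or equivalent) argument for simple connectedness that actually engages with the inter-copy gluings and with condition (ii).
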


\begin{proof}
Let $K$ be a transversal of $\connected(\C\butnot\cl{U})$. Since $\cl{U}$ is finitely connected, $K$ is finite; in particular $K$ is closed. (Note by contrast that if $\cl{U}$ were  not finitely connected, there would be no closed transversal of $\connected(\C\butnot\cl{U})$.) Let $B$ be the connected component of $B_s(U,\delta)\butnot K$ containing $U$. Note that $P\nsubseteq B$ for each $P\in\connected(\C\butnot\cl{U})$.

The uniformization theorem guarantees that the universal cover of $B$ is conformally isomorphic to $\B$. Thus there exists a covering map $\basemeasure:\B\rightarrow B$. By the homotopy lifting principle, there exists a map $\psi:U\rightarrow\B$ such that $\basemeasure\circ\psi = \id$.

Clearly, we cannot set $\zeta := \basemeasure$ and $\xi := \psi$, for in this case we would have $\mult(\zeta) = \infty$ (unless $B$ was simply connected), contradicting (iii). Instead, we will prove the existence of a neighborhood $W$ of $\cl{\psi(U)}$ such that if $\zeta_W:\B\rightarrow W$ is a conformal isomorphism, then $\zeta := \basemeasure \circ \zeta_W$ and $\xi := \zeta_W^{-1} \circ \psi$ satisfy (i) - (iv).

In the following claim, (E) is most difficult step.
\begin{claim}
\label{claimpsiU}~
\begin{enumerate}[A)]
\item $\psi(U) \Kin \B$.
\item $\basemeasure\on B_h(\cl{\psi(U)},1)$ is uniformly locally injective
\item For all $p\in\cl{U}$, $\#(\basemeasure^{-1}(p)\cap\cl{\psi(U)})\leq 3$
\item There exists $0 < \delta \leq 1$ such that for all $p\in\C$, $\#(\basemeasure^{-1}(p)\cap B_h(\cl{\psi(U)},\delta))\leq 3$
\item $\cl{\psi(U)}$ is simply connected.
\item There exists $W$ a simply connected neighborhood of $\cl{\psi(U)}$ such that $W\implies B_h(\cl{\psi(U)},\delta)$.
\end{enumerate}
\end{claim}
\begin{proof}~
\begin{enumerate}[A)]
\item It suffices to show that for any sequence $(p_n)_{n\in\N}$ in $U$, the sequence $(\psi(p_n))_{n\in\N}$ has a cluster point. By the compactness of $\C$, we may without loss of generality suppose that $(p_n)_n$ converges, say to $p\in\cl{U}$.

We claim that $(\psi(p_n))_{n\in\N}$ has at least one and at most three cluster points. (The upper bound will be used in the proof of (C).) To see this, note that since $U$ is nice and since $\basemeasure$ is a covering map, there exists a neighborhood $A_p$ of $p$ for which $\#\connected(A_p\cap U)\leq 3$ and for which $\basemeasure^{-1}(A_p)$ is the disjoint union of open sets on which $\basemeasure$ is a homeomorphism. For each $Q\in\connected(A_p\cap U)$, $\psi(Q)$ is a connected subset of $\basemeasure^{-1}(A_p)$, and is thus entirely contained in some open set $V_Q\implies\basemeasure^{-1}(A_p)$ on which $\basemeasure$ is a homeomorphism. Now the cluster points of $(\psi(p_n))_{n\in\N}$ are exactly the points of the form $\basemeasure_{V_Q}^{-1}(p)$, where $Q$ contains a subsequence of $(p_n)_{n\in\N}$. There is at least one, and at most three.
\item
If $\sigma > 0$ is the Lebesgue number of the cover
\[
\{U\implies\C\text{ open}:\basemeasure\on U\text{ is injective}\}
\]
of $\cl{B_h}(\cl{\psi(U)},1)$, then $\basemeasure$ is $\sigma$-locally injective on $B_h(\cl{\psi(U)},1)$.
\item
By contradiction, suppose that $(x^{(i)})_{i=1}^4$ are distinct elements of $\basemeasure^{-1}(p)\cap\cl{\psi(U)}$. For $i=1,\ldots,4$ there exists a sequence $(p_n^{(i)})_{n\in\N}$ in $U$ such that $\psi(p_n^{(i)})\tendston x^{(i)}$. By combining these sequences, we find a sequence $(p_n)_n$ in $U$ such that $(\psi(p_n))_n$ has four cluster points, contradicting the proof of (A).
\item
By contradiction, suppose that for each $n\in\N$ there exists $p_n\in\C$ with four distinct preimages $x_n^{(i)}\in\basemeasure^{-1}(p_n)\cap B_h(\cl{\psi(U)},2^{-n})$, $i=1,\ldots,4$. Since $B_h(\cl{\psi(U)},1)$ is relatively compact in $\B$, we may assume without loss of generality that for $i=1,\ldots,4$, the sequence $(x_n^{(i)})_{n\in\N}$ converges to some point $x^{(i)}\in\B$. In fact $x^{(i)} \in \cap_{n\in\N}\cl{B_h}(\cl{\psi(U)},2^{-n}) = \cl{\psi(U)}$. By continuity $\basemeasure(x^{(i)}) = p := \lim_{n\rightarrow\infty}p_n$. Thus $x^{(i)} \in \basemeasure^{-1}(p) \cap \cl{\psi(U)}$ for $i=1,\ldots,4$. By the pigeonhole principle, (C) implies that there exist $i\neq j$ with $x^{(i)} = x^{(j)}$. Since $\basemeasure$ is locally injective, for $n$ sufficiently large we have $x_n^{(i)} = x_n^{(j)}$. This is a contradiction, since for each $n\in\N$ $(x_n^{(i)})_{i=1}^4$ were assumed to be distinct.
\item
Since $\cl{\psi(U)}$ is compact, it suffices to show that every connected component of $\B\setminus\cl{\psi(U)}$ is (hyperbolically) unbounded. By contradiction, suppose that $Q\in\connected\left(\B\setminus\cl{\psi(U)}\right)$ is bounded. Then $\cl{Q}$ is compact, and thus $\basemeasure(\cl{Q})$ is closed (as a subset of $\C$). By the open mapping theorem, $\basemeasure(Q)$ is open.

Thus
\begin{equation*}
\del\basemeasure(Q)
\implies \basemeasure(\cl{Q})\setminus\basemeasure(Q)
\implies \basemeasure(\del Q)
\implies \basemeasure(\cl{\psi(U)})
\implies \cl{U}
\end{equation*}
Thus $\basemeasure(Q)$ is relatively clopen in $\C\setminus\cl{U}$. Thus for each $P\in\connected(\C\setminus\cl{U})$, $P\implies \basemeasure(Q)$ or $P\cap \basemeasure(Q) = \emptyset$. But $\basemeasure(Q)\implies B\nsupseteq P$,\footnote{This is the only point in the argument where we use the hypothesis that $\C\butnot\cl{U}$ is finitely connected.} so $P\cap \basemeasure(Q) = \emptyset$ for each $P\in\connected(\C\setminus\cl{U})$. Thus $\basemeasure(Q)\implies\cl{U}$. Since $\basemeasure(Q)$ is open, $\basemeasure(Q)\cap U\neq\emptyset$; fix $x\in Q$ with $\basemeasure(x)\in U$. Let $p = \basemeasure(x)$.

Now $\basemeasure(\psi(p)) = p = \basemeasure(x)$, so there exists a deck transformation $\phi:\B\rightarrow\B$, $\basemeasure\circ\phi = \basemeasure$, such that $\phi(\psi(p)) = x$. We have $\phi(\psi(U)) \implies \B\butnot\cl{\psi(U)}$ connected with $\phi(\psi(U))\cap Q\neq\emptyset$, so $\phi(\psi(U))\implies Q$.

Let $Q_\infty$ be the unbounded component of $\B\setminus\cl{\psi(U)}$. Then $Q_\infty\cap \cl{Q} = \emptyset$, so
\begin{equation*}
Q_\infty
\implies \B\setminus \cl{Q}
\implies \B\setminus\cl{\phi(\psi(U))}
= \phi\left(\B\setminus\cl{\psi(U)}\right)
\end{equation*}
Since $Q_\infty$ is unbounded and connected, it is contained in the unbounded component $\phi(Q_\infty)$ of $\phi\left(\B\setminus\cl{\psi(U)}\right)$. Thus
\[
\phi\left(\B\setminus\cl{Q_\infty}\right) \implies \B\setminus\cl{Q_\infty}
\]
which is a bounded set. However, it is well-known that any conformal isomorphism of $\B$ which preserves a bounded set has a fixed point (in $\B$, rather than in the closure). But since $\phi$ is a deck transformation, the uniqueness of homotopy lifting implies that $\phi = \id$. But then $\psi(p)\in Q$, contradicting that $Q\in\connected\left(\B\setminus\cl{\psi(U)}\right)$.
\item Let
\[
V = B_k\left(\B\butnot B_h(\cl{\psi(U)},\delta),\delta/2\right) \implies \B\butnot\cl{\psi(U)}.
\]
Clearly, the hyperbolic area of each connected component of $V$ is bounded from below, and the total hyperbolic area of the bounded components is finite. Thus $V$ has only finitely many connected components. For each $Q\in\connected(V)$, by (E) there exists a closed connected set (the image of a path) $K_Q\implies \B\butnot\cl{\psi(U)}$ which contains points both in $Q$ and in the unbounded component of $V$. Let
\[
K = \cl{V} \cup \bigcup_{Q\in\connected(V)}K_Q;
\]
so that $K$ is closed and connected, and
\[
\B\setminus B_h(\cl{\psi(U)},\delta)\implies V\implies K\implies \B\butnot\cl{\psi(U)}.
\]
Taking complements,
\[
\cl{\psi(U)}\implies \B\butnot K\implies B_h(\cl{\psi(U)},\delta).
\]
Let $W$ be the connected component of $\B\butnot K$ containing $\psi(U)$. Since $K$ is connected, $W$ is simply connected. Thus we are done.
\end{enumerate}
\QEDmod\end{proof}

By the Riemann mapping theorem, there exists a conformal isomorphism $\zeta_W:\B\rightarrow W$. Let $\zeta = \basemeasure\circ\zeta_W$, and let $\xi = \zeta_W^{-1}\circ\psi$, so that $\zeta:\B \rightarrow B_s(U,\delta)$ and $\xi:U\rightarrow\B$. It is easily verified that $(\zeta,\xi)$ satisfy (i) - (iv). (Note that for (iv), we use the Schwarz-Pick inequality on the map $\zeta_W$.)
\end{proof}

\begin{proof}[Proof of Lemma \ref{lemmaspiderperturbation}]
Let $F_1 = \BP_T$, and let $F_2 = F$. Construction \ref{constructionspider} yields a nice set $U$ such that $\cl{U\cap X} = X$, $U\cap \BP_T=\emptyset$, and $\cl{U}\cap F=\emptyset$. Since $U$ contains no branch points of $T$ and is simply connected, the homotopy lifting principle (or alternatively, the Riemann-Hurwitz formula) implies that $\connected{T^{-1}}(U)$ consists of $d$ sets $V_1,\ldots,V_d$, each of multiplicity one.

\begin{claim}
\label{claimnice}
For $i=1,\ldots,d$, $V_i$ is a nice set.
\end{claim}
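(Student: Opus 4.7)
My plan is to verify the three properties defining niceness for $V_i$ in turn. The key fact underlying everything is that $V_i$ has multiplicity one as a component of $T^{-1}(U)$ and $U$ contains no branch points of $T$ (by Construction \ref{constructionspider} with $F_1=\BP_T$), so $T|_{V_i}:V_i\to U$ is a proper unramified degree-one holomorphic map, hence a conformal isomorphism. Property (i) is then immediate: $V_i$ is open as a component of the open set $T^{-1}(U)$, and inherits simple connectedness and hyperbolicity from $U$ via this conformal isomorphism.

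For property (iii), I must find, for any $x\in\C$, arbitrarily small neighborhoods $A$ of $x$ with $\#\connected(A\cap V_i)\leq 3$. The cases $x\in V_i$ and $x\notin\cl{V_i}$ are handled by shrinking into $V_i$ or away from $\cl{V_i}$ respectively. Otherwise $x\in\del V_i$; I first observe that $T(x)\in\del U$, because if $T(x)\in U$ then $x$ would lie in some open $V_j$ disjoint from $V_i$, contradicting $x\in\cl{V_i}$. Using niceness of $U$, I choose a connected $A'\in\BB_U^{(3)}$ containing $T(x)$ and arbitrarily small, and let $A$ be the connected component of $T^{-1}(A')$ containing $x$; then $T|_A:A\to A'$ is proper and surjective (by general properness of $T$ and the open mapping theorem, cf.\ Lemma \ref{lemmamiranda}), with $T(\del A)\subseteq\del A'$ and therefore $T(\del A)\cap A'=\emptyset$. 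The crux: for each component $Q$ of $A'\cap U$, the set $W_Q:=(T|_{V_i})^{-1}(Q)$ is a connected open subset of $V_i$ whose $T$-image $Q\subseteq A'$ is disjoint from $T(\del A)$, so $W_Q\cap\del A=\emptyset$, and connectedness of $W_Q$ forces $W_Q\subseteq A$ or $W_Q\cap A=\emptyset$. Therefore $A\cap V_i$ is the disjoint union of those $W_Q$ lying in $A$, giving at most $\#\connected(A'\cap U)\leq 3$ components.

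For property (ii), I will show $\C\setminus\cl{V_i}$ has finitely many connected components. Since $\del V_i\subseteq T^{-1}(\del U)$ (by the same reasoning as above), every $y\in\C\setminus\cl{V_i}$ falls into exactly one of three categories: $T(y)\notin\cl U$; $T(y)\in U$ with $y\in V_j$ for some $j\neq i$; or $T(y)\in\del U$ with $y\notin\cl{V_i}$. The first category is the open set $\C\setminus T^{-1}(\cl U)$, which has finitely many components because $\C\setminus\cl U$ does (by finite connectedness of $\cl U$) and $T$ is a $d$-to-one branched cover. The second contributes at most $d-1$ components, one per $V_j$ with $j\neq i$. The third is a relatively open subset of the finite union of real-analytic arcs $T^{-1}(\del U)$, hence has finitely many connected components. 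Combining, $\C\setminus\cl{V_i}$ is a union of finitely many pieces each with finitely many components, so $\cl{V_i}$ is finitely connected. The main obstacle throughout is property (iii), specifically the argument that $W_Q$ cannot wander out of $A$ and return; the trick of taking $A$ to be a connected component of $T^{-1}(A')$ (rather than an arbitrary small neighborhood of $x$) is what forces $T(\del A)\subseteq\del A'$ and makes the containment dichotomy work.
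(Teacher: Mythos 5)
Your parts (i) and (iii) are sound and essentially match the paper's argument. For (iii) you take $A$ to be the connected component of $T^{-1}(A')$ through $x$ rather than $T^{-1}(A')\cap B_x$ with $B_x$ chosen via Lemma \ref{lemmamiranda}; this is a mild variant but the key dichotomy ($W_Q\subseteq A$ or $W_Q\cap A=\emptyset$, because $Q$ avoids $T(\del A)$) is exactly the paper's argument.

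There is a genuine gap in (ii). You claim that the third category, a relatively open subset of the finite $1$-complex $T^{-1}(\del U)$, ``hence has finitely many connected components.'' That implication is false in general: a relatively open subset of even a single real-analytic arc (homeomorphic to an interval) can have infinitely many components, e.g.\ the complement of a Cantor set. Nothing in your argument rules out $\cl{V_i}\cap T^{-1}(\del U)$ being that pathological, so the claimed finiteness of the third category's components is unproved. Moreover, you do not actually need it. Since $\C\butnot\cl{V_i}$ is open, each of its components is an open set, and an open set cannot be contained in the nowhere-dense curve system $T^{-1}(\del U)$; hence every component of $\C\butnot\cl{V_i}$ meets category 1 or category 2, and (being a component of a superset) contains a full component of whichever one it meets. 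This gives the bound $\#\connected(\C\butnot\cl{V_i})\leq\#\connected(\C\butnot\cl{T^{-1}(U)})+(d-1)$ outright, which is the route the paper takes. Category 3 contributes nothing and its component structure is irrelevant. So the decomposition you chose is fine, but the inference ``union of finitely many pieces, each with finitely many components'' cannot be applied as stated, because the third piece's component count is neither known finite nor needed.
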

\begin{proof}~
\begin{itemize}
\item[i)]
Clearly $V_i$ is open. Since $V_i$ has multiplicity one, it is conformally isomorphic to $U$. In particular, $V_i$ is simply connected and hyperbolic, since these are conformal invariants.
\item[ii)]
Since $U$ is nice, $\cl{U}$ is finitely connected. If $P$ is a connected component of $\C\setminus\cl{U}$, then $T^{-1}(P)$ has at most $d$ connected components (exactly $d$ counting multiplicity). Thus $T^{-1}(\C\setminus\cl{U}) = \C\setminus\cl{T^{-1}(U)}$ has finitely many connected components.

Now $T^{-1}(U) = \cup_{i=1}^d V_i$. In particular, $T^{-1}(U)\setminus\cl{V_i} = \cup_{j\neq i}V_j$ has exactly $d-1$ connected components.

Each connected component of $\C\setminus\cl{V_i}$ is open, and therefore intersects either $\C\setminus\cl{T^{-1}(U)}$ or $T^{-1}(U)\setminus\cl{V_i}$ nontrivially, and in fact contains some connected component of the set that it intersects. Thus the number of connected components of $\C\setminus\cl{V_i}$ is at most the number of connected components of $\C\setminus\cl{T^{-1}(U)}$ plus the number of connected components of $T^{-1}(U)\setminus\cl{V_i}$; in particular, this number is finite. Thus $\cl{V_i}$ is finitely connected.
\item[iii)]
Fix $x\in\C$ and $B_x\implies\C$ a neighborhood of $x$. We want to show that there exists $A_x\in\BB_{V_i}^{(3)}$ such that $x\in A_x\implies B_x$. By Lemma \ref{lemmamiranda}, we may without loss of generality assume that $T(\del B_x)\cap T(B_x) = \emptyset$.

Since $U$ is nice, there exists $A_{T(x)}\in\BB_U^{(3)}$ such that $T(x)\in A_{T(x)}\implies T(B_x)$. Let
\[
A_x = T^{-1}(A_{T(x)})\cap B_x.
\]
We have $x\in A_x\implies B_x$. We claim that $A_x\in\BB_{V_i}^{(3)}$ i.e. $\#(\connected(A_x\cap V_i))\leq 3$. Now
\[
A_x\cap V_i = \bigcup_{Q\in\connected(A_{T(x)}\cap U)}T^{-1}(Q)\cap B_x\cap V_i.
\]
Since $A_{T(x)}\in\BB_U^{(3)}$, $\#(\connected(A_{T(x)}\cap U))\leq 3$. Thus it suffices to show that for each $Q\in\connected(A_{T(x)}\cap U)$, $T^{-1}(Q)\cap B_x\cap V_i$ is connected.

Consider $T_{V_i}^{-1}:U\rightarrow V_i$ the inverse branch of $T$ corresponding to $V_i$. Now, $T^{-1}(Q)\cap V_i = T_{V_i}^{-1}(Q)$ is connected, being the continuous image of a connected set. Thus it suffices to show that $T_{V_i}^{-1}(Q)\implies B_x$ or $T_{V_i}^{-1}(Q)\cap B_x = \emptyset$. To see this, note that
\begin{align*}
Q &\implies U\cap A_{T(x)}\\
&\implies U\cap T(B_x)\\
&\implies U\setminus T(\del B_x)\\
T_{V_i}^{-1}(Q)
&\implies T_{V_i}^{-1}(U\butnot T(\del B_x))\\
&\implies V_i\butnot\del B_x\\
&= V_i\butnot\cl{B_x} \cup (V_i\cap B_x)
\end{align*}
Since $T_{V_i}(Q)$ is connected, this completes the proof.
\end{itemize}
\QEDmod\end{proof}

The next logical step would be to apply Lemma \ref{lemmaerrorfix} to get $(\zeta_i)_{i=1}^d$. However, we will instead delay this step and instead perform a somewhat more complicated logical maneuver: We will first prove the existence of a number $\delta > 0$ such that when Lemma \ref{lemmaerrorfix} is applied, the resulting sequence $(\zeta_i)_{i=1}^d$ satisfies (\ref{multzetabounds}). To this end we will prove the following:

\begin{claim}~
\[
\mult(\cl{V_i})_{i=1}^d\leq 3r
\]
\end{claim}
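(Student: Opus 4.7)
My plan is to fix a point $p \in \C$, let $S := \{i : p \in \cl{V_i}\}$, and prove directly that $|S| \leq 3r$ by counting connected components of $W \cap T^{-1}(U)$ on a small neighborhood $W$ of $p$ in two ways. The key inputs will be (a) the niceness of $U$, which gives small neighborhoods $A_q$ of $q := T(p)$ with at most three components of $A_q \cap U$, and (b) the hypothesis $\mult_T(p) \leq r$, which (combined with Lemma \ref{lemmamiranda}) gives a bound on how many components lift under a proper map of degree $k = \mult_T(p)$.

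The first step is to observe that if $S \neq \emptyset$, then $p \in \cl{V_i} \implies T(p) \in \cl{U}$, and property (ii) of Construction \ref{constructionspider} yields $T(p) \notin F_2 = F$. Hence by hypothesis of Lemma \ref{lemmaspiderperturbation} we have $\mult_T(p) \leq r$. Next, apply Lemma \ref{lemmamiranda} to obtain a neighborhood $B_p$ of $p$ with $T|_{B_p} : B_p \to T(B_p)$ proper of degree $k := \mult_T(p)$; since the $V_j$ are finite in number and disjoint from $p$ for $j \notin S$, I can shrink $B_p$ further so that $B_p \cap V_j = \emptyset$ for all $j \notin S$.

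Now I invoke property (iii) of niceness: $\BB_U^{(3)}$ is a basis of topology, so there is an open $A_q \subseteq T(B_p)$ with $q \in A_q$ and $\#\connected(A_q \cap U) \leq 3$. Letting $W := (T|_{B_p})^{-1}(A_q)$, the map $T|_W : W \to A_q$ is again proper of degree $k$. For each of the at most three components $Q_1, \ldots, Q_m$ of $A_q \cap U$, the preimage $T^{-1}(Q_j) \cap W$ breaks into connected components each of which maps properly onto $Q_j$ with positive degree, and the degrees sum to $k$, so there are at most $k$ such components. Hence
\[
\#\connected(W \cap T^{-1}(U)) = \sum_{j=1}^m \#\connected(T^{-1}(Q_j) \cap W) \leq m k \leq 3 r.
\]
On the other hand, $W \cap T^{-1}(U) = \bigsqcup_{i \in S} (W \cap V_i)$ (disjoint because the $V_i$ are disjoint, and restricted to $S$ by the shrinking of $B_p$), and for every $i \in S$ the set $W \cap V_i$ is a nonempty open set (since $W$ is a neighborhood of $p$ and $p \in \cl{V_i}$), contributing at least one connected component. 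Therefore $|S| \leq \#\connected(W \cap T^{-1}(U)) \leq 3r$, which is exactly the claim.

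The only delicate point I expect is verifying cleanly that a proper holomorphic map of degree $k$ between open subsets of $\C$ sends the preimage of a connected open set to at most $k$ components; this is standard (each component maps onto the target with some positive covering degree, and the degrees add to $k$), but it is the one place where ``proper of degree $k$'' has to be used rather than just ``$k$-to-$1$ counting multiplicity,'' and it is where Lemma \ref{lemmamiranda} (ensuring $T(\del B_p) \cap T(B_p) = \emptyset$) is essential.
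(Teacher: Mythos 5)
Your proof is correct and is essentially the paper's own argument: both fix a point, apply Lemma~\ref{lemmamiranda} to get a neighborhood on which $T$ is proper of degree $k=\mult_T(p)\leq r$, invoke the basis property~(iii) of niceness to find a neighborhood of $T(p)$ meeting $U$ in at most three components, count preimage components to get the bound $3k\leq 3r$, and note that each $V_i$ with $p\in\cl{V_i}$ contributes a distinct component. The only cosmetic differences are that you handle the case $S=\emptyset$ explicitly and make visible the step $T(p)\in\cl{U}\Rightarrow T(p)\notin F\Rightarrow\mult_T(p)\leq r$, which the paper leaves implicit.
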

\begin{proof}
Fix $x\in\C$. By Lemma \ref{lemmamiranda}, there exists a neighborhood $B_x$ of $x$ such that
\[
T_{B_x} := T\on B_x:B_x\rightarrow T(B_x)
\]
is proper of degree $k := \mult_T(x)$. Since $U$ is nice, there exists $A_{T(x)}\in\BB_U^{(3)}$ with $T(x)\in A_{T(x)}\implies T(B_x)$.

As above, let
\[
A_x = T^{-1}(A_{T(x)})\cap B_x = T_{B_x}^{-1}(A_{T(x)});
\]
$A_x$ is a neighborhood of $x$. Now for each $Q\in\connected(A_{T(x)}\cap U)$, the set $T_{B_x}^{-1}(Q) = T^{-1}(Q)\cap B_x$ has at most $k$ connected components (exactly $k$ counting multiplicity). Thus
\begin{equation*}
A_x\cap T^{-1}(U)
= T_{B_x}^{-1}(A_{T(x)} \cap U)
= \bigcup_{Q\in\connected(A_{T(x)}\cap U)}T_{B_x}^{-1}(Q)
\end{equation*}
has at most $3k\leq 3r$ connected components.

For each $i=1,\ldots,d$, if $x\in\cl{V_i}$ then $V_i$ contains a connected component of $A_x\cap T^{-1}(U)$. Thus $\mult_x(\cl{V_i})_{i=1}^d \leq 3r$. Since $x$ was arbitrary, we are done.
\QEDmod\end{proof}

\begin{claim}
\label{claimmultViepsilon}
There exists $\delta > 0$ such that $\mult(B_s(V_i,\delta))_{i=1}^d\leq 3r$.
\end{claim}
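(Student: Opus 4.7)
The plan is to argue by contradiction, reducing the claim to the previous one via a standard compactness plus pigeonhole argument. First I would unpack what $\mult(B_s(V_i,\delta))_{i=1}^d$ means: since each $B_s(V_i,\delta)$ is an honest open subset of $\C$ (multiplicity one as a ``multiset''), the quantity $\mult_x(B_s(V_i,\delta))_{i=1}^d$ at a point $x\in\C$ is simply $\#\{i\in\{1,\ldots,d\}:x\in B_s(V_i,\delta)\}=\#\{i:\dist_s(x,V_i)<\delta\}$. So the claim is equivalent to: there exists $\delta>0$ such that no point of $\C$ lies within spherical distance $\delta$ of more than $3r$ of the sets $V_1,\ldots,V_d$.

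Now suppose, for contradiction, that no such $\delta$ exists. Then for every $n\in\N$ there is a point $x_n\in\C$ and a set of distinct indices $I_n\implies\{1,\ldots,d\}$ with $\#(I_n)=3r+1$ such that $\dist_s(x_n,V_i)<1/n$ for every $i\in I_n$. Since there are only finitely many subsets of $\{1,\ldots,d\}$ of size $3r+1$, the pigeonhole principle lets us pass to a subsequence along which $I_n=I$ is constant, and by compactness of $\C$ we may extract a further subsequence along which $x_n\tendston x$ for some $x\in\C$.

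For each $i\in I$, continuity of the distance function then gives
\[
\dist_s(x,V_i) \leq \dist_s(x,x_n)+\dist_s(x_n,V_i) \leq \dist_s(x,x_n)+1/n \tendston 0,
\]
so $x\in\cl{V_i}$. Therefore $\#\{i:x\in\cl{V_i}\}\geq\#(I)=3r+1$, which means $\mult_x(\cl{V_i})_{i=1}^d\geq 3r+1$, contradicting the previous claim that $\mult(\cl{V_i})_{i=1}^d\leq 3r$. There is no real obstacle; the only subtlety is the interpretation of $\mult$ when applied to a collection of open (non-multi) sets, which is why I would make that reduction explicit at the start.
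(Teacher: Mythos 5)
Your argument is correct and follows essentially the same compactness-plus-pigeonhole contradiction used in the paper: pass to a subsequence where the overlapping index set stabilizes and $x_n$ converges, then note that the limit $x$ lies in $\cl{V_i}$ for each of those indices, contradicting the bound $\mult(\cl{V_i})_{i=1}^d\leq 3r$. The only cosmetic difference is your explicit choice of a size-$(3r+1)$ subset $I_n$ before pigeonholing, which is a slightly cleaner way of saying what the paper abbreviates with ``without loss of generality.''
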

\begin{proof}
By contradiction, suppose that for all $n\in\N$ there exists $x_n\in\C$ such that
\[
\mult_{x_n}(B_s(V_i,2^{-n}))_{i=1}^d > 3r.
\]
Without loss of generality, suppose that $(x_n)_n$ converges, say to $x\in\C$. Again without loss of generality, we may suppose that $I := \{i=1,\ldots,d:x_n\in B_s(V_i,2^{-n})\}$ is independent of $n$. Taking limits, we find that $x\in\cl{V_i}$ for all $i\in I$. But then $\#(I)\leq\mult(\cl{V_i})_{i=1}^d\leq 3r$, contradicting that $\mult_{x_n}(B_s(V_i,2^{-n}))_{i=1}^d>3r$ for all $n$.
\QEDmod\end{proof}

For each $i=1,\ldots,d$ we now apply Lemma \ref{lemmaerrorfix} to $V_i$ and $\delta$; the result is a pair of maps $\zeta_i:\B\rightarrow B_s(V_i,\delta)$ and $\wtilde{\xi_i}:V_i\rightarrow\B$. Let $\xi_i = \wtilde{\xi_i}\circ T_{V_i}^{-1}$, so that (i) - (iv) of Lemma \ref{lemmaerrorfix} become
\begin{itemize}
\item[i)] $\zeta_i\circ\xi_i = T_{V_i}^{-1}$
\item[ii)] $\xi_i(U) \Kin \B$
\item[iii)] $\mult(\zeta_i)\leq 3$
\item[iv)] $\zeta_i$ is uniformly locally injective
\end{itemize}
Combining Claim \ref{claimmultViepsilon} and (iii) above yields (\ref{multzetabounds}).

For each $\delta_0 > 0$ let $\BBB_{\delta_0}$ be the neighborhood of $T$ guaranteed by Lemma \ref{lemmamorecomplex}. We claim that if $\delta_0$ is sufficiently small, then the conclusion of Lemma \ref{lemmaspiderperturbation} is satisfied for all $S\in\BBB_{\delta_0}$.

To this end, fix $S\in\BBB_{\delta_0}$. Let $\Phi_\BP:\BP_T\rightarrow\BP_S$ be the bijection guaranteed by Lemma \ref{lemmamorecomplex}. From the construction of $U$ given in Construction \ref{constructionspider}, we have $\infty\notin\BP_T$. Thus if $\delta_0$ is sufficiently small, then $\delta_0 < \dist_s(\infty,\BP_T)$, so that $\infty \notin \BP_S$. Let
\[
U_S := U \butnot \bigcup_{p\in\BP_T}[p,\Phi_\BP(p)]
\]
From the construction of $U$, it follows that if $\delta_0$ is sufficiently small, then $U_S$ is open, simply connected, and hyperbolic, and that (i) and (ii) of Lemma \ref{lemmaspiderperturbation} are satisfied.

Let $\Phi_\infty: T^{-1}(\infty)\rightarrow S^{-1}(\infty)$ be the bijection guaranteed by Lemma \ref{lemmamorecomplex}. For each $i=1,\ldots,d$, let $z_i = \Phi_\infty(\zeta_i\circ\xi_i(\infty))$, so that $S^{-1}(\infty) = \{z_1,\ldots,z_n\}$ (as multisets). By the homotopy lifting principle, there exists a unique map $\eta_{i,S}:U_S\rightarrow\C$ so that $S\circ \eta_{i,S} = \id$ and $\eta_{i,S}(\infty) = z_i$. Thus $\{\eta_{i,S}:i=1,\ldots,d\} = \I(\id\on U_S<T)$.

\begin{figure}
\centerline{\mbox{\includegraphics[scale=1.2]{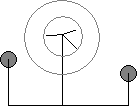}}}
\caption{The set $U_S\butnot B_e(\BP_T,\delta_1) = U\butnot B_e(\BP_T,\delta_1)$ is connected. Every connected component of $U_S\cap B_e(\BP_T,2\delta_1)$ intersects $U_S\butnot B_e(\BP_T,\delta_1)$.}
\label{figureUtwo}
\end{figure}

Fix $\delta_2 > 0$. By Lemma \ref{lemmacomplex}, there exists $\delta_1 > 0$ small enough so that for all $p,q\in B_e(\BP_T,2\delta_1)$, there exists a bijection $\Phi_{p,q}:T^{-1}(p)\rightarrow T^{-1}(q)$ satisfying (\ref{closetoid}) for all $x\in T^{-1}(p)$. From the construction of $U$, it is clear that if $\delta_1$ is sufficiently small, then $U\butnot B_e(\BP_T,\delta_1)$ is connected. Now assume that $\delta_0$ is small enough so that
\begin{equation*}
3\delta_0 \leq \inf_{p\in\C\butnot B_e(\BP_T,\delta_1)}\diam_d(T^{-1}(p))
\end{equation*}
(Note that $\delta_0$ now depends on $\delta_1$, and indirectly on $\delta_2$.) Fix $p\in U\butnot B_e(\BP_T,\delta_1)$, and let $\Phi_p:T^{-1}(p)\rightarrow S^{-1}(p)$ be the bijection guaranteed by Lemma \ref{lemmamorecomplex}. For all $x\in T^{-1}(p)$ and for all $y\in S^{-1}(p)$,
\begin{itemize}
\item If $y = \Phi_p(x)$, then $\dist_s(x,y)\leq \delta_0$
\item If $y \neq \Phi_p(x)$, then $\dist_s(x,y)\geq \dist_s(x,\Phi_p^{-1}(y)) - \delta_0 \geq 2\delta_0$
\end{itemize}
In particular, for $i=1,\ldots,d$, we have $\dist_s(\zeta_i\circ\xi_i(p),\eta_{i,S}(p))\notin (\delta_0,2\delta_0)$.

Now, if $\delta_0$ is sufficiently small, we have $B_s(\BP_T,\delta_0)\implies B_e(\BP_T,\delta_1)$, so that $U_S\butnot B_e(\BP_T,\delta_1) = U\butnot B_e(\BP_T,\delta_1)$; in particular, this set is connected. Furthermore
\[
\dist_s(\zeta_i\circ\xi_i(\infty),\eta_{i,S}(\infty)) = \dist_s(\zeta_i\circ\xi_i(\infty),\Phi_\infty(\zeta_i\circ\xi_i(\infty))) \leq \delta_0;
\]
thus we have
\begin{equation}
\label{zetaxieta}
\dist_s(\zeta_i\circ\xi_i(p),\eta_{i,S}(p)) \leq \delta_0
\end{equation}
for all $p\in U_S\butnot B_e(\BP_T,\delta_1)$.

From the construction of the set $U_S$, it is clear that each connected component $Q$ of $U_S\cap B_e(\BP_T,2\delta_1)$ intersects $U_S\butnot B_e(\BP_T,\delta_1)$, say at $p\in Q$. Let $q\in\BP_T$ be such that $\dist_e(p,q)\leq 2\delta_1$. Let $x = \Phi_{p,q}(\zeta_i\circ\xi_i(p))\in T^{-1}(q)$, so that by (\ref{closetoid}), we have $\dist_s(x,\zeta_i\circ\xi_i(p))\leq\delta_2$. Thus by (\ref{zetaxieta}), we have $\dist_s(x,\eta_{i,S}(p)) \leq \delta_0 + \delta_2$.

By a connectedness argument similar to the previous three paragraphs, it can be shown that if $\delta_2$, $\delta_0$ are sufficiently small, then $\dist_s(x,\zeta_i\circ\xi_i(p)) , \dist_s(x,\eta_{i,S}(p)) \leq \delta_0 + \delta_2$ for all $p\in Q$. Thus for all $p\in U_S$, we have
\begin{equation}
\label{zetaxietatwo}
\dist_s(\zeta_i\circ\xi_i(p),\eta_{i,S}(p)) \leq 2(\delta_0 + \delta_2)
\end{equation}
Let $\sigma > 0$ be the constant of uniform local injectivity for $\zeta_i$. Since $\zeta_i$ is an open mapping, we have that if $\delta_2$ and $\delta_0$ are sufficiently small, then for all $x\in\xi_i(U)\Kin\B$,
\[
B_s(\zeta_i(x),2(\delta_0 + \delta_2)) \implies \zeta_i(B_h(x,\sigma_4)).
\]
Combining with (\ref{zetaxietatwo}), we have that for all $p\in U_S$,
\begin{equation}
\label{etazetaxigamma}
\eta_{i,S}(p) \in \zeta_i(B_h(\xi_i(p),\sigma/4)).
\end{equation}
Since $\zeta_i$ is $\sigma$-locally injective, it is injective when restricted to the ball $B_h(\xi_i(p),\sigma/2)$. Let $\xi_{i,S}(p)$ be the unique inverse of $\eta_{i,S}(p)$ under $\zeta_i\on B_h(\xi_i(p),\sigma/2)$, so that $\eta_{i,S} = \zeta_i\circ\xi_{i,S}$.

We claim that the map $\xi_{i,S}$ is holomorphic, and in particular continuous. To this end, fix $p,q\in U_S$ so that
\begin{equation*}
\dist_h(\xi_i(p),\xi_i(q)) \leq \sigma/4\\
\end{equation*}
Then
\begin{equation*}
\zeta_i(\xi_{i,S}(q))
=\eta_{i,S}(q)  
\in \zeta_i(B_h(\xi_i(q),\sigma/4))
\implies \zeta_i(B_h(\xi_i(p),\sigma/2)).
\end{equation*}
By the injectivity of $\zeta_i\on B_h(\xi_i(p),\sigma/2)$, we have $\xi_{i,S}(q)\in B_h(\xi_i(p),\sigma/2)$. This is true for all $q$ sufficiently close to $p$. Thus the formula
\[
\xi_{i,S} = (\zeta_i)_{B_h(\xi_i(p),\sigma/2)}^{-1}\circ \eta_{i,S}
\]
holds in a neighborhood of $p$. Thus $\xi_{i,S}$ is holomorphic near $p$. Since $p\in U_S$ was arbitrary, $\xi_{i,S}$ is holomorphic.

We now show (iii) - (v) of Lemma \ref{lemmaspiderperturbation}:
\begin{itemize}
\item[iii)] $S\circ\zeta_i\circ\xi_{i,S} = S\circ\eta_{i,S} = \id$.
\item[iv)] $\diam_h(\xi_{i,S}(U_S))\leq C := \diam_h(\xi_i(U)) + \sigma/2$ which is finite and independent of $S$
\item[v)] For all $p\in U_S$,
\begin{equation*}
S^{-1}(p) = \{\eta(p): \eta\in\I(\id\on U_S,T)\} = \{\zeta_i\circ\xi_{i,S}(p): i=1,\ldots,d\}
\end{equation*}
\end{itemize}
Thus if $\delta_0$, $\delta_1$, and $\delta_2$ are sufficiently small, we are done.
\end{proof}

\end{section}

\begin{section}{Proof of Theorem \ref{theoremcondition}}
In this section, we fix $\AA\implies\RR\butnot\RR_1$ and $F\implies\C$ finite with $\AA(F)\implies F$ satisfying the hypotheses of Theorem \ref{theoremcondition}, i.e. for all $\ell\in\N$, for all $T\in\AA^\ell$, and for all $p\in\FP_T$,
\begin{itemize}
\item[i)] $p\in\RP_T$ implies $p\in F$
\item[ii)] $p\in F$ implies $T_*(p)<1$, i.e. $p$ is an attracting fixed point of $T$.
\end{itemize}
We furthermore fix $\tau < 1$.

Let $B\implies\C$ a neighborhood of $F$ and $\BBB_1\implies\RR$ a neighborhood of $\AA$ be given by Lemma \ref{lemmaB}. Let $K = X = \C\butnot B$. Let $r\in\N$ be as in Lemma \ref{lemmaR}. Let $\inc\in\N$, $\delta_3 > 0$, and $\BBB_2\implies\RR$ a neighborhood of $\AA$ be given by Lemma \ref{lemmaL}.

Fix $\ell\in\N$; $\ell$ will be defined later, and will depend only on $r$, $D := \max_\AA(\deg)$, and $\tau$.

Fix $T\in\AA^\ell$. By Lemma \ref{lemmaR}, $\mult_T(p)\leq r$ for all $p\in\C\butnot T^{-1}(F)$. Let $d_T = \deg(T)$; Lemma \ref{lemmaspiderperturbation} applies. The results are $C_T < \infty$, $\sigma_T>0$, $\BBB_T\implies\RR_{d_T}$ a neighborhood of $T$, and $(\zeta_i^{(T)})_{i=1}^{d_T}$ a collection of $\sigma$-locally injective holomorphic maps from $\B$ to $\C$ satisfying (\ref{multzetabounds}). Let
\begin{align*}
C_5 &= \max_{T\in\AA^\ell}C_T < \infty\\
\sigma &= \min_{T\in\AA^\ell}\sigma_T > 0\\
\BBB_\ell &= \bigcup_{T\in\AA^\ell}\BBB_T \implies \RR,
\end{align*}
so that $\BBB_\ell$ is a neighborhood of $\AA^\ell$. Let $\BBB \Kin \BBB_1\cap\BBB_2$ be a neighborhood of $\AA$ such that $\BBB^\ell \Kin \BBB_\ell$.

Fix $C_1 < \infty$ and $\alpha > 0$. Let
\[
H := \sup_\BBB(T_*) < \infty.
\]
Fix $M < \infty$ and $\gamma$ a modulus of continuity; $M$ and $\gamma$ will be defined later, and will depend only on
\[
\ell, \inc, C_1, C_5, r, D, H, \tau, \alpha, \sigma, \delta_3.
\]
(We call these ``the parameters''.) We claim that the conclusion of Theorem \ref{theoremcondition} is satisfied for $(M,\gamma,B,\BBB)$, i.e. we claim that if $(T_j)_{j\in\N}$ is a sequence of rational maps in $\BBB$, and if $(\phi_j)_{j\in\N}$ is a sequence of potential functions satisfying (\ref{C1isabound}) and (\ref{tauisabound}) for all $j\in\N$, then for all $n\in\N$ (\ref{Misaboundzero}) and (\ref{gammaisaboundzero}) hold.

To this end, suppose that $(T_j)_{j\in\N}$ is a sequence of rational maps in $\BBB$, and suppose that $(\phi_j)_{j\in\N}$ is a sequence of potential functions satisfying (\ref{C1isabound}) and (\ref{tauisabound}) for all $j\in\N$. Since $\BBB\implies\BBB_1$, we have $T_j(B)\implies B$ for all $j\in\N$; thus $(T_j)^{-1}(X)\implies X$.

Let $c = \tau^{1/3}$, so that $\tau < c^2 < 1$. Let $\beta = \infty$ and let $C_2 = 0$, so that (\ref{C1isabound}) and (\ref{tauisabound}) imply (\ref{C1isaboundmod}) and (\ref{tauisaboundmod}) for $j = 0,\ldots,n - 1$. By the definition of $D$ given above, (\ref{Disaboundmod}) holds. Thus we are well on our way towards being able to apply the inverse branch formalism (Definition \ref{definitionZAB}). However, the maps $(\zeta_i)_{i=1}^m$ will be different in the proof of (\ref{Misaboundzero}) and in the proof of (\ref{gammaisaboundzero}).

\begin{proof}[Proof of (\ref{Misaboundzero}):]

We will show (\ref{Misaboundzero}) by induction; we describe first the inductive step, since the base cases will become clear once $M$ is established. To this end, we fix $n\in\N$, and assume that (\ref{Misaboundzero}) holds for all $j = 0,\ldots,n - 1$. We will show that (\ref{Misaboundzero}) holds for $N := n + \ell$. This is a sort of ``jump induction''. The appropriate base cases for this type of induction are the cases $n = 0,\ldots,\ell - 1$.

Since $T_j\in\BBB$ for $j = n,\ldots,N - 1$, we have $T_n^N\in\BBB^\ell\implies\BBB_\ell$, so there exists $S_n\in\AA^\ell$ such that $T_n^N\in\BBB_{S_n}$. By Lemma \ref{lemmaspiderperturbation}, there is a simply connected hyperbolic open set $U_N\implies\C$ satisfying conditions (i) and (ii) of Lemma \ref{lemmaspiderperturbation}, with $S = T_n^N$. Let $m = d_{S_n} = \deg(T_n^N)$. For each $i = 1,\ldots,m$ there is a holomorphic map $\xi_i:U_N\rightarrow\B$ satisfying (iii) - (v) of Lemma \ref{lemmaspiderperturbation}, with $S = T_n^N$ and $C = C_5$.

For each $i=1,\ldots,m$, let $\zeta_i := \zeta_i^{(S_n)}$. We have now defined all objects required for the inverse branch formalism (Definition \ref{definitionZAB}). Apply this formalism to get the operator $A_0^n$.

By the inductive hypothesis, (\ref{Misaboundzero}) holds for $j=0,\ldots,n$; by the backwards invariance of $X$, it holds for $\wtilde{X} := T_n^j(X)$ as required in Corollary \ref{corollaryZAB}. Thus we may apply Corollary \ref{corollaryZAB}. Note that the $r$ in this corollary is not the same as our $r$, but by (\ref{multzetabounds}) we may substitute $r = 9r$ into (\ref{C5def}), and (\ref{ZAB}) becomes a true formula.

For each $i=1,\ldots,m$, we have that $\zeta_i$ is $\sigma$-locally injective, and so we may apply Lemma \ref{lemmahyperbolic}. Thus (\ref{Aislessthan}) holds for all $x,y\in\xi_i(U_N)$. Furthermore, for each $x,y\in\xi_i(U_N)$, (iv) of Lemma \ref{lemmaspiderperturbation} gives that $\dist_h(x,y)\leq C_5$. Thus if we let $g = \one$, then we have
\begin{equation}
\label{Aislessthannew}
\sup_{\xi_i(U_N)}(A_0^n[\one])_i \leq e^{C_4 C_5^\alpha} \inf_{\xi_i(U_N)}(A_0^n[\one])_i.
\end{equation}
By (v) of Lemma \ref{lemmaspiderperturbation}, we have $T_N^n(p) = \{\zeta_1(x_1),\ldots,\zeta_n(x_n)\}$ for all $p\in U_N$. Thus we can rewrite (\ref{Ldef}) on $U_N$:
\begin{equation*}
L_0^N[\one] = \sum_{i=1}^m\left(e^{\phi_n^N}L_0^n[\one]\right)\circ\zeta_i\circ\xi_i
\end{equation*}
By (i) of Lemma \ref{lemmaspiderperturbation}, we have $\cl{U_N\cap X} = X$. Thus taking extrema over $U_N\cap X$ gives
\begin{align*}
\sup_X L_0^N[\one] &\leq \sum_{i=1}^m e^{\sup(\phi_n^N)}\sup_{\xi_i(U_N\cap X)}\left(L_0^n[\one]\circ\zeta_i\right)\\
\inf_X L_0^N[\one] &\geq \sum_{i=1}^m e^{\inf(\phi_n^N)}\inf_{\xi_i(U_N\cap X)}\left(L_0^n[\one]\circ\zeta_i\right)
\end{align*}
Multiplying (\ref{ZAB}) [with $f = \one$] by (\ref{tauisabound}) yields
\begin{align*}
\sum_{i=1}^m e^{\sup(\phi_n^N)}\sup_{\zeta_i^{-1}(X)}[L_0^n[f]\circ\zeta_i - (A_0^n[f])_i] &\leq  \tau^\ell\inf(L_n^N[\one])C_3 e^M\inf_X L_0^n[f]\\
&\leq C_3 e^M \tau^\ell\inf_X L_0^N[\one].
\end{align*}
Now, the backwards invariance of $X$ together with (iii) of Lemma \ref{lemmaspiderperturbation} imply that $\xi_i(U_N\cap X) \implies \zeta_i^{-1}(X)$. Thus we have
\begin{equation}
\label{leavingoffinadifferenttheorem}
\sup_X L_0^N[\one] \leq C_3 e^M \tau^\ell\inf_X L_0^N[\one] + \sum_{i=1}^m e^{\sup(\phi_n^N)}\sup_{\xi_i(U_N\cap X)}(A_0^n[\one])_i.
\end{equation}
We concentrate on the last term. By (\ref{C1isabound}) and (\ref{Aislessthannew}),
\begin{align*}
\sum_{i=1}^m e^{\sup(\phi_n^N)}\sup_{\xi_i(U_N\cap X)}(A_0^n[\one])_i
&\leq \sum_{i=1}^m e^{\ell C_1 (\pi/2)^\alpha}e^{\inf(\phi_n^N)}e^{C_4 C_5^\alpha}\inf_{\xi_i(U_N\cap X)}(A_0^n[\one])_i\\
&\leq e^{\ell C_1 (\pi/2)^\alpha + C_4 C_5^\alpha}\sum_{i=1}^m e^{\inf(\phi_n^N)}\inf_{\xi_i(U_N\cap X)}L_0^n[\one]\circ\zeta_i\\
&\leq e^{\ell C_1 (\pi/2)^\alpha + C_4 C_5^\alpha}\inf_X L_0^N[\one].
\end{align*}
Recombining with (\ref{leavingoffinadifferenttheorem}), we have
\[
\sup_X L_0^N[\one] \leq \left(C_3 e^M \tau^\ell + e^{\ell C_1 (\pi/2)^\alpha + C_4 C_5^\alpha}\right)\inf_X L_0^N[\one].
\]
Dividing both sides by $\inf_X(L_0^N[\one])$ yields
\begin{equation*}
e^{\|\ln(L_0^N[\one])\|_{\osc,X}} \leq C_3 e^M \tau^\ell + e^{\ell C_1 (\pi/2)^\alpha + C_4 C_5^\alpha}.
\end{equation*}
Rearranging yields
\[
e^{\|\ln(L_0^N[\one])\|_{\osc,X}} - \frac{e^{\ell C_1 (\pi/2)^\alpha + C_4 C_5^\alpha}}{1 - \tau^\ell C_3} \leq \tau^\ell C_3 \left[e^M - \frac{e^{\ell C_1 (\pi/2)^\alpha + C_4 C_5^\alpha}}{1 - \tau^\ell C_3}\right]
\]
If $\ell$ is sufficiently large, we have $\tau^\ell C_3 < 1$. In particular, this choice of $\ell$ can be made using only the information of $r$, $D$, and $\tau$. (In fact, $C_3$ depends only on the variables stated plus $c$, which depends only on $\tau$, and $C_2$ and $\beta$, which are in fact constants in this proof.) It is crucial here that $C_3$ does not depend on $C_5$ or on $\sigma$, since each of these depends indirectly on $\ell$.

Thus we have
\[
e^{\|\ln(L_0^N[\one])\|_{\osc,X}} - \frac{e^{\ell C_1 (\pi/2)^\alpha + C_4 C_5^\alpha}}{1 - \tau^\ell C_3} \leq \max\left(0,e^M - \frac{e^{\ell C_1 (\pi/2)^\alpha + C_4 C_5^\alpha}}{1 - \tau^\ell C_3}\right).
\]
Solving for $\|\ln(L_0^N[\one])\|_{\osc,X}$ yields
\begin{equation*}
\|\ln(L_0^N[\one])\|_{\osc,X}
\leq \max\left(\ell C_1 (\pi/2)^\alpha + C_4 C_5^\alpha - \ln(1 - \tau^\ell C_3),M\right)
\end{equation*}
Let
\[
M = \ell C_1 (\pi/2)^\alpha + C_4 C_5^\alpha - \ln(1 - \tau^\ell C_3),
\]
so that we have completed the inductive step. As promised, $M$ depends only on the parameters.

To prove the base case, we fix $n = 0,\ldots,\ell - 1$. Now
\begin{equation*}
\|\ln(L_0^n[\one])\|_\osc \leq \|\phi_0^n\|_\osc \leq \ell C_1 (\pi/2)^\alpha \leq M,
\end{equation*}
completing the proof of (\ref{Misaboundzero}).

\end{proof}
\begin{proof}[Proof of (\ref{gammaisaboundzero}):]

We next want to construct $\gamma$ satisfying (\ref{gammaisaboundzero}) for all $n\in\N$.

Fix $\varepsilon > 0$. We claim that there exists $\delta_\varepsilon > 0$ depending only on $\varepsilon$ and the parameters such that for all $n\in\N$,
\begin{equation}
\label{deltaepsilon}
\rho_{\ln(L_0^n[\one])}^{(X)}(\delta_\varepsilon)\leq \varepsilon.
\end{equation}
This claim will show that
\[
\gamma(\delta) := \inf\{\varepsilon:\delta_\varepsilon\geq\delta\}
\]
is a modulus of continuity. By construction, $\gamma$ satisfies (\ref{gammaisaboundzero}). Thus the proof of (\ref{deltaepsilon}) is sufficient to complete the proof of Theorem \ref{theoremcondition}.

\begin{claim}
\label{claimsigma}
There exists $\varepsilon_3 > 0$ depending only on $\inc$, $C_1$, and $D$ such that for all $k\in\N$, for all $n\in\N$, for all $a\in\C$ and for all $p\in X$, if
\begin{align*}
N &:= n + \inc k\\
\delta_k &:= H^{-k}\delta_3/2,
\end{align*}
then
\begin{equation}
\label{smallpart}
\sum_{x\in T_N^n(p)\cap B_s(a,\delta_k)}\frac{e^{\phi_n^N(x)}}{L_n^N[\one](p)} \leq (1 + \varepsilon_3)^{-k}.
\end{equation}
\end{claim}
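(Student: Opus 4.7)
The plan is to iterate a preimage-spreading argument $\lceil k/\inc\rceil$ times, one for each block of $\inc$ pseudo-iterates inside $T_n^N$. The key ingredient at each block is Lemma \ref{lemmaL} applied to $T_{N_j}^{N_{j+1}}\in\BBB^\inc\implies\BBB_2^\inc$, which guarantees that for any $q\in X$ the preimage multiset $T_{N_{j+1}}^{N_j}(q)$ has spherical diameter at least $\delta_3$. Combined with the already-established bounded distortion estimate \textup{(\ref{Misaboundzero})} and the H\"older bound $\|\phi_j\|_\AL\leq C_1$, this will yield a uniform multiplicative loss per block.

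Set $N_j := n+\inc j$ and $a_j := T_n^{N_j}(a)$. Since $\sup_\BBB(T_*)\leq H$, the map $T_n^{N_j}$ is Lipschitz in the spherical metric with constant $H^{\inc j}$, and therefore $T_n^{N_j}(B_s(a,\delta_k))\implies\wtilde{B}_j := B_s(a_j,H^{\inc j}\delta_k)$. Introduce the cumulative quantities
\[
F_j(q) := \sum_{\substack{y\in T_{N_j}^n(q)\\ T_n^{N_i}(y)\in\wtilde{B}_i\text{ for }0\leq i<j}} e^{\phi_n^{N_j}(y)},
\]
so that $F_0\equiv 1$ and the numerator of the left-hand side of \textup{(\ref{smallpart})} is dominated by $F_k(p)$. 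Splitting off the last block gives the recursion
\[
F_{j+1}(q) = \sum_{y'\in\wtilde{B}_j\cap T_{N_{j+1}}^{N_j}(q)} e^{\phi_{N_j}^{N_{j+1}}(y')} F_j(y').
\]

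The heart of the proof is the inductive claim that $F_j(q)\leq(1-\varepsilon_3'')^{\min(j,J_*)}L_n^{N_j}[\one](q)$ for all $q\in X$, where $J_*:=\lceil k/\inc\rceil$ and $\varepsilon_3'' := e^{-(M+\inc C_1(\pi/2)^\alpha)}/D^\inc$. The base case $j=0$ is trivial. For the inductive step with $j<J_*$, the condition $\inc j<k$ gives $H^{\inc j}\delta_k<\delta_3/2$, so Lemma \ref{lemmaL} yields some $y''\in T_{N_{j+1}}^{N_j}(q)\butnot\wtilde{B}_j$, and $y''\in X$ by backward invariance of $X$. Over $y'\in T_{N_{j+1}}^{N_j}(q)\implies X$, the weights $e^{\phi_{N_j}^{N_{j+1}}(y')}L_n^{N_j}[\one](y')$ vary by a factor of at most $e^{M+\inc C_1(\pi/2)^\alpha}$ (by \textup{(\ref{Misaboundzero})} and the H\"older estimate), and there are at most $D^\inc$ of them, so each one --- in particular the one at $y''$ --- accounts for at least a fraction $\varepsilon_3''$ of their sum $L_n^{N_{j+1}}[\one](q)$. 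Substituting the inductive hypothesis and excluding the contribution at $y''$ advances the estimate to step $j+1$. For $j\geq J_*$, the trivial bound $F_{j+1}\leq L_{N_j}^{N_{j+1}}[F_j]$ preserves the decay already accumulated.

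Putting these together, the numerator of \textup{(\ref{smallpart})} is at most $F_k(p)\leq(1-\varepsilon_3'')^{J_*}L_n^N[\one](p)$. Defining $\varepsilon_3$ by $1+\varepsilon_3 := (1-\varepsilon_3'')^{-1/\inc}$ and using $J_*\geq k/\inc$ converts this into $(1+\varepsilon_3)^{-k}L_n^N[\one](p)$, which proves \textup{(\ref{smallpart})}. The main obstacle is the bookkeeping: the constraints on $y$ must be built up cumulatively so the recursion factors cleanly, and $F_j$ must be compared pointwise against $L_n^{N_j}[\one]$ (not against $F_{j-1}$) so that the bounded-distortion bound \textup{(\ref{Misaboundzero})} can be invoked uniformly at every step to provide a lower bound on the mass excluded by $y''$.
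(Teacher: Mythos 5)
Your proof is correct and follows the same core strategy as the paper's: decompose the pullback from $n$ to $N$ into blocks of $\inc$ steps, apply Lemma \ref{lemmaL} to produce a preimage that escapes the forward image of the small ball at each block (which is possible exactly when that forward image has radius $<\delta_3/2$), and then lower-bound the weight of the escaped preimage using the finite degree bound $D^\inc$, the H\"older estimate on the $\inc$-step Birkhoff sum, and bounded distortion (\ref{Misaboundzero}). The paper organizes this as a backward induction on $k$ with $\delta_{k+1}\leq\delta_k$ absorbed into the inductive hypothesis; you organize it as a forward recursion through the cumulative quantities $F_j$, which is equivalent but perhaps cleaner bookkeeping. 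Two places where your version is arguably more careful than the paper's: (i) your $\varepsilon_3''$ carries the extra factor $e^{-M}$, and this factor is genuinely needed --- the paper's displayed ``equality'' silently drops the $L_n^N[\one](x)$ weights when passing from $\sum_{x\in B_1}e^{\phi_N^{\wtilde N}(x)}L_n^N[\one](x)/L_n^{\wtilde N}[\one](p)$ to $\left(1+\tfrac{\sum_{x\notin B_1}e^{\phi}}{\sum_{x\in B_1}e^{\phi}}\right)^{-1}$, which is only an inequality after invoking (\ref{Misabound}) to control those weights; (ii) with $\delta_k=H^{-k}\delta_3/2$ as literally stated, $T_n^{N_j}(B_s(a,\delta_k))$ has radius $<\delta_3/2$ only for $\inc j<k$, so one can extract an escaped preimage for only about $k/\inc$ of the $k$ blocks. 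Your cap at $J_*=\lceil k/\inc\rceil$ and re-expression $(1-\varepsilon_3'')^{J_*}\leq(1+\varepsilon_3)^{-k}$ with $1+\varepsilon_3=(1-\varepsilon_3'')^{-1/\inc}$ resolves this cleanly; the paper's induction, which gains a full factor $(1+\varepsilon_3)^{-1}$ at every block, implicitly requires $\delta_k=H^{-\inc k}\delta_3/2$ (almost certainly a typo in the statement of $\delta_k$). Neither discrepancy affects the downstream use of the claim, since $k$ is later chosen in terms of $M$ and $\varepsilon_3$ anyway; your $\varepsilon_3$ simply has an extra, correct, dependence on $M$ (and, like the paper's, on $\alpha$, which the claim's stated dependency list also omits).
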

\begin{proof}
By induction on $k$. $n$ and $a$ will remain fixed, so the inductive claim is that (\ref{smallpart}) holds for all $p\in X$.

Base case $k = 0$: This is clear from (\ref{Ldef}).

Inductive step: Assume that (\ref{smallpart}) holds for $k$, for all $p\in\Omega$. Let $\wtilde{k} = k + 1$, so that $\wtilde{N} = N + \inc$. Fix $p\in X$. By the backwards invariance of $X$, $T_{\wtilde{N}}^N(p)\implies X$, so by the inductive hypothesis (\ref{smallpart}) holds for all $x\in T_{\wtilde{N}}^N(p)$. Now
\[
T_n^N(B_s(a,\delta_{\wtilde{k}})) \implies B_s(T_n^N(a),\delta_3/2)
\]
which cannot contain every element of $T_{\wtilde{N}}^N(p)$, because of (\ref{equationL}).

We have
\begin{align*}
&\sum_{z\in T_{\wtilde{N}}^n(p)\cap B_s(a,\delta_{\wtilde{k}})}\frac{e^{\phi_n^{\wtilde{N}}(z)}}{L_n^{\wtilde{N}}[\one](p)}\\
&= \sum_{x\in T_{\wtilde{N}}^N(p)\cap B_s(T_n^N(a),\delta_3/2)}\frac{e^{\phi_N^{\wtilde{N}}(x)}L_n^N[\one](x)}{L_n^{\wtilde{N}}[\one](p)}\sum_{z\in T_N^n(x)\cap B_s(a,\delta_{\wtilde{k}})}\frac{e^{\phi_n^N(z)}}{L_n^N[\one](x)}\\
&\leq \sum_{x\in T_{\wtilde{N}}^N(p)\cap B_s(T_n^N(a),\delta_3/2)}\frac{e^{\phi_N^{\wtilde{N}}(x)}L_n^N[\one](x)}{L_n^{\wtilde{N}}[\one](p)}(1 + \varepsilon_3)^{-k}\\
&= (1 + \varepsilon_3)^{-k} \left(1 + \frac{\displaystyle\sum_{x\in T_{\wtilde{N}}^n(p)\butnot B_s(T_n^N(a),\delta_3/2)}e^{\phi_N^{\wtilde{N}}(x)}}{\displaystyle\sum_{x\in T_{\wtilde{N}}^N(p)\cap B_s(T_n^N(a),\delta_3/2)}e^{\phi_N^{\wtilde{N}}(x)}}\right)^{-1}\\
&\leq (1 + \varepsilon_3)^{-k} \left(1 + \frac{1}{D^\inc - 1}\frac{\inf(e^{\phi_N^{\wtilde{N}}})}{\sup(e^{\phi_N^{\wtilde{N}}})}\right)^{-1}\\
&\leq (1 + \varepsilon_3)^{-k} \left(1 + \frac{1}{D^\inc - 1}e^{-\inc C_1 (\pi/2)^\alpha}\right)^{-1},
\end{align*}
which yields (\ref{smallpart}) for $\wtilde{k}$ if we set
\[
\varepsilon_3 := \frac{1}{D^\inc - 1}e^{-\inc C_1 (\pi/2)^\alpha} > 0.
\]
As promised, $\varepsilon_3$ depends only on $\inc$, $C_1$, and $D$.
\QEDmod\end{proof}

Fix $k\in\N$; $k$ will be specified later and will depend only on $\varepsilon_3$, $C_3$, $M$, and $\varepsilon$.

Let $\{a_1,\ldots,a_m\}$ be a maximal $\delta_k$-separated subset of $\C$. For each $i=1,\ldots,m$, let
\[
U_i = B_s(a_i,2\delta_k),
\]
and let $\zeta_i:U_i\rightarrow\C$ be the identity map. Apply the inverse branch formalism (Definition \ref{definitionZAB}) to get the operator $A_0^n$.

Now (\ref{Misaboundzero}) has already been proven. As in the proof of (\ref{Misaboundzero}), the backwards invariance of $X$ implies that (\ref{Misaboundzero}) holds on the appropriate domain $\wtilde{X} = T_n^j(X)$ in order to apply Corollary \ref{corollaryZAB}. Thus we may apply Corollary \ref{corollaryZAB}. The value $r$ is no longer relevant to $\mult(\zeta_i)_{i=1}^m$; nonetheless we have the following bound:

\begin{claim}
\[
\mult(\zeta_i)_{i=1}^m \leq 25.
\]
\end{claim}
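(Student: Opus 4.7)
The plan is a standard packing argument on the Riemann sphere. Since each $\zeta_i$ is the identity map on $U_i = B_s(a_i, 2\delta_k)$, we have $\#(\zeta_i^{-1}(p)) = \one_{U_i}(p)$, so
\[
\mult(\zeta_i)_{i=1}^m = \max_{p\in\C}\#\{i : p\in B_s(a_i,2\delta_k)\} = \max_{p\in\C}\#\{i : a_i\in B_s(p,2\delta_k)\}.
\]
So I just need to bound the number of points of the $\delta_k$-separated set $\{a_1,\ldots,a_m\}$ that can lie in any spherical ball of radius $2\delta_k$.

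First I would observe that maximality of $\{a_1,\ldots,a_m\}$ is not needed here; only the fact that the $a_i$ are pairwise $\delta_k$-separated. From this the spherical balls $B_s(a_i,\delta_k/2)$ are pairwise disjoint, and if $a_i\in B_s(p,2\delta_k)$ then $B_s(a_i,\delta_k/2)\implies B_s(p,5\delta_k/2)$. The disjointness together with additivity of $\lambda_s$ yields
\[
\#\{i : a_i\in B_s(p,2\delta_k)\}\cdot \lambda_s(B_s(a_1,\delta_k/2)) \leq \lambda_s(B_s(p,5\delta_k/2)),
\]
where I used that the spherical area of a ball depends only on its radius (rotation invariance of $\lambda_s$). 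Hence it suffices to show that the ratio $\lambda_s(B_s(\cdot,5r/2))/\lambda_s(B_s(\cdot,r/2))$ is at most $25$ for every $r>0$.

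The last step is the main computation, but it is routine. Using the paper's normalization (total spherical area $1$, spherical diameter $\pi/2$), a direct integration gives $\lambda_s(B_s(x,s)) = \sin^2(s)$ for $s\leq \pi/2$, and $=1$ for $s\geq \pi/2$. Then for $r\leq\pi/5$ the ratio is $\sin^2(5r/2)/\sin^2(r/2)$, which is increasing in $r^{-1}$ and tends to $25$ as $r\rightarrow 0$, so it is strictly bounded by $25$; and for $\pi/5 < r \leq\pi/2$ the ratio equals $1/\sin^2(r/2) \leq 1/\sin^2(\pi/10) < 11$. In both regimes the bound $25$ holds.

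The main obstacle is essentially the bookkeeping of the spherical area normalization — once that is fixed, the packing inequality and the trigonometric estimate finish the claim. Nothing dynamical is used, so this step really is a purely geometric lemma about the Riemann sphere and can be stated independently.
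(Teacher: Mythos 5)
Your proof is correct and follows the same packing argument as the paper: pairwise disjointness of the balls $B_s(a_i,\delta_k/2)$, inclusion into $B_s(p,5\delta_k/2)$, and the area formula $\lambda_s(B_s(\cdot,s))=\sin^2(s)$ combined with the concavity inequality $\sin(b)/\sin(a)<b/a$. The only incidental difference is that you carefully treat the regime $2.5\delta_k>\pi/2$, which the paper skips (elsewhere in that proof it assumes $\delta_k\leq 1/4$, making that case moot).
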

\begin{proof}
Fix $x\in\C$. For $i=1,\ldots,m$, if $x\in B_s(a_i,2\delta_k)$, then $B_s(a_i,0.5\delta_k)\implies B_s(x,2.5\delta_k)$. The collection $(B_s(a_i,0.5\delta_k))_i$ is disjoint, so we have
\begin{equation*}
\mult(B_s(a_i,2\delta_k))_{i=1}^m
\leq \frac{\lambda_s(B_s(0,2.5\delta_k))}{\lambda_s(B_s(0,0.5\delta_k))}
= \frac{\sin^2(2.5\delta_k)}{\sin^2(0.5\delta_k)}
< \frac{2.5^2}{0.5^2} = 25.
\end{equation*}
\QEDmod\end{proof}

Thus (\ref{ZAB}) is true with $r$ replaced by $25$.

Fix $i=1,\ldots,m$. Since $\zeta_i$ is injective, it is $\sigma$-locally injective with $\sigma = \infty$. Thus we may apply Lemma \ref{lemmahyperbolic}. Plugging $g = \one$ into (\ref{Aislessthan}) yields
\[
\rho_{\ln(A_0^n[\one])_i}(\varepsilon) \leq C_4 \varepsilon^\alpha.
\]
Let
\[
P_i = B_s(a_i,\delta_k)\butnot \bigcup_{j < i}B_s(a_j,\delta_k),
\]
so that $(P_i)_{i=1}^m$ is a partition of $\C$, and $P_i\implies B_s(a_i,\delta_k)$ for $i=1,\ldots,m$.

Fix $p\in X$. We weaken (\ref{smallpart}):
\begin{align*}
\sum_{x\in T_N^n(p)\cap P_i} e^{\phi_n^N(x)} &\leq (1 + \varepsilon_3)^{-k} L_n^N[\one](p)\\
\sum_{x\in T_N^n(p)\cap P_i} e^{\phi_n^N(x)}[L_0^n[\one](x) - (A_0^n[\one])_i(x)]
&\leq (1 + \varepsilon_3)^{-k} L_n^N[\one](p) \sup_{U_i\cap X}(L_0^n[\one] - (A_0^n[\one])_i)
\end{align*}
Summing over $i=1,\ldots,m$ and combining with (\ref{ZAB}) gives
\begin{align*}
L_0^N[\one](p) - \sum_{i=1}^m\sum_{x\in T_N^n(p)\cap P_i} e^{\phi_n^N(x)}(A_0^n[\one])_i(x)
&\leq (1 + \varepsilon_3)^{-k} L_n^N[\one](p) \sum_{i=1}^m \sup_{U_i\cap X}(L_0^n[\one] - (A_0^n[\one])_i)\\
&\leq (1 + \varepsilon_3)^{-k} L_n^N[\one](p) C_3 e^M \inf_X(L_0^n[\one])\\
&\leq (1 + \varepsilon_3)^{-k} C_3 e^M L_0^N[\one](p).
\end{align*}
Let
\[
k = \left\lceil\frac{\ln(C_3 e^M) - \ln(1 - e^{\varepsilon/2})}{\ln(1 + \varepsilon_3)}\right\rceil \in \N,
\]
so that solving for $L_0^N[\one](p)$ yields
\begin{equation}
\label{algebraisfun}
L_0^N[\one](p) \leq e^{\varepsilon/2}\sum_{i=1}^m\sum_{x\in T_N^n(p)\cap P_i} e^{\phi_n^N(x)}(A_0^n[\one])_i(x).
\end{equation}
Fix $\delta_2 > 0$; $\delta_2$ will depend only on $\varepsilon$ and the parameters. Let $\delta_\varepsilon > 0$ be the constant guaranteed by Lemma \ref{lemmacomplex} for $D^{\inc k}$, $H^{\inc k}$, and $\delta_2$; as promised, $\delta_\varepsilon$ depends only on $\varepsilon$ and the parameters. Fix $q\in X$ with $\dist_s(p,q)\leq\delta_\varepsilon$. We will be done if
\begin{equation}
\label{proofcomplete}
\frac{L_0^N[\one](p)}{L_0^N[\one](q)} \leq e^\varepsilon.
\end{equation}
Let $\Phi:T_N^n(p)\rightarrow T_N^n(q)$ be the bijection guaranteed by Lemma \ref{lemmacomplex}.

We introduce the following notational convention: By a subscript of $\IXY$, we mean that the sum, maximum, or minimum is to be taken over all $i=1,\ldots,m$ and over all $x\in T_N^n(p)\cap P_i$. For shorthand we write $y := \Phi(x)$. Thus (\ref{algebraisfun}) becomes
\[
L_0^N[\one](p) \leq e^{\varepsilon/2}\sum_\IXY e^{\phi_n^N(x)}(A_0^n[\one])_i(x).
\]
Now clearly
\[
L_0^N[\one](q) \geq \sum_\IXY e^{\phi_n^N(y)}(A_0^n[\one])_i(y).
\]
We continue, applying (\ref{Aislessthan}):
\begin{align*}
\frac{L_0^N[\one](p)}{L_0^N[\one](q)}
&\leq e^{\varepsilon/2}\frac{\displaystyle\sum_\IXY e^{\phi_n^N(x)} (A_0^n[\one])_i(x)}{\displaystyle\sum_\IXY e^{\phi_n^N(y)} (A_0^n[\one])_i(y)}\\
&\leq e^{\varepsilon/2}\max_\IXY\frac{e^{\phi_n^N(x)} (A_0^n[\one])_i(x)}{e^{\phi_n^N(y)} (A_0^n[\one])_i(y)}\\
&\leq e^{\varepsilon/2}\exp\left(\max_\IXY[\phi_n^N(x) - \phi_n^N(y)] + C_4 \max_\IXY\dist_{U_i}(x,y)^\alpha\right)
\end{align*}
Fix $\IXY$. We have
\begin{align}
\dist_s(x,y) &\leq \delta_2 \\
\dist_s(a_i,x) &\leq \delta_k.
\end{align}
For each $j = n,\ldots,N - 1$, the Lipschitz continuity of $T_n^j$ gives
\begin{align*}
\dist_s(T_n^j(x),T_n^j(y)) &\leq H^{j - n}\delta_2\\
\phi_j(T_n^j(x)) - \phi_j(T_n^j(y)) &\leq C_1 (H^{j - n}\delta_2)^\alpha \\
\phi_n^N(x) - \phi_n^N(y) &\leq C_1 \delta_2^\alpha \sum_{j = n}^N H^{\alpha(j - n)}
= C_7\delta_2^\alpha,
\end{align*}
where
\[
C_7 := C_1 \sum_{j = 0}^{\inc k} H^{j\alpha}.
\]
We now need to bound $\dist_{U_i}(x,y)$ in terms of $\delta_2$. Without loss of generality suppose that $\delta_2 \leq \delta_k/2$, so that
\[
\dist_s(a_i,y) \leq 3\delta_k/2.
\]
Without loss of generality suppose that $\delta_k\leq 1/4$. A simple calculation shows that
\[
\|\id\|_{h_{U_i}}^s \geq \frac{1 - (c_1/c_2)^2}{1 + c_1^2},\hspace{.5 in}[\text{on }B_s(a_i,3\delta_k/2)]
\]
where
\begin{align*}
c_1 &:= \tan(3\delta_k/2)\\
c_2 &:= \tan(2\delta_k)
\end{align*}
Further calculation shows that
\[
\|\id\|_{h_{U_i}}^s \geq \frac{1 - (3/4)^2}{1 + 1^2} = \frac{7}{32}.\hspace{.5 in}[\text{on }B_s(a_i,3\delta_k/2)]
\]
(Here we have used the upper convexity of the tangent function.)

Integrating along a hyperbolic geodesic gives
\begin{equation}
\dist_{U_i}(x,y) \leq \frac{32}{7}\dist_s(x,y) \leq \frac{32}{7}\delta_2.
\end{equation}
Thus
\begin{align*}
\frac{L_0^N[\one](p)}{L_0^N[\one](q)}
&\leq e^{\varepsilon/2}\exp\left(\max_\IXY[\phi_n^N(x) - \phi_n^N(y)] + C_4 \max_\IXY\dist_{U_i}(x,y)^\alpha\right)\\
&\leq e^{\varepsilon/2}\exp((C_7 + C_4 (32/7)^\alpha)\delta_2^\alpha)
\end{align*}
We set the right hand side equal to $e^\varepsilon$, and solve for $\delta_2$. Since $\delta_2$ depends only on $\varepsilon$ and the parameters, the proof is complete.
\end{proof}
\end{section}
\begin{section}{Proof of Theorem \ref{theoremmanepartition}} \label{sectionmanepartition}
\begin{remark}
This proof is essentially based off of the proof in the deterministic case, given by Ma\~n\'e [\cite{Ma1}, Lemma II.4 p.33]. The biggest difference is the following: In the proof of Ma\~n\'e's Lemma II.5 [p.37], which gives a lower bound for the derivative of a point in terms of its distance from the critical points, Ma\~n\'e uses the compactness of $\C$ to get a non-explicit lower bound. However, this would not suffice for our purposes, because we need the bound to not only have the right behavior near critical points, but also to be uniform in a sense across all possible rational maps $T$. This is made explicit in Lemma \ref{lemmaC7C8} below.
\end{remark}
To prove Theorem \ref{theoremmanepartition}, we need several lemmas:
\begin{lemma}
\label{lemmaC7C8}
Fix $D<\infty$. Then there exists a constant $C_8 < \infty$ depending only on $D$ such that for all $T\in\RR_d$ with $1\leq d\leq D$ and for all $x\in\C$, we have
\begin{equation}
\label{C7C8}
h_0\geq \frac{\delta^{2D}}{C_8 H^{4D}},
\end{equation}
where
\begin{align} \label{hdef}
h_0 &:= T_*(x)\\ \label{deltadef}
\delta &:= \dist_s(x,\RP_T)\\ \label{Hdef}
H &:= \sup(T_*).
\end{align}
\end{lemma}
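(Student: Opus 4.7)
The plan is to exploit the rational structure of $T$ by factoring $T'$ in terms of its zero set (the finite ramification points) and estimating the remaining multiplicative constant explicitly in terms of $H$ via Cauchy-type bounds.

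First, since rotations of the Riemann sphere are isometries of the spherical metric and preserve $T_*$, $\deg T$, $\RP_T$, and $H$, we may assume without loss of generality that $x = T(x) = 0$. In Euclidean coordinates near $0$ we have $T_*(0) = |T'(0)|$. Write $T = f/g$ in lowest terms, with $\max(\deg f, \deg g) = d$, $f(0) = 0$ (since $T(0) = 0$), and normalized so that $g(0) = 1$. Then $T'(z) = N(z)/g(z)^2$, where $N := f'g - fg'$ is a polynomial of degree $m \leq 2d - 2$ whose roots in $\complexplane$, counted with multiplicity, are exactly the finite ramification points of $T$.

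Factor $N(z) = c\prod_{i=1}^{m}(z - p_i)$. Since $\dist_s(0,p_i) \geq \delta$, converting to Euclidean distance yields $|p_i| \geq c_0\,\delta$ for an absolute constant $c_0 > 0$, provided $\delta$ is bounded away from $\pi/2$ (when $\delta$ is near $\pi/2$ the bound is trivial via a compactness/area argument, which I will handle separately). Hence
\[
T_*(0) = |T'(0)| = |N(0)| = |c|\prod_{i=1}^{m}|p_i| \geq |c|\cdot c_0^{m}\cdot \delta^{m} \geq C_1(D)\cdot |c|\cdot \delta^{2D},
\]
using $m \leq 2D$ together with $\delta \leq \pi/2$. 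It therefore suffices to show the coefficient bound $|c| \geq 1/(C_2(D)\,H^{4D})$.

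To obtain this coefficient bound, I would apply Lemma \ref{lemmaC10}, which gives $|T(z)|\leq 1$ on $B_e(0, \pi/(4H))$, so $|f(z)| \leq |g(z)|$ there. Using the normalization $g(0) = 1$, continuity of $g$ together with Cauchy's inequality yields upper bounds of order $H^{2D}$ on each Taylor coefficient of $f$; applying the same reasoning to the Möbius-conjugate $\tilde{T}(z) := 1/T(1/z)$ (whose spherical derivative also has supremum $H$) yields matching bounds on the coefficients of $g$. Since $c$ is an explicit polynomial in these coefficients (for instance, when $m = 2d-2$, $c = a_d b_{d-1} - a_{d-1} b_d$), and since the constraint $\deg T = d$ forces $c \neq 0$ up to the degree-dropping case handled by induction on $d$ (viewing $\infty$ as an additional ramification point), the required explicit lower bound on $|c|$ follows.

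The main obstacle is precisely the explicit lower bound on $|c|$. Mañé's deterministic argument uses compactness of $\C$ and yields a non-explicit constant depending on $T$, which is insufficient for the uniform random setting. Making the bound explicit requires tracking Cauchy estimates for coefficients of both $f$ and $g$ through the algebraic expression for $c$, producing the exponent $H^{4D}$ (roughly $H^{2D}$ apiece from the two polynomials). Handling the edge case where the leading coefficient of $N$ vanishes requires an induction on $d$, reducing to a lower-degree situation by absorbing ramification at $\infty$.
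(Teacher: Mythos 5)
Your opening matches the paper's: after normalizing to $x = T(x) = 0$ and writing $T = f/g$ with $f(0) = 0$, $g(0)=1$, both you and the paper consider the polynomial $N = f'g - fg'$ whose roots are the finite ramification points and whose value at $0$ is $h_0$. But from there the routes diverge sharply, and yours contains a genuine gap at exactly the step you flag as the main obstacle.

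You factor $N(z) = c\prod_i(z - p_i)$ with $c$ the leading coefficient, write $h_0 = |c|\prod_i|p_i|$, and propose to lower bound the two factors separately, reducing the lemma to the coefficient bound $|c|\geq 1/(C_2(D)\,H^{4D})$. That bound is false. Consider
\[
T_\eta(z) := \frac{z^2 + z}{z^2 + (1+\eta)z + 1},\qquad \eta\to 0^+.
\]
These are degree-$2$ rational maps converging in $\RR_2$ to $T_0$, so $H = \sup T_{\eta*}$ stays bounded (indeed $T_{\eta*}\to T_{0*}$ uniformly on $\C$, and $\int T_{\eta*}^2\,\d\lambda_s = 2$ is constant). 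A direct computation gives $N_\eta(z) = \eta z^2 + 2z + 1$, so $c = \eta\to 0$ while $D$, $H$, $\delta$, and $h_0 = T_{\eta*}(0) = 1$ all stay bounded. The product $|c|\prod|p_i|$ stays equal to $1$ because one root $p_2\approx -2/\eta$ escapes to infinity exactly as fast as $c$ shrinks; your plan of bounding $\prod|p_i|\geq(c_0\delta)^m$ throws away this compensating growth, and no lower bound on $|c|$ in terms of $(D,H)$ alone can recover it. The logical flaw in your justification is also worth naming: upper bounds $|a_j|,|b_j|\leq M(H,D)$ together with $c\neq 0$ do not produce a positive lower bound on $|c| = |a_d b_{d-1} - a_{d-1}b_d|$, since a nonzero algebraic combination of bounded quantities may be arbitrarily small; and the induction on $d$ you sketch only addresses the strict degeneration $c=0$, not $c$ small and nonzero, which is the continuous family that actually occurs.

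The paper avoids the issue entirely by never isolating the leading coefficient. It factors $h(z) = h(0)\prod_i(1 - z/\gamma_i)$, so the scalar out front is $h_0$ itself, and it bounds $T_*$ on the region $A := B_e(0,1/\varepsilon)\setminus\bigcup_i B_e(\beta_i,\varepsilon\beta_i)$ (removing small Euclidean balls around the poles $\beta_i$ of $T$) by a quantity of the form $h_0\,\delta^{-2D}\varepsilon^{-4D}$. It then invokes the degree identity $d = \int T_*^2\,\d\lambda_s$, splits the integral over $A$ and $\C\butnot A$, bounds $\lambda_s(\C\butnot A)\lesssim d\varepsilon^2$, and chooses $\varepsilon\sim 1/H$ so that the contribution from $\C\butnot A$ is at most $d/2$. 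This traps $h_0$ from below without any coefficient estimates on $f$, $g$, or $N$, and the exponents $\delta^{2D}$, $H^{4D}$ fall out of the factor counting. This area/degree mechanism is what lets the bound be uniform across all $T\in\RR_d$, $d\le D$, which is the whole point of the lemma in the random setting.
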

\begin{proof}
Fix $T\in\RR_d$ and $x\in\C$, and let $h_0$, $\delta$ and $H$ be as defined in (\ref{hdef}) - (\ref{Hdef}). By composing with a spherical isometry, we may without loss of generality suppose that $x = T(x) = 0$; this does not change the value of $h_0$, $\delta$, $H$, and therefore does not change the truth value of (\ref{C7C8}).

Write $T = f/g$ the quotient of two polynomial functions. Since $T(0) = 0$, we have $f(0) = 0$; we may without loss of generality suppose that $g(0) = 1$. We can write $g$ in the form
\begin{equation}
\label{multiplicativerepresentationg}
g(z) = \prod_{i = 1}^d \left(1 - \frac{z}{\beta_i}\right),
\end{equation}
where $\beta_1,\ldots,\beta_d$ are the roots of $g$ (possibly with repetition).

Let $h = f'g - g'f$, so that
\begin{align*}
m := \deg(h) &\leq 2d - 2\\
T_*(z) &= |h(z)|\frac{1 + |z|^2}{|f(z)|^2 + |g(z)|^2}\\
h_0 = T_*(0) &= |h(0)|.
\end{align*}
We can write $h$ in the form
\begin{equation}
\label{multiplicativerepresentationh}
h(z) = h(0)\prod_{i = 1}^m \left(1 - \frac{z}{\gamma_i}\right),
\end{equation}
where $\gamma_1,\ldots,\gamma_m$ are the roots of $h$ (possibly with repetition).

Note that $\gamma_1,\ldots,\gamma_m$ are also the ramification points of $T$ (other than $\infty$). Thus by (\ref{deltadef}), we have $\tan(\delta)\leq\gamma_i$ for $i=1,\ldots,m$.

Fix $\varepsilon > 0$, and let
\[
A := B_e(0,1/\varepsilon)\butnot\bigcup_{i=1}^d B_e(\beta_i,\varepsilon\beta_i).
\]
For all $z\in A$, we have the following bound for $T_*(z)$:
\begin{equation*}
T_*(z)
\leq h_0\prod_{i = 1}^m \left|1 - \frac{z}{\gamma_i}\right|\frac{1 + 1/\varepsilon^2}{\displaystyle\prod_{i = 1}^d\left|1 - \dfrac{z}{\beta_i}\right|^2}
\leq h_0\left(1 + \frac{1/\varepsilon}{\tan(\delta)}\right)^m\frac{1 + 1/\varepsilon^2}{\varepsilon^{2d}}.
\end{equation*}
Let us assume now that $\varepsilon\leq 1/2$. By definition, $\delta \leq \diam(\C) = \pi/2$. Calculus gives $\delta\leq\tan(\delta)$. Thus
\begin{align*}
T_*(z)
&\leq h_0\left(\frac{\pi/2}{\delta} + \frac{1/\varepsilon}{\delta}\right)^m\frac{1 + 1/\varepsilon^2}{\varepsilon^{2d}}\\
&\leq 1.25(\pi/4 + 1)h_0\left(\frac{1}{\delta\varepsilon}\right)^m\frac{1}{\varepsilon^{2d + 2}}\\
&\leq 3\frac{h_0}{\delta^m\varepsilon^{2d + m + 2}}\\
&\leq 3(\pi/2)^{2D}\frac{h_0}{\delta^{2D}\varepsilon^{4D}}
\end{align*}
Now by the change of variables formula,
\begin{align*}
d &= \int T_*(z)^2\d\lambda_s(z)\\
&= \int_A T_*(z)^2\d\lambda_s(z) + \int_{\C\butnot A} T_*(z)^2\d\lambda_s(z)\\
&\leq \sup_A(T_*(z))^2\lambda_s(\C) + \sup(T_*(z))^2\lambda_s(\C\butnot A)\\
&\leq 9(\pi/2)^{4D}\frac{h_0^2}{\delta^{4D}\varepsilon^{8D}} + H^2\lambda_s(\C\butnot A).
\end{align*}
We concentrate on this last term:
\begin{align*}
\lambda_s(\C\butnot A)
&\leq \lambda_s(\C\butnot B_e(0,1/\varepsilon)) + \sum_{i = 1}^d\lambda_s(B_e(\beta_i,\varepsilon\beta_i))\\
\lambda_s(\C\butnot B_e(0,1/\varepsilon))
&= \lambda_s(B_e(0,\varepsilon))\\
&= 1 - \frac{1}{1 + \varepsilon^2} \leq \varepsilon^2.
\end{align*}
For each $i = 1,\ldots,d$, consider the map $Q_i:B_e(1,\varepsilon)\rightarrow B_e(\beta_i,\varepsilon\beta_i)$ defined by $Q_i(z) = \beta_i z$. We have
\begin{align*}
\|(Q_i)_*(z)\|_e^s
&= |\beta_i|\frac{1}{1 + |\beta_i z|^2}\\
&\leq \frac{1}{2|z|} \leq \frac{1}{2(1 - \varepsilon)} \leq 1\\
\lambda_s(B_e(\beta_i,\varepsilon\beta_i))
&= \int_{B_e(1,\varepsilon)} [\|(Q_i)_*(z)\|_e^s]^2\d\lambda_e(z)\\
&\leq \lambda_e(B_e(1,\varepsilon)) = \pi\varepsilon^2
\end{align*}
Thus
\begin{equation*}
\lambda_s(\C\butnot A)
\leq (1 + d\pi)\varepsilon^2
\leq 2d\pi\varepsilon^2.
\end{equation*}
Let
\[
\varepsilon = \frac{1}{H\sqrt{4\pi}} \leq 1/2,
\]
so that
\begin{align*}
d &\leq 9(\pi/2)^{4D}\frac{h_0^2}{\delta^{4D}\varepsilon^{8D}} + H^2 (2d\pi\varepsilon^2)\\
&= 9(2\pi^2)^{4D}\frac{h_0^2 H^{8D}}{\delta^{4D}} + d/2\\
1/2 \leq d/2
&\leq 9(16\pi^8)^D\frac{h_0^2 H^{8D}}{\delta^{4D}}\\
\end{align*}
Rearranging yields (\ref{C7C8}).
\end{proof}

\begin{lemma}
\label{lemmapartitions}
There exists a sequence of partitions $(\Par_k)_{k\in\N}$ of $\C$ such that for all $k\in\N$,
\begin{enumerate}[A)]
\item $\diam(\Par_k)\leq 2^{-k}$
\item For all $x\in \C$ and for all $\delta > 0$,
\begin{equation}
\label{lnnumberbound}
\ln\#(\Par_k\on B_s(x,\delta))\leq 6\ln(2) + 2\max(0,\ln(\delta/2^{-(k-1)})).
\end{equation}
\end{enumerate}
\end{lemma}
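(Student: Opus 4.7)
The plan is to construct each $\Par_k$ as a ``dyadic grid'' on the Riemann sphere, obtained by pulling back a Euclidean dyadic grid through a fixed finite atlas of $\C$.

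First I would cover $\C$ by two closed topological disks $D_1,D_2$ (for instance, two slightly enlarged closed hemispheres $\cl{B_s}(0,\pi/4+\varepsilon_0)$ and $\cl{B_s}(\infty,\pi/4+\varepsilon_0)$) and fix bi-Lipschitz charts $\phi_i:D_i\to V_i\implies\complexplane$ with a common bi-Lipschitz constant $L<\infty$ between the spherical metric on $D_i$ and the Euclidean metric on $V_i$. For each $k\in\N$ I would consider the tiling of $\complexplane$ by half-open dyadic squares of side length $s_k := 2^{-k}/(L\sqrt{2})$, pull this tiling back to each $D_i$ via $\phi_i$, and intersect the pulled-back cells with $D_i$ to obtain a partition $\Par_k^{(i)}$ of $D_i$ into pieces of spherical diameter at most $L\cdot s_k\sqrt{2}=2^{-k}$. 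I would then splice these into a partition $\Par_k := \Par_k^{(1)} \cup (\Par_k^{(2)}\on (D_2\butnot D_1))$ of $\C$, for which property (A) is immediate from the diameter estimate above, and no cell straddles the chart boundary.

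For property (B), fix $x\in\C$ and $\delta>0$. The ball $B_s(x,\delta)$ meets at most both $D_i$, and within each chart its image $\phi_i(B_s(x,\delta)\cap D_i)$ is contained in a Euclidean disk of radius at most $L\delta$. A standard count shows that the number of half-open squares of side $s_k$ meeting a Euclidean disk of radius $r$ is at most $(\lceil 2r/s_k\rceil + 1)^2\leq 4(r/s_k+1)^2$, so substituting $r=L\delta$ and $s_k=2^{-k}/(L\sqrt{2})$ and summing over the two charts gives a bound of the form $\#(\Par_k\on B_s(x,\delta))\leq C_0\max(1,(2^k\delta)^2)$ for a universal constant $C_0$, which after taking logarithms is of the shape required by \textup{(\ref{lnnumberbound})}.

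The main obstacle is matching the precise numerical constant $6\ln 2$. If the bi-Lipschitz constant $L$ of the chosen atlas is too large, the naive argument above produces some constant $C\ln 2$ with $C>6$. This can be remedied by choosing an atlas with charts whose bi-Lipschitz constants are arbitrarily close to $1$ (for instance, covering $\C$ by a bounded number of small spherical caps, each biholomorphic to a Euclidean disk via stereographic projection from the antipode, with bi-Lipschitz constant tending to $1$ as the caps shrink). One has to balance the number of charts the ball $B_s(x,\delta)$ might intersect against the per-chart counting bound; a concrete tuning of the cap radius and of $L$, together with the elementary estimate $(u+1)^2\leq 2\max(1,u^2)$, brings the combined constant below $2^6$ as required.
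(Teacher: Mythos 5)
Your approach (pulling a Euclidean dyadic grid back through a finite bi-Lipschitz atlas of $\C$) is genuinely different from the paper's, which works intrinsically: take a maximal $2^{-(k+1)}$-separated set $(x_i)_{i=1}^m$ in $\C$, set
\[
\Par_k = \left\{B_s(x_i,2^{-(k+1)})\butnot\bigcup_{j<i}B_s(x_j,2^{-(k+1)}):i=1,\ldots,m\right\},
\]
and bound $\#(\Par_k\on B_s(x,\delta))$ by noting that the balls $B_s(x_i,2^{-(k+2)})$ are pairwise disjoint and, for every cell meeting $B_s(x,\delta)$, contained in $B_s(x,\delta+2^{-k})$; the spherical area ratio $\lambda_s(B_s(0,\delta+2^{-k}))/\lambda_s(B_s(0,2^{-(k+2)}))$ then delivers the constant $6\ln 2$ in a single line, with no freedom to tune and no charts.

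The constant issue you flag at the end is a genuine gap in your sketch, not a presentational footnote. With two hemispheric charts, the bi-Lipschitz constant $L$ is bounded away from $1$ (e.g.\ stereographic projection has conformal factor varying by a factor of $2$ on a hemisphere), and the crude square-count produces a coefficient strictly larger than $2^6$ near the crossover $\delta\approx 2^{-k}$. Driving $L\to 1$ by shrinking the caps introduces a competing overhead you do not quantify: more charts means a ball $B_s(x,\delta)$ can straddle several seams; cells clipped by chart boundaries can have arbitrarily small spherical area, so you cannot invoke a per-cell area lower bound; and you must also control the total cell count $\#(\Par_k)$ (so that the bound survives when $\delta\geq\diam(\C)$), which caps the number of charts. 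By contrast, in the paper's construction every cell contains a full spherical ball of radius $2^{-(k+2)}$, so the packing count is tight and chart-free. I'd recommend replacing the atlas construction with the maximal-separated-set one. (Incidentally, the displayed chain of inequalities in the paper actually yields $6\ln 2 + 2\max(0,\ln(\delta/2^{-k}))$, i.e.\ with $2^{-k}$ rather than the $2^{-(k-1)}$ appearing in the lemma's statement; the stronger form as stated does not follow from that computation, so you should not try to hit $2^{-(k-1)}$ either.)
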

\begin{proof}
Fix $k\in\N$. Let $(x_i)_{i = 1}^m$ be a maximal $2^{-(k + 1)}$-separated sequence in $\C$. Let
\[
\Par_k = \left\{B(x_i,2^{-(k + 1)})\butnot \bigcup_{j < i}B(x_i,2^{-(k + 1)}):i=1,\ldots,m\right\};
\]
(A) follows easily.

Now,
\begin{align*}
\ln\#(\Par_k\on B_s(x,\delta))
&\leq \ln\#(i = 1,\ldots,m: B_s(x_i,2^{-(k + 2)}) \in B_s(x,\delta + 2^{-k}))\\
&\leq \ln(\lambda_s(B_s(0,\delta + 2^{-k}))) - \ln(\lambda_s(B_s(0,2^{-(k + 2)}))),
\end{align*}
since the balls $(B_s(x_i,2^{-(k + 2)}))_{i = 1}^m$ are disjoint, as $(x_i)_{i = 1}^m$ is $2^{-(k + 1)}$-separated. We continue:
\begin{align*}
\ln\#(\Par_k\on B_s(x,\delta))
&\leq 2\ln\sin(\delta + 2^{-k}) - 2\ln\sin(2^{-(k + 2)})\\
&\leq 2\ln(\delta + 2^{-k}) - 2\ln(2^{-(k + 2)})\\
&= 4\ln(2) + 2\ln(1 + \delta/2^{-k})\\
&\leq 6\ln(2) + 2\max(0,\ln(\delta/2^{-k})).
\end{align*}
\end{proof}

\begin{lemma}
\label{lemmaUVW}
Fix $0 < h \leq H < \infty$ and $\delta \leq \pi/4$. Suppose that $T\in\RR$ satisfies
\begin{align}
\label{hHbounds}
T_* &\leq H \\
T_* &\geq h. \hspace{.5 in}[\text{on }B_s(a,\delta)]
\end{align}
Let $W = B_s(a,h\delta/H)$. Then $T\on W$ is injective; furthermore, for all $x,y\in W$,
\begin{equation}
\label{UVW}
\dist_s(T(x),T(y)) \geq \dist_s(x,y)\left[T_*(x) - \frac{\wtilde{H}}{2}\dist_s(x,y)\right],
\end{equation}
where $\wtilde{H}$ is as in Lemma \ref{lemmaC10}.
\end{lemma}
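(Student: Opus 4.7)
The plan is to establish both claims via a local integration argument in a stereographic chart, mirroring the structure used to prove (\ref{C11}) in Lemma \ref{lemmaC10}.

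First I would reduce to a convenient chart. Since $h \leq H$, we have $W \subseteq B_s(a,\delta)$, so $T_* \geq h$ throughout $W$, and the hypothesis $\delta \leq \pi/4$ guarantees that both $W$ and $B_s(a,\delta)$ are geodesically convex, so the spherical geodesic between any two points of $W$ stays in $W$. Given $x,y \in W$, spherical isometries on the source and the target reduce us to the case $x = T(x) = 0$ in stereographic coordinates; these isometries preserve $T_*(x)$, $\dist_s(x,y)$, $\dist_s(T(x),T(y))$, and $\wtilde{H}$, and in this chart spherical and Euclidean distances agree to first order at the origin, with $|T'(0)| = T_*(x)$.

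For the distortion inequality (\ref{UVW}), I would use the integral representation
\[
T(y) - T(x) = T'(x)(y-x) + (y-x)\int_0^1 [T'(x+s(y-x)) - T'(x)]\,ds
\]
together with the bound $|T'(z) - T'(x)| \leq \wtilde{H}\dist_s(x,z)$ already established in the proof of Lemma \ref{lemmaC10} via (\ref{C9}). The triangle inequality yields
\[
|T(y) - T(x)| \geq |T'(x)||y-x| - \tfrac{\wtilde{H}}{2}|y-x|^2
\]
in the chart, and converting Euclidean distances back to spherical distances at the origin (where the two metrics coincide infinitesimally, with the lower-order corrections absorbed into $\wtilde{H}$ exactly as in the proof of Lemma \ref{lemmaC10}) gives (\ref{UVW}).

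The injectivity of $T\on W$ should then follow from the same identity: if $T(x) = T(y)$ for some distinct $x,y \in W$, the vanishing of the integral forces $T_*(x) \leq \tfrac{\wtilde{H}}{2}\dist_s(x,y) \leq \wtilde{H}\cdot(h\delta/H)$, which ought to contradict $T_*(x) \geq h$ in light of the hypotheses. The main obstacle I anticipate is making this contradiction work in the full range $\delta \leq \pi/4$: the naive single-chart estimate is not sharp enough, and I expect one must normalize the chart at the \emph{midpoint} of the spherical geodesic from $x$ to $y$ so that the symmetry $y \mapsto -y$ kills the leading odd-order contribution to $T(y)-T(x)$, improving the bound on $T_*$ at the midpoint; combining this with the Lipschitz property (\ref{C10}) of $T_*$ then transports the contradiction back to $x$. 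A careful accounting of the spherical-Euclidean conversion factors across $W$ will be necessary at this step, and the precise calibration of the radius as $h\delta/H$ (rather than some fraction of $\delta$) is what makes the constants close.
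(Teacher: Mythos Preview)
Your proposal has the logical flow backwards relative to the paper, and this is not merely a stylistic difference: your order does not close.

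\textbf{Injectivity cannot be extracted from (\ref{UVW}).} Suppose (\ref{UVW}) holds and $T(x)=T(y)$ with $x\neq y$. You obtain $T_*(x)\le \tfrac{\wtilde H}{2}\dist_s(x,y)\le \wtilde H\,h\delta/H$, and want this to contradict $T_*(x)\ge h$. That would require $\wtilde H\,\delta < H$, i.e.\ $\tfrac{32}{\pi^2}H^2\delta < H$, i.e.\ $H\delta<\pi^2/32$. Nothing in the hypotheses gives this: $\delta\le\pi/4$ and $H$ may be large (indeed $H\ge 1$ for any nonconstant rational map). Your proposed midpoint normalization can at best improve constants by a bounded factor, which does not help when $H$ is unbounded. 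The radius $h\delta/H$ is calibrated for a different argument, not for making this contradiction work.

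\textbf{The direct chart argument for (\ref{UVW}) is also incomplete.} Integrating $T_*$ along the source geodesic yields the \emph{upper} bound (\ref{C11}); it cannot give a lower bound on $\dist_s(T(x),T(y))$, since $T\circ\gamma$ need not be a geodesic. Your Euclidean identity bounds $|T'(z)-T'(0)|$, but (\ref{C10}) controls the spherical derivative $T_*$, not $T'$; the two differ by the factor $(1+|z|^2)/(1+|T(z)|^2)$, which is not close to $1$ across a ball of radius up to $\pi/4$. The claim that these corrections are ``absorbed into $\wtilde H$ exactly as in the proof of Lemma~\ref{lemmaC10}'' is not substantiated --- that lemma only proved the upper bound by this method.

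\textbf{What the paper does.} Injectivity comes first, via a global inverse-branch argument: one shows the inverse branch $\eta$ of $T$ at $T(a)$ extends to $V=B_s(T(a),h\delta)$ with image in $U=B_s(a,\delta)$ (using $\eta_*\le 1/h$), then applies the same reasoning to $\eta$ to get that $T=\eta^{-1}$ is defined and injective on $W=B_s(a,h\delta/H)$. With $\eta$ in hand, (\ref{UVW}) is proved by integrating $\eta$ along the \emph{target} geodesic $\gamma$ from $T(x)$ to $T(y)$: setting $f(t)=\dist_s(\eta\circ\gamma(t),x)$ and using $\eta_*\le (T_*(x)-\wtilde H f(t))_+^{-1}$ from (\ref{C10}), an ODE comparison with $\Psi(r)=T_*(x)r-\tfrac{\wtilde H}{2}r^2$ gives $t\ge\Psi(f(t))$, which at $t=\dist_s(T(x),T(y))$ is exactly (\ref{UVW}). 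The existence of $\eta$ is what makes the lower bound accessible.
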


\begin{figure}
\centerline{\mbox{\includegraphics[scale=.5]{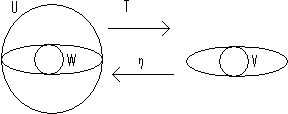}}}
\caption{Construction of the set $W$.}
\label{figureUVW}
\end{figure}

\begin{proof}
Let $U = B_s(a,\delta)$. Let $\delta_2 > 0$ be the largest number such that there exists an inverse branch $\eta$ of $T$ on $B_s(T(a),\delta_2)$ sending $T(a)$ to $a$ such that $\eta(B_s(T(a),\delta_2)) \implies U$. Since $\delta_1$ is maximal, there exists $p\in \del B_s(T(a),\delta_2)$ such that $\eta$ cannot be extended to any neighborhood of $p$. We claim that $\delta_2 \geq h\delta$. By contradiction, suppose otherwise; note that by the inverse chain rule (\ref{hHbounds}) becomes
\begin{equation}
\label{Hhbounds}
\frac{1}{H} \leq \eta_* \leq \frac{1}{h}.
\end{equation}
Thus $\eta$ is Lipschitz continuous with a corresponding constant of $1/h$; we have
\[
\eta(B_s(T(a),\delta_2)) \Kin U.
\]
This implies that there exists $x\in \cl{\eta(B_s(T(a),\delta_2))} \implies U$ such that $T(x) = p$.

In particular, since $x\in U$, we have $T_*(x) \geq h > 0$, so $T$ is injective on some neighborhood $B_s(x,\varepsilon)$ of $x$. Let $\varepsilon_2 > 0$ be small enough so that
\[
B_s(p,\varepsilon_2) \implies T(B_s(x,\varepsilon)).
\]
Then there exists
\[
\wtilde{\eta}: B_s(p,\varepsilon_2) \rightarrow B_s(x,\varepsilon) \implies U
\]
an inverse branch of $T$ such that $\wtilde{\eta}(p) = x$. Since $x\in\cl{\eta(B_s(T(a),\delta_2))}$, it follows that $\eta$ and $\wtilde{\eta}$ agree in a neighborhood of some point. Basic spherical geometry shows that the set
\[
B_s(T(a),\delta_2) \cap B_s(p,\varepsilon_2)
\]
is connected. Thus $\eta$ and $\wtilde{\eta}$ agree on this intersection, so $\eta$ can be extended, contradicting that $\delta_2$ is maximal.

Thus $\delta_2 \geq h\delta$. Thus there exists an inverse branch of $T$
\[
\eta: V := B_s(T(a),h \delta) \rightarrow U
\]
such that $\eta(T(a)) = a$. But by (\ref{Hhbounds}), the exact same argument can be applied to $\eta$, yielding an inverse branch of $\eta$
\[
\zeta: W := B_s\left(\eta(T(a)),\frac{1}{H}h\delta\right) \rightarrow V
\]
such that $\zeta(\eta(T(a))) = T(a)$. But an inverse branch of an inverse branch of $T$ must just be $T$, so $\zeta = T\on W$. Since $\zeta$ is an inverse branch of $\eta$, $\zeta$ is injective. Fix $x,y\in W$; we will show (\ref{UVW}). Now $T(x),T(y)\in V$ with $\eta(T(x)) = x$, $\eta(T(y)) = y$. Let $\gamma:[0,\dist_s(T(x),T(y))]\rightarrow V$ be the geodesic connecting $T(x)$ and $T(y)$, parameterized at unit speed. Then $\eta\circ\gamma$ connects $x$ with $y$. Define $f:[0,\dist_s(T(x),T(y))]\rightarrow[0,\pi/2]$ by
\[
f(t) := \dist(\eta\circ\gamma(t),x).
\]
Then
\begin{align*}
f(0) &= 0\\
f(\dist_s(T(x),T(y))) &= \dist_s(x,y).
\end{align*}
We have
\begin{align*}
f'(t)
&\leq \|(\eta\circ\gamma)_*(t)\|_e^s\\
&= \eta_*\circ\gamma(t)\\
&= \frac{1}{T_*(\eta\circ\gamma(t))}\\
&\leq \frac{1}{(T_*(x) - \wtilde{H} f(t))_+},
\end{align*}
the last inequality coming from Lemma \ref{lemmaC10}. Let
\[
\Psi(r) = \begin{cases}
T_*(x)r - \dfrac{\wtilde{H}}{2}r^2 & r \leq \dfrac{T_*(x)}{\wtilde{H}} \\
\dfrac{T_*(x)^2}{2\wtilde{H}} & r > \dfrac{T_*(x)}{\wtilde{H}}
\end{cases}
\]
Note that $\Psi$ is $C^1$, nondecreasing, and that $\Psi(0) = 0$.

To show (\ref{UVW}), it clearly suffices to show that
\begin{equation}
\label{ODEs}
t \geq \Psi(f(t))
\end{equation}
for all $t\in[0,\dist_s(T(x),T(y))]$. We already know that this inequality holds at at least one point, namely $t = 0$. If $f(t) < \frac{T_*(x)}{\wtilde{H}}$, the chain rule gives
\[
(\Psi\circ f)'(t) \leq \frac{T_*(x) - \wtilde{H} f(t)}{T_*(x) - \wtilde{H} f(t)} = 1;
\]
if $f(t) \geq \frac{T_*(x)}{\wtilde{H}}$, it gives
\[
(\Psi\circ f)'(t) = 0 \leq 1.
\]
The mean value inequality yields (\ref{ODEs}).
\end{proof}

\begin{proof}[Proof of Theorem \ref{theoremmanepartition}]
By the random Ruelle inequality [\cite{BB} Theorem 1, p.248],\footnote{There is a minor error in the referenced paper. On page 250, the statement ``Without loss of generality, ...'' is incorrect because this change would significantly affect the bounds from Lemma 3 [p.249]. One solution is to generalize Lemma 1(b) [p.247] to the case where the sequence of partitions is not assumed to be increasing. This is not too difficult; for the proof in the deterministic case see [\cite{PU} Corollary 1.8.10, p.57].}

\begin{equation*}
0 < h_\sigma(\T\on\theta) \leq 2\max\left(0,\int\ln(T_*(x))\d\sigma(\omega,x)\right),
\end{equation*}
and thus
\begin{equation}
\label{ruelle}
\int\ln(T_*(x))\d\sigma(\omega,x) > 0.
\end{equation}
Thus by the Birkhoff ergodic theorem, there exists $\varepsilon > 0$ such that $\sigma(\X\butnot A_\varepsilon) = 0$, where
\begin{equation}
\label{Aepsilondef}
A_\varepsilon := \left\{(\omega,x)\in\X:\sum_{j = 0}^{n - 1}\ln(T_j)_*\circ T_0^j(x) - n\varepsilon \tendston \infty\right\}.
\end{equation}
Note than $A_\varepsilon\cap \RP_T = \emptyset$. (By abuse of notation, we use $\RP_T$ to mean the set $\{(\omega,x):x\in\RP_{T_\omega}\}$.)

Let $(\Par_k)_{k\in\N}$ be as in Lemma \ref{lemmapartitions}. Fix $\omega\in\Omega$. For each $k\in\N$, let
\begin{equation}
\label{deltakdef}
\delta_k := \max\left(2\left(2^{-k}C_8 H^{4D+1}\right)^{1/(2D+1)},\left(2^{-k + 1}\frac{C_8 H^{4D}\wtilde{H}}{1 - e^{-\varepsilon}}\right)^{1/(2D)}\right),
\end{equation}
and let
\[
\delta_{-1} := \max(\delta_0,\pi/2).
\]
Then $(\delta_k)_{k\geq -1}$ is a nonincreasing sequence whose limit is zero. Furthermore, for $k\in\N$
\begin{align} \label{2kislessthan1}
2^{-k} &\leq 2\frac{1 - e^{-\varepsilon}}{\wtilde{H}}\frac{\delta_k^{2D}}{C_8 H^{4D}}\\ \label{2kislessthan2}
2^{-k} &\leq \frac{(\delta_k/2)^{2D+1}}{C_8 H^{4D + 1}}.
\end{align}
Let
\begin{equation}
\label{equationbk}
B_k := \cl{B_s}(\RP_T,\delta_k),
\end{equation}
so that
\begin{align} \label{pardefone}
\ParB &:= \{B_{k-1}\butnot B_k:k\in\N\}\cup\{\RP_T\}\\ \label{pardeftwo}
\Par &:= \bigcup_{k\in\N}\Omega\times\Par_k\on(B_{k - 1}\butnot B_k)\cup\{\RP_T\}
\end{align}
are partitions of $\X$, with $\Par$ a refinement of $\ParB$. Lt $B_{0,k}$, $\ParB_0$, and $\Par_0$ denote the $\omega$th fibers of $B_k$, $\ParB$, and $\Par$, respectively.
\begin{claim}
\label{claimTxTy}
Fix $\omega\in\Omega$. If $x,y\in\C$ with $y\in\Par_\omega(x)$, then
\begin{equation}
\label{TxTy}
\dist_s(T(x),T(y)) \geq \dist_s(x,y)e^{-\varepsilon}T_*(x).
\end{equation}
\end{claim}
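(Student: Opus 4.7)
The plan is to use Lemmas \ref{lemmaC7C8} and \ref{lemmaUVW} in tandem, with the choices of $\delta_k$ in (\ref{deltakdef}) being precisely what makes the algebra work out. First I would dispose of trivial cases: if $\Par_\omega(x) = \RP_T$, then $T_*(x) = 0$ and the inequality (\ref{TxTy}) is vacuous. Otherwise, by the definition (\ref{pardeftwo}) there exists a unique $k \in \N$ such that $x, y \in B_{k-1} \butnot B_k$ and $x, y$ lie in the same atom of $\Par_k$, so by construction of $(\Par_k)_k$ in Lemma \ref{lemmapartitions}, $\dist_s(x,y) \leq 2^{-k}$, and by (\ref{equationbk}) $\dist_s(x, \RP_T) > \delta_k$. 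Without loss of generality assume $\delta_k \leq \pi/2$, since otherwise $B_k = \C$ and the relevant atom is empty.

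Next I would apply Lemma \ref{lemmaC7C8} on the ball $B_s(x, \delta_k/2)$: since $\dist_s(z, \RP_T) > \delta_k/2$ for every $z \in B_s(x,\delta_k/2)$, the lemma gives
\[
T_*(z) \geq \frac{(\delta_k/2)^{2D}}{C_8 H^{4D}} =: h \all z\in B_s(x,\delta_k/2).
\]
I would then invoke Lemma \ref{lemmaUVW} with $a = x$, $\delta = \delta_k/2 \leq \pi/4$, and this value of $h$, producing the domain $W = B_s(x, h\delta/H) = B_s\!\left(x, (\delta_k/2)^{2D+1}/(C_8 H^{4D+1})\right)$. Inequality (\ref{2kislessthan2}) tells us exactly that $2^{-k} \leq (\delta_k/2)^{2D+1}/(C_8 H^{4D+1})$, so $\dist_s(x,y) \leq 2^{-k}$ forces $y \in W$, and (\ref{UVW}) applies to give
\[
\dist_s(T(x),T(y)) \geq \dist_s(x,y)\left[T_*(x) - \tfrac{\wtilde H}{2}\dist_s(x,y)\right].
\]

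Finally I would convert the bracketed factor into $e^{-\varepsilon}T_*(x)$ using the other defining inequality (\ref{2kislessthan1}). Since $\dist_s(x,\RP_T) > \delta_k$, Lemma \ref{lemmaC7C8} gives $T_*(x) \geq \delta_k^{2D}/(C_8 H^{4D})$, and hence
\[
\tfrac{\wtilde H}{2}\dist_s(x,y) \;\leq\; \tfrac{\wtilde H}{2}\cdot 2^{-k} \;\leq\; \tfrac{\wtilde H}{2}\cdot\frac{2(1-e^{-\varepsilon})\delta_k^{2D}}{\wtilde H\, C_8 H^{4D}} \;\leq\; (1-e^{-\varepsilon})T_*(x),
\]
so $T_*(x) - \tfrac{\wtilde H}{2}\dist_s(x,y) \geq e^{-\varepsilon}T_*(x)$ and (\ref{TxTy}) follows.

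The proof is essentially a bookkeeping exercise once one recognizes that the two maxima in the definition (\ref{deltakdef}) of $\delta_k$ are tailored to encode precisely (\ref{2kislessthan1}) and (\ref{2kislessthan2}): the first guarantees $y\in W$ so that Lemma \ref{lemmaUVW} is applicable at the scale $\delta_k/2$, and the second guarantees that the correction term $\tfrac{\wtilde H}{2}\dist_s(x,y)$ is dominated by $(1-e^{-\varepsilon})T_*(x)$. The main conceptual point—and the only nontrivial obstacle—is to notice that one must pass through a ball of radius $\delta_k/2$ (rather than $\delta_k$) in order to have a lower bound on $T_*$ on a whole neighborhood of $x$ (needed to apply Lemma \ref{lemmaUVW}), while still using the full $\delta_k$ to bound $T_*(x)$ itself from below when converting to the factor $e^{-\varepsilon}$; this asymmetry is exactly why the $2^{2D+1}$ appears in the first branch of (\ref{deltakdef}).
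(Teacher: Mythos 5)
Your proposal is correct and follows essentially the same route as the paper: dispose of the trivial case $x\in\RP_T$, identify the scale $k$ from $\Par$, get $\dist_s(x,y)\leq 2^{-k}$ and $\dist_s(x,\RP_T)\geq\delta_k$, feed Lemma \ref{lemmaC7C8} into Lemma \ref{lemmaUVW} at the half-scale $\delta_k/2$ to produce the domain $W = B_s\bigl(x, (\delta_k/2)^{2D+1}/(C_8 H^{4D+1})\bigr)$, use (\ref{2kislessthan2}) to get $y\in W$, and use (\ref{2kislessthan1}) together with (\ref{C7C8}) to dominate the error term $\frac{\wtilde H}{2}\dist_s(x,y)$ by $(1-e^{-\varepsilon})T_*(x)$. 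Your closing remark correctly identifies the reason for the asymmetric appearances of $\delta_k$ and $\delta_k/2$ in (\ref{deltakdef}).
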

\begin{proof}
If $x = y \in\RP_T$, (\ref{TxTy}) is trivial. Otherwise, there exists $k\in\N$ such that $x,y\in B_{0,k-1}\butnot B_{0,k}$ and $y\in\Par_k(x)$. Now (A) of Lemma \ref{lemmapartitions} gives
\begin{equation}
\label{dsxy2k}
\dist_s(x,y)\leq 2^{-k},
\end{equation}
and (\ref{equationbk}) gives
\begin{equation}
\dist_s(x,\RP_T)\geq\delta_k.
\end{equation}
Thus (\ref{C7C8}) gives
\[
\frac{(\delta_k/2)^{2D}}{C_8 H^{4D}} \leq T_* \leq H; \hspace{.5 in}[\text{on }B_s(x,\delta_k/2)]
\]
so that Lemma \ref{lemmaUVW} applies on the disk
\[
W := B_s\left(x,\frac{\delta_k}{2}\frac{\left(\dfrac{(\delta_k/2)^{2D}}{C_8 H^{4D}}\right)}{H}\right)
= B_s\left(x,\frac{(\delta_k/2)^{2D + 1}}{C_8 H^{4D + 1}}\right).
\]
By (\ref{2kislessthan2}) and (\ref{dsxy2k}), $y\in W$. Thus we have (\ref{UVW}), which yields (\ref{TxTy}) as long as
\[
T_*(x) - \frac{\wtilde{H}}{2}\dist_s(x,y) \geq e^{-\varepsilon}T_*(x),
\]
or equivalently
\[
\dist_s(x,y) \leq 2T_*(x)\frac{1 - e^{-\varepsilon}}{\wtilde{H}}.
\]
But this follows from (\ref{dsxy2k}), (\ref{2kislessthan1}), and (\ref{C7C8}).
\QEDmod\end{proof}

Suppose that $\sigma\in\M_e(\X,\T,\basemeasure)$ with $h_\sigma(\T\on\theta) > 0$. We are done if (A) and (B) of Theorem \ref{theoremmanepartition} hold.

To show (A), we will show that
\begin{align} \label{entropyB}
H_\sigma(\ParB\on\pi^{-1}\epsilon_\Omega) &< \infty\\ \label{entropypar}
H_\sigma(\Par\on\ParB\vee\pi^{-1}\epsilon_\Omega) &< \infty.
\end{align}
Then by a well-known formula for conditional entropy,
\[
H_\sigma(\Par\on\pi^{-1}\epsilon_\Omega) = H_\sigma(\Par\on\ParB\vee\pi^{-1}\epsilon_\Omega) + H_\sigma(\ParB\on\pi^{-1}\epsilon_\Omega) < \infty.
\]
To show (\ref{entropyB}), define the map $k:\X\rightarrow\N$ by
\[
k(\omega,x) :=
\begin{cases}
\min(k\in\N:\dist_s(x,\RP_T)\geq \delta_k) & \text{if $x\notin\RP_T$}\\
\infty & \text{if $x\in\RP_T$}
\end{cases}
\]
Note that $\ParB = k^{-1}\epsilon_{\what{\N}}$.
\begin{claim}
\begin{equation}
\label{kboundintegral}
\int k\d\sigma < \infty.
\end{equation}
\end{claim}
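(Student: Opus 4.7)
The plan is to show $\int k \, d\sigma < \infty$ by producing an explicit $\sigma$-integrable upper bound for $k(\omega,x)$, built from $\ln \sup(T_*)$ and $\ln^- T_*(x)$. Note first that $\sigma(\RP_T)\leq \sigma(\X\setminus A_\varepsilon)=0$, so $k$ is $\sigma$-a.e.\ finite, and hence the integral is well-defined.

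First I would observe that $\int \ln^+ T_*(x)\,d\sigma(\omega,x)<\infty$. Since $T_*(x)\leq H(\omega):=\sup((T_\omega)_*)$ and $H(\omega)\geq 1$ (because $\int T_*^2\,d\lambda_s=\deg(T)\geq 1$ with $\lambda_s(\C)=1$), we have $\ln^+ T_*(x)\leq \ln H(\omega)$, and integrating against $\pi_*\sigma=\basemeasure$ and using hypothesis (\ref{lnH}) gives the bound. Next, by the one-sided Birkhoff ergodic theorem (valid because $\ln^+ T_*\in L^1(\sigma)$ and $\sigma$ is ergodic), the averages $\tfrac1n\sum_{j=0}^{n-1}\ln T_*\circ\T^j$ converge $\sigma$-a.e.\ to $L:=\int\ln T_*\,d\sigma\in[-\infty,\infty)$. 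But on $A_\varepsilon$ (full measure) the definition (\ref{Aepsilondef}) forces these averages to be eventually $\geq \varepsilon$, which rules out $L=-\infty$ and in fact gives $L\geq\varepsilon>0$. Consequently $\int\ln^- T_*\,d\sigma = \int\ln^+ T_*\,d\sigma - L<\infty$.

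Second, I would apply Lemma \ref{lemmaC7C8}: for $x\notin \RP_T$,
\[
\ln T_*(x) \;\geq\; 2D\ln\dist_s(x,\RP_T) - \ln C_8 - 4D\ln H,
\]
so rearranging and taking positive parts,
\[
\bigl(-\ln\dist_s(x,\RP_T)\bigr)_+ \;\leq\; \tfrac{1}{2D}\ln^- T_*(x) + 2\ln H + \tfrac{1}{2D}\ln C_8.
\]
Both the right-hand side terms are $\sigma$-integrable by the first step and by (\ref{lnH}), so $(-\ln\dist_s(\cdot,\RP_T))_+$ is $\sigma$-integrable.

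Third, I would extract from the definition (\ref{deltakdef}) of $\delta_k$ the elementary estimate that $\delta\geq\delta_k$ holds as soon as
\[
k \;\geq\; C_1\bigl(1 + \ln H + (-\ln\delta)_+\bigr),
\]
for a constant $C_1=C_1(D,\varepsilon)$, using $\wtilde H \leq \tfrac{32}{\pi^2}H^2$. Since $k(\omega,x)$ is defined as the least such $k$, we obtain the pointwise bound
\[
k(\omega,x) \;\leq\; C_1\bigl(1 + \ln H(\omega) + (-\ln\dist_s(x,\RP_T))_+\bigr).
\]
Integrating this against $\sigma$ and applying the first two steps yields $\int k\,d\sigma<\infty$. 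The only mildly subtle step is the second one, namely the use of the one-sided Birkhoff theorem together with the $A_\varepsilon$ condition to upgrade (\ref{ruelle}) into the two-sided integrability of $\ln T_*$; the remaining ingredients are essentially bookkeeping.
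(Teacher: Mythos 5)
Your first, second and fourth steps are fine (the Birkhoff reconstruction of $\int\ln T_*\,d\sigma>0$ is just a re-derivation of (\ref{ruelle}), which the paper already has; the pointwise bound on $k$ in your last step is essentially the paper's (\ref{kbound})). But your third step has a sign error that breaks the argument, and it uses the wrong lemma.

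Lemma \ref{lemmaC7C8} says $T_*(x)\geq\dist_s(x,\RP_T)^{2D}/(C_8H^{4D})$. Taking logs gives
\[
\ln\dist_s(x,\RP_T)\;\leq\;\tfrac{1}{2D}\ln T_*(x)+\tfrac{1}{2D}\ln C_8+2\ln H,
\]
i.e.\ an \emph{upper} bound on $\ln\dist_s(x,\RP_T)$, hence a \emph{lower} bound on $-\ln\dist_s(x,\RP_T)$. Your claimed inequality
\[
\bigl(-\ln\dist_s(x,\RP_T)\bigr)_+\;\leq\;\tfrac{1}{2D}\ln^- T_*(x)+2\ln H+\tfrac{1}{2D}\ln C_8
\]
is the reverse and is simply false: take $x$ very close to $\RP_T$ with $T_*(x)$ of order one (the lemma permits this, since when $\delta=\dist_s(x,\RP_T)$ is small the bound $T_*(x)\geq\delta^{2D}/(C_8H^{4D})$ is nearly vacuous). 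Then the left side is huge while the right side is a constant. Lemma \ref{lemmaC7C8} controls the wrong regime: it bounds $T_*$ from below when you are \emph{far} from $\RP_T$, whereas the claim requires you to rule out $\sigma$ concentrating near $\RP_T$, which needs a bound on $\dist_s(x,\RP_T)$ from \emph{below} in terms of $T_*(x)$.

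The paper obtains exactly that from (\ref{C10}) of Lemma \ref{lemmaC10}: plugging $y\in\RP_T$ (where $T_*(y)=0$) into $|T_*(x)-T_*(y)|\leq\wtilde H\,\dist_s(x,y)$ gives $T_*(x)\leq\wtilde H\,\dist_s(x,\RP_T)$, so $\ln\dist_s(x,\RP_T)\geq\ln T_*(x)-\ln\wtilde H$. Integrating, and using (\ref{ruelle}) (which asserts $\int\ln T_*\,d\sigma>0>-\infty$, with the positive part automatically integrable by (\ref{lnH})) together with (\ref{lnH}) for $\ln\wtilde H$, yields $\int\ln\dist_s(x,\RP_T)\,d\sigma>-\infty$; this is then fed into the pointwise bound (\ref{kbound}) on $k$. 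Replace your Lemma \ref{lemmaC7C8} step by this use of Lemma \ref{lemmaC10} and the rest of your outline goes through.
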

\begin{proof}
By (\ref{deltakdef}), there exists $C_{10} < \infty$ depending only on $D$ and $\varepsilon$ such that
\begin{equation}
\label{deltaklessthan}
\delta_k \leq C_{10} H^3 2^{-k/(2D + 1)}
\end{equation}
for all $k\in\N$. Fix $(\omega,x)\in\X$; if we let
\[
\wtilde{k} := \left\lceil -(2D + 1)\frac{1}{\ln(2)}\ln\left(\frac{\dist_s(x,\RP_T)}{C_{10}H^3}\right)\right\rceil \in \N,
\]
then algebra shows $\dist_s(x,\RP_T)\geq \delta_{\wtilde{k}}$; thus
\begin{equation}
\label{kbound}
k(\omega,x) \leq \wtilde{k}
\leq 1 + (2D + 1)\frac{1}{\ln(2)}\left[\ln(C_{10}) + 3\ln(H) - \ln(\dist_s(x,\RP_T))\right].
\end{equation}
Now, Lemma \ref{lemmaC10} gives
\[
(T_\omega)_*(x) \leq \wtilde{H_\omega}\dist_s(x,\RP_{T_\omega}).
\]
Taking logs, integrating against $\d\sigma(\omega,x)$, and combining with (\ref{ruelle}) and (\ref{lnH}) yields
\[
\int\ln\dist_s(x,\RP_{T_\omega})\d\sigma(\omega,x) > -\infty.
\]
Combining with (\ref{kbound}) and (\ref{lnH}) yields (\ref{kboundintegral}).
\QEDmod\end{proof}

By an elementary calculation [\cite{PU} Lemma 10.3.1, p.314], it follows that $H_\sigma(\ParB) = H_\sigma(k^{-1}\epsilon_{\what{\N}}) < \infty$; we have shown (\ref{entropyB}).

To show (\ref{entropypar}), first note that
\[
H_\sigma(\Par\on\ParB\vee\pi^{-1}\epsilon_\Omega) = \int H_\sigma(\Par_\omega \on B_{\omega,k(\omega,x) - 1}\butnot B_{\omega,k(\omega,x)}) \d\sigma(\omega,x).
\]
Fix $k\in\N$ and $\omega\in\Omega$. Now
\[
H_\sigma(\Par_\omega \on B_{\omega,k - 1}\butnot B_{\omega,k})
\leq \ln\#(\Par_\omega \on B_{\omega,k - 1}\butnot B_{\omega,k})
= \ln\#(\Par_k \on B_{\omega,k - 1}\butnot B_{\omega,k}).
\]
Combining with (\ref{equationbk}) and (\ref{deltaklessthan}),
\[
H_\sigma(\Par_\omega \on B_{\omega,k - 1}\butnot B_{\omega,k})
\leq \ln\left(\sum_{p\in\RP_T}\#\left[\Par_k \on B_s(p,C_{10} H^3 2^{-(k - 1)/(2D + 1)})\right]\right);
\]
combining with (\ref{lnnumberbound}),
\begin{align*}
&H_\sigma(\Par_\omega \on B_{\omega,k - 1}\butnot B_{\omega,k})\\
&\leq \ln\#(\RP_T) + 6\ln(2) + 2\max\left(0,\ln\left(\frac{C_{10} H^3 2^{-(k - 1)/(2D + 1)}}{2^{-(k-1)}}\right)\right) \\
&\leq \ln(2D - 2) + 6\ln(2) + 2\max\left(0,\ln(C_{10}) + 3\ln(H) + \ln(2)\left(1 - \frac{1}{2D + 1}\right)(k - 1)\right),
\end{align*}
which is integrable by (\ref{lnH}) and (\ref{kboundintegral}). Thus we have shown (\ref{entropypar}), completing the proof of (A).

To show (B), let $A_\varepsilon\implies\X$ be defined by (\ref{Aepsilondef}). We claim that (\ref{partitionsequal}) holds on $A_\varepsilon$. To this end, fix $(\omega,x)\in A_\varepsilon$; we must show that
\[
\left(\bigvee_{j\in\N} \T^{-j}\Par \vee \pi^{-1}\epsilon_\Omega\right)(\omega,x)
= \epsilon_\X(\omega,x),
\]
i.e.
\[
\bigvee_{j\in\N} T_j^0\Par_j(T_0^j(x)) = \{x\}.
\]
By contradiction, suppose that $y\in\bigvee_{j\in\N} T_j^0\Par_j(T_0^j(x))\butnot\{x\}$. Then for all $j\in\N$, $T_0^j(y)\in\Par_j(T_0^j(x))$. By Claim \ref{claimTxTy},
\[
\dist_s(T_0^{j+1}(x),T_0^{j+1}(y)) \geq \dist_s(T_0^j(x),T_0^j(y))e^{-\varepsilon}(T_j)_*\circ T_0^j(x);
\]
iterating yields
\[
\dist_s(T_0^n(x),T_0^n(y)) \geq \dist_s(x,y)\exp\left(\sum_{j = 0}^{n - 1}\ln (T_j)_*\circ T_0^j(x) - n\varepsilon\right) \tendston \infty,
\]
which is a contradiction since $\diam(\C) = \pi/2 < \infty$. Thus $\Par$ $\sigma$-almost generates $\X$ over $\Omega$.
\end{proof}
\end{section}
\begin{section}{Proof of Theorem \ref{theoremequilibrium}} \label{sectionequilibrium}
Let $\points_\Omega$ and $\points_\X$ be the partition of $\Omega$ and $\X$ into points, respectively.

Fix $\sigma\in \M_e(\X,\T,\basemeasure)$. We will show that
\begin{equation}
\label{varprinciplelessthan}
h_\sigma(\T\on\theta) + \int \phi\d\sigma \leq \int \ln(\lambda)\d\basemeasure,
\end{equation}
with equality if and only if $\sigma = \mu$.

Since $\Omega\times B,\SS\implies\X$ are forward invariant, the ergodicity of $\sigma$ implies that each of them has measure zero or one. If $\SS$ has measure one, then $h_\sigma(\points_\X\on\pi^{-1}\points_\Omega)\leq\ln(2) < \infty$, so $h_\sigma(\T\on\theta) = 0$. We deal with this case below. Suppose that $\sigma(\Omega\times B) = 1$. For each $\omega\in\Omega$, the containment
\[
T_0(B)\Kin B
\]
together with the Schwarz-Pick lemma imply that
\[
\diam_B(\Supp(\sigma_1)) \leq \diam_B(\Supp(\sigma_0))
\]
with equality if and only if $\Supp(\sigma_0)$ is a singleton. By ergodicity, we have equality almost surely; thus $\sigma_0$ is almost surely a point measure. Thus $h_\sigma(\points_\X\on\pi^{-1}\points_\Omega) = 0$, so again $h_\sigma(\T\on\theta) = 0$.

Suppose that $\sigma\in \M_e(\X,\T,\basemeasure)$ satisfies $h_\sigma(\T\on\theta) = 0$. (\ref{tauisaboundintegral}) gives
\begin{equation*}
\int \phi\d\sigma
\leq \int \sup(\phi_\omega)\d\basemeasure(\omega)
< \int \ln\inf(L_\omega[\one])\d\basemeasure(\omega)
\leq \int\ln(\lambda(\omega))\d\basemeasure(\omega).
\end{equation*}
(The last inequality comes from integrating (\ref{measures}) against $\one$, taking logarithms, and integrating against $\d\basemeasure(\omega)$.

This completes the proof of (\ref{varprinciplelessthan}) in the case where $h_\sigma(\T\on\theta) = 0$. Thus the only case which remains is the case $h_\sigma(\T\on\theta) > 0$. In this case we also know that $\sigma((\Omega\times X)\butnot\SS) = 1$, and that there exists a partition of $(\X,\T,\sigma)$ which is generating relative to $(\Omega,\theta,\basemeasure)$ and which has finite relative entropy.
By Theorems \ref{theoremmanepartition} and \ref{propositionkolmogorovsinai}, we have (\ref{hequalsH}), where $(\sigma_{\omega,p})_{(\omega,p)\in\X}$ is the Rohlin decomposition of $\sigma$ relative to $\T^{-1}(\points_\X)$. Thus
\begin{equation*}
h_\sigma(\T\on\theta) + \int\psi\d\sigma
= \int \left[H_{\sigma_{\omega,p}}(\points_\X) + \int \psi\d\sigma_{\omega,p}\right]\d\sigma(\omega,p)
\end{equation*}
By the finite variational principle, the integrand is bounded above by
\begin{equation}
\label{finitevariationalprinciple}
\ln\left(\sideset{}{^*}\sum_{(\theta^{-1}\omega,x)\in\T^{-1}(\omega,p)}e^{\psi(\theta^{-1}\omega,x)}\right).
\end{equation}
(Recall that $*$ indicates that the sum is taken without multiplicity.) Since all terms are positive, the same sum with multiplicity is at least as large. Thus
\begin{align*}
H_{\sigma_{\omega,p}}(\points_\X) + \int \psi\d\sigma_{\omega,p}
&\leq \ln\left(\sum_{(\theta^{-1}\omega,x)\in\T^{-1}(\omega,p)}e^{\psi(\theta^{-1}\omega,x)}\right)\\
&= \ln(\L[\one](\omega,p))\\
&= \ln(1) = 0.
\end{align*}
Integrating against $\d\sigma(\omega,p)$ , we find that
\begin{equation*}
h_\sigma(\T\on\theta) + \int\psi\d\sigma \leq 0.
\end{equation*}
Expanding $\psi$ and rearranging,
\begin{equation*}
h_\sigma(\T\on\theta) + \int\phi\d\sigma \leq \int \ln(\lambda)\d\basemeasure.
\end{equation*}
Equality is achieved if and only if for $\sigma$-almost every $(\omega,p)\in\X$,
\begin{itemize}
\item[A)] The maximum in the finite variational principle is achieved i.e.
\[
\sigma_{\omega,p} = \frac{1}{C} \sideset{}{^*}\sum_{(\theta^{-1}\omega,x)\in\T^{-1}(\omega,p)}e^{\psi(\theta^{-1}\omega,x)}\delta_{(\theta^{-1}\omega,x)}
\]
where $C$ is the appropriate normalization constant.
\item[B)] The sum in (\ref{finitevariationalprinciple}) is the same whether or not multiplicity is counted. This happens if and only if $p$ is not a branch point of $T_{\theta^{-1}\omega}$.
\end{itemize}
We claim that (A) and (B) occur for $\sigma$-almost every $(\omega,p)$ if and only if $\sigma = \mu$. This will complete the proof due to the remarks below.

In the presence of (B), (A) is equivalent to the simpler equation
\begin{itemize}
\item[C)]
\begin{align*}
\sigma_{\omega,p} &= \sum_{(\theta^{-1}\omega,x)\in\T^{-1}(\omega,p)}e^{\psi(\theta^{-1}\omega,x)}\delta_{(\theta^{-1}\omega,x)}\\
&= \L^*[\delta_{(\omega,p)}].
\end{align*}
\end{itemize}

Thus, (A) and (B) hold if and only if (B) and (C) hold. We show that this occurs if and only if $\sigma = \mu$:
\begin{itemize}
\item[($\Rightarrow$)] Integrating (C) against $\d\sigma(\omega,p)$ yields
\begin{equation*}
\sigma
= \L^*\left[\int \delta_{(\omega,p)}\d\sigma(\omega,p)\right]
= \L^*[\sigma].
\end{equation*}

Since $\sigma$ is supported on $X\butnot\SS$, for all $\epsilon > 0$ there exists $\kappa > 0$ such that $\pr(\sigma_0(B_s(\SS_0,\kappa))\geq\epsilon) < 1$. Fix $\omega\in\Omega$ and assume that there exists a sequence $(n_k)_{k\in\N}$ of positive integers so that
\[
\sigma_{n_k}(B_s(\SS_{n_k},\kappa))\leq 2^{-k}.
\]
for all $k\in\N$. By Lemma \ref{lemmapoincare}, this assumption is almost certainly valid. Now, Remark \ref{remarkweakstar} gives
\begin{equation*}
\mu_0 = \lim_{k\rightarrow\infty}\L_{n_k}^0[\sigma_{n_k}]
= \lim_{k\rightarrow\infty}\sigma_0 = \sigma_0.
\end{equation*}
Since this is true for $\basemeasure$-almost every $\omega\in\Omega$, we have that $\sigma = \mu$.

\item[($\Leftarrow$)] Proposition \ref{propositionatomless} implies (B), since each rational map has only finitely many branch points. Disintegrating (\ref{measuresnu}) yields (C).
\end{itemize}
However, we are not done quite yet. We know that $\mu$ is supported on $\JJ\implies(\Omega\times X)\butnot\SS$, but we do not yet know that $h_\mu(\T\on\theta) > 0$. Nevertheless, the calculations made under this assumption still hold, since (\ref{hequalsH}) is always true (for any relative dynamical system) when $h_\sigma(\T\on\theta) = 0$. Thus we are done.
\end{section}
\begin{section}{Appendix: A logical test for measurability} \label{sectionappendix}
\begin{subsection}{Statements} \label{sectionstatements}
The purpose of this appendix is to explore the issue of measurability in relative dynamical systems. The idea is a simple one; namely that many statements can be seen to be measurable simply from the way that they are written. In fact, it is clear that in any language in which atomic propositions correspond to measurable sets and in which only quantification over countable sets is allowed, every proposition corresponds to a measurable set. However this is insufficient for our purposes because we would like to quantify over uncountable sets. For example, consider the following event:
\begin{event}
\label{eventexample}
There exists $p\in \JJ_0$ such that $T_0^n(\cl{B_s}(p,\delta_1))\supseteq \C\butnot B_s(\SS_n,\kappa)$. (Here $\JJ_0\implies \C$ is the random Julia set and $\SS_0\implies\C$ is the random exceptional set, defined in Section \ref{sectionbasic}.)
\end{event}
(The measurability of this event is used in the proof of (\ref{equationexample}), in order to be able to apply a continuity of measures argument.)

The domain of quantification $\JJ_0$ is almost certainly uncountable. However, this event will turn out to be measurable Corollary \ref{corollaryconventions}. Let us think about how to prove directly that Event \ref{eventexample} is measurable. Consider the random set
\[
\K_0 := \{p\in\C:T_0^n(\cl{B_s}(p,\delta_1))\supseteq \C\butnot B_s(\SS_n,\kappa)\}
\]
The question that Event \ref{eventexample} asks is whether $\JJ_0\cap\K_0\neq\emptyset$. If this were an open set, we could quantify over a countable dense subset of $\C$, and answer the same question. However $\JJ_0$ is a closed set, not an open set. It seems probable that most of the time $\JJ_0$ will not even intersect the countable dense set, rendering its detection power invalid. We solve this problem by noticing that $\JJ_0$ has the property of being ``strongly measurable", meaning that the map $(\omega,x)\mapsto\dist_s(x,\JJ_\omega)$ is jointly measurable (Remark \ref{remarkmeasurable}). This means that we can look at $\varepsilon$-neighborhoods of $\JJ_0$, where the countable dense set has detection power. Taking the limit as $\varepsilon$ goes to zero, we retain measurability since $\varepsilon$ can be quantified countably.

This solves the problem of detecting $\JJ_0$, but there is still the issue of detecting $\K_0$. Since $\K_0$ is linguistically non-atomic, we would like to have some way of checking that $\K_0$ is strongly measurable based on its subformulas. In fact, we will do this by inductively showing that its subformulas are continuous functions of $p$.

We begin with the following definitions. By ``locally compact'', we always mean that all closed and bounded subsets of $X$ are compact; any locally compact metrizable space has a compatible metric satisfying this condition.
\begin{itemize}
\item If $X$ is a locally compact separable metric space, let $\K(X)$ be the set of all compact subsets of $X$, endowed with the Vietoris topology (also known as the narrow topology, or the topology induced by the Hausdorff metric). Let $\F(X)$ be the set of all closed subsets of $X$, endowed with the Fell topology (also known as the vague topology). Both of these are Polish topologies [\cite{Mo} Theorem B.2(iii) p.399, Theorem C.8 p.405]. Precise definitions are given below in the second paragraph of the proof of Lemma \ref{lemmacontinuous}.
\item If $X$ is a locally compact separable metric space, and if $Y$ is a Polish space, let $\CC(X,Y)$ be the set of all continuous functions from $X$ to $Y$, endowed with the compact-open topology. This topology is Polish; in fact it is induced by the collection of pseudometrics $(\max_K(\dist))_{K\in\K(X)}$. In other words, this topology is the topology of locally uniform convergence.
\item If $X$ is a locally compact separable metric space, let $\M(X)$ be the set of all locally finite measures on $X$, endowed with the weak-* topology.
\item If $X$ is a compact Riemann surface with a Riemannian metric, let $\Div(X)$ be the set of all effective divisors on $X$, endowed with the quasimetric
\[
\dist(D_1,D_2) := \inf\{\delta:\exists \Phi:D_1\rightarrow D_2\text{ a bijection such that }\dist(x,\Phi(x))\leq\delta\all x\in D_1\}
\]
Note that $\Div(X)$ is a locally compact separable metric space.
\item If $X$ and $Y$ are Riemann surfaces, let $\AA(X,Y)$ be the set of all holomorphic maps from $X$ to $Y$, endowed with the compact-open topology. This topology is Polish, being a closed subspace of $\CC(X,Y)$. We write $\AA(X) := \AA(X,X)$.
\end{itemize}
The following lemmas are a collection of some well-known results, together with some (possibly) new ones. Proofs are given in Section \ref{sectionproofs}.
\begin{lemma}
\label{lemmacontinuous}
Suppose that $X$, $Y$, $Z$ are locally compact separable metric spaces. The maps \textup{(\ref{mapsstart}) - (\ref{mapsend})} are continuous, except for the starred maps which are only Borel measurable. For \textup{(\ref{MOC})} and \textup{(\ref{extend})}, assume that $X$ is a geodesic metric space i.e.
\[
\cl{B}(\cl{B}(x,\delta_1),\delta_2) = \cl{B}(x,\delta_1 + \delta_2)
\]
for all $x\in X$ and $\delta_1,\delta_2 > 0$. For \textup{(\ref{complexanalysisstart}) - (\ref{mapsend})}, assume that $X$, $Y$, and $Z$ are compact Riemann surfaces.
\end{lemma}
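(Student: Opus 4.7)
The plan is to verify each map in turn, grouping them by the structure of their domains and codomains and reducing to a small set of standard subbasic or pseudometric criteria. For any map whose codomain is $\K(X)$ or $\F(X)$, I would use the standard characterization of the Vietoris and Fell topologies by the subbase consisting of sets of the form $\{K : K \cap U \neq \emptyset\}$ (equivalently $\{F : F \cap U \neq \emptyset\}$) and $\{K : K \implies U\}$ (equivalently $\{F : F \cap L = \emptyset\}$ for $L$ compact), so that checking continuity amounts to showing that the preimage of such a set is open. For maps whose codomain is $\CC(X,Y)$, I would instead work with the pseudometrics $\max_K \dist$ indexed by compact $K\implies X$, reducing continuity to uniform continuity on compact subsets. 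For maps into $\M(X)$, I would use that weak-$*$ convergence is detected by integrals against $C_c(X)$.

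Next I would handle the analytic maps. Continuity of composition $\AA(X,Y)\times\AA(Y,Z)\to\AA(X,Z)$ reduces to the classical fact that composition is jointly continuous in the compact-open topology when the middle space is locally compact Hausdorff; continuity of evaluation and of $f\mapsto f^{(k)}$ follows from Cauchy's integral formula, which gives uniform control of derivatives on compact sets in terms of values on slightly larger compact sets. The maps involving $\Div(X)$ need the observation that the quasimetric is compatible with the obvious embedding $\Div(X)\hookrightarrow\M(X)$ sending an effective divisor to its counting measure; continuity into $\Div(X)$ can then be verified by first checking weak-$*$ continuity into $\M(X)$ and then upgrading using the fact that an integer-valued weak-$*$ limit determines a unique bijective matching on a small enough scale. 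For maps such as $T\mapsto \RP_T$, $T\mapsto\BP_T$, $T\mapsto\FP_T$ (as divisors), and $(T,p)\mapsto T^{-1}(p)$, this would proceed by solving polynomial equations whose coefficients depend continuously on $T$ and $p$ and invoking continuity of roots of a monic polynomial as a multiset.

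For the starred maps, which are only Borel measurable, the standard approach is to exhibit them as pointwise limits of continuous maps, or to write them via suprema and infima over countable dense parameters. For example, a map defined by ``the unique simply connected component containing a given point'' or ``an inverse branch selection'' is typically measurable by a Kuratowski--Ryll-Nardzewski selection argument once one has shown that the graph is a Borel set in the product of Polish spaces; I would invoke this directly. Maps like $T\mapsto \deg(T)$, $T\mapsto\sup(T_*)$ and $(\zeta,T)\mapsto\I(\zeta,T,c)$ are handled by decomposing the domain into the clopen pieces $\RR_d$ (where degree is locally constant) and using continuity of the derivative.

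The main obstacle will be the interplay between multiplicity and topology: as a rational map $T$ is perturbed, branch points and preimages can collide or separate, which destroys continuity in any naive set-valued sense and forces us to work with multisets and with the quasimetric on $\Div(X)$. Once this is handled via the $\M(X)$-embedding as above, the remaining verifications are routine subbase or pseudometric checks, and the starred measurability claims fall out of the standard Borel calculus for Polish spaces.
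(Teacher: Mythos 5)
Your overall strategy is sound, but it differs from the paper's actual proof in some meaningful ways, and in a few places you are gesturing at maps that are not actually in the list. The paper's proof is quite sparse: the author explicitly states ``We omit the majority of the proofs. Those which are omitted are either obvious or well-known,'' and then handles only a selected collection of cases. For those, the workhorse is not general machinery but concrete algebraic reductions, chaining one list item to another. For example, $\max_K(f) = \max(f(K))$ reduces (\ref{supK}) to (\ref{forward}) plus continuity of $K\mapsto\max K$ on $\K(\R)$; $\rho_f(\delta) = \max_x\max_{y\in\cl{B}(x,\delta)}|f(x)-f(y)|$ reduces (\ref{MOC}) to (\ref{extend}) and (\ref{supK}); and for the starred map (\ref{backward}) the paper uses the simple identity $f^{-1}(F)\cap K = \emptyset \Leftrightarrow f(K)\cap F = \emptyset$ to pull back subbasic sets $\F^K$ through an already-measurable map. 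The maps (\ref{molchanovstart})--(\ref{molchanovend}) are discharged by citation to Molchanov. By contrast, your plan invokes heavier tools than needed: Kuratowski--Ryll-Nardzewski selection and an $\M(X)$-embedding-then-upgrade argument for divisors. Neither of those is wrong, but they address problems the paper either sidesteps with explicit formulas or does not prove at all (the complex-analytic items (\ref{complexanalysisstart})--(\ref{mapsend}) are among the omitted ones). You also mention maps such as $(\zeta,T)\mapsto\I(\zeta,T,c)$ and ``inverse branch selection,'' which appear elsewhere in the paper but not in this lemma's list, so the selection-theorem discussion is answering a question that wasn't asked here.

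One genuinely useful difference in your approach is the $\M(X)$ embedding for $\Div(X)$: the paper defines the quasimetric on $\Div(X)$ directly and never records how to verify continuity into it, so your proposal --- pass to counting measures, check weak-$*$ continuity against test functions, then recover the divisor metric from the fact that integer-valued measures of the correct total mass admit a bijective $\delta$-matching once close in weak-$*$ --- supplies a clean route through the omitted cases. You would need to pair it with the observation, which you do make, that $\deg$ is locally constant on the clopen pieces $\RR_d$, so the total mass is locally fixed. The main thing to tighten is that, since the lemma covers maps between general compact Riemann surfaces, the ``continuity of polynomial roots as a multiset'' step needs to be localized (work in coordinate charts after precomposing with M\"obius or chart maps), but that is bookkeeping rather than a conceptual gap.
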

\begin{lemma}
\label{lemmaimplicitization}
Suppose that $(\Omega,\Par)$ is a measurable space, suppose that $X,Y,Z$ are locally compact separable metric spaces, and suppose that
\[
f:\Omega\times X\times Y\rightarrow Z.
\]
Then $f$ is measurable and fiberwise continuous (m.f.c.) if and only if the induced map
\[
\wtilde{f}:\Omega\times X\rightarrow \CC(Y,Z)
\]
is m.f.c.
\end{lemma}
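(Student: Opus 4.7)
The proof splits into checking the fiberwise continuity and the measurability separately, in each direction.

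\emph{Fiberwise continuity (both directions simultaneously).} Fix $\omega\in\Omega$. By the classical exponential law for the compact-open topology, since $Y$ is locally compact Hausdorff, a map $g:X\times Y\rightarrow Z$ is continuous if and only if its transpose $\wtilde{g}:X\rightarrow\CC(Y,Z)$ given by $\wtilde{g}(x)(y)=g(x,y)$ is continuous. Applying this to $g=f(\omega,\cdot,\cdot)$ and $\wtilde{g}=\wtilde{f}(\omega,\cdot)$ immediately gives that $f$ is fiberwise continuous if and only if $\wtilde{f}$ is fiberwise continuous.

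\emph{Measurability, $f\Rightarrow\wtilde{f}$.} Since $Y$ is locally compact and separable, we fix a countable dense set $D\implies Y$ and a countable exhaustion $(K_n)_{n\in\N}$ of $Y$ by compact sets; the Polish topology on $\CC(Y,Z)$ is generated by the pseudometrics $d_n(g,h):=\sup_{y\in K_n}\min(1,\dist_Z(g(y),h(y)))$, so it suffices to show that for each $h\in\CC(Y,Z)$ and each $n\in\N$ the function $(\omega,x)\mapsto d_n(\wtilde{f}(\omega,x),h)$ is measurable. Because $\wtilde{f}(\omega,x)$ and $h$ are continuous on $Y$, the supremum over $K_n$ agrees with the supremum over the countable set $K_n\cap D$, so
\[
d_n(\wtilde{f}(\omega,x),h)=\sup_{y\in K_n\cap D}\min\left(1,\dist_Z(f(\omega,x,y),h(y))\right)
\]
is a countable supremum of measurable functions of $(\omega,x)$ (using the measurability of $f$), hence measurable.

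\emph{Measurability, $\wtilde{f}\Rightarrow f$.} The evaluation map $\mathrm{ev}:\CC(Y,Z)\times Y\rightarrow Z$, $\mathrm{ev}(g,y):=g(y)$, is jointly continuous since $Y$ is locally compact Hausdorff. Write $f=\mathrm{ev}\circ(\wtilde{f}\times\id_Y)$. Since $\CC(Y,Z)$ and $Y$ are separable metrizable, the Borel $\sigma$-algebra on $\CC(Y,Z)\times Y$ coincides with the product $\sigma$-algebra; hence $\wtilde{f}\times\id_Y$ is measurable, and its composition with the continuous map $\mathrm{ev}$ is measurable. The only subtle points are the reduction of uncountable suprema to countable ones (which uses separability of $Y$ together with the continuity of $\wtilde{f}(\omega,x)$ guaranteed by fiberwise continuity) and the identification of the Borel structure on the product with the product $\sigma$-algebra; both follow from standard facts about Polish spaces.
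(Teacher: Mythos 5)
Your proof is correct (modulo one small repair, below) but it takes a genuinely different route from the paper's. The paper proves the forward direction by working directly with a subbasic open set $\CC_K^U$ of $\CC(Y,Z)$ and verifying in one pass that $\wtilde{f}^{-1}(\CC_K^U)$ is measurable (by quantifying over a countable dense subset of $K$) and fiberwise open (by a finite-subcover argument), thereby interleaving measurability and continuity; the backward direction is dismissed as definitional. You instead cleanly decouple the two properties: the fiberwise continuity equivalence in both directions is a single invocation of the exponential law for the compact-open topology, and measurability is handled separately — forward via the pseudometric family $(d_n)$ that generates the Polish topology on $\CC(Y,Z)$, backward via factoring $f$ through the jointly continuous evaluation map and using that the product $\sigma$-algebra on $\CC(Y,Z)\times Y$ equals its Borel $\sigma$-algebra. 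Your treatment is structurally clearer and in fact more complete, since the paper's one-line backward direction really does rely (implicitly) on exactly the exponential-law and product-Borel facts you spell out.

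One technical patch: when you rewrite $d_n(\wtilde{f}(\omega,x),h)$ as a countable supremum over $K_n\cap D$, you need $K_n\cap D$ to be dense in $K_n$, which a single countable dense subset $D$ of $Y$ need not guarantee (a dense set of $Y$ can intersect a compact subset in a non-dense set). The fix is immediate: choose a countable dense subset $D_n$ of each compact $K_n$ (possible since $K_n$ is separable) and take $D:=\bigcup_n D_n$, or simply replace $K_n\cap D$ with $D_n$ in the displayed identity. The paper's version sidesteps this by choosing its countable dense set inside the relevant compact set from the start. With that adjustment your argument is watertight.
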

\begin{lemma}
\label{lemmaclosed}
Suppose that $X$ is a locally compact separable metric space. For each of the expressions \textup{(\ref{atomicstart}) - (\ref{atomicend})}, the set of all tuples satisfying the quoted condition is closed.
\end{lemma}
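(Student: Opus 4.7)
The plan is to handle each of the atomic expressions (\ref{atomicstart})--(\ref{atomicend}) by recognizing its defining condition as the preimage of a closed set under a map already shown to be continuous in Lemma \ref{lemmacontinuous}, or else by a direct sequential closure argument using the explicit description of the relevant topology. I would first partition the list of atomic expressions into broad categories: (a) containment relations between points, closed sets, compact sets, and divisors (e.g.\ ``$x \in K$'' for $(x,K) \in X \times \K(X)$, or ``$K_1 \implies K_2$'' for $K_1 \in \K(X)$, $K_2 \in \F(X)$); (b) equality and evaluation conditions for continuous, holomorphic, or measure-valued maps (e.g.\ ``$f(x) = y$'' for $(f,x,y) \in \CC(X,Y) \times X \times Y$, or ``$T \circ S = R$'' for $(T,S,R) \in \AA(Y,Z) \times \AA(X,Y) \times \AA(X,Z)$); and (c) inequalities involving measures, diameters, or moduli of continuity (e.g.\ ``$\mu(U) \leq c$'' for $U$ open, ``$\mu(K) \geq c$'' for $K$ compact, ``$\diam(K) \leq c$'', ``$\rho_f(\varepsilon) \leq c$'').

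For category (a), the key observation is that the Vietoris topology on $\K(X)$ and the Fell topology on $\F(X)$ are designed so that the containment and membership relations are closed: a convergent sequence $K_n \to K$ in $\K(X)$ together with $x_n \to x$ and $x_n \in K_n$ forces $x \in K$, and similarly $K_n \implies F_n$ passes to the limit. In each case, I would write the defining condition as the zero-set of one of the continuous distance-type maps $(x,K) \mapsto \dist(x,K)$ from Lemma \ref{lemmacontinuous}, or invoke that description of the topology directly. For category (b), equality conditions are immediately closed because the relevant spaces are Hausdorff and the evaluation maps $(f,x) \mapsto f(x)$ and composition map $(T,S) \mapsto T \circ S$ are continuous by Lemma \ref{lemmacontinuous}; then ``$f(x) = y$'' is the preimage of the diagonal and ``$T \circ S = R$'' is the preimage of the diagonal in $\AA(X,Z)^2$.

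For category (c), upper-semicontinuity versus lower-semicontinuity of the measure functionals is the crucial subtlety, and this will be the main obstacle. The portmanteau theorem tells us that $\mu \mapsto \mu(U)$ is only lower-semicontinuous for open $U$, while $\mu \mapsto \mu(K)$ is only upper-semicontinuous for compact $K$; thus the atomic expressions in the paper must be stated with the semicontinuity-compatible direction of inequality. I would verify case-by-case that each atomic inequality in the list is written with the direction that produces a closed set, using that $\{\mu : \mu(K) \geq c\}$ is closed for $K \in \K(X)$ and $\{\mu : \mu(U) \leq c\}$ is closed for $U$ open, and that $\{\mu : \int f\, d\mu \leq c\}$ is closed for $f \in \CC(X,\R)$ bounded. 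Conditions of the form $\diam(K) \leq c$ and $\diam_m(K) \leq c$ reduce to continuity of $\diam_m:\K(X) \to \R$ (continuity of $\diam_m$ is claimed in Section \ref{sectionexact}, property (4)), and analogous bounds on moduli of continuity reduce to $\CC(X,Y)$-continuity of the modulus map.

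The hardest step in practice will be expressions involving the Fell topology on $\F(X)$, because $\F(X)$ is only second-countable Polish, not metric in general, and containment and non-containment behave asymmetrically: ``$F_1 \implies F_2$'' with $F_1 \in \K(X)$, $F_2 \in \F(X)$ is closed, but the reverse containment with $F_1$ closed and $F_2$ compact can fail to be. I would treat each such atomic expression very carefully, making sure the compactness/closedness hypotheses on each coordinate match the direction of containment, and in marginal cases exhibit the condition as an intersection over a countable basis of open sets (this is possible because $\F(X)$ has a countable subbasis). Once all the atomic cases are verified, the lemma follows.
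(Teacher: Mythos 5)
Your plan is built around a list of atomic expressions that does not match what the paper actually labels (\ref{atomicstart})--(\ref{atomicend}). The paper's atomic propositions are only seven very simple items: ``$K\neq\emptyset$'' ($K\in\K(X)$), ``$F=\emptyset$'' ($F\in\F(X)$), ``$F=X$'' ($F\in\F(X)$), ``$x=y$'' ($x,y\in X$), ``$x\in F$'' ($x\in X$, $F\in\F(X)$), ``$a\leq b$'' ($a,b\in\R$), and ``$F_1\implies F_2$'' ($F_1,F_2\in\F(X)$). Nothing about measures, diameters, moduli of continuity, evaluation maps $f(x)=y$, or compositions $T\circ S=R$ appears there; those are functions listed in (\ref{mapsstart})--(\ref{mapsend}), not atomic propositions. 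Consequently most of your proposal --- categories (b) and (c), the portmanteau discussion, the semicontinuity of $\mu\mapsto\mu(U)$ vs.\ $\mu\mapsto\mu(K)$, and the $\diam_m$ and $\rho_f$ bounds --- has no target in the lemma you are proving. You also assert that $\F(X)$ is ``second-countable Polish, not metric in general,'' which is internally contradictory: Polish by definition means completely metrizable and separable (and the paper explicitly cites that both the Vietoris and Fell topologies are Polish).

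That said, the core idea you open with --- exhibit the defining condition either as the complement of a subbasic Vietoris/Fell open set or as the preimage of a closed set under a map already shown continuous in Lemma \ref{lemmacontinuous} --- is exactly the strategy the paper uses, and it does cover the real list. The paper handles (\ref{atomicstart})--(\ref{equalsend}) as $\K(X)\butnot\K^X$, $\F(X)\butnot\F_X$, and $\bigcap_{x\in X}(\F(X)\butnot\F^{\{x\}})$, respectively (complements of subbasic opens, and a closed intersection); ``$x=y$'' and ``$a\leq b$'' are standard closed relations; (\ref{inclusion}) is the preimage of $\{0\}$ under the continuous map $(x,F)\mapsto\dist(x,F)$ from (\ref{distK}); and (\ref{atomicend}) is written as $\bigcap_{x\in X}\{(F_1,F_2):\dist(x,F_2)\leq\dist(x,F_1)\}$, each factor closed by that same continuity. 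Had you identified the actual list, the same strategy you sketch would have delivered this directly; the gap is not in the method but in addressing material that is not in the statement while omitting the elementary arguments that are.
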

\begin{lemma}
\label{lemmaquantifiers}
Suppose that $X$ and $Y$ are locally compact separable metric spaces. The maps
\begin{align}
\label{forallK2} (K,P) &\mapsto \forall_K(P) := \{x\in X:(x,y)\in P\all y\in K(x)\}\in\F(X)\\
\label{existsK2} (K,P) &\mapsto \exists_K(P) := \{x\in X:\exists y\in K(x)\;\;(x,y)\in P\}\in\F(X)
\end{align}
\[
[K\in\CC(X,\K(Y)),P\in\F(X\times Y)]
\]
are Borel measurable. (These correspond to \textup{(\ref{forallK})} and \textup{(\ref{existsK})}.)
\end{lemma}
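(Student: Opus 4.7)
My plan is to realize both $\exists_K(P)$ and $\forall_K(P)$ as the zero loci of explicit continuous nonnegative real-valued functions on $X$, and then to track measurability through each reduction. Fix a bounded truncation of the distance by setting $\phi_P(z) := \min(1, \dist(z, P))$, so that $\phi_P \in \CC(X \times Y, [0,1])$ and $\phi_P^{-1}(0) = P$. Define
\[
g_{K,P}(x) := \min_{y \in K(x)} \phi_P(x,y), \qquad h_{K,P}(x) := \max_{y \in K(x)} \phi_P(x,y).
\]
Then $\exists_K(P) = g_{K,P}^{-1}(0)$ and $\forall_K(P) = h_{K,P}^{-1}(0)$ (assuming the convention that $\K(Y)$ consists of nonempty compacta; the degenerate case $K(x)=\emptyset$ is handled separately using that $\{x: K(x)=\emptyset\}$ is clopen in $X$ by continuity of $K$). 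Berge's maximum theorem, applied to the continuous map $\phi_P$ and the continuous compact-valued correspondence $K$, gives that $g_{K,P},h_{K,P} \in \CC(X, \R)$; in particular both sets lie in $\F(X)$.

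Next I would verify that the assignments $(K,P) \mapsto g_{K,P}$ and $(K,P) \mapsto h_{K,P}$ are continuous from $\CC(X, \K(Y)) \times \F(X \times Y)$ into $\CC(X, \R)$ with the topology of locally uniform convergence. This factors into two steps: (a) $P \mapsto \phi_P$ is continuous from $\F(X \times Y)$ (Fell) into $\CC(X \times Y, [0,1])$, because on a locally compact separable metric space Fell convergence $P_n \to P$ is equivalent to pointwise convergence of the $1$-Lipschitz functions $\dist(\cdot, P_n)$, and equicontinuity upgrades pointwise convergence to locally uniform convergence; composing with the $1$-Lipschitz truncation $\min(1,\cdot)$ preserves this. (b) Given $K_n \to K$ in $\CC(X, \K(Y))$ and $\phi_n \to \phi$ in $\CC(X \times Y, \R)$, the functions $\min_{K_n(\cdot)}\phi_n$ converge locally uniformly to $\min_{K(\cdot)}\phi$; for a compact $L \subseteq X$, uniform Hausdorff convergence of $K_n|_L$ forces $\bigcup_{n\geq N}\bigcup_{x\in L} K_n(x) \cup K(x)$ to be contained in a single compact $L' \subseteq Y$, and uniform continuity of $\phi$ on $L \times L'$ combined with uniform Hausdorff convergence of $K_n|_L$ then yields uniform convergence of the minima on $L$. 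The argument for maxima is identical.

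Third, I would show that the map $g \mapsto g^{-1}(0)$ is Borel measurable from $\CC(X, [0,\infty))$ to $\F(X)$. Since the Borel $\sigma$-algebra on $\F(X)$ (equivalently the Effros $\sigma$-algebra) is generated by the sets $\{F : F \cap U \neq \emptyset\}$ for $U \subseteq X$ open, it suffices to check that $\{g : g^{-1}(0) \cap U \neq \emptyset\}$ is Borel for every open $U$. Fix a countable basis $(B_n)_{n \in \N}$ of topology for $X$ consisting of open sets with $\cl{B_n} \Kin X$ (available by local compactness plus separability). I claim
\[
g^{-1}(0) \cap U \neq \emptyset \iff \exists n \in \N : \cl{B_n} \subseteq U \text{ and } \inf_{\cl{B_n}} g = 0,
\]
where $\Leftarrow$ uses that continuous $g$ attains its infimum on the compact set $\cl{B_n}$, and $\Rightarrow$ uses density to place a basic $B_n$ around any zero of $g$ in $U$ with $\cl{B_n} \subseteq U$. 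The inner condition $\inf_{\cl{B_n}} g = 0$ is continuous in $g$ in the locally uniform topology (infimum over a fixed compact set), so the full set is a countable union of closed sets, hence Borel.

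Composing the continuous map from step two with the Borel map from step three yields the Borel measurability of $(K,P) \mapsto \exists_K(P)$ and $(K,P) \mapsto \forall_K(P)$ into $\F(X)$. The main obstacle is the parameter-form of Berge's theorem in step (b), whose verification hinges on the compact-containment argument for the sets $K_n|_L$; once this is in hand, the remaining reductions (the zero-locus characterization and the countable-basis Effros calculation) are routine bookkeeping.
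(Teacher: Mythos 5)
Your proof is correct and rests on exactly the same identity as the paper's: write $\forall_K(P)$ and $\exists_K(P)$ as the zero loci of $x\mapsto\max_{y\in K(x)}\dist((x,y),P)$ and $x\mapsto\min_{y\in K(x)}\dist((x,y),P)$ respectively, and then chase measurability through the zero-locus map. The difference is one of packaging rather than substance. The paper's proof is deliberately a one-liner that cites Corollary \ref{corollaryconventions} (really the expression-level machinery of Theorem \ref{theoremexpressions}, since the whole expression is built from the unstarred rules (\ref{xx}), (\ref{distK}), (\ref{supK}), (\ref{implicitization}) and then the starred preimage rule (\ref{backward}) once there are no free variables) --- so the continuity in $(K,P)$ of the value function and the Borel measurability of $g\mapsto g^{-1}(0)$ are already baked into the rule-book. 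You instead unpack those appeals into an explicit Berge-style verification of joint continuity of $(K,P)\mapsto \min_{K(\cdot)}\phi_P$ and a hands-on Effros calculation for $g\mapsto g^{-1}(0)$ using a countable basis of relatively compact balls. That makes the argument self-contained and avoids any appearance of circularity with Theorem \ref{theorempropositions}; it also lets you handle two details the paper glosses over: the possibly empty value $K(x)=\emptyset$ (you correctly observe $\{\emptyset\}$ is clopen in $\K(Y)$) and the $+\infty$ value of $\dist(\cdot,\emptyset)$, which your $\min(1,\cdot)$ truncation cleanly removes. The only small point I would flag for completeness is that your compact-containment step in (b) implicitly uses that $\bigcup_{x\in L}K(x)$ is compact for compact $L$ (continuous image plus the union map $\K(\K(Y))\to\K(Y)$), but this is standard and the conclusion is right.
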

\begin{description}
\item[Set-theoretic operations]
\begin{align} \label{mapsstart}
K&\mapsto K&&\in\F(X) & [K\in\K(X)] && \\
(f,x)&\mapsto f(x)&&\in Y & [x\in X,f\in\CC(X,Y)] && \\
(f_1,f_2)&\mapsto f_2\circ f_1&&\in \CC(X,Z) & [f_1\in\CC(X,Y),f_2\in\CC(Y,Z)] && \\
\label{xx} x&\mapsto \{x\}&&\in\K(X) & [x\in X] && \\
\label{unionK} (K_1,K_2)&\mapsto K_1\cup K_2&&\in\K(X) & [K_1,K_2\in\K(X)] && \\
\label{unionF} (F_1,F_2)&\mapsto F_1\cup F_2&&\in\F(X) & [F_1,F_2\in\F(X)] && \\
(F,K)&\mapsto F\cap K&&\in\K(X) & [F\in\F(X),K\in\K(X)] && * \\
\label{intersectionF} (F_1,F_2)&\mapsto F_1\cap F_2&&\in\F(X) & [F_1,F_2\in\F(X)] && * \\
\label{forward} (f,K)&\mapsto f(K)&&\in\K(Y) & [K\in\K(X), f\in\CC(X,Y)] && \\
\label{backward} (f,F)&\mapsto f^{-1}(F)&&\in\F(X) & [F\in\F(Y), f\in\CC(X,Y)] && * 
\end{align}
\item[Arithmetic operations]
\begin{align}
(a,b)&\mapsto a + b&&\in\R & [a,b\in\R] && \\
(a,b)&\mapsto a - b&&\in\R & [a,b\in\R] && \\
(a,b)&\mapsto ab&&\in\R & [a,b\in\R] && \\
(a,b)&\mapsto a/b&&\in\R & [a\in\R,b\in\R\butnot\{0\}] && \\
(a,b)&\mapsto a^b&&>0 & [a > 0,b\in\R] && \\
(a,b)&\mapsto \min(a,b)&&\in\R & [a,b\in\R] && \\
(a,b)&\mapsto \max(a,b)&&\in\R & [a,b\in\R] && \\
a&\mapsto |a|&&\in\R & [a\in\R] && 
\end{align}
\item[Topological and metric operations]
\begin{align}
\label{molchanovstart} F&\mapsto \cl{X\butnot F}&&\in\F(X) & [F\in\F(X)] && * \\
F&\mapsto \del{F}&&\in\F(X) & [F\in\F(X)] && * \\
\label{molchanovend} F&\mapsto \cl{\text{int}(F)}&&\in\F(X) & [F\in\F(X)] && * \\
(x,y)&\mapsto d(x,y)&&\in[0,\infty) & [x,y\in X] && \\
\label{distK} (F,K)&\mapsto \dist(F,K)&&\in [0,\infty] & [F\in\F(X),K\in\K(X)] && \\
\label{distF} (F_1,F_2)&\mapsto \dist(F_1,F_2)&&\in [0,\infty] & [F_1,F_2\in\F(X)] && * \\
\label{coextend} (K,\delta)&\mapsto X\butnot B(K,\delta)&&\in\F(X) & [K\in\F(X),\delta\geq 0] && * \\
\label{extend} (K,\delta)&\mapsto \cl{B}(K,\delta)&&\in\K(X) & [K\in\K(X),\delta\in(0,\infty),X\text{ is a g.m.s.}] && \\
f&\mapsto\lim_{n\rightarrow\infty}f(n)&&\in X & [f\in\CC(\N,X),\text{ assuming the limit exists}] && * \\
\label{diamm} (K,m)&\mapsto \diam_m(F)&&\geq 0 & [K\in\K(X),m\in\N] && \\
\label{card} F&\mapsto \#(F)&&\in\N\cup\{\infty\} & [F\in\F(X)] && * 
\end{align}
\item[Functional analysis operations]
\begin{align}
\label{supK} (K,f)&\mapsto \max_K(f)&&\in\R & [K\in\K(X),f\in\CC(X,\R)] && \\
(K,f)&\mapsto \min_K(f)&&\in\R & [K\in\K(X),f\in\CC(X,\R)] && \\
(K,f)&\mapsto \|f\|_{\osc,K}&&\geq 0 & [K\in\K(X),f\in\CC(X,\R)] && \\
(K,f)&\mapsto \|f\|_{\infty,K}&&\geq 0& [K\in\K(X),f\in\CC(X,\R)] && \\
\label{supF} (F,f)&\mapsto \sup_F(f)&&\in\R & [F\in\F(X),f\in\CC(X,\R)] && * \\
\label{MOC} (f,\delta)&\mapsto \rho_f(\delta)&&\geq 0 & [f\in\CC(X,Y),\delta\geq 0,X\text{ is a compact g.m.s.}] && \\
\label{fKalpha} (K,f,\alpha)&\mapsto \|f\|_{\alpha,K}&&\geq 0 & [K\in\K(X),f\in\CC(X,Y),\alpha > 0] && * \\
x&\mapsto \delta_x&&\in\M(X) & [x\in X] && \\
(f,\mu)&\mapsto f\mu&&\in\M(X) & [f\in\CC(X,\R),\mu\in\M(X)] && \\
\label{integral} (f,\mu)&\mapsto \int f\d\mu&&\in\R & [f\in\CC(X,\R),\mu\in\M(X),X\text{ is compact}] && \\
(K,f,\mu)&\mapsto \int_K f\d\mu&&\in\R & [K\in\K(X),f\in\CC(X,\R),\mu\in\M(X)] && * 
\end{align}
\item[Complex analysis operations]
\begin{align}
\label{complexanalysisstart} T&\mapsto T&&\in\CC(X,Y) & [T\in\AA(X,Y)] && \\
(T_1,T_2)&\mapsto T_2\circ T_1&&\in\AA(X,Z) & [T_1\in\AA(X,Y), T_2\in\AA(Y,Z)] && \\
T&\mapsto \deg(T)&&\in\N & [T\in\AA(X,Y)] && \\
(T,x)&\mapsto \mult_T(x)&&\in\N & [T\in\AA(X,Y),x\in X] && * \\
T&\mapsto \|T_*\|&&\in\CC(X,[0,\infty)) & [T\in\AA(X)] && \\
x&\mapsto [x] &&\in\Div(X) & [x\in X] && \\
T&\mapsto \RP_T &&\in\Div(X) & [T\in\AA(X,Y)] && \\
T&\mapsto \BP_T &&\in\Div(Y) & [T\in\AA(X,Y)] && \\
T&\mapsto \FP_T &&\in\Div(X) & [T\in\AA(X)] && \\
(D_1,D_2)&\mapsto D_1 + D_2 &&\in\Div(X) & [D_1,D_2\in\Div(X)] && \\
D&\mapsto \Supp(D)&&\in\F(X) & [D\in\Div(X)] && \\
D&\mapsto \deg(D)&&\in\N & [D\in\Div(X)] && \\
(D,f)&\mapsto\sum_D f&&\in\R & [D\in\Div(X),f\in\CC(X,\R)] && \\
(D,T)&\mapsto T^*D&&\in\Div(X) & [D\in\Div(Y),T\in\AA(X,Y)] && \\
(D,f)&\mapsto f_*D&&\in\Div(Y) & [D\in\Div(X),f\in\CC(X,Y)] && \label{mapsend}
\end{align}
\item[Implicitization]
\begin{align}
\label{implicitization} y(x) &\mapsto (x\mapsto y(x))&&\in \CC(X,Y) & [y\in Y\text{ with free variable }x\in X] && 
\end{align}
\item[Atomic propositions]
\begin{align}
\label{atomicstart} K &\mapsto ``K \neq \emptyset" & [K\in\K(X)]\\
F &\mapsto ``F = \emptyset" & [F\in\F(X)]\\
\label{equalsend} F &\mapsto ``F = X" & [F\in\F(X)]\\
(x,y) &\mapsto ``x = y" & [x,y\in X]\\
\label{inclusion} (x,F) &\mapsto ``x \in F" & [x\in X,F\in\F(X)]\\
(a,b) &\mapsto ``a \leq b" & [a,b\in\R]\\
\label{atomicend} (F_1,F_2) &\mapsto ``F_1 \implies F_2" & [F_1,F_2\in\F(X)]
\end{align}
\item[Non-atomic propositions]
\begin{align}
\label{nonatomicstart} (P_1,P_2)&\mapsto ``P_1\text{ and }P_2" & && \\
\label{nonatomicor} (P_1,P_2)&\mapsto ``P_1\text{ or }P_2" & && \\
\label{nonatomicnot} P&\mapsto ``\text{not }P" & && * \\
\label{forallK} (K,P(x))&\mapsto ``\forall x\in K\;\; P(x)" & [K\in\K(X)] &&  \\
\label{existsK} (K,P(x))&\mapsto ``\exists x\in K\;\; P(x)" & [K\in\K(X)] &&  \\
\label{forallF} (F,P(x))&\mapsto ``\forall x\in F\;\; P(x)" & [F\in\F(X)] && \dag \\
\label{nonatomicend} (F,P(x))&\mapsto ``\exists x\in F\;\; P(x)" & [F\in\F(X)] && * 
\end{align}
\end{description}

Inspired by this list, we define a language $\LL$. $\LL$ will consist of a pair $(\E_\LL,\mathbb{P}_\LL)$, where $\E_\LL$ and $\mathbb{P}_\LL$ are subsets of the set of all finite strings over the alphabet $\A$ consisting of all symbols in LaTeX. An element of $\E_\LL$ will be called an \emph{expression} in $\LL$, and an element of $\mathbb{P}_\LL$ will be called a \emph{proposition} of $\LL$.

Fix two disjoint subsets $\V,\mathbb{C}\implies \A$ which are disjoint from the set of symbols needed to do the operations (\ref{mapsstart}) - (\ref{mapsend}). (We distinguish between the syntax, e.g. $+$, $\deg$, $B(\cdot,\cdot)$, which are not allowed in $\V$ or $\mathbb{C}$, from the mere placeholders e.g. $F$, $T$, $x$, which are allowed.) An element of $\V$ is called a \emph{formal variable}, and an element of $\mathbb{C}$ is called a \emph{formal constant}. If a particular formula is given that you are trying to test the measurability of, then you should generally let $\V$ be the set of all variables which have been bound by quantifiers or implicitization, and let $\mathbb{C}$ be the set of all remaining variables used in the formula, For example, in Event \ref{eventexample},
\begin{align*}
\V &:= \{``p"\}\\
\mathbb{C} &:= \{``\JJ_0",``\SS_n",``T_0^n",``\delta_1",``\kappa"\}
\end{align*}

We define the set $\E_\LL$ by induction: A string is in $\E_\LL$ if and only if:
\begin{itemize}
\item It is a formal variable or constant
\item It is obtained by concatenating previously existing elements of $\E_\LL$ according to the rules (\ref{mapsstart}) - (\ref{mapsend}), with the qualification that the starred rules can only be used if each of the strings being concatenated contains no free variables.
\item It is obtained by ``implicitizing'' a formal variable according to rule (\ref{implicitization}). Specifically, if $e_1\in \E_\LL$ and $v_1\in \V$, then $e = (v_1\mapsto e_1(v_1))$ is a new element of $\E_\LL$.
\end{itemize}
For example, the string
\[
``(x\mapsto \cl{B}(x,\delta))"
\]
is proved to be in $\E_\LL$ in four steps:
\begin{itemize}
\item $``x"\in\V\implies\E_\LL$
\item $``\delta"\in\mathbb{C}\implies\E_\LL$
\item $``\cl{B}(x,\delta)"\in\E_\LL$ by rule (\ref{extend}) (here we assume that $X$ is a g.m.s.)
\item $``(x\mapsto \cl{B}(x,\delta))"$ by rule (\ref{implicitization}).
\end{itemize}

The set $\mathbb{P}_\LL$ is defined similarly: A string is in $\mathbb{P}_\LL$ if and only if either it is obtained by concatenating elements of $\E_\LL$ according to the rules (\ref{atomicstart}) - (\ref{atomicend}), or it is obtained by concatenating previously existing elements of $\mathbb{P}_\LL$ according to the rules (\ref{nonatomicstart}) - (\ref{nonatomicend}), again with the qualification that the starred rules can only be used if each of the strings being concatenated contains no free variables. The daggered rule can be used as long as the input string taking the place of the domain of quantification (i.e. $F$) contains no free variables.

Denote the set of free variables of an expression or proposition by $F(e)$ or $F(p)$; i.e. variables which occur in the string but are neither bound to a quantifier nor implicitized.

\begin{remark}
There are many subtleties in the language $\LL$. For example, the statement ``$F_1\implies F_2$" has different rules of construction than the logically equivalent statement ``$\forall x\in F_1,x\in F_2$". In fact, the latter has the $\dag$ restriction ($F_1$ cannot have free variables), whereas the former has no restriction. The reason for this is because of the precise implementation of measurability and continuity concepts in the Theorems \ref{theoremexpressions} and \ref{theorempropositions} below. They are not the only possible choice of inductive claims, and another choice could possibly yield a different language.
\end{remark}

\begin{remark}
Although this list cannot possibly be exhaustive, we have included expressions which are not directly relevant to this paper, on the grounds that they could be useful in the future.
\end{remark}

We now come to the issue of interpretation. Fix a measurable space $(\Omega,\Par)$, and for each $c\in \mathbb{C}$ fix a topological space $X_c$ and a Borel measurable map $c_*:\Omega\rightarrow X_c$. $c_*$ is called the \emph{interpretation} of $c$. For each $v\in \V$ fix a locally compact separable metric space $X_v$. ($X_c$ or $X_v$ may happen to equal a constructed space e.g. $X_c = \K(X)$ for some $X$.)

If $e\in\E_\LL$, we informally define the \emph{interpretation} of $e$ to be the map
\[
e_*: \Omega\times\prod_{v\in F(e)}X_v\rightarrow X_e
\]
which inputs a tuple $(\omega,(x_v)_v)$ and outputs the thing that you get when you plug in $(c_*(\omega))_c$ and $(x_v)_v$ into the string $e$. For example, if
\[
e = ``\dist(x,y)",
\]
then
\begin{align*}
X_e &:= [0,\infty)\\
e_*(\omega,(x,y)) &:= \dist(x,y) \in X_e;
\end{align*}
here we use the convention that $(x,y)_{``x"} = x$ and $(x,y)_{``y"} = y$. With a little work, $e_*$ can be defined inductively in a rigorous manner.

Similarly, for each proposition $p\in \mathbb{P}_\LL$, we informally define the \emph{interpretation} of $p$ to be the map
\[
p_*:\Omega\rightarrow \powerset\left(\prod_{v\in F(p)}X_v\right)
\]
which inputs $\omega$ and outputs the set
\[
p_*(\omega) := \{(x_v)_v:p\text{ is true when you plug in $(c_*(\omega))_c$ and $(x_v)_v$}\}
\]
For example, if
\[
p = ``x\in\JJ_0",
\]
then
\[
p_*(\omega) = \{x\in\C:x\in\JJ_0\on_\omega\} = \JJ_0\on_\omega.
\]
Again, with more work we could define $p_*$ in a rigorous manner by induction.

Note that if $p$ has no free variables then $p_*:\Omega\rightarrow\powerset(\{()\})\equiv \{\text{True},\text{False}\}$ can be reinterpreted as a subset of $\Omega$. (Here $()$ denotes the empty tuple.)

We have the following results, which are the only motivation for constructing so idiosyncratic a language.
\begin{theorem}
\label{theoremexpressions}
Fix $(\Omega,\Par)$, $(X_c)_{c\in \mathbb{C}}$, $(X_v)_{v\in \V}$, and $c_*:\Omega\rightarrow X_c$. For each $e\in \E_\LL$, we have that $e_*$ is m.f.c.
\end{theorem}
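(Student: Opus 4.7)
The plan is to proceed by structural induction on the expression $e \in \E_\LL$, with inductive hypothesis that $e_*$ is m.f.c.\ (measurable and fiberwise continuous), i.e., measurable as a function on $\Omega \times \prod_{v \in F(e)} X_v$ and continuous in the product of $X_v$'s for each fixed $\omega$. The base cases are immediate: if $e = c \in \mathbb{C}$ then $F(e) = \emptyset$ and $e_* = c_*$ is measurable by hypothesis; if $e = v \in \V$ then $F(e) = \{v\}$ and $e_*(\omega, x_v) = x_v$, which is jointly continuous and so trivially m.f.c.

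For the inductive step involving a construction rule from (\ref{mapsstart})--(\ref{mapsend}), suppose $e$ is built from subexpressions $e_1,\ldots,e_k$ via an operation $\Phi$ listed in Lemma \ref{lemmacontinuous}. By induction, each $(e_i)_*$ is m.f.c., so the tuple $((e_1)_*, \ldots, (e_k)_*)$ is m.f.c.\ on $\Omega \times \prod_{v \in F(e)} X_v$ (after pulling back along the projection to the relevant free variables). In the unstarred case, Lemma \ref{lemmacontinuous} supplies continuity of $\Phi$, so the composition $\Phi \circ ((e_1)_*, \ldots, (e_k)_*)$ is m.f.c.\ by standard arguments. In the starred case, $\Phi$ is only Borel measurable, but by the construction rules of $\LL$ each input $e_i$ has no free variables, so $(e_i)_*$ is merely a measurable map $\Omega \to X_{e_i}$ (no fiber to worry about); composing with Borel measurable $\Phi$ gives a measurable map, which vacuously satisfies fiberwise continuity on the empty fiber variable set. (More precisely, any surviving free variables come from sibling subexpressions that remain continuous, and the starred piece is constant in those variables, so the full expression remains fiberwise continuous.)

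For the implicitization rule, suppose $e = (v_1 \mapsto e_1(v_1))$ with $v_1 \in F(e_1)$, so $F(e) = F(e_1) \setminus \{v_1\}$ and $X_e = \CC(X_{v_1}, X_{e_1})$. By the inductive hypothesis, $(e_1)_*: \Omega \times \prod_{v \in F(e_1)} X_v \to X_{e_1}$ is m.f.c., which in particular means it is measurable and continuous separately in $x_{v_1}$ with the other variables fixed. Applying Lemma \ref{lemmaimplicitization} with the auxiliary variable playing the role of $Y = X_{v_1}$ and the remaining free variables lumped into $X$, the induced map $e_*: \Omega \times \prod_{v \in F(e)} X_v \to \CC(X_{v_1}, X_{e_1})$ is m.f.c., completing the inductive step.

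The main obstacle is bookkeeping rather than mathematical depth: one must verify that the restriction placed on starred construction rules (inputs have no free variables) exactly ensures that the measurability of $\Phi$ is enough to propagate m.f.c.\ through the inductive step, even though starred $\Phi$'s would destroy fiberwise continuity if applied inside a free variable. Likewise, the implicitization step requires Lemma \ref{lemmaimplicitization} in precisely the form stated there, which is why the lemma is phrased with an auxiliary variable split off from the other free variables. Once these correspondences are spelled out, the induction goes through uniformly.
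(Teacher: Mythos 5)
Your proof mirrors the paper's own argument exactly: a structural induction with the same base cases (constants measurable by hypothesis with no free variables, variables giving coordinate projections), the same handling of the construction rules via Lemma~\ref{lemmacontinuous} (unstarred operations are continuous, starred ones force all inputs to be free-variable-less so fiberwise continuity is vacuous), and the same appeal to Lemma~\ref{lemmaimplicitization} for the implicitization rule. Two cosmetic slips worth flagging, neither fatal: the parenthetical in your starred case contradicts the correct sentence preceding it, since the paper's syntax forbids a starred rule from having \emph{any} input with free variables (so there are no surviving sibling free variables to worry about), and m.f.c.\ means \emph{joint} continuity in all non-$\omega$ coordinates for fixed $\omega$ rather than separate continuity in $x_{v_1}$, which is precisely the hypothesis Lemma~\ref{lemmaimplicitization} needs.
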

\begin{theorem}
\label{theorempropositions}
Fix $(\Omega,\Par)$, $(X_c)_{c\in \mathbb{C}}$, $(X_v)_{v\in \V}$, and $c_*:\Omega\rightarrow X_c$. For each $p\in \mathbb{P}_\LL$, then
\begin{enumerate}[A)]
\item For all $\omega\in\Omega$, $p_*(\omega)$ is closed i.e. $p_*(\omega)\in \F(\prod_{v\in F(p)}X_v)$
\item $p_*:\Omega\rightarrow\F(\prod_{v\in F(p)}X_v)$ is Borel measurable
\end{enumerate}
\end{theorem}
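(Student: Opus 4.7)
The plan is to establish (A) and (B) simultaneously by structural induction on $p \in \mathbb{P}_\LL$. For notational convenience, write $Y_p := \prod_{v \in F(p)} X_v$. The inductive claim will be: each $p_*(\omega)$ is closed in $Y_p$, and the map $p_* : \Omega \to \F(Y_p)$ is Borel measurable.

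For the base case, an atomic proposition $p$ is built from expressions $e_1, \ldots, e_k \in \E_\LL$ via one of the rules (\ref{atomicstart})--(\ref{atomicend}). Theorem \ref{theoremexpressions} tells us each $(e_i)_*$ is m.f.c., so the tuple map $E : \Omega \times Y_p \to \prod_i X_{e_i}$ is m.f.c.\ as well. Lemma \ref{lemmaclosed} identifies the atomic predicate with a closed set $C \subseteq \prod_i X_{e_i}$; hence $p_*(\omega) = E(\omega, \cdot)^{-1}(C)$, which is closed because $E(\omega, \cdot)$ is continuous. For measurability, I would apply Lemma \ref{lemmaimplicitization} to view $E$ as an m.f.c.\ map $\omega \mapsto E(\omega, \cdot) \in \CC(Y_p, \prod_i X_{e_i})$, and then compose with the (Borel measurable) pullback operation (\ref{backward}) of Lemma \ref{lemmacontinuous}.

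For the inductive step, each propositional constructor corresponds to a set-theoretic operation on $\F$ together with the necessary free-variable bookkeeping. For ``$P_1$ and $P_2$'' (resp.\ ``or''), lift $(P_i)_*(\omega)$ along the canonical projection $Y_p \to Y_{P_i}$ (continuous, so the pullback (\ref{backward}) remains Borel measurable in $\omega$), then apply the intersection operator (\ref{intersectionF}) (resp.\ union (\ref{unionF})); both are Borel measurable on $\F \times \F$. For ``not $P$'', the starred rule forces $F(P) = \emptyset$, so $P_*$ is a measurable subset of $\Omega$ and its complement is again measurable. The quantifier rules (\ref{forallK}) and (\ref{existsK}) are the heart of the argument: when $p$ is ``$\forall x \in K,\,P(x)$'' with $x$ a formal variable living in $X_v$, the expression $K$ gives an m.f.c.\ map $\Omega \times Y_p \to \K(X_v)$, which by Lemma \ref{lemmaimplicitization} is the same data as a Borel map $\Omega \to \CC(Y_p, \K(X_v))$; by induction $P_*$ is a Borel map into $\F(Y_p \times X_v)$; and Lemma \ref{lemmaquantifiers} then yields $p_* = \forall_{K_*}(P_*)$ as a Borel map into $\F(Y_p)$. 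The case $\exists$ and the daggered/starred rules (\ref{forallF}), (\ref{nonatomicend}) are handled identically, using the analogous $\F$-version of Lemma \ref{lemmaquantifiers} (available because those rules' restrictions on free variables reduce the quantification domain from a $\CC(Y_p, \K(X_v))$-valued map to a plain $\F(X_v)$-valued random variable).

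The main obstacle will be the bookkeeping of free variables: at each constructor the set $F(p)$ may grow (conjunction), shrink (quantification), or collapse to $\emptyset$ (starred operations), forcing one to continually lift intermediate $\F$-valued random variables via continuous projections and to track which product space currently hosts the interpretation. Once that bookkeeping is in place, the proof is essentially a case check: the design of $\mathbb{P}_\LL$ was tailored exactly to the Borel measurable operations catalogued in Lemmas \ref{lemmacontinuous}, \ref{lemmaclosed}, and \ref{lemmaquantifiers}, and the starred and daggered restrictions on specific rules are precisely the syntactic side-conditions required so that the induction step for the corresponding (only Borel measurable, not continuous) operation on $\F$ goes through via Lemma \ref{lemmaimplicitization}.
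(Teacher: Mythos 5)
Your overall plan is the same as the paper's: structural induction on $p$, with the base case handled by Lemma \ref{lemmaclosed}, Lemma \ref{lemmaimplicitization} and the pullback rule (\ref{backward}), the boolean connectives by (\ref{unionF})/(\ref{intersectionF}), the negation by reduction to $\powerset(\{()\})$, and the compact quantifiers by Lemma \ref{lemmaquantifiers}. Your remark that (\ref{intersectionF}) being starred is harmless here (because $\Omega$ carries no topology) is the same observation the paper makes, and your free-variable bookkeeping via projection lifts is if anything slightly more careful than the paper's one-line treatment of ``and''/``or''.

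However, there is a genuine gap in your handling of the closed-domain quantifiers (\ref{forallF}) and (\ref{nonatomicend}). You assert the existence of an ``$\F$-version of Lemma \ref{lemmaquantifiers}'' and claim it becomes available once the free-variable restrictions convert $K$ from a $\CC(Y_p,\K(X_v))$-valued map to an $\F(X_v)$-valued random variable. No such lemma exists, and for $\exists$ it would be false: if $F\in\F(Y)$ is not compact and $P\in\F(X\times Y)$ is closed, the set $\{x : \exists y\in F,\ (x,y)\in P\}$ need not be closed (take $X = Y = \R$, $F=\R$, $P=\{(x,y):xy=1\}$). The proof of Lemma \ref{lemmaquantifiers} relies essentially on compactness of $K(x)$ via the $\min$/$\max$ rewriting, so it does not transfer to the $\F$ setting. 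The paper instead reduces both quantifiers to the compact case via the exhaustion $F = \bigcup_n (F\cap K_n)$:
\begin{align*}
\forall x\in F\;\; P(x) &\Leftrightarrow \forall n\in\N\;\; \forall x\in F\cap K_n\;\; P(x),\\
\exists x\in F\;\; P(x) &\Leftrightarrow \exists n\in\N\;\; \exists x\in F\cap K_n\;\; P(x).
\end{align*}
This also explains the asymmetry you did not account for between the $\dag$ and $*$ restrictions: the countable intersection on the $\forall$ side is automatically closed, so only the $\dag$ requirement (needed so that $F\cap K_n$ is a legal starred expression) is imposed on (\ref{forallF}); whereas the countable union on the $\exists$ side may be non-closed unless $Y_p = \{()\}$, forcing the full $*$ requirement on (\ref{nonatomicend}). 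Treating the two ``identically'' under a fictitious $\F$-quantifier lemma both misses this distinction and fails to establish part (A) for the existential case in the generality you describe.
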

We call a map $p_*$ satisfying (A) and (B) \emph{strongly measurable} or \emph{s.m.}

The case $F(p) = \emptyset$ gives the following corollary:
\begin{corollary}
\label{corollaryconventions}
\textup{(Measurable Conventions)} In Theorem \ref{theorempropositions}, if $p$ has no free variables, then $p_*$, interpreted as a subset of $\Omega$ as above, is a measurable set.
\end{corollary}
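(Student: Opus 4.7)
The plan is to obtain the corollary as an immediate specialization of Theorem \ref{theorempropositions}(B) to the case $F(p) = \emptyset$. When the set of free variables is empty, the indexed product $\prod_{v \in F(p)} X_v$ collapses to the one-point space $\{()\}$ whose only element is the empty tuple. Hence $\F(\{()\})$ consists of exactly two points, $\emptyset$ and $\{()\}$; since $\{()\}$ is compact, the Fell topology on this finite set coincides with the discrete topology, so both singletons $\{\emptyset\}$ and $\{\{()\}\}$ are Borel subsets of $\F(\{()\})$.

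First I would invoke Theorem \ref{theorempropositions}(B) to conclude that the map $p_* : \Omega \to \F(\{()\})$ is Borel measurable. Next, under the identification of $\{()\}$ with the truth value \emph{True} and $\emptyset$ with \emph{False} (as discussed in the paragraph preceding the corollary statement), the set interpretation of $p_*$ as a subset of $\Omega$ is exactly the preimage $p_*^{-1}(\{\{()\}\})$. Borel measurability of $p_*$ together with the fact that $\{\{()\}\}$ is a Borel (indeed clopen) singleton in $\F(\{()\})$ yields that this preimage is a measurable subset of $(\Omega, \Par)$. There is essentially no obstacle to overcome here: all substantive content has been absorbed into Theorem \ref{theorempropositions}, and what remains is only to check that the bookkeeping of the empty-tuple identification is consistent with the Fell topology on a one-point underlying space.
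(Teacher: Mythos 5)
Your proposal is correct and takes exactly the route the paper intends: the paper gives no separate proof and simply remarks that the case $F(p)=\emptyset$ of Theorem \ref{theorempropositions} yields the corollary. You have merely spelled out the (correct) bookkeeping that $\F(\{()\})$ is a two-point discrete space, so that measurability of $p_*$ as a map into $\F(\{()\})$ is equivalent to measurability of the corresponding subset of $\Omega$.
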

\end{subsection}
\begin{subsection}{Proofs} \label{sectionproofs}
We omit the majority of the proofs. Those which are omitted are either obvious or well-known.

The proofs build on each other, in an order which is inconsistent with the order in which they are listed. In addition, some proofs demonstrate continuity and/or measurability by using prototype versions of Theorems \ref{theoremexpressions} and \ref{theorempropositions}, compiled using only functions which were already known to satisfy the requirements. However, there is no circular reasoning.

We follow the notation found in \cite{Mo}. Suppose that $G$ is open, $F$ is closed, and $K$ is compact. Let
\begin{align*}
\F_G &:= \{F\in\F(X):F\cap G\neq\emptyset\}\\
\K_G &:= \{K\in\K(X):K\cap G\neq\emptyset\}\\
\F^K &:= \{F\in\F(X):F\cap K = \emptyset\}\\
\K^F &:= \{K\in\K(X):K\cap F = \emptyset\}\\
\CC_K^G &:= \{f\in\CC(X,Y):f(K)\implies G\}.
\end{align*}
By definition, sets of the forms $\F_G,\F^K$ form a subbasis for $\F(X)$, sets of the form $\K_G,\K^F$ form a subbasis for $\K(X)$, and sets of the form $\CC_K^G$ form a subbasis for $\CC(X,Y)$. $\dist_H$ denotes the Hausdorff metric on $\K(X)$, which induces the Vietoris topology [\cite{Mo} Corollary C.6 p.404].

In some cases, we will use the existence of a sequence $K_n := \cl{B}(a,n)\in\K(X)$ such that $X = \bigcup_{n\in\N}K_n$, and of a countable dense set $Q\implies X$.

\begin{proof}[Proof of Lemma \ref{lemmacontinuous}:]~
\begin{description}
\item[(\ref{xx})]
\begin{align*}
\{x:\{x\}\in \K_G\} &= G\\
\{x:\{x\}\in \K^F\} &= X\butnot F
\end{align*}
are open.
\item[(\ref{unionK})]
\begin{align*}
\{(K_1,K_2):K_1 \cup K_2\in\K_G\} &= \K_G \times \K \cup \K\times \K_G\\
\{(K_1,K_2):K_1 \cup K_2\in\K^F\} &= \K^F\times \K^F
\end{align*}
are open.
\item[(\ref{forward})]
Fix $f_0\in\CC(X,Y)$, $K_0\in\K(X)$, and $\varepsilon > 0$. Fix $\delta_1 > 0$ such that $\cl{B}(K_0,\delta_1)$ is compact. Fix $\delta_2 > 0$ so that
\[
\rho_{f_0}^{(\cl{B}(K_0,\delta_1))}(\delta_2) \leq \varepsilon/2.
\]
Let $\delta_3 = \min(\delta_1,\delta_2) > 0$. Suppose that $f\in\CC(X,Y)$ and $K\in\K(X)$ are close enough to $f_0$ and $K_0$ so that
\begin{align*}
\max_{\cl{B}(K_0,\delta_1)}\dist(f,f_0) &\leq \varepsilon/2\\
\dist_H(K,K_0) &\leq \delta_3.
\end{align*}
Now
\begin{align*}
\dist_H(f(K),f_0(K_0))
&\leq \dist_H(f(K),f_0(K)) + \dist_H(f_0(K),f_0(K_0))\\
&\leq \varepsilon/2 + \varepsilon/2 \leq \varepsilon.
\end{align*}
\item[(\ref{backward})]
For each $K\in \K(X)$,
\[
f^{-1}(F)\cap K = \emptyset \Leftrightarrow f(K)\cap F = \emptyset.
\]
Now, the map $(f,F)\mapsto f(K)\cap F$ is measurable by Theorem \ref{theoremexpressions}. Thus the set
\[
((f,F)\mapsto f^{-1}(F))^{-1}(\F^K) = ((f,F)\mapsto f(K)\cap F)^{-1}(\F(Y)\butnot \F_Y)
\]
is Borel measurable. Since the sets $(\F^K)_K$ form a basis for the $\sigma$-algebra of Borel sets, the map $(f,F)\mapsto f^{-1}(F)$ is Borel measurable.
\item[(\ref{supK})]
\[
\max_K(f) = \max(f(K))
\]
Noting that the map $K\mapsto\max(K)\hspace{.15 in}[K\in\K(\R)]$ is Lipschitz $1$-continuous completes the proof.
\item[(\ref{supF})]
\[
\sup_F(f) = \sup_{n\in\N}\max_{F\cap K_n}(f)
\]
\item[(\ref{MOC})]
\[
\rho_f(\delta) = \max_{x\in X}\max_{y\in \cl{B}(x,\delta)}|f(x) - f(y)|
\]
\item[(\ref{fKalpha})]
\[
\|f\|_{\alpha,K} = \sup_{\substack{\delta > 0 \\ \text{rational}}}\max_{x\in K}\max_{y\in K\butnot B(x,\delta)}\frac{f(y) - f(x)}{q^\alpha}
\]
\item[(\ref{integral})]
\[
\int_K f\d\mu = \inf_{\substack{\delta > 0 \\ \text{rational}}}\int (x\mapsto \phi_\delta(\dist(x,K))) f\d\mu,
\]
where
\[
\phi_\delta(t) := \begin{cases}
0 & t \geq \delta\\
1 - t/\delta & 0\leq t < \delta\\
1 & t < 0
\end{cases}
\]
\item[(\ref{molchanovstart}) - (\ref{molchanovend})]
[\cite{Mo} Theorem 2.25 p.37 (iii)]
\item[(\ref{distK})]
\[
\dist(F,K) = \min_{x\in K}\dist(x,F)
\]
To see that the map $(x,F)\mapsto\dist(x,F)$ is continuous, fix $x_0\in X$, $F_0\in\F(X)$, and $\varepsilon > 0$, and let $d_0 = \dist(x_0,F_0)$. If $x\in X$ and $F\in\F(X)$ are close enough to $x_0$ and $F_0$ so that
\begin{align*}
\dist(x,x_0) &\leq \varepsilon/2\\
F &\in \F_{B(x_0,d_0 + \varepsilon/2)}\cap \F^{\cl{B}(x_0,d_0 - \varepsilon/2)},
\end{align*}
then
\[
|\dist(x,F) - \dist(x_0,F_0)| \leq \varepsilon.
\]
Here we have used the fact that all closed and bounded subsets of $X$ are compact. Without this condition we could only show that $(K_1,K_2)\mapsto \dist(K_1,K_2)$ is continuous for $K_1,K_2\in\K(X)$.
\item[(\ref{distF})]
\[
\dist(F_1,F_2) = \inf_{n\in\N}\dist(F_1\cap K_n,F_2\cap K_n)
\]
\item[(\ref{coextend})] If $\wtilde{K}\in\K(X)$,
\[
(X\butnot B(K,\delta))\in \F^{\wtilde{K}} \Leftrightarrow \dist(K,\wtilde{K}) \geq \delta.
\]
\item[(\ref{extend})]
The map is Lipschitz $1$-continuous with respect to each input.
\item[(\ref{diamm})]
\[
\diam_m(K) = \max_{x_1\in K}\cdots\max_{x_m\in K}\min_{\substack{i,j \\ i\neq j}}\dist(x_i,x_j).
\]
(See Section \ref{sectionexact} for definition of $\diam_m$.)
\item[(\ref{card})]
\[
\#(F)\geq m \Leftrightarrow \exists n\text{ such that } \diam_m(F\cap K_n) > 0.
\]
\end{description}
\end{proof}
\begin{proof}[Proof of Lemma \ref{lemmaimplicitization}:]
For the backwards direction, note that $f(\omega,x,y) = \wtilde{f}(\omega,x)(y)$. (Indeed, this is the definition of $\wtilde{f}$.) We prove the forward direction:

Fix $K\implies Y$ compact and $U\implies Z$ open, so that $\CC_K^U$ is an arbitrary basic open subset of $\CC(Y,Z)$. We will show that the set $\wtilde{f}^{-1}(\CC_K^U)$ is measurable and fiberwise open.

Let $Q\implies K$ be countable dense. Then
\[
\wtilde{f}^{-1}(\CC_K^U) = \bigcap_{y\in Q}(f\circ i_y)^{-1}(U),
\]
where $i_y:\Omega\times X\rightarrow \Omega\times X\times Y$ is the obvious inclusion. This proves measurability.

Fix $\omega\in\Omega$ and $x\in\wtilde{f}\on_\omega^{-1}(\CC_K^U)$. Since $f\on_\omega$ is continuous, for each $y\in K$ there exist $V_y\implies X$ and $W_y\implies Y$ open neighborhoods of $x$ and $y$ respectively so that
\[
f(\omega,V_y,W_y)\implies U.
\]
Let $(W_y)_{y\in F}$ be a finite subcover i.e. $K\implies \bigcup_{y\in F}W_y$. Then
\begin{align*}
f\left(\omega,\bigcap_{y\in F}V_y,K\right)&\implies U\\
x\in \bigcap_{y\in F}V_y&\implies\wtilde{f}\on_\omega^{-1}(\CC_K^U).
\end{align*}
Since $x\in\wtilde{f}\on_\omega^{-1}(\CC_K^U)$ was arbitrary, $\wtilde{f}\on_\omega^{-1}(\CC_K^U)$ is open.\
\end{proof}
\begin{proof}[Proof of Lemma \ref{lemmaclosed}:]~
\begin{description}
\item[(\ref{atomicstart}) - (\ref{equalsend})]
\begin{align*}
\{K\in\K(X):K\neq\emptyset\} &= \K(X)\butnot \K^X \\
\{F\in\F(X):F=\emptyset\} &= \F(X)\butnot \F_X \\
\{F\in\F(X):F=X\} &= \bigcap_{x\in X}\F(X)\butnot \F^{\{x\}} \\
\end{align*}
\item[(\ref{inclusion})]
\[
\{(x,F):x\in F\} = \{(x,F):\dist(x,F) = 0\}
\]
which is closed by Theorem \ref{theorempropositions}.
\item[(\ref{atomicend})]
\[
\{(F_1,F_2): F_1\implies F_2\} = \bigcap_{x\in X}\{(F_1,F_2):\dist(x,F_2)\leq \dist(x,F_1)\}
\]
which is closed by Theorem \ref{theorempropositions}.
\end{description}
\end{proof}
\begin{proof}[Proof of Lemma \ref{lemmaquantifiers}:]
We rewrite (\ref{forallK2}) - (\ref{existsK2}):
\begin{align*}
\{x\in X:(x,y)\in P\all y\in K(x)\} &= \left(x\mapsto\max_{y\in K(x)}\dist((x,y),P)\right)^{-1}(0) \in\F(X)\\
\{x\in X:\exists y\in K(x)\text{ such that }(x,y)\in P\} &= \left(x\mapsto\min_{y\in K(x)}\dist((x,y),P)\right)^{-1}(0)
\in\F(X),
\end{align*}
both of which are measurable by Corollary \ref{corollaryconventions}. (Here we let $\Omega = \CC(X,\K(Y))\times\F(X\times Y)$, and let $``K"_*$ and $``P"_*$ be the first and second projections.)
\end{proof}
\begin{remark}
This proof is deceptively simple, since it depends on all the theory which has been developed so far. Lemma \ref{lemmaquantifiers} is probably the most important result in this paper.
\end{remark}
\begin{proof}[Proof of Theorem \ref{theoremexpressions}:]~
\begin{description}
\item[Base case]~
\begin{description}
\item[$e\in \mathbb{C}$] $e_*$ is measurable by hypothesis. Since $e$ has no free variables $e_*$ is automatically fiberwise continuous.
\item[$e\in \V$] In this case $e_*$ is just projection onto the $e$th coordinate, which is clearly m.f.c.
\end{description}
\item[Inductive step]~
\begin{description}
\item[Case 1]
Suppose that $e_1,\ldots,e_n$ are expressions of $\LL$ with m.f.c. interpretations $e_{1*},\ldots,e_{n*}$, and suppose that
\[
e = ``f(e_1,\ldots,e_n)",
\]
where $f$ is one of the functions (\ref{mapsstart}) - (\ref{mapsend}).
\begin{description}
\item[1A]
If $f$ is continuous (i.e. is unstarred), then 
\[
e_* := f\circ (e_{1*},\ldots,e_{n*})
\]
is Borel measurable, and for each $\omega\in\Omega$
\[
e_*\on_\omega := f\circ (e_{1*}\on_\omega,\ldots,e_{n*}\on_\omega)
\]
is continuous.
\item[1B] If $f$ is only Borel measurable (i.e. is starred), then $e_*$ is still Borel measurable. Since $e$ has no free variables, for each $\omega\in\Omega$ the domain of $e_*\on_\omega$ is a singleton, and thus $e_*\on_\omega$ cannot fail to be continuous.
\end{description}
\item[Case 2]
Alternatively, suppose that rule (\ref{implicitization}) is being used i.e. suppose that $e_1$ is an expression of $\LL$, suppose that $v_1\in F(e_1)$, and suppose that
\[
e = ``(v_1\mapsto e_1(v_1))".
\]
By assumption
\[
e_{1*}:\Omega\times \prod_{v\in F(e_1)\butnot\{v_1\}}X_v \times X_{v_1}\rightarrow X_{e_1}
\]
is m.f.c., so by Lemma \ref{lemmaimplicitization},
\[
e_*:\Omega\times \prod_{v\in F(e_1)\butnot\{v_1\}}X_v \rightarrow X_e := \CC(X_{v_1},X_{e_1})
\]
is m.f.c. Clearly $F(e) = F(e_1)\butnot\{v_1\}$, so we are done.
\end{description}
\end{description}
\end{proof}
\begin{proof}[Proof of Theorem \ref{theorempropositions}]~
\begin{description}
\item[Base case]
Suppose that $e_1,\ldots,e_n$ [$n = 1,2$] are expressions of $\LL$ with m.f.c. interpretations $e_{1*},\ldots,e_{n*}$, and suppose that
\[
p = s(e_1,\ldots,e_n)
\]
where $s$ is one of the maps (\ref{atomicstart}) - (\ref{atomicend}). (Note that $p$ is therefore a string, since the maps (\ref{atomicstart}) - (\ref{atomicend}) have quotation marks.) In particular, by Lemma \ref{lemmaclosed} the set $s_*$ consisting of all tuples $x_1,\ldots,x_n$ such that $s(x_1,\ldots,x_n)$ is a true statement is closed i.e.
\[
s_*\in\F\left(\prod_{i=1}^n X_{e_i}\right).
\]
Interpreted as a map from $\Omega$ to $\F\left(\prod_{i=1}^n X_{e_i}\right)$, $s_*$ is measurable since its range is a singleton. Let
\[
e_*:\Omega\times\prod_{v\in F(p)}X_v\rightarrow\prod_{i=1}^n X_{e_i}
\]
be the product of $e_{1*},\ldots,e_{n*}$; $e_*$ is m.f.c. By definition
\[
p_*(\omega) := \{(x_v)_{v\in F(p)}:e_*(\omega,(x_v)_v)\in s_*\}.
\]
By Lemma \ref{lemmaimplicitization}, the map
\[
\wtilde{e_*}:\Omega\rightarrow\CC\left(\prod_{v\in F(p)}X_v,\prod_{i=1}^n X_{e_i}\right)
\]
is measurable. Now
\[
p_*(\omega) = (\wtilde{e_*}(\omega))^{-1}(s_*) = ((f,F)\mapsto f^{-1}(F))\circ(\wtilde{e_*},s_*)(\omega)
\]
By Lemma \ref{lemmacontinuous}, in particular (\ref{backward}),
\[
\omega \mapsto p_*(\omega) \in \F\left(\prod_{v\in F(p)}X_v\right)
\]
is measurable.

\item[Inductive step]~
\begin{description}
\item[(\ref{nonatomicstart}),(\ref{nonatomicor})]
Suppose that $p_1,p_2$ are propositions of $\LL$ with s.m. interpretations $p_{1*},p_{2*}$, and suppose that
\[
p = ``p_1 \cap p_2".
\]
Then
\[
p_*(\omega) = p_{1*}(\omega) \cap p_{2*}(\omega) = \cap(p_{1*}(\omega),p_{2*}(\omega))
\]
which is clearly closed; the map $\omega\mapsto p_*(\omega)$ is measurable by (\ref{intersectionF}). (\ref{nonatomicor}) is proved similarly.

Note that we do not need a star here, despite the fact that (\ref{intersectionF}) has a star. This is because the domain of $p_*$ is just $\Omega$, which has no topology, rendering continuity unnecessary.
\item[(\ref{nonatomicnot})] Suppose that $p_1$ is a proposition of $\LL$ with s.m. interpretation $p_{1*}$ and with no free variables, and suppose that
\[
p = ``\text{not }p_1".
\]
Then
\[
p_*(\omega) := \left(\prod_{v\in F(p)}X_v\right)\butnot p_{1*}(\omega).
\]
Since $p$ has no free variables, 
\[
\prod_{v\in F(p)}X_v = \{()\},
\]
so clearly (A) is satisfied. The map
\[
i: F\mapsto \{()\}\butnot F
\]
is measurable because it is a permutation of the finite set $\powerset(\{()\})$. Thus 
\[
p_* = i \circ p_{1*}
\]
is measurable, so (B) is satisfied.
\item[(\ref{forallK}),(\ref{existsK})]
Suppose that $e_1,p_1$ are an expression and a proposition of $\LL$, respectively, with interpretations $e_{1*},p_{1*}$, with $e_{1*}$ m.f.c. and $p_{1*}$ s.m. Suppose that
\[
p = ``\forall v\in e_1, p_1(v_1)",
\]
where $v_1\in F(p_1)\butnot F(e_1)$. By definition
\[
F(p) = F(e_1)\cup F(p_1)\butnot\{v_1\}.
\]
Let
\begin{align*}
X &:= X_{v_1}\\
Y &:= \prod_{v\in F(p)}X_v,
\end{align*}
so that
\[
X\times Y = \prod_{v\in F(e_1)\cup F(p_1)}X_v.
\]
Now
\[
p_*(\omega) = \{y\in Y:\forall x\in e_{1*}(\omega), (x,y)\in p_{1*}(\omega)\} = \forall_{e_{1*}(\omega)}(p_{1*}(\omega));
\]
by Lemma \ref{lemmaquantifiers}, $p_*$ is s.m. (\ref{existsK}) is proved similarly.
\item[(\ref{forallF}),(\ref{nonatomicend})]
\begin{align*}
\forall x\in F\;\; P(x) &\Leftrightarrow \forall n\in\N\;\; \forall x\in F\cap K_n\;\; P(x)\\
\exists x\in F\;\; P(x) &\Leftrightarrow \exists n\in\N\;\; \exists x\in F\cap K_n\;\; P(x).
\end{align*}
Since a countable union of closed sets may not be closed unless they are subsets of $\{()\}$, the $*$ requirement must be imposed on (\ref{nonatomicend}), whereas only the $\dag$ requirement is needed on (\ref{forallF}).
\end{description}
\end{description}
\end{proof}
\end{subsection}
\end{section}
\begin{section}{Notational conventions}\label{sectionnotation}

We make the following miscellaneous notational conventions:

We consider $\N = \{0,1,\ldots\}$; in particular, $0\in\N$.

Unless explicitly stated, variables are allowed to take on the value $\infty$. However, they are nonnegative unless otherwise stated.

$\points_S$ is the partition of $S$ into points.

All measurable spaces are assumed to be standard Borel.

If $U\implies\C$, we denote the collection of connected components of $U$ by $\connected(U)$.

$K \Kin U$ means that $K$ is relatively compact in $U$.

The set of ramification points of a rational map $T$ is denoted $\RP_T$; the set of branch points is denoted $\BP_T$, so that $\BP_T = T(\RP_T)$. The set of fixed points is denoted $\FP_T$.

We denote the (local) spherical metric on $\C$ by $s$, the corresponding distance function (global metric) by $\dist_s$, and the corresponding area measure by $\lambda_s$. Similarly, $e$ is the local Euclidean metric on $\complexplane$. We assume that the spherical area measure is normalized so that $\lambda_s(\C)=1$; however the metric remains standard. Although these normalizations are inconsistent with each other, no contradiction will occur since we will not move between them. If $T$ is a rational function, then $T_*$ denotes the map $\|T_*\|:\C\rightarrow\R$ which sends a point to the operator norm of its derivative interpreted as a map on tangent spaces.

If $U$ is a hyperbolic Riemann surface, we denote the Poincar\'e metric on $U$ by $h_U$, and the corresponding distance function by $\dist_U$.

By $\B$ we mean the Poincar\'e disk $B_e(0,1)$; often we care only about the fact that it is a simply connected hyperbolic Riemann surface, and not the embedding in $\C$. The Poincar\'e metric on $\B$ we shorten $h := h_\B$, and the corresponding distance function we denote $\dist_h$.

As a result of these conventions we have
\begin{align*}
\tan(\dist_s(0,x)) &= \dist_e(0,x) = \tanh(\dist_h(0,x))\\
B_s(0,\delta) &= B_e(0,\tan(\delta))\\
B_h(0,\delta) &= B_e(0,\tanh(\delta))\\
\diam_s(\C) &= \pi/2\\
\lambda_s(B_s(x,\delta)) &= \sin(\delta)
\end{align*}
A useful inequality in connection with the last equation is the inequality
\[
\frac{\sin(b)}{\sin(a)} < \frac{b}{a},
\]
valid whenever $0 < a < b \leq \pi/2$. (It follows from the fact that $\sin$ is strictly concave down on this interval.)

$\delta_x$ denotes the point measure centered at $x$; $\one_A$ denotes the characteristic function of the set $A$.

If $X$ is a topological space, then $\CC(X)$ denoted the space of all continuous (real-valued) functions on $X$, and $\M(X)$ denotes the space of all nonnegative measures on $X$. If $X$ is a compact metric space, then $\M(X)\implies\CC^*(X)$ is given the weak-* topology.

If $f\in\CC(\C)$, and $K\implies \C$, we denote the modulus of continuity of $f$ relative to $K$ by
\[
\rho_f^{(K)}(\varepsilon):=\sup_{\substack{x,y\in K \\ \dist_s(x,y)\leq\varepsilon}}|f(y) - f(x)|.
\]
The absolute modulus of continuity we denote $\rho_f := \rho_f^{(\C)}$. Similarly, we define the relative and absolute oscillation
\[
\|f\|_{\osc,K} := \sup_K(f) - \inf_K(f) = \rho_f^{(K)}(\pi/2)
\]
and
\[
\|f\|_\osc := \|f\|_{\osc,\C}.
\]

We define a \emph{modulus of continuity} to be a nondecreasing function $\gamma:(0,\infty)\rightarrow(0,\infty)$ so that $\gamma(\varepsilon)\tendstoeps 0$. It follows that $\rho_f^{(K)}$ is a modulus of continuity, since $f$ is uniformly continuous on $K$.

We define the \emph{local $\alpha$ norm} on a function $\phi\in\CC(\C)$ to be $\|\phi\|_\AL := \sup_{\varepsilon>0}\frac{\rho_\phi(\varepsilon)}{\varepsilon^\alpha}$.

If $(f_n)_n$ is a sequence of functions and $(K_n)_n$ is a sequence of sets, we say that $(f_n)_n$ tends to a constant $C$ uniformly on $(K_n)_n$ if $\|f_n - C\|_{\infty,K_n} \tendston 0$.

Operator norms will be notated in the following way: If $R$ is an operator, then $\|R\|_A^B$ means the operator norm of $R$ where norms in the domain are taken according to the (pseudo)norm $\|\cdot\|_A$ and norms in the range are taken according to the (pseudo)norm $\|\cdot\|_B$. (In other words, $\|R\|_A^B=\sup\{\|R[v]\|_B:\|v\|_A\leq 1\}$.) If both norms are the same, we write $\|R\|_A=\|R\|_A^A$.

If $f:X\rightarrow Y$ is a function, then we denote the forward image of a measure $\mu\in\M(X)$ under $f$ by $f_*[\mu]$; i.e. $f_*[\mu](A) = \mu(f^{-1}(A))$. (The use of brackets rather than parentheses is because $f$ is linear when interpreted as a map on measures.)

By a \emph{potential function} we mean a continuous function from $\C$ to $\R$.

A sequence of objects $(S_n)_n$ is \emph{$(T_n)_n$-invariant} if $S_m = T_n^m(S_n)$ for all $m,n\in\Z$ with $m\leq n$. (vice-versa if the objects move in the opposite direction)
\end{section}

\end{document}